                    \def\version{April 7, 2021}                       %
\def\@rmrk#1#2{\refstepcounter
    {#1}\@ifnextchar[{\@yrmrk{#1}{#2}}{\@xrmrk{#1}{#2}}}
\makeatletter\@addtoreset{equation}{section}\makeatother
 \newfont{\bfit}{cmbxti10 scaled 1200}
 \newtheorem{theorem}{Theorem}[section]
\newtheorem{lemma}[theorem]{Lemma}
\newtheorem{proposition}[theorem]{Proposition}
\newtheorem{cor}[theorem]{Corollary}
\newtheorem{rem}[theorem]{Remark}
\newcommand{\N}{{\mathbb{N}}}
\newcommand{\R}{{\mathbb{R}}}
\newcommand{{\rd}}{\R^d}
\newcommand{\IP}{{\mathbb P}}
\newcommand{\E}{\mathbb E}
\newcommand{\8}{\infty}
\newcommand{\eu }{{\bf e_1}}
\renewcommand{\b}{\beta}
\renewcommand{\L}{\mathscr L^{\ssup\sigma}_T(x)}
\newcommand{\rmd}{\mathrm{d}}
\newcommand{\D}{\Delta}
\newcommand{\e}{\varepsilon}
\newcommand{\tht}{\theta}
\newcommand{\dd}{\,\text{\rm d}}             
\newcommand{\dB}{\xi}
\newcommand{\vphi}{\varphi}
\newcommand{\nn}{\nonumber}
\newcommand{\fC }{{\mathfrak C}}
\newcommand{\sZ}{{\mathscr Z}}
\newcommand{\heap}[2]{\genfrac{}{}{0pt}{}{#1}{#2}}
\newcommand{\ssup}[1] {{\scriptscriptstyle{({#1}})}}
\newcommand{\hh}{{\mathfrak h_\e^{\mathrm{st}}}}
\newcommand{\HH}{{\mathscr H^{\mathrm{st}}}}
\def\section{\@startsection{section}{1}{\z@}{-3.5ex plus -1ex minus 
 -.2ex}{2.3ex plus .2ex}{\bf}}
\def\subsection{\@startsection{subsection}{2}{\z@}{-3.25ex plus -1ex minus 
 -.2ex}{1.5ex plus .2ex}{\bf}}
\newcommand{\cvlaw}{\stackrel{\rm{ law}}{\longrightarrow}}
\newcommand{\eqlaw}{\stackrel{\rm{ law}}{=}}
\newcommand{\cvfidi}{\stackrel{\rm{ f.d.m.}}{\longrightarrow}}
\def\thebibliography#1{\section*{References}
  \list%
  {\arabic{enumi}.}
    {\settowidth\labelwidth{[#1]}\leftmargin\labelwidth
    \advance\leftmargin\labelsep
    \parsep0pt\itemsep0pt
    \usecounter{enumi}}
    \def\newblock{\hskip .11em plus .33em minus .07em}
    \sloppy                   
    \sfcode`\.=1000\relax}
\begin{document}
\title[Fluctuation of KPZ equation in $d\geq 3$ and the Gaussian free field]
{\large Space-time fluctuation of the Kardar-Parisi-Zhang equation in $d\geq 3$ and the Gaussian free field}
\author[Francis Comets, Cl\'ement Cosco and Chiranjib Mukherjee]{}
\maketitle
\thispagestyle{empty}
\vspace{-0.5cm}

\centerline{\sc  Francis Comets \footnote{Universit\'e de Paris,
Laboratoire de Probabilit\'es, Statistique et Mod\'elisation,
 LPSM (UMR 8001 CNRS, SU, UP)
B\^atiment Sophie Germain, 8 place Aur\'elie Nemours, 75013 Paris, {\tt comets@lpsm.paris}}, 
Cl\'ement Cosco\footnote{Department of Mathematics, Weizmann Institute of Science, Herzl St 234, Rehovot, Israel, {\tt  clement.cosco@weizmann.ac.il}} and Chiranjib Mukherjee\footnote{University of M\"unster, Einsteinstrasse 62, M\"unster 48149, Germany, {\tt chiranjib.mukherjee@uni-muenster.de}}}
\renewcommand{\thefootnote}{}
\footnote{\textit{AMS Subject
Classification:} Primary 60K35. Secondary 35R60, 35Q82, 60H15, 82D60.}
\footnote{\textit{Keywords:} SPDE, Kardar-Parisi-Zhang equation, stochastic heat equation, rate of convergence, Edwards-Wilkinson limit, Gaussian free field, directed polymers, random environment}

\vspace{-0.5cm}
\centerline{\textit{Universit\'e de Paris, Weizmann Institute of Science and University of M\"unster}}
\vspace{0.2cm}

\begin{center}
\version
\end{center}
\smallskip

\centerline{\it Dedicated to the memory of Dima Ioffe}
\smallskip

\begin{quote}{\small {\bf Abstract: }
We study the solution $h_\e$ of the Kardar-Parisi-Zhang (KPZ) equation in $\rd \!\times\! [0, \infty)$ for $d \geq 3$:
$$
\frac{\partial}{\partial t} h_{\e} = \frac12 \D h_{\e} +  \bigg[\frac12   |\nabla h_\e |^2  - C_\e\bigg]+
 \b \e^{\frac{d-2}2}   \dB_{\e} \;,\qquad\,\,   h_{\e}(0,x) =0.
$$
Here $\b>0$ is a parameter called the disorder strength, $\xi_\e=\xi\star \phi_\e$ is a spatially smoothened (at scale $\e$) Gaussian space-time white noise 
and $C_\e$ is a divergent constant as $\e\to 0$. When
$\b$ is sufficiently small and $\e\to 0$, $h_\e(t,x)- \hh(t,x)\to 0$ in probability
where $\hh(t,x)$ is the {\it stationary solution} of the KPZ equation -- more precisely, $\hh(t,x)$
 solves the above equation with a random initial condition (that is independent of the driving noise $\xi$) and its law is constant in $(\e,t,x)$. 
In the present article we quantify the rate of the above convergence in this regime and show that the fluctuations 
$(\e^{1-\frac d2} [h_\e(t,x) - \hh(t,x)])_{x\in \rd, t > 0}$ {\it about} the stationary solution  
converges pointwise (with finite dimensional distributions in space and time)  to a Gaussian free field evolved by the deterministic heat equation. We also identify the fluctuations {\it of} 
the stationary solution itself and show that the rescaled averages $\int_{\rd} \dd x \, \varphi(x)\,\,\e^{1-\frac d2} [\hh(t,x) - \E\hh(t,x)]$ converge 
to that of the {\it stationary solution} of the stochastic heat equation with additive noise, but with (random) {\it Gaussian free field marginals} (instead of flat initial condition).}
\end{quote}

\setcounter{footnote}{0}
\renewcommand{\thefootnote}{\alph{footnote}}


\section{Introduction and background.}\label{sec:background}
We consider the {\it{Kardar-Parisi-Zhang}} (KPZ) equation 
written informally as
 \begin{equation} \label{eq:KPZ}
 \frac{\partial}{\partial t} h = \frac12 \D h +  \bigg[ \frac12   |\nabla h |^2 -\infty\bigg] +  \beta \dB \qquad u\colon \rd\times \mathbb R_+\to \mathbb R, 
 \end{equation}
and driven by a totally uncorrelated Gaussian space-time white noise $\dB$ with $\beta>0$ being a constant  known as the {\it disorder}. The above equation enjoys a huge popularity as {\it{the default model}} of 
stochastic growth in $(d+1)$-dimensions \cite{KPZ86,T18}. For  $d=1$ this equation also becomes relevant for non-equilibrium fluctuations and appears as the scaling limit of front propagation of exclusion processes and weakly asymmetric interacting particles \cite{BG97,SS10,C12, DGP17} as well as that of the free energy of the discrete directed polymer \cite{AKQ14,CSZ16} at intermediate disorder. The KPZ equation \eqref{eq:KPZ} in $d=1$ has now seen a huge upsurge of interest in the recent years and a vast amount of deep mathematical results are now available, see  
\cite{SS10,ACQ11,Q12} 
and \cite{H13,H14,GIP15,GP17,K17} for a rigorous construction of the notion of a solution. 
For spatial dimension $d>1$, due to the irregular nature of the noise $\xi$, a precise construction of a solution to the SPDE \eqref{eq:KPZ} is not expected. On the other hand, despite being ill-posed for larger dimensions, the KPZ equation still remains relevant for random surface growth and has its own appeal even in the so-called {\it small disorder regime} -- a distinguishing feature of this equation in higher dimensions, see \cite{MSV18} for recent work 
in the physics literature.

In the present context we fix a spatial dimension $d\geq 3$ and let $\xi$ denote space-time white noise -- that is, with $\mathcal S(\R_+\times \rd)$ denoting the space of smooth and rapidly decreasing functions at infinity, $\{\dB(\vphi)\}_{\varphi\in \mathcal S(\R_+ \times \rd)}$ is a family of Gaussian random variables $\dB(\vphi)= \int_0^\8 \int_{\rd} \dd t \, \dd x \,\, \dB(t,x)\,\vphi (t,x)$ with mean $0$ and covariance $\mathbb E[ \dB(\varphi_1)\,\, \dB(\varphi_2)]= \int_0^\8 \int_{\rd} \varphi_1(t,x) \varphi_2(t,x) \dd t \dd x$. 
Consider a spatially mollified version of \eqref{eq:KPZ} with flat initial condition 
\begin{equation}\label{eq:KPZe}
 \frac{\partial}{\partial t} h_{\e} = \frac12 \D h_{\e} +  \bigg[\frac12   |\nabla h_\e |^2  - C_\e\bigg]+
 \b \e^{\frac{d-2}2}   \dB_{\e} \;,\qquad\,\,   h_{\e}(0,x) =0,
\end{equation}
which is driven by the spatially mollified Gaussian noise 
$$
\xi_{\e}(t,x) = (\xi(t,\cdot)\star \phi_\e)(x)=  \int \phi_\e(x - y) \xi(t,y) \dd y.
$$
Here $\phi_\e(\cdot)=\e^{-d}\phi(\cdot/\e)$ is a suitable approximation of the Dirac measure $\delta_0$, while $\phi: \R^d \to \R_+$  is a fixed function which is smooth and spherically symmetric, with $\mathrm{supp}(\phi) \subset B(0,\frac12)$ and  $\int_{\rd} \phi(x)\dd x=1$. Also $C_\e=\b^2(\phi\star \phi)(0) \e^{-2}/2$ is a suitably chosen divergent (renormalization) constant (cf. \eqref{C-eps} below).  Thus, $\{\xi_\e(t,x)\}$ is a centered Gaussian field with covariance
\begin{equation}\label{V}
{{\mathbb E[ \xi_{\e}(t,x) \xi_{\e}(s,y) ]  = \delta({t-s}) \, \e^{-d} V\big((x-y)/\e\big),}} \quad\mbox{with  } V=\phi \star \phi,
\end{equation}
being a smooth function supported in $B(0,1)$.  

The solution $h_\e(t,x)$ to \eqref{eq:KPZe} is a priori non-stationary (i.e. its law fluctuates as the mollification scale $\e>0$, $t>0$ and $x\in \rd$ vary). However, as shown in \cite{CCM19}, when the disorder $\beta$ remains sufficiently small, a key object of interest is the {\it stationary solution} $\hh(t,x)$ which is constructed as follows. We first remark that, by the Cole-Hopf transform and the Feyman-Kac formula (see Remark \ref{rem:stationary} for details), the solution $h_\e$ of \eqref{eq:KPZe} is directly related via the almost sure identity \eqref{hZ} to 
the (logarithm of the) martingale 
\begin{equation}\label{eq:Z}
\mathscr Z_T(x)= \mathscr Z_T(\xi;x)= E_x  \bigg[ \exp\bigg\{\beta \,\int_0^{T} \int_{\rd} \, \phi(W_{
{ s}}-y)  \dB(s, y)\,\dd s \dd y -  \frac{\beta^2\,T} 2\,\, (\phi\star\phi)(0)\bigg\}\bigg]
\end{equation}
with $E_x$ denoting expectation with respect to the law $P_x$ of a $d$-dimensional Brownian path $W=(W_s)_{s\geq 0}$ starting at $x\in \rd$, which is independent of the noise $\dB$. Now if we extend 
$\xi$ also to negative times and set 
\begin{equation}\label{scaling}
 \dB^{\ssup{\e,t,x}}(s,y)= \e^{\frac{d+2}2}\dB\big( t-\e^2 s,\e y-x  \big),
\end{equation}
then by diffusive scaling, time-reversal and spatial translation of the noise, $\dB^{\ssup{\e,t,x}}$ 
\emph{is itself a Gaussian white noise and possesses the same law as that of} $\dB$. Thus we have an almost sure identity 
\begin{equation}\label{hZ}
h_\e(t,x)\stackrel{\mathrm{a.s.}}{=} \log\mathscr Z_{\frac t{\e^2}} \big(\xi^{\ssup{\e,t,0}}; \frac x \e\big).
\end{equation} 
Next, as shown in \cite{MSZ16} (see also \cite{C17}  for a general reference), since $\sZ_T$ is a non-negative martingale, there exists $\beta_c\in (0,\infty)$ such that $(\mathscr Z_T)_T$ is uniformly integrable for $\beta\in (0,\beta_c)$ and there is a strictly positive non-degenerate random variable $\mathscr Z_\infty(x)$ so that, a.s. as $T \to \8$, 
\begin{equation}\label{eq:dichotomy}
\mathscr Z_T(x) \to 
\begin{cases}
\mathscr Z_\infty(x) &\mbox{if}\,\, \beta\in (0,\beta_c),\\
0 & \mbox{if}\,\, \beta\in (\beta_c,\infty).
\end{cases}
\end{equation}
 Now for $\beta\in (0,\beta_c)$, if $\mathfrak u(\xi)$ is any arbitrary representative of the positive random limit 
$\sZ_\infty=\sZ_\infty(0)$, then we can define 
\begin{equation}\label{def:stationary} 
\hh(t,x) := \log\mathfrak u(\xi^{\ssup{\e,t,x}}),
\end{equation}
to be the aforesaid {\it stationary solution}, which then satisfies the following three properties:
\begin{enumerate}
\item $\hh(t,x)$ is a (global in time) solution to \eqref{eq:KPZe} with a random initial condition, that is independent of the driving noise $\xi$.
\item The law of $\hh(t,x)$ is invariant under both space and time translation, and under oscillations of $\e>0$  -- that is, the family $\{\hh(t,x)\}_{\e,t,x}$ is {\it constant in law}.  
\item The non-stationary solution $h_\e(t,x)$ of \eqref{eq:KPZe} with flat initial condition approximates its stationary version -- that is, a law of large numbers holds: For any $t>0$ and $x\in \rd$ and as $\e\downarrow 0$,
\begin{equation}\label{LLN}
h_\e(t,x)- \hh(t,x)\stackrel{\mathbb P}\longrightarrow 0 \quad\mbox{pointwise, without any spatial averaging.}
\end{equation}
\end{enumerate}
With this background, the goal of the current article is two-fold:  First, we develop a method for studying the fluctuations of $h_\e$ {\it about} its stationary version $\hh$. This is done by quantifying the rate of the convergence \eqref{LLN} and identifying the error explicitly {\it pointwise} in terms of finite dimensional distributions in space and time (cf. Theorem \ref{th:h} and also Theorem \ref{th:CVagainstTestFun} and Proposition \ref{prop:tightness}). The limiting process is a Gaussian free field (GFF) subject to the (deterministic) heat equation. Next, we identify the fluctuation {\it of} the stationary solution $\hh$ itself around its mean $\E[\hh]$ in a spatially averaged sense (cf. Theorem \ref{thm:stationary}), with the latter limit being the {\it stationary solution} of the stochastic heat equation with additive noise, but with the above {\it GFF marginals} (instead of flat initial condition). We turn to the precise statements of these assertions. 

\section{Main results.}\label{sec:3dKPZ}
Throughout the sequel, we write 
$\rho(t,x)=\frac 1 {(2\pi t)^{d/2}} \mathrm e^{-\frac{|x|^2}{2t}}$
 for the standard Gaussian kernel 
and
\begin{equation}\label{def-sigma-beta}
 \begin{aligned}
&\gamma^2(\beta)= \b^2 \int_{\rd} \dd y \,\, V(y)\,\,E_y\bigg[\mathrm e^{\beta^2\int_0^\infty V( W_{2s})\,\dd s}\bigg]
\end{aligned}
\end{equation}
which diverges for $\beta$ large (recall from \eqref{V} that $V=\phi\star \phi$). Moreover, for a sequence of time-space random fields we denote by $\cvfidi$ the convergence in the sense of finite dimensional marginal distributions in time and space. Finally, with the solution $h_\e$ to \eqref{eq:KPZe} with $h_\e(0,\cdot)=0$ and the stationary solution $\hh$ defined 
in \eqref{def:stationary} we set 
\begin{equation}\label{eq:Heps}
\mathscr H_\e(t,x)= \e^{1-\frac d2} \,\, [h_\e(t,x)- \hh(t,x)],
\end{equation}
Here is the statement of our first main result. 
\begin{theorem}[Space-time fluctuations {\it about} the stationary solution]\label{th:h}
 Assume $d\geq 3$. Then there exists $\beta_0\in (0,\b_c)$ such that for $\beta<\beta_0$ and as $ \e \to 0$
  \begin{equation}\label{eq:th1}
\begin{aligned}
 &\big(\mathscr H_\e(t,x) \big)_{x\in \rd, t>0} \,\cvfidi \, \big(\mathscr H(t,x)\big)_{x\in \rd, t>0}\\
\quad\mbox{where}
&\quad\mathscr H(t,x)=  \gamma(\beta) \int_0^\8 \int_{\rd} \rho\left(\sigma+t,x-z\right) \dB (\sigma,z) \dd \sigma \dd z.
\end{aligned}
\end{equation}
The limiting process $(\mathscr H(t,x))_{x\in \rd, t\geq 0}$ is a centered Gaussian field with covariance \begin{equation}\label{cov:H}
\mathrm{Cov}\big(\mathscr H(t,x),\mathscr H(s,y)\big)=\gamma^2(\beta)\int_0^\infty \rho(2\sigma +t+s,y-x) \dd \sigma\;.
\end{equation}
and $\mathscr H(t,x)$ is a real-valued solution of  the {\it non-noisy} heat equation 
\begin{equation} \label{eq:HE}
\partial_t \mathscr H=\frac 12 \Delta\mathscr H,
\end{equation}
but with {\it random initial condition} given by the Gaussian free field $\mathscr H(0,\cdot)$\footnote{The value $\mathscr H(0,x)$ at time $0$ is defined through the formula \eqref{eq:th1} for $t=0$.} with covariance structure 
\begin{equation}\label{cov:H:GFF}
\mathrm{Cov}\big(\mathscr H(0,x),\mathscr H(0,y)\big)=\gamma^2(\beta)\int_0^\infty \rho(2\sigma,x-y) \dd \sigma= \frac{\gamma^2(\beta)\Gamma(\frac d2-1)}{\pi^{d/2}|x-y|^{d-2}}.
\end{equation}
As a particular case of \eqref{eq:th1} we have that for any $x\in \rd$ and $t>0$, as $\e \to 0$,
\begin{equation} \label{th:h:EW}
 \mathscr H_\e(t,x)  \cvlaw \mathcal N\bigg(0,\gamma^2(\beta) \frac 2 {d-2} \frac 1{(4\pi)^{d/2}} t^{-(d-2)/2}\bigg).
\end{equation}
\end{theorem}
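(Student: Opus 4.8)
The plan is to reduce the claimed convergence $\mathscr H_\e(t,x)\cvfidi \mathscr H(t,x)$ to a statement about the log-partition function $\log\sZ_T(x)$ of the directed polymer, using the a.s.\ identity \eqref{hZ} and the scaling \eqref{scaling}. Since $h_\e(t,x)=\log\sZ_{t/\e^2}(\xi^{\ssup{\e,t,0}};x/\e)$ and $\hh(t,x)=\log\mathfrak u(\xi^{\ssup{\e,t,x}})=\log\sZ_\infty(\xi^{\ssup{\e,t,x}};0)$, after undoing the scaling the quantity $\e^{1-d/2}[h_\e(t,x)-\hh(t,x)]$ becomes (in law, jointly over finitely many $(t_i,x_i)$) a multiple of $\e^{1-d/2}\big[\log\sZ_{t_i/\e^2}(x_i/\e)-\log\sZ_\infty(x_i/\e)\big]$ with respect to a common realization of $\xi$. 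The heart of the matter is therefore the fluctuation theory of $\log\sZ_T$ in the $L^2$ (weak-disorder) regime: one wants to show that the difference $\log\sZ_\infty(x)-\log\sZ_T(x)$, properly rescaled, converges to a Gaussian field, and the first step is to linearize, replacing $\log\sZ_\infty-\log\sZ_T$ by $(\sZ_\infty-\sZ_T)/\sZ_T$ (or by $\sZ_\infty/\sZ_T-1$), controlling the remainder by the fact that $\sZ_T\to\sZ_\infty>0$ a.s.\ and that both are bounded in $L^2$ uniformly for $\beta<\beta_0$.

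Next I would analyze $\sZ_\infty(x)-\sZ_T(x)$. Writing $\sZ_\infty-\sZ_T$ as the terminal increment of the martingale, one has the martingale/chaos representation $\sZ_\infty(x)-\sZ_T(x)=\int_T^\infty\!\!\int_{\rd} D_{s,y}\sZ_\infty(x)\,\xi(s,y)\,\rmd s\,\rmd y$ where the Malliavin derivative is $D_{s,y}\sZ_\infty(x)=\beta\,E_x[\phi(W_s-y)e^{\,\cdots}\,\sZ_\infty^{\theta_s}(W_s)]$ in terms of the partition function of the shifted environment after time $s$. In the weak-disorder regime $\sZ_\infty^{\theta_s}(W_s)$ concentrates around its mean $1$, so the leading contribution is $\int_T^\infty\!\!\int \beta\,E_x[\phi(W_s-y)]\,\xi(s,y)\,\rmd s\,\rmd y$, i.e.\ a stochastic convolution of the heat kernel against $\xi$ restricted to times $\geq T$. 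After the diffusive rescaling $s=t/\e^2+\sigma/\e^2$, $y=x/\e+z/\e$, the prefactor $\e^{1-d/2}$ is exactly the right power (this is the Edwards--Wilkinson scaling in $d\geq 3$) to turn this into $\gamma(\beta)\int_0^\infty\!\!\int_{\rd}\rho(\sigma+t,x-z)\,\xi(\sigma,z)\,\rmd\sigma\,\rmd z$; the constant $\gamma^2(\beta)$ in \eqref{def-sigma-beta} emerges as the limiting $L^2$-normalization, the extra factor beyond $\beta^2$ coming from the two-point function $E_y[e^{\beta^2\int_0^\infty V(W_{2s})\rmd s}]$ that records the pairing of two independent polymer paths in the second moment. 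This is standard in the polymer-CLT literature (Comets--Liu, Mukherjee--Shamov--Zeitouni, and the companion work \cite{CCM19}), so I would invoke an $L^2$-martingale CLT or a fourth-moment/chaos-expansion argument to get joint Gaussian convergence of the finitely many linearized increments, and then transfer back via Slutsky.

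There are really two things to prove jointly: (i) the linearization error is $o_{\IP}(1)$ after multiplication by $\e^{1-d/2}$, and (ii) the leading term converges in finite-dimensional distributions to the stated Gaussian field. For (i) the key input is an $L^p$-bound on $\sZ_\infty(x)-\sZ_T(x)$ of order $T^{-(d-2)/2}$ (hence $\e^{d-2}$ after scaling), together with uniform positivity of $\sZ_T$; this forces $\beta<\beta_0$ possibly strictly smaller than $\beta_c$, since we need more than just $L^2$-boundedness — some quantitative control, e.g.\ via the $L^2$ estimate on the martingale increment and a moment bound on $\sZ_\infty^{-1}$. The covariance formula \eqref{cov:H} then follows by computing $\gamma^2(\beta)\int_0^\infty\!\!\int\rho(\sigma+t,x-z)\rho(\sigma+s,y-z)\,\rmd z\,\rmd\sigma=\gamma^2(\beta)\int_0^\infty\rho(2\sigma+t+s,y-x)\,\rmd\sigma$ using the Chapman--Kolmogorov identity for $\rho$, and the special case \eqref{th:h:EW} is the diagonal $t=s$, $x=y$, where $\int_0^\infty\rho(2\sigma+2t,0)\,\rmd\sigma$ is evaluated by the substitution $u=2\sigma+2t$ to give $\frac{1}{(4\pi)^{d/2}}\frac{2}{d-2}t^{-(d-2)/2}$; the Gaussian free field statement \eqref{cov:H:GFF} at $t=0$ is then just the $t\to 0$ value, with $\int_0^\infty\rho(2\sigma,x-y)\,\rmd\sigma=\frac{\Gamma(d/2-1)}{\pi^{d/2}|x-y|^{d-2}}$ the Green's function of $-\frac12\Delta$. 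The main obstacle I expect is (i) — showing that the nonlinear (higher-chaos) corrections to $\sZ_\infty/\sZ_T$, once rescaled by $\e^{1-d/2}$, genuinely vanish; morally one needs that the second-and-higher chaos components of $\sZ_\infty^{\theta_s}-1$ contribute at a strictly smaller order than the first chaos after integrating over $s\geq t/\e^2$, which requires a careful two-point-function estimate and is precisely where the restriction to small $\beta$ does its work.
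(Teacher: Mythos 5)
Your overall strategy is the one the paper takes: reduce the claim to fluctuations of $\log\sZ_T(x)$ via the identity $h_\e(t,x)=\log\sZ_{t/\e^2}(\xi^{(\e,t,0)};x/\e)$, linearize the logarithm using positivity and a.s.\ convergence of $\sZ_T$, and then prove a CLT for the linearized martingale increment. Your covariance computations at the end are also correct: Chapman--Kolmogorov gives \eqref{cov:H}, the $u=2\sigma+2t$ substitution gives \eqref{th:h:EW}, and the $t=0$ marginal is the Green's function of $-\frac12\Delta$.

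However, the \emph{mechanism} you propose for the CLT is internally inconsistent and would not produce the correct constant. You write that the Malliavin derivative $D_{s,y}\sZ_\infty(x)$ may be approximated, because ``$\sZ_\infty^{\theta_s}(W_s)$ concentrates around its mean $1$,'' by the deterministic kernel $\beta E_x[\phi(W_s-y)]$, so that ``the leading contribution is $\int_T^\infty\!\!\int\beta E_x[\phi(W_s-y)]\xi(s,y)\,\rmd s\,\rmd y$,'' i.e.\ a \emph{first-chaos} term. If that were really the leading term, its variance would be $\beta^2\int_T^\infty E_x^{\otimes 2}[V(W^{(1)}_s-W^{(2)}_s)]\,\rmd s$, which is strictly smaller than what $\gamma^2(\beta)$ in \eqref{def-sigma-beta} demands: the extra factor $E_y[\mathrm e^{\beta^2\int_0^\infty V(W_{2s})\rmd s}]$ in $\gamma^2(\beta)$ records the contribution of \emph{all} higher chaoses, not just the first. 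You acknowledge this two-point factor a few lines later, but then the ``main obstacle'' you name --- showing ``the second-and-higher chaos components of $\sZ_\infty^{\theta_s}-1$ contribute at a strictly smaller order'' --- is not the obstacle that is actually faced, and attempting to prove it would fail because those components do contribute at the leading order (they account for the gap between $\beta^2$ and $\gamma^2(\beta)$).

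What the paper does instead, and what your second-moment remark is reaching for, is a genuine martingale functional CLT in the sense of \cite[Theorem 3.11, Chapter 8]{JS87}: it does not claim higher chaoses vanish, but rather that the rescaled quadratic variation of the (non-Gaussian) martingale becomes deterministic. The key technical input is Proposition~\ref{prop:claim1}, namely $T^{d/2}\big(\tfrac{\rmd}{\rmd t}\langle\sZ\rangle\big)_T - \fC_0\sZ_T^2 \to 0$ in $L^2$, with $\fC_0(\beta)=\gamma^2(\beta)(4\pi)^{-d/2}$. Dividing by $\sZ_T^2\to\sZ_\infty^2>0$, this shows that the bracket of the martingale $\tau\mapsto T^{(d-2)/4}(\sZ_{\tau T}/\sZ_T-1)$ converges in probability to the deterministic function $g(\tau)$, from which Gaussianity follows automatically; the correct constant $\gamma^2(\beta)$ enters via $\fC_0$ precisely because the bracket integrates the full two-point function $\mathrm e^{\beta^2\int V}$, not just the first-chaos pairing. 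Without a statement and proof of this kind, your argument has a gap at the very place you flag as the heart of the matter. Two further items your outline glosses over but the paper addresses with some effort: establishing the $L^p$ bounds ($p\in(1,2)$) on $T^{(d-2)/4}(\log\sZ_T-\log\sZ_\infty)$ needed to make the linearization and the double limit $\tau,T\to\infty$ rigorous (Proposition~\ref{prop:LpBoundLogZ}); and the joint space-time convergence, which requires the two-time coupling of \eqref{eq:couplingProporty}, the extended martingales $N_{U,T}(X)$, and the cross-bracket control of Lemma~\ref{prop:twopointscorr} in Section~\ref{4.2}.
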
 
We underline the following implication of Theorem \ref{th:h}. Note that a priori, the stationary solution $\hh$ itself does not admit an explicit statistical description (although it is a well-defined function evaluated on the noise, after time-reversal, rescaling and spatial translation). 
However, Theorem \ref{th:h} shows that if the stationary solution $\hh$ is subtracted from its non-stationary counterpart $h_\e$ and the difference is rescaled by $\e^{1-d/2}$, then the limiting object (as $\e\downarrow 0$) is a {\it Gaussian free field} smoothened by the heat kernel. Thus, as a space-time process, the error term is analytically more tractable than the limiting (stationary) distribution itself (see also Theorem \ref{th:CVagainstTestFun} and Proposition \ref{prop:tightness} below). 

Next we turn to the fluctuations {\it of} the stationary solution $\hh$. Although it is statistically constant, $\hh(t,x)$ itself fluctuates around its mean $\E(\hh(t,x))$ and in this vein, our next main result identifies the rescaled fluctuation $\e^{1-\frac d2} (\hh(t,x)- \E[\hh(t,x)])$ in a spatially averaged sense. The limiting object is identified as a {\it stationary solution} of the stochastic heat equation with additive noise, but with {\it Gaussian free field marginals} (instead of flat initial condition). Here is the precise statement of this assertion, which is our next main result. 

\begin{theorem}[Fluctuations {\it of} the stationary solution]\label{thm:stationary}
 Assume $d\geq 3$. Then there exists $\beta_0\in (0,\b_c)$ such that for $\beta<\beta_0$ and for every smooth function $\varphi\in \mathcal C^\infty_c(\rd)$ with compact support and as $ \e \to 0$
 $$
 \e^{1-\frac d2}\int \dd x \varphi(x) [\hh(t,x)- \E(\hh(t,x))] \cvlaw \int \dd x \varphi(x) \HH(t,x) 
 $$
 with $\HH(\cdot,\cdot)$ being a stationary solution of the stochastic heat equation 
 \begin{equation}\label{StatEW}
 \partial_t \HH(t,x)= \frac 12 \Delta \HH(t,x)+ \overline{\gamma}(\beta) \xi, \qquad \HH(0,x)=  \mathscr H(0,x),
 \end{equation}
 with additive noise and with marginal distributions $\mathscr H(0,x)$ which is a Gaussian free field with covariance structure \eqref{cov:H:GFF} defined in Theorem \ref{th:h}, while
 $\overline\gamma^2(\beta)= \int_{\rd} \dd x V(x) E_x[\exp\{\beta^2\int_0^\infty V(W_{2s}) \dd s\}]$. 
 \end{theorem}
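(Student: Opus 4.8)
The plan is to derive the statement from Theorem \ref{th:h} by a decomposition that separates the stationary solution into the non-stationary one plus the error field $\mathscr H_\e$, and then to identify separately the limit of the (spatially averaged) non-stationary fluctuation. Concretely, write
\begin{equation}\label{eq:decomp-stat}
\e^{1-\frac d2}\,\bigl[\hh(t,x)-\E(\hh(t,x))\bigr]
= \e^{1-\frac d2}\,\bigl[h_\e(t,x)-\E(h_\e(t,x))\bigr]
\;-\;\mathscr H_\e(t,x)\;+\;\e^{1-\frac d2}\,\E\bigl[h_\e(t,x)-\hh(t,x)\bigr].
\end{equation}
The last deterministic term should be shown to vanish as $\e\to0$ (using that $h_\e-\hh\to0$ in $L^1$, which needs uniform integrability of $\e^{1-d/2}(h_\e-\hh)$ in the small-$\beta$ regime; this follows from the moment bounds underlying Theorem \ref{th:h}). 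By Theorem \ref{th:h}, after testing against $\varphi\in\mathcal C^\infty_c(\rd)$, the middle term $\int\dd x\,\varphi(x)\,\mathscr H_\e(t,x)$ converges to the Gaussian $\int\dd x\,\varphi(x)\,\mathscr H(t,x)$, which is exactly the GFF-with-heat-evolution described by \eqref{eq:th1}. So the whole burden is to identify the limit of the first term, the spatially averaged fluctuation of the \emph{non-stationary} solution $h_\e$ about its own mean.

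For that first term, I would use the Cole--Hopf identity \eqref{hZ}: $h_\e(t,x)=\log\sZ_{t/\e^2}(\xi^{\ssup{\e,t,0}};x/\e)$. The linearization $\log\sZ_T \approx \sZ_T-1$ is controlled because $\sZ_T-1$ is itself a (mean-zero) $L^2$ martingale for $\beta<\beta_c$ with $\sZ_T\to\sZ_\infty$ in $L^2$, so $\log\sZ_T - (\sZ_T-1)$ contributes negligibly to the fluctuations at scale $\e^{1-d/2}$ when $\beta$ is small. Writing $\sZ_T-1$ as a stochastic integral against $\xi$ via its martingale representation (the polymer partition function has an explicit Wiener chaos expansion), and then rescaling back via \eqref{scaling}, one gets $\e^{1-d/2}[h_\e(t,x)-\E h_\e(t,x)]$ as a stochastic integral of the form $\int_{-\infty}^{t}\!\int_{\rd}\Phi_\e(t,x;s,y)\,\xi(\dd s,\dd y)$, where $\Phi_\e$ is built from the heat kernel $\rho$ and the mollifier $\phi$. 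The key computation is that the covariance kernel of $\int\dd x\,\varphi(x)\,\e^{1-d/2}[h_\e-\E h_\e]$ converges, as $\e\to0$, to the covariance of $\int\dd x\,\varphi(x)\,\HH(t,x)$, where $\HH$ solves \eqref{StatEW}: the part of the time-integral coming from $s\in(0,t)$ (diffusive, noise after time $0$) produces the additive-noise Duhamel term with coefficient $\overline\gamma(\beta)$, and the part coming from $s<0$ (i.e. the "initial data" built from the stationary randomness $\sZ_\infty$) produces the GFF initial condition $\mathscr H(0,\cdot)$ evolved by the heat semigroup — matching exactly the covariance \eqref{cov:H} / \eqref{cov:H:GFF}. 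Joint Gaussianity of the limit then follows from a martingale CLT (or a fourth-moment/chaos argument) applied to the rescaled stochastic integrals, exactly as in the proof of Theorem \ref{th:h}.

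The main obstacle, and where the real work lies, is the precise covariance asymptotics: one must show that $\operatorname{Var}\!\bigl(\int\dd x\,\varphi(x)\,\e^{1-d/2}[h_\e(t,x)-\E h_\e(t,x)]\bigr)$ and the cross-covariances between the three terms on the right of \eqref{eq:decomp-stat} converge to the right limits, and in particular that the cross term between the non-stationary fluctuation and $\mathscr H_\e$ does not spoil the decomposition — i.e. that the sum telescopes cleanly into "(additive-noise SHE evolution) $+$ (GFF initial data)". This requires controlling the two-point function of the polymer near the $L^2$ region and tracking how the time-reversal and diffusive rescaling in \eqref{scaling} exchange the roles of "noise before time $0$" (frozen into $\sZ_\infty$, hence into the GFF marginal) and "noise after time $0$" (the driving white noise in \eqref{StatEW}); the bookkeeping is delicate but the estimates are of the same nature as those already used for Theorem \ref{th:h}, so I expect no genuinely new analytic input beyond what is available from that proof. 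A secondary technical point is justifying the linearization $\log\sZ\mapsto\sZ-1$ uniformly enough to pass to the limit under the spatial average, which again is handled by the $L^2$-martingale bounds valid for $\beta<\beta_0$ small.
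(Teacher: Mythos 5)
Your decomposition \eqref{eq:decomp-stat} is the same as the paper's \eqref{eq2:stationary} (up to an irrelevant overall sign of the centred fields), and you correctly dispose of the deterministic term via uniform integrability (the paper uses Proposition~\ref{prop:LpBoundLogZ} and Theorem~\ref{th:CVagainstTestFun} to show $\E[\mathscr H_\e]\to 0$). Invoking Theorem~\ref{th:CVagainstTestFun} for the $\mathscr H_\e$-piece is also correct. Beyond that point, however, there are two real problems.

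\textbf{The covariance identification for the first term is wrong.} You claim that the covariance of $\int \varphi(x)\,\e^{1-d/2}\bigl[h_\e(t,x)-\E h_\e(t,x)\bigr]\dd x$ converges to that of $\int\varphi(x)\,\HH(t,x)\dd x$, with the part $s<0$ of your stochastic-integral representation "built from $\sZ_\infty$" producing the GFF initial condition. But $h_\e$ is the solution with \emph{flat} initial condition and, via \eqref{hZ}, only involves the noise on the time window corresponding to $(0,t)$; no negative-time noise and no $\sZ_\infty$ appear in it. Its rescaled centred spatial average therefore converges (as in \eqref{EW}, by \cite{DGRZ18,MU17}) to $\int\varphi\,\overline{\mathscr H}(t,\cdot)$ with $\overline{\mathscr H}$ solving the additive-noise heat equation with \emph{flat} initial data, not to $\int\varphi\,\HH(t,\cdot)$. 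The GFF marginal in $\HH$ comes entirely from the middle term $\mathscr H_\e\to\mathscr H$, not from $\overline{\mathscr H}_\e$. If the covariance of the first term alone already equalled that of $\HH$, adding $-\mathscr H_\e$ would overshoot it.

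\textbf{The joint convergence with independent limits is asserted, not proved, and it is precisely the heart of the matter.} You write that one "must show that the cross term does not spoil the decomposition" and that the sum "telescopes cleanly," but this is exactly the nontrivial point. The paper's proof is built around showing that $\int\varphi\,\mathscr H_\e$ and $\int\varphi\,\overline{\mathscr H}_\e$ converge \emph{jointly} to \emph{independent} limits, so that $\HH=\mathscr H+\overline{\mathscr H}$ as an independent sum. The mechanism is a characteristic-function factorization: after the linearization $T^{(d-2)/4}\log(\sZ_\infty/\sZ_T)\approx T^{(d-2)/4}(\sZ_\infty/\sZ_T-1)$ one replaces $\sZ_\infty(x\sqrt T)/\sZ_T(x\sqrt T)$ by the free Brownian average $E_{x\sqrt T}\bigl(\sZ_\infty\circ\theta_{T,W_T}\bigr)$ — a quantity that depends \emph{only on the noise after time $T$} — while $\log\sZ_T(x\sqrt T)-\E\log\sZ_T(x\sqrt T)$ depends only on the noise before time $T$. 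Independence is then exact, and the two residual approximations (corresponding to \eqref{eq:f17f1}--\eqref{eq:f17f2}) are shown to vanish in $L^1$ via negative-moment bounds on $\sZ_T$ and a $c_T$-type covariance computation with four replicas. This concrete "split the noise at time $T$ after first replacing the polymer average by a free Brownian average" step is absent from your sketch; without it (or an equivalent argument) the claim that $\HH$ is the independent sum does not follow from the results you cite. Your martingale-CLT remark would give joint Gaussianity, not the required independence of the two pieces.

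In short: the decomposition and the treatment of the $\mathscr H_\e$-piece are fine, but you have misidentified the limit of the $\overline{\mathscr H}_\e$-piece and left out the argument for asymptotic independence, which is where the paper actually invests its effort.
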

 
We also point out to the very interesting and independent recent works \cite{MU17,DGRZ18} where the rescaled and spatially averaged fluctuations $\e^{1-\frac d 2}\int_{\rd} \varphi(x) \big( h_\e(t,x)- \E[h_\e(t,x)]\big) \, \dd x$ of the (non-stationary) solution $h_\e$ of the KPZ equation \eqref{eq:KPZe} with flat initial condition $h_\e(0,\cdot)=0$ have been obtained. The limiting object (in law as $\e\downarrow 0$) is identified as $\int_{\rd} \varphi(x) \overline{\mathscr H}(t,x) \, \dd x$ with $\overline{\mathscr H}$ solving
the stochastic heat equation with additive noise, but with flat initial condition (in contrast to \eqref{StatEW}) -- that is, for $\beta\in(0,\beta_0)$, 
\begin{equation}\label{EW}\begin{aligned}
&\e^{1-\frac d 2}\int_{\rd} \varphi(x) \big( h_\e(t,x)- \E[h_\e(t,x)]\big) \, \dd x \cvlaw \int_{\rd} \varphi(x) \overline{\mathscr H}(t,x)\\
&\partial_t \overline{\mathscr H}= \frac 12 \Delta \overline{\mathscr H}+ \overline{\gamma}(\beta) \xi,\qquad \overline{\mathscr H}(0,x)=0. 
\end{aligned}
\end{equation} 
The above result is quite orthogonal in nature to Theorem \ref{th:h} (as well as the methods of proof, see Section \ref{sec:proofsketch} for a sketch). The centering in Theorem \ref{th:h} is done {\it about } the stationary solution and a space-time limiting process is obtained. In contrast, the deterministically centered fluctuations $h_\e(t,x)- \E[h_\e(t,x)]$ do not converge to zero pointwise (compare to \eqref{LLN}) but spatial averaging causes oscillations to cancel and a meaningful limit is obtained for the averages after rescaling by $\e^{1-\frac d2}$. Note that there is a qualitative difference also between \eqref{EW} and Theorem \ref{thm:stationary}: the former provides fluctuations of the  non-stationary solution $h_\e(t,x)$ (its distribution oscillates quite a bit as $\e>0$, $t>0$ and $x\in \rd$ vary) around its mean and the limiting object solves the additive noise equation \eqref{EW} with flat initial condition, while the latter provides the fluctuation of the stationary solution $\hh(t,x)$ (its law remains fixed even if the variables $\e>0$, $t>0$ and $x\in \rd$ oscillate) around its mean which converges to a stationary solution of \eqref{StatEW} with the (random) Gaussian free field initial condition. The link between these two results is built precisely  by another implication of Theorem \ref{th:h} -- the latter, combined with other tools developed on the way currently, implies that the limit $\HH$ in Theorem \ref{thm:stationary} manifests as an independent sum of the two limits $\mathscr H$ from Theorem \ref{th:h} and $\overline{\mathscr H}$ from \eqref{EW} (see Section \ref{proof:stationary} for details).

As remarked earlier, following up on Theorem \ref{th:h} we record two further results. The first is concerned with the convergence of the integrals $\int_{\rd} \dd x \varphi(x) \, \mathscr H_\e(t,x)$ against test functions $\varphi$:

\begin{theorem}[Spatially averaged fluctuations about the stationary solution]\label{th:CVagainstTestFun}
For $\beta\in (0,\beta_0)$ as in Theorem \ref{th:h} and for any smooth function $\varphi\in\mathcal{C}_c^\infty$ with compact support and any $t>0$, as $\e\to 0$, 
\begin{equation}
\int_{\mathbb{R}^d} \mathscr H_\e(t,x) \varphi(x) \rmd x \cvlaw \int_{\mathbb{R}^d} \mathscr H(t,x) \varphi(x) \rmd x\;.
\end{equation}
\end{theorem}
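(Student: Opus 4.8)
Fix $t>0$ and $\varphi\in\mathcal C_c^\infty$, supported in a compact set $K$. First note that the target variable $X:=\int_{\rd}\mathscr H(t,x)\,\varphi(x)\,\rmd x$ is a well-defined centered Gaussian random variable: by \eqref{cov:H} its would-be second moment equals $\gamma^2(\beta)\int_K\int_K\varphi(x)\varphi(y)\big(\int_0^\infty\rho(2\sigma+2t,x-y)\,\rmd\sigma\big)\,\rmd x\,\rmd y$, and since $d\ge 3$ the kernel $x\mapsto\int_0^\infty\rho(2\sigma+2t,x)\,\rmd\sigma$ is finite, bounded and (for $t>0$) continuous on $K-K$, so the integral converges and a stochastic Fubini lets us write $X=\gamma(\beta)\int_0^\infty\int_{\rd}\big(\int_K\varphi(x)\rho(\sigma+t,x-z)\,\rmd x\big)\,\xi(\sigma,z)\,\rmd\sigma\,\rmd z$. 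The plan is the standard upgrade of the finite-dimensional convergence of Theorem \ref{th:h} to convergence of this averaged functional, through a Riemann-sum approximation controlled uniformly in $\e$.

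Concretely, for $N\ge 1$ tile $K$ by cubes $Q_1,\dots,Q_{m_N}$ of side of order $1/N$ with centres $x_1,\dots,x_{m_N}$ and put
\[
X_\e^N:=\sum_{i=1}^{m_N}\varphi(x_i)\,|Q_i|\,\mathscr H_\e(t,x_i),\qquad X^N:=\sum_{i=1}^{m_N}\varphi(x_i)\,|Q_i|\,\mathscr H(t,x_i).
\]
For each fixed $N$, $X_\e^N$ is a fixed linear combination of the entries of the vector $(\mathscr H_\e(t,x_i))_{i\le m_N}$, so Theorem \ref{th:h} together with the continuous mapping theorem gives $X_\e^N\cvlaw X^N$ as $\e\to 0$; and $X^N\to X$ in $L^2$ as $N\to\infty$ because $\E[(X^N)^2]$, $\E[X^NX]$ and $\E[X^2]$ are Riemann sums (resp.\ the integral) of the bounded continuous covariance of $\mathscr H(t,\cdot)$ over $K\times K$. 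Given these two facts, the theorem follows from the three-term estimate
\[
\big|\E e^{i\theta X_\e}-\E e^{i\theta X}\big|\ \le\ |\theta|\,\E|X_\e-X_\e^N|\ +\ \big|\E e^{i\theta X_\e^N}-\E e^{i\theta X^N}\big|\ +\ |\theta|\,\E|X^N-X|
\]
on letting first $\e\to0$ and then $N\to\infty$, provided one establishes
\[
\lim_{N\to\infty}\ \sup_{0<\e\le1}\ \E\big[(X_\e-X_\e^N)^2\big]\ =\ 0.
\]

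The last display is the crux and, I expect, the main obstacle. Since $X_\e-X_\e^N=\sum_i\varphi(x_i)\int_{Q_i}\big(\mathscr H_\e(t,x)-\mathscr H_\e(t,x_i)\big)\,\rmd x$ up to a harmless deterministic Riemann error of $\varphi$, it reduces to a uniform spatial-increment bound
\[
\sup_{0<\e\le1}\ \E\big[\big(\mathscr H_\e(t,x)-\mathscr H_\e(t,y)\big)^2\big]\ \le\ \omega_t(|x-y|),\qquad x,y\in K,
\]
for a modulus $\omega_t(r)\downarrow 0$ as $r\downarrow 0$, together with a uniform two-point bound $\sup_{0<\e\le1}\sup_{x,y\in K}\big|\E[\mathscr H_\e(t,x)\mathscr H_\e(t,y)]\big|<\infty$ (the latter also giving tightness of $(X_\e)_\e$). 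Both are increment-refinements of the second-moment estimates produced in the course of proving Theorem \ref{th:h}: via the Cole--Hopf identity \eqref{hZ} and the noise rescaling \eqref{scaling}, $\mathscr H_\e(t,x)$ is the rescaled logarithm of a ratio of the polymer partition functions \eqref{eq:Z}, and $\mathscr H_\e(t,x)-\mathscr H_\e(t,y)$ compares two such ratios whose driving white noises differ by the spatial shift $(y-x)/\e$; feeding this difference into the same $L^2$/chaos expansion used for Theorem \ref{th:h} should yield the increment bound, with $\omega_t$ governed by the modulus of continuity of $x\mapsto\gamma^2(\beta)\int_0^\infty\rho(2\sigma+2t,x)\,\rmd\sigma$ plus an error that is $o(1)$ uniformly over $x,y\in K$. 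A route that avoids making the modulus $\omega_t$ explicit is to prove tightness of $\big(\mathscr H_\e(t,\cdot)\big)_\e$ in $C(K)$ (Proposition \ref{prop:tightness}); then Theorem \ref{th:h} identifies the limit as $\mathscr H(t,\cdot)$ and the continuous map $\psi\mapsto\int_K\psi\,\varphi$ finishes the proof.
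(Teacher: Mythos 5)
Your plan—Riemann-sum approximation + the finite-dimensional convergence of Theorem~\ref{th:h}, closed by a uniform second-moment control—is different in structure from the paper's proof, and as written it has a genuine gap precisely where you flag the ``crux.'' The hinge of your argument is the uniform bound
$\sup_{0<\e\le 1}\E[(\mathscr H_\e(t,x)-\mathscr H_\e(t,y))^2]\le\omega_t(|x-y|)$ (and the companion bound $\sup_\e\sup_{x,y}|\E[\mathscr H_\e(t,x)\mathscr H_\e(t,y)]|<\infty$), i.e.\ uniform $L^2(\mathbb P)$ control of $T^{(d-2)/4}(\log\sZ_T-\log\sZ_\infty)$. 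The paper does \emph{not} establish any such $L^2$ bound. The only uniform moment control available is Proposition~\ref{prop:LpBoundLogZ}, which is restricted to $1<p<2$, and this restriction is not cosmetic: the proof of that proposition applies H\"older with exponent $2/p$, which degenerates at $p=2$, and it rests on the $L^2$ rate $\|\sZ_\infty-\sZ_T\|_2^2\asymp T^{-(d-2)/2}$ from Proposition~\ref{prop:covar}, which cannot absorb the events where $|\frac{\sZ_\infty-\sZ_T}{\sZ_\infty}|>\frac12$ at the level of a second moment of the logarithm. So the step you hope ``should yield'' from the Theorem~\ref{th:h} machinery is in fact the missing ingredient, and the second-moment computations that do appear in Sections~\ref{sec:prclaim1}--\ref{sec:proof:th:h} live at the level of the martingale \emph{brackets}, not of $\log\sZ_T-\log\sZ_\infty$ itself. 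Your fallback route via tightness of $\mathscr H_\e(t,\cdot)$ in $C(K)$ is likewise unsupported: Proposition~\ref{prop:tightness} gives tightness only in $\mathcal C^\alpha(\rd)$ for $\alpha<-d/2$ (a space of distributions, far from $C(K)$), and the paper explicitly remarks that this result is \emph{not} used in the proof of Theorem~\ref{th:CVagainstTestFun}.

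The paper sidesteps the need for pointwise $L^2$ control by integrating against $\varphi$ at the level of the It\^o decomposition $\log\sZ_T(x)=N_T(x)-\frac12\langle N(x)\rangle_T$. One defines the continuous martingales $N^{\ssup T}(\varphi)_\tau=\int N_\tau^{\ssup T}(x)\varphi(x)\,\dd x$ and computes their cross-brackets, which by \eqref{eq4.8} converge in $L^1(\mathbb P)$ \emph{uniformly in $x,y$} to $\gamma^2\int_t^\tau\rho(2\sigma,y-x)\,\dd\sigma$; the multidimensional martingale FCLT (\cite[Theorem~3.11]{JS87}) then yields convergence in law of $N^{\ssup T}(\varphi)$, and the remaining terms (the bracket contribution $\langle N\rangle$ and the $\tau\to\infty$ tail) vanish using only the $L^p$, $p<2$, bound of Proposition~\ref{prop:LpBoundLogZ}, which is enough for a convergence-in-law statement. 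This is exactly what the paper means when it stresses, just before Section~\ref{subsec:rateOfDec}, that the direct It\^o-decomposition route (rather than the ratio argument of Remark~\ref{remark2-Psi}) is the one that survives integration against a test function. If you want to keep your Riemann-sum scheme, you would need to independently prove a uniform-in-$\e$ second-moment increment bound for $\mathscr H_\e(t,\cdot)$, which is a nontrivial piece of additional work not contained in (and not derivable from) the estimates of this paper.
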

Next, as a byproduct of the arguments constituting the proof of Theorem \ref{th:h} and Theorem \ref{th:CVagainstTestFun}, we also obtain a tightness property of the 
process $\{\mathscr H_\e(t,x)\}_{\e>0,x\in \rd}$ which holds in a function space $\mathcal C^\alpha(\rd)$ of distributions with ``local $\alpha$-regularity" for $\alpha<0$, see 
Section \ref{sec-tightness} for a detailed definition. While this result is not used in the proof  
of Theorem \ref{th:CVagainstTestFun} 
it may be of independent interest. 
\begin{proposition}[Tightness] \label{prop:tightness}
For any $\beta \in (0,\beta_0)$ as in Theorem \ref{th:h} and $t>0$, the family $\{\mathscr H_\e(t,x)\}_{\e>0, x\in \rd}$ is tight in the function space $\mathcal C^\alpha(\rd)$ for all $\alpha < -\frac d 2$. Together with Theorem \ref{th:CVagainstTestFun}, we conclude that $\{\mathscr H_\e(t,x)\}_{\e>0, x\in \rd}$ converges to $\{\mathscr H(t,x)\}_{x\in \rd}$ in $\mathcal C^\alpha(\rd)$ for all $\alpha < -\frac d 2$.
\end{proposition}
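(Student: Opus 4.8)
\textit{Proof strategy.} The plan is to reduce tightness to a single scale-uniform \emph{second-moment} estimate and to extract that estimate from the ingredients behind Theorem \ref{th:h}. Fix $t>0$, and for a test function $\eta$ (a mollifier, or a Littlewood--Paley generator) write $\eta^\lambda_x(\cdot)=\lambda^{-d}\eta(\lambda^{-1}(\cdot-x))$, so that $\|\eta^\lambda_x\|_{L^1}=\|\eta\|_{L^1}$ for all $\lambda\in(0,1]$ and $x\in\rd$. The first step is the standard Kolmogorov-type criterion for negative H\"older--Besov spaces: if there exist $p\ge 1$ and $\theta\in\R$ such that, over a suitable finite family of test functions $\eta$,
\[
\E\big[\big|\langle\mathscr H_\e(t,\cdot),\eta^\lambda_x\rangle\big|^{p}\big]\le C\,\lambda^{\theta p}\qquad\text{uniformly in}\quad \e\in(0,1),\ \lambda\in(0,1],\ x\in K
\]
for every compact $K\subset\rd$, then $\{\mathscr H_\e(t,\cdot)\}_\e$ is tight in $\mathcal C^{\theta'}(\rd)$ for every $\theta'<\theta-\tfrac dp$. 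I would apply this with $p=2$ and $\theta=0$, i.e.\ with a mere \emph{uniform bound} on the second moment $\E[\langle\mathscr H_\e(t,\cdot),\eta^\lambda_x\rangle^2]$, no decay in $\lambda$ being required since $-\tfrac d2<0$; this yields precisely tightness in $\mathcal C^\alpha(\rd)$ for all $\alpha<-\tfrac d2$. The restriction to $p=2$, hence the loss of exactly $\tfrac d2$ derivatives, is forced because $\mathscr H_\e$ is a rescaled difference of logarithms of polymer partition functions and is genuinely non-Gaussian, so no hypercontractivity is at hand to promote the $L^2$ bound to higher $L^p$ bounds (which would improve the exponent to $\alpha<0$).

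It remains to prove the uniform $L^2$ bound. Expanding the square and using Cauchy--Schwarz,
\[
\E\big[\langle\mathscr H_\e(t,\cdot),\eta^\lambda_x\rangle^2\big]=\int\int \eta^\lambda_x(y)\,\eta^\lambda_x(y')\,\mathrm{Cov}\big(\mathscr H_\e(t,y),\mathscr H_\e(t,y')\big)\,\rmd y\,\rmd y'\ \le\ \|\eta\|_{L^1}^2\,\sup_{y,y'\in\rd}\big|\mathrm{Cov}\big(\mathscr H_\e(t,y),\mathscr H_\e(t,y')\big)\big|,
\]
and, by Cauchy--Schwarz again together with the fact that (by the scaling, time-reversal and translation equivariance built into \eqref{scaling}--\eqref{hZ}) the law of $\mathscr H_\e(t,y)$ does not depend on $y$, the last supremum equals $\mathrm{Var}(\mathscr H_\e(t,0))$. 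Finally, $\mathrm{Var}(\mathscr H_\e(t,0))=\e^{2-d}\,\mathrm{Var}\big(\log\mathscr Z_{t/\e^2}-\log\mathscr Z_\infty\big)$ (using \eqref{hZ} at $x=0$ and $\hh(t,0)=\log\mathfrak u(\xi^{\ssup{\e,t,0}})$), and this is bounded uniformly in $\e\in(0,1)$: this is exactly the quantitative input behind Theorem \ref{th:h}, namely that the $L^2$-tail of the polymer martingale decays at the sharp rate $\mathrm{Var}(\log\mathscr Z_T-\log\mathscr Z_\infty)\asymp T^{-(d-2)/2}$ for $d\ge 3$ — equivalently, $\mathrm{Var}(\mathscr H_\e(t,0))\to\gamma^2(\beta)\,\tfrac{2}{d-2}\,\tfrac1{(4\pi)^{d/2}}\,t^{-(d-2)/2}$ by \eqref{th:h:EW} (with the uniform integrability of $\mathscr H_\e(t,0)^2$ established in the proof of Theorem \ref{th:h}). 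Combining the three displays gives the uniform $L^2$ bound, hence tightness in $\mathcal C^\alpha(\rd)$ for all $\alpha<-\tfrac d2$. The convergence statement then follows: tightness together with the convergence of $\int\mathscr H_\e(t,x)\varphi(x)\,\rmd x$ for every $\varphi\in\mathcal C^\infty_c(\rd)$ from Theorem \ref{th:CVagainstTestFun} identifies every subsequential limit as $\mathscr H(t,\cdot)$, the pairings against $\mathcal C^\infty_c(\rd)$ being separating on $\mathcal C^\alpha(\rd)$.

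The main obstacle is this last ingredient — the uniform bound on $\mathrm{Var}(\mathscr H_\e(t,0))$, i.e.\ the sharp $T^{-(d-2)/2}$ decay of the $L^2$-tail of the polymer martingale $(\mathscr Z_T)_T$ in weak disorder (equivalently $\E[\langle\mathscr Z\rangle_\infty-\langle\mathscr Z\rangle_T]\asymp T^{-(d-2)/2}$), together with the fact that passing from $\mathscr Z_T$ to $\log\mathscr Z_T$ costs nothing there. Both are already part of the analysis underlying Theorem \ref{th:h}, so no genuinely new estimate is needed. The remaining points are routine: passing from a single test function to the full Littlewood--Paley scale in the criterion of the first step, and, if $\mathcal C^\alpha(\rd)$ denotes the weighted global space rather than its local version, accommodating a polynomial weight (immediate from the translation invariance of the law of $\mathscr H_\e(t,\cdot)$ used above).
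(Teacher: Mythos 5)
Your overall reduction—stationarity plus a Kolmogorov/Besov moment criterion to get tightness in $\mathcal C^\alpha$ with $\alpha$ slightly below $-d/p$—is structurally the same as the paper's proof, which invokes the criterion of Furlan--Mourrat and bounds the relevant moments by $L^1$-duality and translation invariance of the law of $\mathscr H_\e(t,\cdot)$. But there is a genuine gap in the choice $p=2$.

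You assume a uniform-in-$T$ second moment bound, $\sup_T\,\mathrm{Var}\big(T^{(d-2)/4}(\log\mathscr Z_T-\log\mathscr Z_\infty)\big)<\infty$, asserting that this is ``the quantitative input behind Theorem~\ref{th:h}'' and that uniform integrability of $\mathscr H_\e(t,0)^2$ is established there. Neither claim holds in the paper. Proposition~\ref{prop:LpBoundLogZ} establishes $\sup_T\E\big[|T^{(d-2)/4}(\log\mathscr Z_T-\log\mathscr Z_\infty)|^p\big]<\infty$ \emph{only for $1<p<2$}, and this restriction is not cosmetic: the proof passes through H\"older with exponents $2/p$ and $q$ (so $1/(2/p)+1/q=1$), and as $p\uparrow 2$ one needs $\E[\mathscr Z_\infty^{-pq}]^{1/q}$ with $q\to\infty$; $\mathscr Z_\infty$ admits all negative moments but is not bounded away from zero, so this control degenerates. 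The sharp $L^2$ rate that \emph{is} available, $\|\mathscr Z_\infty-\mathscr Z_T\|_2^2\sim \fC_1\fC_2\,\E[\mathscr Z_\infty^2]\,T^{-(d-2)/2}$ from \eqref{eq:asdifnorm}, concerns $\mathscr Z$ and does not directly transfer to $\log\mathscr Z$ in $L^2$. Likewise, the proof of Theorem~\ref{co:tclMT} proceeds via weak convergence plus the $L^p$ ($p<2$) bound, not via second-moment convergence, so uniform integrability of the square is never obtained.

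The fix is exactly what the paper does: run the tightness criterion with $p\in(1,2)$ instead of $p=2$. Stationarity and H\"older then give the bound $C\,2^{-nd}$ in condition~\eqref{cond2} (with $\beta=0$), yielding tightness in $\mathcal C^\alpha$ for all $\alpha<-d/p$; since $p$ can be taken arbitrarily close to $2$, this covers every $\alpha<-d/2$, which is the stated range. In other words, you do not need $p=2$ at all — the loss of $d/p$ derivatives lets you approach $-d/2$ strictly from below, which is all the proposition claims. Your final step, identifying the limit via Theorem~\ref{th:CVagainstTestFun} and separation by smooth test functions, is fine.
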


\begin{rem}[The stationary solution]\label{rem:stationary}
\textup{For conceptual clarity, it is convenient to 
relate the smoothened KPZ solution $h_\e$ to the so-called {\it{free energy of the continuous directed polymer}}. 
For that, let $u_\e= \exp[h_\e]$ 
denote the {\it{Hopf-Cole solution}} of 
the linear multiplicative noise  stochastic heat equation (SHE) given by
\begin{equation}\label{eq:SHEe}
 \frac{\partial}{\partial t} u_{\e} = \frac12 \D u_{\e} +
 \b \e^{\frac{d-2}2} u_\e \, \dB_{\e} \;,\qquad\,\,  u_{\e}(0,x) =1,
\end{equation}
where the stochastic integral above is interpreted in the classical It\^o-Skorohod sense and we choose 
\begin{equation}\label{C-eps}
C_\e= \b^2(\phi\star \phi)(0) \e^{-2}/2= \b^2 V(0) \e^{-2}/2. 
\end{equation} 
Then, by the Feynman-Kac  formula \cite[Theorem 6.2.5]{K90} we have 
\begin{equation}\label{FK}
u_{\e}(t,x)=E_{x} \bigg[ \exp\bigg\{\beta\e^\frac{d-2}{2} \,\int_0^t \int_{\rd} \, \phi_\e(W_{
{ t-s}}-y)  \dB(s, y)\dd s \dd y -  t C_\e\bigg\}\bigg]\;.
\end{equation} 
With $\dB^{\ssup{\e,t,x}}(s,y)$ defined in \eqref{scaling} and plugging the latter in \eqref{FK}, and using Brownian scaling and time-reversal, we get the a.s.\  equality
\begin{equation}\label{eq:uZ}
u_\e(t,x)= \mathscr Z_{\frac t{\e^2}} \left(\xi^{\ssup{\e,t,0}}; \frac x \e\right)\;,
\end{equation}
where (recall \eqref{eq:Z}) $\mathscr Z_T(x)= \mathscr Z_T(\xi;x)$ is the martingale corresponding to the {\it normalized partition function} of the {\it continuous directed polymer} in a white noise environment $\xi$, while $\log \mathscr Z_T$ now stands for its aforementioned {\it{free energy}}. For $\beta\in (0,\beta_c)$,  
$\mathscr Z_T$ converges almost surely to a strictly positive limit $\sZ_\infty>0$ with a non-degenerate law and if $\mathfrak u(\xi)$ is any arbitrary representative of $\mathscr Z_\infty$,  then, letting $\mathfrak h=\log \mathfrak u$ and $\mathfrak h_\e(t,x) = \mathfrak h(\xi^{\ssup{\e,t,x}})$,
 we have the validity of \eqref{LLN} by \eqref{eq:uZ}. 
 Also, as remarked earlier, the sequence $\{\mathfrak h_\e(t,x)\}_{\e>0}$ 
 is {\it{constant in law}} for all $\e, t, x$, with the  law determined by that of $\log \mathscr Z_\infty$ (see Proposition \ref{prop:covar} for the spatial correlation structure of $\sZ_\infty$).
Finally, $\hh(t,x)$ is a stationary solution of the regularized KPZ equation \eqref{eq:KPZe} with initial condition $\mathfrak h_\e(0,x) = \mathfrak h (\xi^{(\e,0,x)})$. The latter fact can be observed using the self-consistent property \eqref{eq:markov} and the Feynman-Kac formula.}\qed
\end{rem}

\begin{rem}[The limit $\mathscr H(t,x)$]\label{remark-H}
\textup{Interestingly, the evolution of $\mathscr H(t,x)$ obtained in Theorem \ref{th:h} is purely deterministic after having fixed the initial condition. Indeed,
 $$
 \begin{aligned}
\mathscr H(t,x)&=  \gamma(\beta) \int_0^\8 \rho\left(\sigma+t,\cdot\right) \star \dB (\sigma,\cdot)(x) \dd \sigma 
&= \left(\rho\left(t,\cdot\right) \star  \gamma(\beta)\int_0^\8 \rho\left(\sigma,\cdot\right) \star \dB (\sigma,\cdot) \dd \sigma\right) (x) \\
&= \rho\left(t,\cdot\right) \star  \mathscr H(0,\cdot) (x).
\end{aligned}
$$
The Gaussian process $\mathscr H(t,x)$ is, to our best knowledge, new in this context and in the literature.
The idea behind its deterministic evolution is that,
by the Feynman-Kac formula, the difference  $\mathscr H_\e(t,\cdot)$ can be viewed as an average under a polymer measure on $[0,t]$ of some centered field depending on the driving noise. By diffusivity, the averaging measure converges to Wiener measure, whereas the field depends linearly at first order on the white noise on negative-time  interval 
coming from the stationary solution $\hh$. 
The limit is a Gaussian free field evolved by the heat equation.}\qed
\end{rem}


\begin{rem}
\textup{For results similar to \eqref{EW} in $d=2$, see \cite{CD18,CSZ18}. By mollifying the noise in both time and space
related questions for the stochastic heat equation have been studied. This set up is quite different from the present one and leads to homogenized diffusion coefficients, i.e. in the limit $\frac 12 \D$ is replaced by $\frac 12 \mathrm{div}(\mathrm a_{\beta}\nabla)$ where $\mathrm a_\beta\ne \mathrm I_{d\times d}$ (see \cite{M17,GRZ17,DGRZ18h}).}\qed
\end{rem}

 \subsection{On the method of proof.}\label{sec:proofsketch}
To provide some guidance to the reader, it is perhaps useful to underline some key ideas of the current method of the proof, which are 
quite orthogonal to the techniques used in the existent literature for showing Gaussian fluctuations of the KPZ equation. \footnote{Indeed, for proving the fluctuations \eqref{EW} of the non-stationary solution, \cite{DGRZ18} 
uses the integral representation $u_\e(t,x)= \E[u_\e(t,x)] + \beta \e^{(d-2)/2} \int_0^t\int_{\rd} \rho(t-s,x-y) u_\e(s,y) \, \xi_\e(s,y) \dd s \dd y$ for the stochastic heat equation \eqref{eq:SHEe}. Then the (rescaled and averaged) fluctuations  $\e^{1-d/2} \int_{\rd} \dd x  \varphi(x) [u_\e(t,x)- \E u_\e(t,x)]=\beta \int_0^t \int_{{\mathbb R}^{2d}} (P_{t-s}\varphi)(z-\e y) u_\e(s, z- \e y)  \varphi(y)  \xi(s,z) \dd s \dd y \dd z$ is written in terms of an integral representation w.r.t. the heat semigroup $(P_t)_t$, and due to asymptotic independence of $u_\e(t,x)$ and $u_\e(t,y)$ (for $x\ne y$ and as $\e\to 0$) and due to spatial averaging w.r.t. $\varphi$, the (r.h.s) in the integral representation converges to a centered Gaussian law.  An alternative method has been used in \cite{MU17} based on intricate applications of renormalization theory.} Also, the method developed here, for which we heavily exploit tools from stochastic analysis, is quite different from invoking stable and mixing convergence as in \cite{HL15,CL17,CN19}.

An important element of the present technique is to identify the limiting distributions as well as the correlation structure of the space-time process 
$T^{\frac{d-2}4}\big(\log \mathscr Z_{tT}(x\sqrt T)- \log \mathscr Z_\infty(x\sqrt T)\big)_{x\in \rd, t>0}$ (cf. Theorem \ref{th:Z} later). 
The main task for this purpose proceeds in four main steps.

\noindent{\bf Step 1:} The first step is to decompose $\log \sZ_T$, using It\^o's formula and write 
\begin{equation}\label{dec}
\begin{aligned}
&\log \sZ_T = N_T - \frac 12 \langle N\rangle_T, \qquad \mbox{where }\qquad
N_T=\beta \int_0^T \int_{\R^d} E_{0,\b,t} [ \phi (y-W_t) ]  \xi(t,y) \dd y \dd t,\,\,\,\mbox{and } \\
&\qquad\qquad \langle N\rangle_T = 
\b^2
\int_0^T E_{0,\b,s}^{\otimes 2}  \left[ V(W_s^{\ssup 1}-W_s^{\ssup 2})  \right]  \dd s,
\end{aligned}
\end{equation}
where  $E_{0,\b,T}$ (resp.$E_{0,\b,T}^{\otimes 2}$) stands for the expectation w.r.t. the polymer measure (resp. the product of two independent polymer measures, cf. \eqref{polym-meas}). Note that $(N_T)_T$ is a martingale and 
 a key technical step is to prove the following decay of the derivative of its angled bracket 
\begin{equation}\label{decayN}
\frac{\dd}{\dd T}\langle N\rangle_T \sim \fC_0 T^{-d/2} \qquad\mbox{in probability.}
\end{equation}
for $\beta\in (0,\beta_0)$ and for an explicit constant 
\begin{equation}\label{C0}
\mathfrak C_0=\mathfrak C_0(\beta)= \gamma^2(\beta)\frac{1}{(4\pi)^{d/2}}\,,
\end{equation}
where $\gamma^2(\b)$ is given in \eqref{def-sigma-beta}. Note that we can express $N_T$ as well as its bracket in terms of 
 $\mathscr Z_T= E_0[\Phi_T(W)]$ with 
\[
 \dd\langle \sZ\rangle_T= \b^2E_0^\otimes\big[\Phi_T(W^{\ssup 1}) \Phi_T(W^{\ssup 2}) V(W^{\ssup 1}_T- W^{\ssup 2}_T)\big]\dd T \;,
\]
 where, for fixed Brownian path $W$, 
$ \Phi_T(W)= \exp\left\{\b \int_0^T \int_{\rd} \phi(W_s-y) \dB(s,y) \dd s\, \dd y - \frac{\b^2T}{2 }V(0)\right\}$
 is another martingale. Now we can compute 
\begin{eqnarray} \nn
&\dd \sZ_T=
\b E_0 \bigg[ \Phi_T(W)\,\ (\phi \star \xi) (T,W_T)\bigg]\ \dd T \;,\\ \label{eq:differentialM}
&\dd \langle  \sZ\rangle_T = \b^2 E_0^{\otimes 2} \bigg[ \Phi_T(W^{\ssup 1})\Phi_T(W^{\ssup 2}) V\big(W_T^{\ssup 1}-W_T^{\ssup 2}\big)  \bigg]  \dd T \\ \label{eq:bracketM}
&=  \b^2 \sZ_T^{2} \times E_{0,\b,T}^{\otimes 2}  \big[ V(W_T^{\ssup 1}-W_T^{\ssup 2})  \big]  \dd T =\sZ_T^2 \dd\langle N\rangle_T\;,
\end{eqnarray}
so that the desired decay \eqref{decayN} follows once it is verified that for $\beta\in (0,\beta_0)$, 
 \begin{equation}\label{keystep}
 T^{d/2} \bigg(\frac \dd {\dd t} \langle \sZ\rangle\bigg)_T - \fC_0 \sZ_T^2 \stackrel{L^2(\mathbb P)}\to 0.
 \end{equation}
  Substantial technical work is needed to establish this step. 

\noindent{\bf Step 2:} In the next step we need to quantify the decay of correlation of $\mathscr Z_T(x)$, leading to the estimate 
 \begin{equation}\label{correlation}
 \begin{aligned}
& \mathrm{Cov}\big(\mathscr Z_\infty(0), \mathscr Z_\infty(x)\big)= \fC_1\bigg(\frac 1 {|x|}\bigg)^{d-2} \quad\forall |x|\geq 1, \qquad \mbox{and}\\
& \|\mathscr Z_\infty - \mathscr Z_T\|^2_{L^2(\mathbb P)} \sim \fC_1\fC_2 \, T^{-\frac{(d-2)}2} \E[\mathscr Z_\infty^2],
 \end{aligned}
 \end{equation}
 for constants $\fC_1,\fC_2$ depending on $\beta$.  This decorrelation estimate is helpful to quantify the speed of convergence in \eqref{LLN}. Next, to
 identify the error of the approximation, the first step is to show that the {\it ratio} 
 \begin{equation}\label{eq0}
T^{\frac{d-2}4}\bigg( \frac{{\sZ_T(x)- \sZ_\infty(x)}}{\sZ_T(x)}\bigg) 
\end{equation} 
actually converges to a non-trivial distribution. For this pursuit, 
 we now introduce a sequence of 
 processes 
\[
 G^{\ssup T}=(G^{\ssup T}_\tau)_{\tau\geq 1}\qquad\mbox{with}\qquad G^{\ssup T}_\tau=T^{\frac{d-2}4} \left(\frac{\sZ_{\tau T}}{\sZ_T}-1\right),
\]
 and observe that, for each $T$, $G^{\ssup T}$ is a continuous martingale. We can compute its bracket: 
 \begin{equation}\label{bracket}
 \begin{aligned}
  \langle G^{\ssup T}\rangle_\tau= \frac{T^{\frac{d-2}2}}{\sZ_T^2} \langle \sZ\rangle_{\tau T} &=  \frac{T^{\frac{d-2}2}}{\sZ_T^2} \int_1^{\tau T} \bigg(\frac {\dd}{\dd t}\langle \sZ\rangle\bigg)_{s} \dd s \\
  &= \frac 1 {\sZ_T^2} \int_1^\tau (\sigma T)^{d/2} \bigg(\frac {\dd}{\dd t}\langle \sZ\rangle\bigg)_{\sigma T} \, \sigma^{-d/2}\dd \sigma.
  \end{aligned}
  \end{equation} 
 
 \noindent{\bf Step 3:} The next task is to justify that \eqref{bracket} and \eqref{keystep} imply 
 the convergence of the angled brackets $\{\langle G^{\ssup T}\rangle_\tau\}_T$ of the continuous martingales $G^{\ssup T}$  to a {\it deterministic limit}: 
 $$
 \langle G^{\ssup T}\rangle_\tau \to \fC_0 \int_1^\tau \sigma^{-\frac d 2} \, \dd\sigma =: g(\tau)
 $$  
Then it follows that the sequence $G^{\ssup T}$ itself converges in law to a Brownian motion with time-change given by $g$, i.e., $G^{\ssup T}\cvlaw G$ in $\mathscr C([1,\infty);\R)$ where $G$ is a mean-zero Gaussian process with independent increments and variance $g(\tau)= \frac 2 {d-2} \fC_0 [1- \tau ^{-\frac{(d-2)}2}]$.\footnote{It is well-known that a continuous martingale $M$ with a deterministic bracket $\langle M\rangle =\phi$ with $\phi:\R_+\to \R_+$ being continuous and increasing, is a process with independent and centered Gaussian increments. Moreover, if $\{M^{\ssup n}\}_n$ is a sequence of continuous martingales such that the sequence $\langle M^{\ssup n}\rangle$ converges in probability to a deterministic function $\phi$ as above, then $M^{\ssup n}$ itself converges in law to a process with independent, centered Gaussian increments. Based on a (conditional) second moment method, but using instead much of the process structure, this  argument is an efficient way to prove asymptotic normality, see e.g.   \cite{CN95}.}  
 Then,
 $$
 \lim_{T\to\infty} T^{\frac{d-2}4}\, \bigg(\frac {\sZ_\infty}{\sZ_T} - 1\bigg)= \lim_{T\to\infty}\lim_{\tau\to\infty} \bigg[G^{\ssup T}_\tau+ \frac{T^{\frac{(d-2)}4}[\sZ_\infty- \sZ_{\tau T}]}{\sZ_T}\bigg]
 $$
 and thanks to \eqref{correlation} and the fact that $\sZ_T\stackrel{\mathrm{a.s.}}\to \sZ_\infty>0$, the second term vanishes in the double limit, so that the left hand side converges to 
 the commutative limit $\lim_{T\to\infty}\lim_{\tau\to\infty} G^{\ssup T}_\tau= \lim_{\tau\to\infty} \lim_{T\to\infty} G^{\ssup T}_\tau$ which has a centered Gaussian law with variance $\frac 2 {d-2} \fC_0 (\beta) = g(\infty)$. The last assertion then also implies that the term \eqref{eq0} now converges to a centered Gaussian law with variance $\frac 2 {d-2}\mathfrak C_0$.

\noindent{\bf Step 4:}  We can now follow the recipe given in the above steps to show that the bracket of the rescaled martingale $N^{\ssup T}: \tau \mapsto T^{\frac{d-2}4} (N_{\tau T}- N_T)$ converges in probability to the deterministic function $g(\tau)$. Therefore, $N^{\ssup T} \cvlaw G$ and since the bracket $T^{\frac{d-2}4} \big(\langle N \rangle_{\tau T}- \langle N\rangle_T\big) \stackrel{\mathbb P}\to 0$ converges, the bracket term in the decomposition \eqref{dec} also vanishes. This idea now allows one to appeal to a multidimensional version of martingale CLT in an appropriate manner.  The actual machinery for showing convergence of the space-time process $(\mathscr H_\e(t,x))_{t>0,x\in \rd}$ for Theorem \ref{th:h} is more intricate, but it builds on an extension of the above guiding philosophy, which, together with further technical estimates developed on the way, is also helpful for proving Theorem \ref{thm:stationary} and Theorem \ref{th:CVagainstTestFun}.  
\medskip

\noindent{\it Organization of the article:} The rest of the article is organized as follows. Section \ref{sec-proof-theorem} is devoted to proving the convergence of $\mathscr H_\e(t,x)$ for fixed $x\in \rd$ and $t>0$. One technical step for its proof is the aforementioned $L^2(\mathbb P)$ convergence \eqref{keystep} which constitutes Section \ref{sec:prclaim1}. The proof of the space-time convergence of Theorem \ref{th:h} as well as that of its averaged version in Theorem \ref{th:CVagainstTestFun}
are given in Section \ref{sec:proof:th:h}. Section \ref{proof:stationary} and Section \ref{sec-tightness} contain the proofs of Theorem \ref{thm:stationary} and that of Proposition \ref{prop:tightness}, respectively.

\section{Fluctuations about the limiting free energy.}\label{sec-proof-theorem}

The convergence results in 
Theorem \ref{th:h} and Theorem \ref{th:CVagainstTestFun} are directly linked with the following behavior of the free energy $\log\sZ_T$ of the continuous directed polymer, recall \eqref{eq:Z}. 
\begin{theorem}[Space-time fluctuations of the free energy]\label{th:Z} 
Under the assumptions imposed in Theorem \ref{th:h} 
we have, as $T\to\infty$, 
\begin{itemize}
\item  \begin{equation} \label{eq1:th:Z}
T^{\frac{d-2}4}\bigg(\log \mathscr Z_{tT}\big(x\sqrt T\big)- \log \mathscr Z_\infty\big(x\sqrt T\big)\bigg)_{x\in \rd, t>0} \cvfidi \big(\mathscr H(t,x)\big)_{x\in \rd, t>0}\,, 
\end{equation}
\item and for any $\varphi\in \mathcal C^\infty_c(\rd)$, 
\begin{equation} \label{eq2:th:Z}
T^{\frac{d-2}4}\int_{\rd} \dd x \, \varphi(x) \, \big[\log \mathscr Z_{tT}\big(x\sqrt T\big)- \log \mathscr Z_\infty\big(x\sqrt T\big)\big] \to \int_{\rd}\dd x \, \varphi(x)\, \mathscr H(t,x).
\end{equation}
\end{itemize}
\end{theorem}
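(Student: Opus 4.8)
The two assertions \eqref{eq1:th:Z} and \eqref{eq2:th:Z} would be established in parallel; below I describe the plan for the finite-dimensional statement \eqref{eq1:th:Z} and indicate the modifications for the averaged one. \textbf{First, a reduction to the martingale part of} \eqref{dec}. Fix $\beta<\beta_0$, so that $\sZ_T$ is bounded in $L^2(\IP)$ and \eqref{decayN}--\eqref{keystep} are available. Since $\frac{\dd}{\dd s}\langle N\rangle_s=\b^2 E_{0,\b,s}^{\otimes2}[V(W^{\ssup1}_s-W^{\ssup2}_s)]$ is of order $s^{-d/2}$ in $L^1(\IP)$ with $d/2>1$ (and is independent of the starting point, by spatial stationarity of the environment), the martingale $N_\cdot(x)$ converges in $L^2(\IP)$ to a limit $N_\8(x)$ with $\langle N\rangle_\8<\8$. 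Applying \eqref{dec} at the endpoints $tT$ and $\8$ and subtracting gives
\begin{equation}\label{pf:Zdec}
\log\sZ_{tT}(x\sqrt T)-\log\sZ_\8(x\sqrt T)=-\big(N_\8-N_{tT}\big)(x\sqrt T)+\tfrac12\big(\langle N\rangle_\8-\langle N\rangle_{tT}\big).
\end{equation}
Multiplied by $T^{\frac{d-2}4}$, the bracket term in \eqref{pf:Zdec} is of order $T^{-\frac{d-2}4}$ and hence tends to $0$ in probability (integrate $s^{-d/2}$ over $[tT,\8)$ using \eqref{decayN}). Thus \eqref{eq1:th:Z} reduces to the joint convergence $\big(T^{\frac{d-2}4}(N_\8-N_{t_iT})(x_i\sqrt T)\big)_{i\le k}\cvlaw\big(\mathscr H(t_i,x_i)\big)_{i\le k}$, the sign being immaterial for the centered Gaussian limit.

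\textbf{Next, a martingale central limit theorem together with the Cram\'er--Wold device.} Fix points $(t_i,x_i)_{i\le k}$ and coefficients $(a_i)_{i\le k}$, and set
\begin{equation}\label{pf:Zmart}
Y^{\ssup T}_\tau=\sum_{i=1}^k a_i\,T^{\frac{d-2}4}\big(N_{(\tau\vee t_i)T}(x_i\sqrt T)-N_{t_iT}(x_i\sqrt T)\big),\qquad \tau\ge0,
\end{equation}
a continuous martingale for the filtration $(\mathcal F_{\tau T})_\tau$ of the driving noise (each summand is continuous, vanishes for $\tau\le t_i$, and is a martingale for $\tau\ge t_i$). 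Its bracket is the double sum over $i,j$ of $a_ia_j\,T^{\frac{d-2}2}\big(\langle N(x_i\sqrt T),N(x_j\sqrt T)\rangle_{(\tau\vee t_i\vee t_j)T}-\langle N(x_i\sqrt T),N(x_j\sqrt T)\rangle_{(t_i\vee t_j)T}\big)$. The diagonal terms, after the substitution $s=\sigma T$ and an application of \eqref{keystep} (with a uniform integrable domination of $(\sigma T)^{d/2}\frac{\dd}{\dd s}\langle N\rangle_{\sigma T}$), converge in probability to $a_i^2\,\fC_0\tfrac2{d-2}\big(t_i^{-\frac{d-2}2}-\tau^{-\frac{d-2}2}\big)$, which at $\tau=\8$ equals $a_i^2\,\mathrm{Var}(\mathscr H(t_i,x_i))$ by \eqref{cov:H}. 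The off-diagonal terms require a \emph{two-point} analogue of \eqref{keystep}: for $x\ne y$ one must show that the rescaled cross-bracket $T^{\frac{d-2}2}\big(\langle N(x\sqrt T),N(y\sqrt T)\rangle_\8-\langle N(x\sqrt T),N(y\sqrt T)\rangle_{sT}\big)$ converges in probability to the deterministic quantity $\mathrm{Cov}(\mathscr H(t,x),\mathscr H(s,y))$ of \eqref{cov:H} (with $s$ the larger of the two times), whose expected value is read off from the Feynman--Kac representation exactly as in \eqref{correlation}. Granting this, $\langle Y^{\ssup T}\rangle_\tau$ converges in probability, as $T\to\8$, to a deterministic, continuous, nondecreasing $V(\tau)$ with $V(\8)=\sum_{i,j}a_ia_j\,\mathrm{Cov}(\mathscr H(t_i,x_i),\mathscr H(t_j,x_j))=\mathrm{Var}\big(\sum_i a_i\mathscr H(t_i,x_i)\big)<\8$. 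The martingale CLT (a continuous martingale with a deterministic bracket has independent centered Gaussian increments, and this is stable under convergence in probability of the brackets --- cf.\ the footnote in Section \ref{sec:proofsketch}) then yields $Y^{\ssup T}\cvlaw Y$, a Gaussian process, and $Y^{\ssup T}_\8:=\lim_\tau Y^{\ssup T}_\tau$ converges in law to $Y_\8\sim\mathcal N(0,V(\8))$ because $\sup_T\E\big[(Y^{\ssup T}_\8-Y^{\ssup T}_\tau)^2\big]\to0$ as $\tau\to\8$, this uniform tail estimate being exactly the content of \eqref{correlation} together with the $L^1$ form of \eqref{decayN}. By Cram\'er--Wold this gives the reduced form of \eqref{eq1:th:Z}, hence \eqref{eq1:th:Z}, and the limiting field has covariance \eqref{cov:H}, in accordance with \eqref{cov:H}--\eqref{th:h:EW}.

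\textbf{For the averaged statement} \eqref{eq2:th:Z} I would run the same argument with the finite sum in \eqref{pf:Zmart} replaced by $T^{\frac{d-2}4}\int_{\rd}\varphi(x)\big(N_{(\tau\vee t)T}(x\sqrt T)-N_{tT}(x\sqrt T)\big)\,\dd x$ at a single fixed $t>0$. The bracket of this martingale is $T^{\frac{d-2}2}\int\!\int\varphi(x)\varphi(y)\big(\langle N(x\sqrt T),N(y\sqrt T)\rangle_{(\tau\vee t)T}-\langle N(x\sqrt T),N(y\sqrt T)\rangle_{tT}\big)\dd x\,\dd y$; splitting the $(x,y)$-integral into the near-diagonal region $|x-y|\le T^{-1/2}$ (which contributes $O(T^{-1})$, using $|\langle N(x\sqrt T),N(y\sqrt T)\rangle_\8|\le\langle N\rangle_\8$ in $L^1$ there) and its complement (where the two-point estimate above applies) shows that it converges in probability to $\int\!\int\varphi(x)\varphi(y)\,\mathrm{Cov}(\mathscr H(t,x),\mathscr H(t,y))\,\dd x\,\dd y=\mathrm{Var}\big(\int\varphi(x)\mathscr H(t,x)\,\dd x\big)$; the remaining steps, and the vanishing of the averaged bracket term in \eqref{pf:Zdec}, are as before.

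\textbf{The main obstacle is the two-point analogue of} \eqref{keystep}: controlling the rescaled cross-bracket of $N_\cdot(x\sqrt T)$ and $N_\cdot(y\sqrt T)$ when the two starting points sit a macroscopic distance $|x-y|\sqrt T$ apart. This amounts to quantifying the rare event that the two polymer paths meet --- of probability of order $|x-y|^{-(d-2)}T^{-(d-2)/2}$ for $d\ge3$ --- and to showing that, after rescaling, the environment-dependence of this bracket washes out to the deterministic Gaussian kernel in \eqref{cov:H}; this is a genuine extension of the $L^2(\IP)$ analysis behind \eqref{keystep} and of the covariance computation \eqref{correlation}. A secondary but nontrivial point is the rigorous interchange of the limits $T\to\8$ and $\tau\to\8$, which rests on the decorrelation/speed estimates \eqref{correlation} to bound the martingale tail $Y^{\ssup T}_\8-Y^{\ssup T}_\tau$ in $L^2(\IP)$ uniformly in $T$, and on tracking the constant $\gamma^2(\beta)$ from \eqref{def-sigma-beta} through the polymer expectation when identifying the limiting (co)variance with \eqref{cov:H}.
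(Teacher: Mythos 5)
Your overall plan is the same as the paper's: decompose $\log\sZ$ via It\^o's formula as in \eqref{dec}, observe that the rescaled bracket term vanishes, and run a martingale CLT on the rescaled martingale part with a two-point extension of the bracket estimate \eqref{keystep}. The paper does this via the multidimensional martingale CLT of \cite[Theorem~3.11]{JS87}, while you package the same argument through Cram\'er--Wold; that is a cosmetic difference. The two-point analogue you call the ``main obstacle'' is indeed exactly what the paper supplies (Proposition~\ref{prop1:th:h} and Lemma~\ref{lemma3:prop2:th:h}, leading to the uniform $L^1$-estimate \eqref{eq4.8}), and for the test-function average \eqref{eq2:th:Z} at a \emph{fixed} time $t$ your argument is essentially that of Section~\ref{subsec:CVagainstTest} (except that you propose a near-diagonal split where the paper instead uses the uniform bound \eqref{eq:UniformL2Bound} and dominated convergence).

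The gap is in the handling of distinct time points $t_i\neq t_j$. You set up the martingale
$Y^{\ssup T}_\tau=\sum_i a_i\,T^{\frac{d-2}4}\big(N_{(\tau\vee t_i)T}(x_i\sqrt T)-N_{t_iT}(x_i\sqrt T)\big)$, so every component uses the polymer martingale $N_\cdot(\cdot)$ started at time $0$, and the off-diagonal increments only overlap on the time interval $[(t_i\vee t_j)T,\infty)$. You then assert that
$T^{\frac{d-2}2}\big(\langle N(x_i\sqrt T),N(x_j\sqrt T)\rangle_\infty-\langle N(x_i\sqrt T),N(x_j\sqrt T)\rangle_{(t_i\vee t_j)T}\big)$
converges to $\mathrm{Cov}\big(\mathscr H(t_i,x_i),\mathscr H(t_j,x_j)\big)$. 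This is not so when $t_i\neq t_j$. Applying the two-point key estimate along the lines of \eqref{eq4.8} to your setup, the cross-bracket converges to $\gamma^2\int_{t_i\vee t_j}^{\infty}\rho(2\sigma,x_i-x_j)\,\dd\sigma$, whereas from \eqref{cov:H} one has
$\mathrm{Cov}\big(\mathscr H(t_i,x_i),\mathscr H(t_j,x_j)\big)=\gamma^2\int_0^\infty\rho(2\sigma+t_i+t_j,x_i-x_j)\,\dd\sigma=\gamma^2\int_{(t_i+t_j)/2}^\infty\rho(2\sigma,x_i-x_j)\,\dd\sigma$,
and $(t_i+t_j)/2<t_i\vee t_j$ strictly when $t_i\neq t_j$. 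Thus your limiting process would have strictly smaller off-diagonal covariance than $\mathscr H$. The point you have missed is that the interpretation of the space-time fdms behind Theorem~\ref{th:Z} comes from KPZ via the coupling identity \eqref{eq:couplingProporty}: the correct algebra is not ``same polymer start time, different end times $t_iT$'' but rather ``same end time $Tt$, different polymer start times $Tu_i=T(t-t_i)$''. This is why the paper introduces the martingales $N^{\ssup{T,t}}(u,x):\tau\mapsto T^{\frac{d-2}4}\big(N_{Tu,T\tau}(\sqrt Tx)-N_{Tu,Tt}(\sqrt Tx)\big)$ in Section~\ref{4.2}, and why their cross-bracket (Lemma~\ref{prop:twopointscorr}, \eqref{it1}) carries the shift $\rho(2\sigma-(u_1+u_2),\cdot)$, which exactly reproduces \eqref{it2}. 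Your plan therefore needs the shifted-start-time formulation (and the corresponding two-point estimate \eqref{prop4eq2} for $\mathscr L^{\sigma,u}_T(x)$ in \eqref{eq:L1}) to produce the covariance of $\mathscr H$; the version with all polymers started at time $0$ does not.

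A secondary remark: in your Step 1 you need that $N(x\sqrt T)$ has an $L^2$ limit $N_\infty(x\sqrt T)$, which, for the Cram\'er--Wold vector, requires the \emph{uniform} $L^1$-decay of $\frac{\dd}{\dd s}\langle N(x),N(y)\rangle_s$ for two distinct starting points. You cite $\frac{\dd}{\dd s}\langle N\rangle_s$ (one point) as if the two-point case were identical; the paper justifies the two-point domination by the uniform bound \eqref{eq:L2bound2pointsDerivative} coming from \eqref{eq:UniformL2Bound}, and you should flag this as part of the ``two-point analogue'' rather than assume it.
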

In order to derive Theorem \ref{th:Z}, in this section we will derive the following assertions:

\begin{theorem}\label{co:tclMT}
Fix $d\geq 3$ and $\b < \b_0$ as in Theorem \ref{th:h}. Then for all $x\in \rd$, and as $T\to\infty$, 
\begin{equation}\label{eq1}
T^{\frac{d-2}4}\left(\log {\sZ_T(x)-\log \sZ_\infty(x)}\right)  \cvlaw \mathcal  N\big(0,2(d-2)^{-1}\,\fC_0\big),
\end{equation}
where $\fC_0(\beta)$ is defined above in \eqref{C0}. Moreover, for all $x\in \rd$, $t>0$, as $\e\to 0$,
\begin{equation}\label{eq2}
\mathscr H_\e(t,x)
 \cvlaw \mathcal  N\big(0,2(d-2)^{-1}\,\fC_0\,t^{-\frac{d-2}{2}}\big) \;.
\end{equation}
\end{theorem}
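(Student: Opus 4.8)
\textbf{Proof proposal for Theorem \ref{co:tclMT}.}

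The plan is to follow the four-step recipe outlined in Section \ref{sec:proofsketch}, treating \eqref{eq1} first and then transferring it to \eqref{eq2} via the exact identity \eqref{hZ} together with the decorrelation estimate \eqref{correlation}. First I would set up the rescaled continuous martingales $G^{\ssup T}_\tau = T^{\frac{d-2}4}\big(\sZ_{\tau T}/\sZ_T - 1\big)$ and $N^{\ssup T}_\tau = T^{\frac{d-2}4}(N_{\tau T} - N_T)$, where $N$ is the martingale part of $\log\sZ$ from the It\^o decomposition \eqref{dec}. The central analytic input is the $L^2(\IP)$ convergence \eqref{keystep}, namely $T^{d/2}\big(\tfrac{\dd}{\dd t}\langle\sZ\rangle\big)_T - \fC_0\sZ_T^2 \to 0$; assuming this (it is proved in Section \ref{sec:prclaim1}), I would substitute into the formula \eqref{bracket} for $\langle G^{\ssup T}\rangle_\tau$ and show, using $\sZ_T \to \sZ_\infty > 0$ a.s.\ and a uniform-integrability/dominated-convergence argument for the $\sigma$-integral, that
\begin{equation*}
\langle G^{\ssup T}\rangle_\tau \stackrel{\IP}{\longrightarrow} \fC_0 \int_1^\tau \sigma^{-d/2}\,\dd\sigma = g(\tau),
\end{equation*}
a deterministic limit. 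By the martingale central limit theorem for continuous martingales with deterministic limiting bracket (the fact recalled in the footnote to Step 3), this gives $G^{\ssup T} \cvlaw G$ in $\mathscr C([1,\infty);\R)$, with $G$ a centered Gaussian process with independent increments and variance $g(\tau) = \tfrac{2}{d-2}\fC_0(1 - \tau^{-(d-2)/2})$, hence $g(\infty) = \tfrac{2}{d-2}\fC_0$.

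Next I would pass to $\sZ_\infty$. Writing
\begin{equation*}
T^{\frac{d-2}4}\Big(\frac{\sZ_\infty}{\sZ_T} - 1\Big) = G^{\ssup T}_\tau + \frac{T^{\frac{d-2}4}[\sZ_\infty - \sZ_{\tau T}]}{\sZ_T},
\end{equation*}
the second term is controlled in $L^2$ by the second line of \eqref{correlation}, which gives $\|\sZ_\infty - \sZ_{\tau T}\|_{L^2}^2 \sim \fC_1\fC_2 (\tau T)^{-(d-2)/2}\E[\sZ_\infty^2]$, so after multiplying by $T^{(d-2)/4}$ it is $O(\tau^{-(d-2)/4})$, vanishing as $\tau\to\infty$ uniformly in $T$; combined with $\sZ_T\to\sZ_\infty$ this shows the double limit in $\tau$ and $T$ commutes, and $T^{\frac{d-2}4}(\sZ_\infty/\sZ_T - 1) \cvlaw \mathcal N(0, g(\infty))$. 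Since $\sZ_T \to \sZ_\infty$ a.s.\ and both are strictly positive, a Taylor expansion $\log\sZ_\infty - \log\sZ_T = \log(1 + (\sZ_\infty/\sZ_T - 1))$ with the linear term dominating (the quadratic remainder is $O_\IP(T^{-(d-2)/2})$, which is negligible after multiplication by $T^{(d-2)/4}$) yields \eqref{eq1}. Translation invariance in $x$ of the law of $\log\sZ_T(x) - \log\sZ_\infty(x)$ (which follows from stationarity of the noise) handles the dependence on $x$ trivially. Alternatively, and this is the route that also feeds into the space-time statement, one runs the same argument directly on $N^{\ssup T}$: its bracket is $T^{\frac{d-2}2}(\langle N\rangle_{\tau T} - \langle N\rangle_T) = \tfrac{1}{\sZ_T^2}\langle G^{\ssup T}\rangle_\tau \cdot \sZ_T^2 / \sZ_T^2$—more precisely $\langle N^{\ssup T}\rangle_\tau$ converges to the same $g(\tau)$ by the computation in Step 4 using \eqref{eq:bracketM}—while the compensator increment $T^{\frac{d-2}4}(\langle N\rangle_{\tau T} - \langle N\rangle_T) \stackrel{\IP}{\to} 0$ because $T^{\frac{d-2}4}\cdot T^{-d/2}\to 0$, so the drift in \eqref{dec} is asymptotically negligible and $T^{\frac{d-2}4}(\log\sZ_{\tau T} - \log\sZ_T) = N^{\ssup T}_\tau - \tfrac12 T^{\frac{d-2}4}(\langle N\rangle_{\tau T} - \langle N\rangle_T) \cvlaw G_\tau$.

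Finally, \eqref{eq2} follows from \eqref{eq1} by the a.s.\ identity \eqref{hZ}, which gives $h_\e(t,x) = \log\sZ_{t/\e^2}(\xi^{\ssup{\e,t,0}}; x/\e)$, hence $\mathscr H_\e(t,x) = \e^{1-d/2}[h_\e(t,x) - \hh(t,x)] = \e^{1-d/2}[\log\sZ_{t/\e^2}(\xi^{\ssup{\e,t,0}};x/\e) - \log\sZ_\infty(\xi^{\ssup{\e,t,0}};x/\e)]$ using the definition \eqref{def:stationary} of $\hh$. Setting $T = t/\e^2$, so that $\e^{1-d/2} = t^{(d-2)/4} T^{-(d-2)/4} \cdot t^{1/2}\cdot$—more cleanly, $\e^{1-d/2} = (t/T)^{(d-2)/4}\cdot t^{?}$; the correct bookkeeping is $\e^{d-2} = (t/T)^{(d-2)/2}$, so $\e^{1-d/2} = t^{(d-2)/4}\, T^{-(d-2)/4}\cdot$... writing it as $\mathscr H_\e(t,x) = t^{-(d-2)/4}\cdot\big(T^{(d-2)/4}[\log\sZ_T(\cdot) - \log\sZ_\infty(\cdot)]\big)\cdot t^{(d-2)/4}\cdot\e^{1-d/2}T^{(d-2)/4}$, and since $\e^{1-d/2}T^{(d-2)/4} = \e^{1-d/2}(t/\e^2)^{(d-2)/4} = t^{(d-2)/4}\e^{1-d/2+1-d/2}\cdot\e^{?}$—I will carry the exponent arithmetic carefully in the writeup, the upshot being $\mathscr H_\e(t,x)$ equals $t^{-(d-2)/4}$ times a quantity that is distributed as $T^{(d-2)/4}(\log\sZ_T - \log\sZ_\infty)$, since $\xi^{\ssup{\e,t,0}}$ has the same law as $\xi$; thus the limit is $t^{-(d-2)/4}\mathcal N(0, \tfrac{2}{d-2}\fC_0) = \mathcal N(0, \tfrac{2}{d-2}\fC_0\, t^{-(d-2)/2})$, which is \eqref{eq2}.

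The main obstacle is unquestionably \eqref{keystep}, the $L^2(\IP)$-convergence $T^{d/2}\big(\tfrac{\dd}{\dd t}\langle\sZ\rangle\big)_T \to \fC_0\sZ_T^2$: all of Step 1 rests on controlling the polymer two-point function $E_{0,\b,T}^{\otimes 2}[V(W_T^{\ssup 1} - W_T^{\ssup 2})]$ and showing it decays like $\fC_0 T^{-d/2}$ with the precise constant $\gamma^2(\beta)(4\pi)^{-d/2}$, uniformly enough to get $L^2$ convergence; this requires delicate estimates on replica overlaps in the weak-disorder regime and is precisely why $\beta < \beta_0$ rather than merely $\beta < \beta_c$. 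A secondary technical point is justifying the exchange of limits and the passage from convergence of brackets to convergence in law via the martingale CLT in a way that is uniform enough to also yield \eqref{eq1} (not just \eqref{eq0}), for which the quantitative decorrelation \eqref{correlation} is the essential ingredient.
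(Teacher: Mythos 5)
Your proposal is correct and follows essentially the same strategy as the paper: It\^o decomposition $\log\sZ_T = N_T - \tfrac12\langle N\rangle_T$, the $L^2$ estimate \eqref{keystep} (Proposition \ref{prop:claim1}) to identify the deterministic limiting bracket $g(\tau)$, the martingale CLT to get $N^{\ssup T}\cvlaw G$, the decorrelation/moment estimates (\eqref{correlation} and Proposition \ref{prop:LpBoundLogZ}) to pass from the increment $\log\sZ_{\tau T}-\log\sZ_T$ to $\log\sZ_\infty-\log\sZ_T$, and finally the scaling identity \eqref{eq:uZ} with $T=t/\e^2$ to transfer \eqref{eq1} into \eqref{eq2}; your alternative route through $G^{\ssup T}_\tau$ and the Taylor expansion of $\log(1+x)$ is precisely the equivalence recorded in Remark \ref{remark2-Psi}. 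Two small blemishes worth tidying in a final writeup: the displayed identity $T^{(d-2)/2}(\langle N\rangle_{\tau T}-\langle N\rangle_T) = \sZ_T^{-2}\langle G^{\ssup T}\rangle_\tau$ is not correct (one must keep $\sZ_{\sigma T}^2$ inside the $\sigma$-integral, as in \eqref{bracket}), and for the direct route the uniform-in-$T$ control of $T^{(d-2)/4}(\log\sZ_\infty-\log\sZ_{\tau T})$ requires the uniform $L^p$ bound of Proposition \ref{prop:LpBoundLogZ} rather than just the $L^2$ estimate \eqref{correlation}; the exponent bookkeeping for $\e^{1-d/2} = t^{-(d-2)/4}T^{(d-2)/4}$ with $T=t/\e^2$ should also be written cleanly, but your final variance is right.
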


\begin{theorem}\label{th:CVmarginalZ}
Fix $d\geq 3$ and $\b < \b_0$ as in Theorem \ref{th:h}. Then for all $x\in \rd$, and as $T\to\infty$, 
\begin{equation}\label{eq:CVmarginalZ}
T^{\frac{d-2}4}\bigg( \frac{{\sZ_T(x)- \sZ_\infty(x)}}{\sZ_T(x)}\bigg)  \cvlaw \mathcal  N\left(0,\frac 2 {d-2} \fC_0 \right),
\end{equation}
where $\mathfrak C_0(\b)$ is defined in \eqref{C0}.

\end{theorem}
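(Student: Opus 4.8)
The plan is to prove Theorem~\ref{th:CVmarginalZ} by analyzing the rescaled martingales $G^{\ssup T}=(G^{\ssup T}_\tau)_{\tau\geq 1}$ with $G^{\ssup T}_\tau=T^{\frac{d-2}4}(\sZ_{\tau T}/\sZ_T-1)$, exactly following Steps~2 and~3 of Section~\ref{sec:proofsketch}. First I would record that for each fixed $T$, the process $\tau\mapsto \sZ_{\tau T}$ is a continuous $L^2$-martingale (being a time-change of the martingale $\sZ$), hence so is $G^{\ssup T}$, with bracket given by \eqref{bracket}:
\begin{equation*}
\langle G^{\ssup T}\rangle_\tau = \frac 1 {\sZ_T^2} \int_1^\tau (\sigma T)^{d/2} \Big(\tfrac{\dd}{\dd t}\langle \sZ\rangle\Big)_{\sigma T}\, \sigma^{-d/2}\,\dd\sigma .
\end{equation*}
Here I would invoke the key estimate \eqref{keystep} (proved in Section~\ref{sec:prclaim1}), namely $t^{d/2}\big(\tfrac{\dd}{\dd t}\langle\sZ\rangle\big)_t - \fC_0 \sZ_t^2 \to 0$ in $L^2(\IP)$, together with $\sZ_t\stackrel{\mathrm{a.s.}}\to\sZ_\infty>0$ and uniform integrability of $(\sZ_t^2)_t$ for $\b<\b_0$, to conclude that $(\sigma T)^{d/2}\big(\tfrac{\dd}{\dd t}\langle\sZ\rangle\big)_{\sigma T}\to \fC_0\sZ_\infty^2$ and $\sZ_T^2\to\sZ_\infty^2$ as $T\to\infty$; a dominated-convergence argument in $\sigma$ then yields $\langle G^{\ssup T}\rangle_\tau \stackrel{\IP}\to \fC_0\int_1^\tau\sigma^{-d/2}\dd\sigma=:g(\tau)$ for every fixed $\tau$, with $g(\infty)=\frac{2}{d-2}\fC_0$.

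Next, since the $G^{\ssup T}$ are continuous martingales whose brackets converge in probability to the deterministic, continuous, increasing limit $g$, the martingale functional CLT (as recalled in the footnote to Step~3, cf.~\cite{CN95}) gives $G^{\ssup T}\cvlaw G$ in $\mathscr C([1,\infty);\R)$, where $G$ is a mean-zero Gaussian process with independent increments and variance $g$. In particular $G^{\ssup T}_\infty$ — interpreted via the double limit — is asymptotically $\mathcal N(0,\frac 2{d-2}\fC_0)$. To transfer this to the object of interest, I would write the algebraic identity
\begin{equation*}
T^{\frac{d-2}4}\Big(\frac{\sZ_\infty-\sZ_T}{\sZ_T}\Big) = T^{\frac{d-2}4}\Big(\frac{\sZ_{\tau T}-\sZ_T}{\sZ_T}\Big) + \frac{T^{\frac{d-2}4}(\sZ_\infty-\sZ_{\tau T})}{\sZ_T} = G^{\ssup T}_\tau + \frac{T^{\frac{d-2}4}(\sZ_\infty-\sZ_{\tau T})}{\sZ_T},
\end{equation*}
and control the remainder uniformly: by the second estimate in \eqref{correlation}, $\|\sZ_\infty-\sZ_{\tau T}\|^2_{L^2(\IP)}\sim \fC_1\fC_2(\tau T)^{-(d-2)/2}\E[\sZ_\infty^2]$, so $T^{\frac{d-2}4}\|\sZ_\infty-\sZ_{\tau T}\|_{L^2}=O(\tau^{-(d-2)/4})$, uniformly in $T$; combined with $\sZ_T^{-1}\to\sZ_\infty^{-1}$ a.s.\ (and an $L^p$ bound on $\sZ_T^{-1}$, or a truncation on the event $\{\sZ_T\geq\delta\}$ whose probability is uniformly close to $1$), the remainder tends to $0$ in probability as $T\to\infty$ then $\tau\to\infty$. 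Since this holds uniformly, one may exchange the two limits and conclude $T^{\frac{d-2}4}(\sZ_\infty-\sZ_T)/\sZ_T \cvlaw \mathcal N(0,\frac 2{d-2}\fC_0)$; noting $(\sZ_T-\sZ_\infty)/\sZ_T$ differs only by sign gives \eqref{eq:CVmarginalZ}.

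The main obstacle is the passage $\langle G^{\ssup T}\rangle_\tau\stackrel{\IP}\to g(\tau)$, i.e.\ making the dominated-convergence step in $\sigma$ rigorous: one needs not just the pointwise (in $\sigma$) $L^2$-convergence from \eqref{keystep} but a dominating function integrable against $\sigma^{-d/2}\dd\sigma$ on $[1,\tau]$ uniformly in $T$. This requires a uniform-in-$t$ bound of the form $\E\big[(t^{d/2}(\tfrac{\dd}{\dd t}\langle\sZ\rangle)_t)^2\big]\leq C$ (or at least a bound with a harmless polynomial factor that $\sigma^{-d/2}$ absorbs), which in turn rests on the quantitative $L^2$-estimates of Section~\ref{sec:prclaim1}; the remaining ingredients (the martingale CLT, uniform integrability, and the decorrelation bound \eqref{correlation}) are either quoted or comparatively soft. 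A secondary technical point is the justification of the limit exchange, for which the uniform $L^2$-control of the remainder via \eqref{correlation} is exactly what is needed.
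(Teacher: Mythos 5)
Your proposal is essentially the paper's own proof: the same martingale $G^{\ssup T}$ and bracket formula, the same reduction to Proposition~\ref{prop:claim1} for the bracket convergence, the same algebraic identity $T^{\frac{d-2}4}(\sZ_\infty/\sZ_T-1)=G^{\ssup T}_\tau+T^{\frac{d-2}4}(\sZ_\infty-\sZ_{\tau T})/\sZ_T$, and the same use of \eqref{eq:asdifnorm} to kill the remainder in the double limit. The one place you deviate implementationally is in passing from Proposition~\ref{prop:claim1} to $\langle G^{\ssup T}\rangle_\tau\stackrel{\IP}\to g(\tau)$: you propose dominated convergence in $\sigma$ requiring a uniform $L^2$-bound on $t^{d/2}(\tfrac{\dd}{\dd t}\langle\sZ\rangle)_t$, whereas the paper instead restricts to the high-probability event $A_\e=\{\sup_t\sZ_t\vee\sup_t\sZ_t^{-1}\le\e^{-1}\}$ (to control the $\sZ_T^{-2}$ prefactor) and then uses the uniform-in-$t$ $L^1$-decay $\sup_{t\ge T}\|t^{d/2}(\tfrac{\dd}{\dd t}\langle\sZ\rangle)_t-\fC_0\sZ_t^2\|_1\to 0$ from Proposition~\ref{prop:claim1} directly, which avoids any domination argument; both routes work, and the missing uniform bound you flag is indeed available from the computations of Section~\ref{sec:prclaim1}.
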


The rest of this section is devoted to the proof of Theorem \ref{co:tclMT} and  Theorem \ref{th:CVmarginalZ}. We would like to emphasize that 
 although both theorems are easily shown to be  equivalent (see Remark \ref{remark2-Psi}), in Section \ref{subsec:secondProof} we provide a \emph{direct} proof of Theorem \ref{co:tclMT} which is in the same spirit as the one of Theorem \ref{th:CVmarginalZ}, but further relies on It\^o's decomposition of $\log \sZ_T$. As this idea turns out to be of much help in order to move to convergence against a test function (cf. Theorem \ref{th:CVagainstTestFun}) -- for which the argument of Remark \ref{remark2-Psi} {\it does not apply} -- we present here both proofs. This latter argument will also be repeatedly used throughout Section \ref{sec:proof:th:h}. 


\subsection{Rate of decorrelation.}\label{subsec:rateOfDec}
In this section we will provide the following elementary result, which provides an estimate on the asymptotic decorrelation of $u_\e(t,x)$ and $u_\e(t,y)$ as $\e\to 0$ through identification \eqref{eq:uZ}. This estimate also underlines the fact that smoothing $u_\e(x)$ w.r.t. 
any $\varphi\in \mathcal C_c^\infty(\rd)$ makes $\int_{\rd} \dd x \, u_\e(t,x) \, \varphi(x)\to \int_{\rd} \dd x \, \overline u(t,x) \varphi(x) $ deterministic, with $\partial_t \overline u=\frac 12 \Delta \overline u$.
\begin{proposition}\label{prop:covar}  Let $d\geq 3$ and $\beta$ small enough.
\begin{itemize}
\item We have: 
\begin{equation}\label{eq:covM0}
{\rm Cov}\big(  \sZ_\8(0),  \sZ_\8(x)\big)=
\begin{cases}
 E_{x/\sqrt{2}} \bigg[ \mathrm e^{\b^2 \int_0^\8 V(\sqrt 2 W_s) ds} -1 \bigg] \quad \forall x\in \rd, \\
\fC_1  
|x|^{-(d-2)}    \ \,\qquad\qquad\qquad\qquad\forall\, |x| \geq 1
\end{cases}
\end{equation}
with $\fC_1= E_{\eu/\sqrt 2}[\exp\{\b^2 \int_0^\8 V(\sqrt 2 W_s) ds\} -1]$.\footnote{the constant $1$ in the second case of the covariance is related to the assumed correlation length of the noise, recall that $V=\phi\star \phi$ has support within a ball of radius $1$ around the origin.}
\item Finally,
\begin{eqnarray} 
\big\| \sZ_\8- \sZ_T\big\|_2^2  \label{eq:asdifnorm}
&\sim& \fC_1 \fC_2 \E\big[\sZ_\8^2\big] T^{-\frac{d-2}{2}}
 \qquad {\rm as} \; T \to \8.
\end{eqnarray}
with $\fC_2= E\big[ \big({\sqrt 2}/|Z|\big)^{d-2} \big]  $, where $Z$ is a centered Gaussian vector with covariance $I_d$.
\end{itemize}
\end{proposition}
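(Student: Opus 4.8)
The starting point is the standard formula for the covariance of the partition function, which comes from expanding the exponentials in \eqref{eq:Z} and using the independence of the two Brownian motions together with the covariance structure \eqref{V} of the noise. Concretely, writing $\sZ_T(x)=E_x[\Phi_T(W)]$ and using that $\E[\Phi_T(W^{\ssup1})\Phi_T(W^{\ssup2})]=\exp\{\b^2\int_0^T V(W^{\ssup1}_s-W^{\ssup2}_s)\,\dd s\}$ for fixed paths, one gets
\[
\E[\sZ_T(0)\sZ_T(x)]=E_{0,x}^{\otimes2}\Big[\exp\Big\{\b^2\int_0^T V(W^{\ssup1}_s-W^{\ssup2}_s)\,\dd s\Big\}\Big].
\]
Since $\E[\sZ_T(0)]=\E[\sZ_T(x)]=1$, subtracting $1$ and letting $T\to\infty$ (the limit exists and is finite for small $\b$ by uniform integrability, $\b<\b_c$, and an $L^2$ bound) gives the first line of \eqref{eq:covM0}: the difference process $W^{\ssup1}-W^{\ssup2}$ is a Brownian motion run at twice the speed started from $x$, so after the change of variables it becomes $E_{x/\sqrt2}[\exp\{\b^2\int_0^\infty V(\sqrt2 W_s)\,\dd s\}-1]$. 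I would be careful to justify interchanging the limit with the expectation — this is where a uniform $L^2$ bound on $\sZ_T$ (available for $\b$ small) is used.

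For the second line, the key point is that $V=\phi\star\phi$ is supported in $B(0,1)$, so for $|x|\ge1$ the Brownian path started at $x/\sqrt2$ must first travel into the unit ball before the integrand $V(\sqrt2 W_s)$ becomes nonzero. Decomposing at the hitting time $\tau$ of a suitable ball and using the strong Markov property factorizes the expectation as (harmonic measure / hitting probability from $x/\sqrt2$) times (the expectation started from the boundary). The hitting probability of a fixed ball by a $d$-dimensional Brownian motion from a far point $x$ decays exactly like $|x|^{-(d-2)}$ (this is the Newtonian capacity asymptotic, $d\ge3$), and the prefactor converges to the constant $\fC_1=E_{\eu/\sqrt2}[\exp\{\b^2\int_0^\infty V(\sqrt2 W_s)\,\dd s\}-1]$ by rotational symmetry of $V$. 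Making the "$\sim$" into an actual equality for all $|x|\ge1$ requires that once the path reaches $\partial B(0,1/\sqrt2)$ the law of its entrance point and subsequent excursion does not depend on the starting direction — again rotational invariance — and that the support condition guarantees $V(\sqrt2 W_s)=0$ for all $s$ before the path enters the relevant ball, so that no contribution is lost.

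For \eqref{eq:asdifnorm}, I would write $\|\sZ_\infty-\sZ_T\|_2^2=\E[\sZ_\infty^2]-\E[\sZ_T^2]$ (using the martingale property, $\E[\sZ_\infty\sZ_T]=\E[\sZ_T^2]$), reducing the problem to the rate at which $\E[\sZ_T^2]\uparrow\E[\sZ_\infty^2]$. Now $\E[\sZ_\infty^2]-\E[\sZ_T^2]=E_0^{\otimes2}[\exp\{\b^2\int_0^\infty V(W^{\ssup1}_s-W^{\ssup2}_s)\,\dd s\}(1-\exp\{-\b^2\int_T^\infty V\})]$-type expression, but more cleanly one conditions on the configuration of the difference Brownian motion at time $T$: by the Markov property the tail contribution from $[T,\infty)$ is governed by $E_{B}[\exp\{\b^2\int_0^\infty V(\sqrt2 W_s)\,\dd s\}-1]$ with $B\sim\sqrt2\cdot\mathcal N(0,T I_d)$ the rescaled difference at time $T$, weighted by the polymer-type density up to time $T$ which converges to $\E[\sZ_\infty^2]$. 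Plugging the $|x|^{-(d-2)}$ asymptotic from the first part with $x=B$ of typical size $\sqrt T$, and computing the Gaussian average $E[(\sqrt2/|Z|)^{d-2}]=\fC_2$, yields the stated $T^{-(d-2)/2}$ rate with constant $\fC_1\fC_2\E[\sZ_\infty^2]$. I would need an integrability check that $E[|Z|^{-(d-2)}]<\infty$, which holds precisely because $d-2<d$.

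The main obstacle is the second part of \eqref{eq:asdifnorm}: turning the heuristic "condition the difference walk at time $T$, it is at distance $\sim\sqrt T$, apply the $|x|^{-(d-2)}$ covariance decay" into a rigorous $\sim$ asymptotic. This needs (i) a clean decoupling of the time window $[0,T]$ from $[T,\infty)$ via the Markov property that simultaneously identifies the $[0,T]$-weight's limit as $\E[\sZ_\infty^2]$ and the $[T,\infty)$-tail's conditional expectation as the covariance function evaluated at the (rescaled) endpoint, and (ii) a dominated-convergence argument to push the scaling $x\mapsto x\sqrt T$ through the Gaussian integral, controlling the region $|B|\le\sqrt T$ where the $|x|^{-(d-2)}$ bound is not yet valid — here one uses that the first-line formula is bounded and that the Gaussian measure of $\{|Z|\le\delta\}$ is small, uniformly in $T$. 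The uniform $L^2$ control of $\sZ_T$ for small $\b$ underlies all of these interchanges.
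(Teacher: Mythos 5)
Your proposal is correct and follows essentially the same route as the paper: the first line of \eqref{eq:covM0} via the two-replica Gaussian identity and $W^{(1)}-W^{(2)}\eqlaw\sqrt2\,W$, the exact power law for $|x|\ge1$ via the hitting time of $B(0,1)$, the strong Markov property and spherical symmetry of $V$, and \eqref{eq:asdifnorm} via conditioning the difference walk at time $T$ followed by the joint weak convergence of $(\int_0^T V(\sqrt2W_s)\,\dd s,\,T^{-1/2}W_T)$ together with a uniform integrability estimate. The only (cosmetic) difference is your entry point $\|\sZ_\infty-\sZ_T\|_2^2=\E[\sZ_\infty^2]-\E[\sZ_T^2]$ via martingale orthogonality, whereas the paper instead expands $\sZ_\infty-\sZ_T=E_0[\Phi_T(W)(\sZ_\infty\circ\theta_{T,W_T}-1)]$ using the Markov identity \eqref{eq:markov}; both collapse immediately to the same formula $E_0^{\otimes2}\big[\mathrm e^{\b^2\int_0^T V(W^{(1)}_s-W^{(2)}_s)\,\dd s}\,{\rm Cov}\big(\sZ_\infty(W^{(1)}_T),\sZ_\infty(W^{(2)}_T)\big)\big]$.
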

\begin{rem}
The random process $(\fC_1^{-1/2} \sZ_\8(x); x \in \rd)$ is quite interesting. It has the same covariance as the Gaussian free field, see \eqref{eq:covM0} for $|x|\geq 1$.  It is a  positive stationary field with covariance equal to the Green function  (at distance 1 from the diagonal). We stress that, in this range of values for $x$, the covariance does not  not depend on the regularizing function $\phi$, whereas the law does depend. \qed
\end{rem}
\begin{proof}
For any Brownian path $W=(W_s)_{s\geq 0}$ we set 
\begin{equation} \label{eq-Phi}
\Phi_T(W)= \exp\left\{\b \int_0^T \int_{\rd} \phi(W_s-y) \dB(s,y) \dd s\, \dd y - \frac{\b^2T}{2 }V(0)\right\}
\end{equation}
and see that, for any $n\in \mathbb N$, 
\begin{equation} \label{eq:expgauss}
\E \bigg[   \prod_{i=1}^n \Phi_T( W^{\ssup i} ) \bigg] =
\exp \bigg\{ \b^2 \int_0^{T} \sum_{1 \leq i < j \leq n} V\big( W_s^{\ssup i}- W_s^{\ssup j} \big) ds  \bigg\}.
\end{equation}
 Then, the first line of equation \eqref{eq:covM0} follows from \eqref{eq:expgauss} with $n=2$ and the fact that $W_s^{(1)}-W_s^{(2)}\eqlaw \sqrt 2 W_s$. 
Now, the second line of  \eqref{eq:covM0}  follows by considering the hitting time of the unit ball for $\sqrt{2}W$ and spherical symmetry of $V$.

We now show \eqref{eq:asdifnorm} as follows. For two independent paths $W^{\ssup 1}$ and $W^{\ssup 2}$  (which are also independent of the noise $\dB$), we will denote by $\mathcal F_T$ the $\sigma$-algebra generated by 
both paths until time $T$. Then, by Markov property, for $0<S\leq \8$,
\begin{equation}\label{eq:markov}
\sZ_{T+S}(x) = E_x\big[ \Phi_T(W) \,\,
\sZ_S \circ \tht_{T,W_T}
\big]\;.
\end{equation}
where for any $t>0$ and $x\in \rd$, $\theta_{t,x}$ denotes the canonical spatio-temporal shift in the white noise environment. Hence, 
\begin{eqnarray*}
\|\sZ_\8-\sZ_T\|_2^2  &=& \E \bigg[E_0^{\otimes 2}\bigg\{ \Phi_T(W^{\ssup 1}) \Phi_T(W^{\ssup 2})  \left(\sZ_\8 \circ \theta_{T,W_T^{\ssup 1}}-1\right)\left(\sZ_\8  \circ \theta_{T,W_T^{\ssup 2}}-1\right)  \bigg\}\bigg]\\
 &=& E_0^{\otimes 2}\bigg[ \mathrm e^{\b^2 \int_0^{T} V( W_s^{\ssup 1}-W_s^{\ssup 2}) \dd s} \times {\rm Cov}\big(\sZ_\8(W_T^{\ssup 1}),\sZ_\8(W_T^{\ssup 2})\big)  \bigg]\\
&=& E_0^{\otimes 2}\bigg[ \mathrm e^{\b^2 \int_0^{T} V( W_s^{\ssup 1}-W_s^{\ssup 2}) \dd s} \times  E_0^{\otimes 2}\bigg[  {\rm Cov}\big(\sZ_\8(W_T^{\ssup 1}),\sZ_\8(W_T^{\ssup 2})\big)  \bigg \vert \mathcal F_T\bigg]\bigg] \\
&\sim& \fC_1  E_0^{\otimes 2}\left[ \mathrm e^{\b^2 \int_0^{T} V( W_s^{\ssup 1}-W_s^{\ssup 2}) \dd s} \times \left(\frac{ 2}{|W_T^{\ssup 1}-W_T^{\ssup 2}|}\right)^{d-2} \right]  
\\
&=& \fC_1  
 E_0\left[ \mathrm e^{\b^2 \int_0^T V(\sqrt 2 W_s) \dd s}  \bigg(\frac{\sqrt 2}{|W_T|}\bigg)^{d-2} \right] 
\end{eqnarray*}
The asymptotic equivalence in the above display is justified if we note that for $\xi^{\ssup t}(s,y)=\xi(s+t,y)$,
\begin{equation}\label{eq:covM}
{\rm Cov}\big(  \sZ_\8(0;\xi),  \sZ_\8(x,\xi^{\ssup t})\big)=
\int_{\rd} \rho(t,y)  E_{(y-x)/\sqrt{2}} \bigg[ \mathrm e^{\b^2 \int_0^\8 V(\sqrt 2 W_s) ds} -1 \bigg] \dd y
\end{equation}

Then \eqref{eq:asdifnorm} is proved once we show 
\begin{equation} \label{eq:asymptoticindependence1}
 E_0\bigg[ \mathrm e^{\b^2 \int_0^T V(\sqrt 2 W_s) \dd s}  \bigg(\frac{1}{|W_T|}\bigg)^{d-2} \bigg]
 \sim E_0\bigg[ \mathrm e^{\b^2 \int_0^{\8} V(\sqrt 2 W_s) \dd s} \bigg]  E_0\bigg[ \left(\frac{1}{|W_T|}\right)^{d-2} \bigg] 
\end{equation}
But as $T\to\infty$, 
\begin{equation} \nn
\left( \int_0^{T} V(\sqrt 2 W_s) \dd s, T^{-1/2}W_T \right)  \stackrel{\rm law}{\longrightarrow} \left( \int_0^{\8} V(\sqrt 2 W_s) \dd s, Z \right) 
\end{equation}
with $Z\sim N(0,I_d)$ being independent of the Brownian path $W$, and then \eqref{eq:asdifnorm} follows from the requisite uniform integrability 
\begin{equation}\label{eq-ui}
\sup_{T \geq 1}  E_0\left[\left(\mathrm e^{\b^2 \int_0^T V(\sqrt 2 W_s) \dd s}  \left(\frac{T^{1/2}}{|W_T|}\right)^{d-2} \right)^{1+\delta}\right] < \8 
\end{equation}
for $\delta>0$. By H\"older's inequality and Brownian scaling, for any $p,q\geq 1$ with $1/p+1/q=1$, 
$$
\begin{aligned}
\mbox{(l. h. s.) of \eqref{eq-ui}} &\leq E_0\bigg[\mathrm e^{q(1+\delta)\b^2 \int_0^T V(\sqrt 2 W_s) \dd s}\bigg]^{1/q}\,\, E_0\bigg[ \frac 1 {|W_1|^{p (1+\delta) (d-2)}} \bigg]^{1/p} \\
&\leq E_0\bigg[\mathrm e^{q(1+\delta)\b^2 \int_0^\infty V(\sqrt 2 W_s) \dd s}\bigg]^{1/q} \,\, \bigg[\int_{\mathbb R^d} \dd x \,\, \frac 1 {|x|^{p(1+\delta)(d-2)}} \mathrm e^{-|x|^2/2}\bigg]^{1/p} \\
&\leq C \bigg[\int_0^\infty \dd r  \,\, r^{d-1}\,\, \frac 1 {r^{p(1+\delta)(d-2)}} \mathrm e^{-r^2/2}\bigg]^{1/p} 
\end{aligned}
$$
Then the last integral is seen to be finite provided we choose $\delta>0$ and $p>1$ small enough so that $1< p(1+\delta) \leq \frac {d}{d-2}$.  
 \end{proof}
 We will also need the following uniform $L^p$-estimate whose proof is based on Proposition \ref{prop:covar} as well as uniform estimates on moments of $\log\mathscr Z_T$ proved in \cite{CCM19}.  
 
 \begin{proposition} \label{prop:LpBoundLogZ}
For all $1<p<2$,
\begin{equation} \label{eq:lpbound}
\sup_{T>0}\E\left[\left|T^{(d-2)/4}(\log\sZ_T-\log\sZ_\infty)\right|^p\right] < \infty.
\end{equation}
\end{proposition}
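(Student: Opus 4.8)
The plan is to split $T^{(d-2)/4}(\log \sZ_T - \log \sZ_\infty)$ into a ``bounded-$T$'' part and a ``large-$T$'' part, and to treat the large-$T$ part by interpolating between the $L^2$-decorrelation estimate \eqref{eq:asdifnorm} of Proposition \ref{prop:covar} and an a priori uniform moment bound for $\log \sZ_T$ coming from \cite{CCM19}. First I would fix some $T_0 \geq 1$; for $T \leq T_0$ the quantity $T^{(d-2)/4}(\log\sZ_T - \log\sZ_\infty)$ has $p$-th moment bounded by $T_0^{p(d-2)/4}$ times $\sup_{T \leq T_0}\E[|\log\sZ_T - \log\sZ_\infty|^p]$, and the latter is finite because $\E[|\log\sZ_T|^q]$ is bounded uniformly in $T$ (for every $q<\infty$, or at least every $q$ in a suitable range) by the estimates of \cite{CCM19}, together with $\log\sZ_T \to \log\sZ_\infty$ a.s.\ and in $L^q$. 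So the real content is the regime $T \geq T_0$.

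For $T$ large I would write, with $q = 2/p > 1$ and $q' $ its conjugate exponent,
\begin{equation}\label{eq:interp-plan}
\E\big[|\log\sZ_T - \log\sZ_\infty|^p\big] \leq \E\big[|\log\sZ_T - \log\sZ_\infty|^2\big]^{1/q}\;\E\big[|\log\sZ_T - \log\sZ_\infty|^{p q'}\big]^{1/q'},
\end{equation}
where $pq' = p q'$ is some fixed finite exponent (note $pq' = 2p/(2-p)$, which is finite for $p<2$). The second factor is bounded uniformly in $T$ by the $\cite{CCM19}$ moment bounds applied to both $\log\sZ_T$ and $\log\sZ_\infty$ (using again $L^{pq'}$-convergence, or directly a uniform bound on $\E[|\log\sZ_T|^{pq'}]$ and Fatou for the limit). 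The first factor is where Proposition \ref{prop:covar} enters: one needs $\|\log\sZ_T - \log\sZ_\infty\|_2^2 \lesssim T^{-(d-2)/2}$. This is not literally \eqref{eq:asdifnorm} — that estimate is for $\sZ_T - \sZ_\infty$, not its logarithm — so I would pass from $\sZ$ to $\log\sZ$ using that $\sZ_T, \sZ_\infty$ are bounded away from $0$ in a strong enough sense: since $\sZ_\infty > 0$ a.s.\ and the family $\{\sZ_T\}$ is uniformly integrable with $\inf_T \sZ_T$ having good lower-tail control (again via \cite{CCM19}), one gets $|\log\sZ_T - \log\sZ_\infty| \leq K \,|\sZ_T - \sZ_\infty|$ off an event of small probability, or more robustly one applies Hölder once more to split $\E[|\log\sZ_T-\log\sZ_\infty|^2]$ into $\E[|\sZ_T-\sZ_\infty|^{2+\eta}]^{2/(2+\eta)}$ times a negative-moment factor $\E[(\sZ_T \wedge \sZ_\infty)^{-c}]$ that is bounded uniformly in $T$. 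Combining, $\|\log\sZ_T - \log\sZ_\infty\|_2^2 \lesssim \|\sZ_T-\sZ_\infty\|_{2+\eta}^2 \lesssim T^{-(d-2)/2}$ (the last step itself using hypercontractivity/Gaussian concentration to upgrade the $L^2$ rate \eqref{eq:asdifnorm} to an $L^{2+\eta}$ rate, which is standard for Wiener-chaos-type quantities). Feeding this into \eqref{eq:interp-plan} gives $\E[|\log\sZ_T - \log\sZ_\infty|^p] \lesssim T^{-p(d-2)/4 \cdot (2/p) \cdot \frac12}$... more precisely the $T^{-(d-2)/2}$ from the first factor raised to the power $1/q = p/2$ yields $T^{-p(d-2)/4}$, which exactly cancels the prefactor $T^{p(d-2)/4}$. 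Hence $\sup_{T \geq T_0}\E[|T^{(d-2)/4}(\log\sZ_T - \log\sZ_\infty)|^p] < \infty$, and together with the bounded-$T$ regime this proves \eqref{eq:lpbound}.

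The main obstacle I anticipate is the passage from $L^2$-control of $\sZ_T - \sZ_\infty$ to $L^p$-control of $\log\sZ_T - \log\sZ_\infty$ with the \emph{same} decay rate: one must control negative moments of $\sZ_T$ uniformly in $T$ (equivalently, the lower tail of the free energy), and one must upgrade the $L^2$ rate in \eqref{eq:asdifnorm} to an $L^{2+\eta}$ rate. Both are exactly the kind of uniform-in-$T$ estimates that \cite{CCM19} supplies in the small-$\beta$ regime (uniform integrability of $\{\sZ_T^{\pm s}\}$ for a range of $s$, and moment bounds on $\log\sZ_T$), so modulo quoting those inputs the argument is a two-step Hölder interpolation. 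I would organize the write-up as: (i) recall the needed uniform moment and negative-moment bounds from \cite{CCM19}; (ii) deduce $\|\log\sZ_T - \log\sZ_\infty\|_2 \lesssim T^{-(d-2)/4}$ from Proposition \ref{prop:covar} plus (i); (iii) interpolate as in \eqref{eq:interp-plan} and conclude.
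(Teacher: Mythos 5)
Your overall strategy --- interpolate between an $L^2$-type decorrelation rate and a uniform higher-moment bound from \cite{CCM19}, and control negative moments of $\sZ$ to pass from $\sZ$ to $\log\sZ$ --- is the right kind of idea, and indeed the paper's proof uses exactly those ingredients. However, the pivot of your plan, step~(ii), is a genuine gap. You need
\[
\big\|\log\sZ_T - \log\sZ_\infty\big\|_2^2 \lesssim T^{-(d-2)/2},
\]
i.e.\ the conclusion of Proposition~\ref{prop:LpBoundLogZ} for $p=2$, which neither the paper nor \cite{CCM19} establishes and which the proposition itself carefully stops short of claiming. The two routes you sketch to get it both run into trouble. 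The route via $|\log\sZ_T-\log\sZ_\infty|\le K|\sZ_T-\sZ_\infty|$ ``off a small event'' is left undeveloped: what ultimately has to be done with the bad event is precisely where the argument lives, and you do not pursue it. The route via H\"older into $\|\sZ_T-\sZ_\infty\|_{2+\eta}^2\cdot(\text{negative moment})$ requires the \emph{rate} $\|\sZ_T-\sZ_\infty\|_{2+\eta}\lesssim T^{-(d-2)/4}$, and the ``hypercontractivity/Gaussian concentration'' step you invoke does not apply: $\sZ_T-\sZ_\infty$ is not an element of a fixed Wiener chaos (its chaos expansion has components of every order), so $L^p$-equivalence of moments is not available, and even a chaos-by-chaos argument needs $\beta<\beta_{L^{2+\eta}}$ and a separate computation of the rate, none of which is in the paper. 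Your observation that the $T$-powers ``exactly cancel'' in \eqref{eq:interp-plan} is correct arithmetic, but it sits on an unproved input.

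The paper avoids this by never leaving the regime of moments strictly below $2$. It writes $\log\sZ_T-\log\sZ_\infty=\log\bigl(1-\frac{\sZ_\infty-\sZ_T}{\sZ_\infty}\bigr)$, splits on the event $\{|\frac{\sZ_\infty-\sZ_T}{\sZ_\infty}|\le \tfrac12\}$ versus its complement, and on the good event uses $|\log(1-x)|\le C|x|$ together with H\"older to reduce to $\E\big[T^{(d-2)/2}(\sZ_\infty-\sZ_T)^2\big]^{p/2}\,\E[\sZ_\infty^{-pq}]^{1/q}$, i.e.\ exactly the $L^2$ estimate \eqref{eq:asdifnorm} together with negative moments of $\sZ_\infty$. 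On the bad event it first uses $|a+b|^p\le 2^p(|a|^p+|b|^p)$ and H\"older with exponents $(a,b)$, $b>1$ chosen so small that $pb<2$, bounds $\E[|\log\sZ_T|^{pa}]$ and $\E[|\log\sZ_\infty|^{pa}]$ uniformly in $T$ via \eqref{eq:negative}, and estimates the probability of the bad event by Markov with the $pb$-th moment, which is then handled by the very same $B_T$-type bound with $p$ replaced by $pb$. The crucial point is that because $p<2$ one always has room to take $b>1$ with $pb<2$, so only the $L^2$ decay of $\sZ_\infty-\sZ_T$ is ever needed, never $L^{2+\eta}$. If you want to keep your interpolation scheme, you would need to either (a) replace the $L^2$ factor in \eqref{eq:interp-plan} by an $L^{p'}$ factor with $p<p'<2$ and prove the rate at level $p'$ by a self-improving/bootstrap argument, or (b) genuinely establish the $L^{2+\eta}$ rate for $\sZ_\infty-\sZ_T$ (e.g.\ via the martingale bracket and Burkholder--Davis--Gundy, for $\beta$ small enough that $\E[\sZ_\infty^{2+\eta}]<\infty$), but as written the proof is incomplete.
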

\begin{proof}
Write $\log \sZ_T - \log \sZ_\infty$ as $\log (1- \frac{\sZ_\infty - \sZ_T}{\sZ_\infty})$ and decompose the LHS of \eqref{eq:lpbound} as:
\begin{align*}
&\E\left[\left|T^{(d-2)/4}\left(\log \sZ_T - \log\sZ_\infty\right)\right|^p \mathbf{1}\left(\left|\frac{\sZ_\infty - \sZ_T}{\sZ_\infty}\right| > \frac 12\right)\right]\\
& + \E\left[\left|T^{(d-2)/4}\log \left(1- \frac{\sZ_\infty - \sZ_T}{\sZ_\infty}\right)\right|^p \mathbf{1}\left(\left|\frac{\sZ_\infty - \sZ_T}{\sZ_\infty}\right| \leq \frac 12 \right)\right],
\end{align*}
which defines the sum of two terms $A_T+B_T$. For the second term, we can use the upper bound $|\log(1-x)|\leq C|x|$ which holds for all $x\leq 1/2$, and, combined with H\"older's inequality with $1/(2/p)+1/q=1$, entails
\begin{equation} \label{eq:boundBTterm}
B_T \leq C \E\left[\left|T^{(d-2)/4} \left(\frac{\sZ_\infty - \sZ_T}{\sZ_\infty}\right)\right|^p \right]\leq C \E\left[T^{(d-2)/2} \left(\sZ_\infty - \sZ_T\right)^2 \right]^{p/2}\E\left[\sZ_\infty^{-pq}\right]^{1/q}.
\end{equation}
In \cite{CCM19}, it is proven that $\sZ_\infty$ admits all negative moments for all $\b<\b_{L^2}$. Moreover $T^{(d-2)/2}(\sZ_\infty - \sZ_T)^2$ is bounded in $L^1$ by \eqref{eq:asdifnorm}, hence $\sup_T B_T$ is finite. 

Then, we use the upper bound $|\alpha + \mu|^p \leq 2^p(|\alpha|^p + |\mu|^p)$ and H\"older's inequality with $1/a + 1/b = 1$, choosing $b>1$ small enough so that $bp<2$, and obtain:
\begin{align*}
A_T & \leq  T^{p(d-2)/4 } \, 2^p\left(\E\left[|\log Z_T|^{pa}\right]^{1/a} + \E\left[|\log Z_\infty|^{pa}\right]^{1/a}\right) \IP\left(\left|\frac{\sZ_\infty - \sZ_T}{\sZ_\infty}\right| > \frac 12\right)^{1/b}\\
& \leq C T^{\frac{p}{2}\frac{(d-2)}{2}} \E\left[ \left|\frac{\sZ_\infty - \sZ_T}{\sZ_\infty} \right|^{pb}\right]^{1/b}\\
& \leq C \E\left[T^{(d-2)/2} \left(\sZ_\infty - \sZ_T\right)^2 \right]^{p/2},
\end{align*}
where the second inequality comes 
\begin{equation}\label{eq:negative}
\sup_T \E\left[|\log\sZ_T|^\theta\right]<\infty, \qquad \forall\theta\in \mathbb R,
\end{equation}
which has been shown for $\beta \in (0,\beta_0)$ in \cite[Theorem 1.3]{CCM19}\footnote{In \cite[Theorem 1.3]{CCM19} the existence of negative (and positive) moments is stated for $\log\mathscr Z_\infty$. However, exactly the same proof yields a uniform (in $T$) estimate for $\log \mathscr Z_T$.} and Markov's inequality, and where the third inequality follows from the upper-bound \eqref{eq:boundBTterm} with $p$ replaced by $pb$ and again invoking \eqref{eq:negative}. As above, this implies that $\sup_T A_T$ is finite, which ends the proof.
\end{proof}

\subsection{Proof of Theorem \ref{th:CVmarginalZ}}   
 We start by computing the stochastic differential and bracket of the martingale $\sZ_T$ defined as follows: 
\begin{eqnarray} \nn
&\dd \sZ_T=
\b E_0 \bigg[ \Phi_T(W)\,\ (\phi \star \xi) (T,W_T)\bigg]\ \dd T \;,\\ \label{eq:differentialM}
&\dd \langle  \sZ\rangle_T = \b^2 E_0^{\otimes 2} \bigg[ \Phi_T(W^{\ssup 1})\Phi_T(W^{\ssup 2}) V\big(W_T^{\ssup 1}-W_T^{\ssup 2}\big)  \bigg]  \dd T \\ \label{eq:bracketM}
&=  \b^2 \sZ_T^{2} \times E_{0,\b,T}^{\otimes 2}  \left[ V(W_T^{\ssup 1}-W_T^{\ssup 2})  \right]  \dd T \;,
\end{eqnarray}
where $E_{0,\b,T}^{\otimes 2}$ is the expectation taken with respect to the product of two independent polymer measures, 
\begin{equation}\label{polym-meas}
P_{0,\beta,T}(\dd W^{\ssup i})=\frac 1 {Z_{\beta,T}} \,\, \exp\bigg\{ \beta \int_0^T \int_{\rd} \phi(W_s^{\ssup i}-y) \,\,\dB(y, s) \dd s \dd y \bigg\} \,\, P(\dd W^{\ssup i})\qquad i=1,2,
\end{equation}
with $ {Z_{\beta,T}}= e^{-\frac{\beta^2}{2}TV(0)}\sZ_T $.
The proof of Theorem \ref{th:CVmarginalZ} splits into two main steps. The first step involves showing the following estimate whose proof constitutes Section \ref{sec:prclaim1}: 
 \begin{proposition}\label{prop:claim1} There exists $\b_0\in(0,\infty)$, such that for all $\b < \b_0$, as $T \to \8$,
\begin{equation}\nn
T^{\frac{d}{2}} \left(\frac{\dd}{\dd t} \langle  \sZ \rangle\right)_T -  \fC_0(\b) \sZ_T^2 \stackrel{L^2}{\longrightarrow} 0, 
\end{equation}
where $\mathfrak C_0(\b)$ is defined in \eqref{C0}.
\end{proposition}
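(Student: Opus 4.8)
\textbf{Proof proposal for Proposition \ref{prop:claim1}.}
The plan is to expand the squared $L^2(\IP)$-norm of the difference $T^{d/2}(\frac{\dd}{\dd t}\langle\sZ\rangle)_T-\fC_0(\b)\sZ_T^2$ and show that each of the resulting mixed moments converges to the same limit. Using the explicit formula \eqref{eq:differentialM} and Gaussian integration by parts (the Wick/Feynman identity \eqref{eq:expgauss} with $n=2,3,4$), the quantities
$$
\E\!\left[\Big(\tfrac{\dd}{\dd t}\langle\sZ\rangle\Big)_T\sZ_T^2\right],\qquad \E\!\left[\Big(\tfrac{\dd}{\dd t}\langle\sZ\rangle\Big)_T^2\right],\qquad \E\!\left[\sZ_T^4\right]
$$
are rewritten as Brownian expectations over, respectively, three and four independent copies $W^{\ssup 1},\dots,W^{\ssup 4}$ of the path, weighted by $\exp\{\b^2\int_0^T\sum_{i<j}V(W^{\ssup i}_s-W^{\ssup j}_s)\dd s\}$, with extra factors $V(W^{\ssup i}_T-W^{\ssup j}_T)$ (one such factor for $\sZ_T^2\cdot\frac{\dd}{\dd t}\langle\sZ\rangle$ up to a cross term, two such factors for $(\frac{\dd}{\dd t}\langle\sZ\rangle)^2$). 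The point of the $T^{d/2}$ normalization is that a single factor $\e^{-d}V((W^{\ssup i}_T-W^{\ssup j}_T)/1)$—equivalently $V(W^{\ssup i}_T-W^{\ssup j}_T)$ with the pair difference being a Gaussian of variance $\sim 2T$—contributes a local-time-type factor of order $T^{-d/2}$, since $\E[V(W^{\ssup i}_T-W^{\ssup j}_T)\mid\mathcal F_0^{\mathrm{rest}}]\approx (4\pi T)^{-d/2}\int V$ by the local CLT.

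The key computation is therefore to identify the limit of $T^{d/2}\,\E_0^{\otimes 2}[\,\cdots\,V(W^{\ssup 1}_T-W^{\ssup 2}_T)\,]$. I would condition on the path up to a large but fixed time $M$, freeze the interaction $\exp\{\b^2\int_0^\8 V(W^{\ssup 1}_s-W^{\ssup 2}_s)\dd s\}$ (legitimate because $V$ is compactly supported, so for diffusive paths the two copies separate and the integral converges), and then use that $W^{\ssup 1}_T-W^{\ssup 2}_T$, suitably rescaled, is asymptotically Gaussian and independent of the frozen interaction. Writing the difference walk as $\sqrt 2 W$ and using $\int_{\rd}V(y)\dd y=(\int\phi)^2=1$, one gets
$$
T^{d/2}\,E_0^{\otimes 2}\!\left[\Phi_T(W^{\ssup 1})\Phi_T(W^{\ssup 2})V(W^{\ssup 1}_T-W^{\ssup 2}_T)\right]\longrightarrow \frac{1}{(4\pi)^{d/2}}\,\b^{-2}\,\gamma^2(\b)\,\E[\sZ_\8^2],
$$
matching the definition \eqref{def-sigma-beta} of $\gamma^2(\b)=\b^2\int V(y)E_y[\exp\{\b^2\int_0^\8 V(W_{2s})\dd s\}]\dd y$ and the definition \eqref{C0} of $\fC_0=\gamma^2(\b)(4\pi)^{-d/2}$; hence $\E[(T^{d/2}\frac{\dd}{\dd t}\langle\sZ\rangle)_T\,\sZ_T^2]\to\fC_0\,\E[\sZ_\8^4]$ and similarly $\E[(T^{d/2}\frac{\dd}{\dd t}\langle\sZ\rangle)_T^2]\to\fC_0^2\,\E[\sZ_\8^4]$, while $\E[\sZ_T^4]\to\E[\sZ_\8^4]$ by $L^2$-martingale convergence of $\sZ_T^2$ (valid for $\b<\b_0$ small). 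Summing the three limits with signs $+1,-2\fC_0,+\fC_0^2$ gives $0$.

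The main obstacle—and where "substantial technical work" is needed, as the authors warn—is making the freezing/decoupling rigorous uniformly in $T$ with the right $L^2$ control, i.e. proving the requisite uniform integrability so that the local-CLT asymptotics can be integrated against the (unbounded, since it involves negative-moment-type or high-moment-type weights) polymer weights. Concretely one needs: (i) a uniform-in-$T$ bound on $\E[\sZ_T^4]$ and on the analogous four-replica partition function with one or two pinning factors $V$, which requires $\b$ below an $L^2$- (indeed $L^4$-) threshold $\b_0$ so that the replica interaction is subcritical; (ii) control of the contribution of paths for which the two (or more) copies are still close at time $T$—these are rare, of probability $O(T^{-d/2})$, but must be shown not to inflate under the exponential weight; and (iii) interchanging the $T\to\8$ limit with the Brownian expectations, via dominated convergence using a Hölder split exactly as in the proof of Proposition \ref{prop:covar} (the bound \eqref{eq-ui} and its multi-replica analogue), choosing $p>1$ small so that the singular factor $|W_T|^{-(d-2)}$-type terms are integrable, i.e. $p(1+\delta)\le d/(d-2)$. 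Once these uniform estimates are in place, the convergence of each mixed moment follows from the joint convergence in law of the interaction functionals and the rescaled terminal differences, and the three limits cancel as above.
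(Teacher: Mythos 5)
Your overall strategy, namely rewriting the objects as Brownian replica expectations and letting the $T^{d/2}$ normalization absorb the heat-kernel scaling of the terminal pinning $V(W^{\ssup 1}_T-W^{\ssup 2}_T)$, is the right one and is indeed what the paper does. Expanding the $L^2$-norm into the three mixed moments $\E[T^d(\frac{\dd}{\dd t}\langle\sZ\rangle)_T^2]$, $\E[T^{d/2}(\frac{\dd}{\dd t}\langle\sZ\rangle)_T\,\sZ_T^2]$ and $\E[\sZ_T^4]$ is a slightly different (and legitimate) bookkeeping from the paper's, which instead studies $\E[\mathscr L_T^2]$ directly and produces the cancellation in one stroke via Propositions~\ref{prop-claim3}--\ref{prop-claim4}; both routes need the same uniform-integrability machinery, and the paper's has the minor advantage that one Brownian-bridge computation settles all three limits at once rather than three times.

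The genuine gap is in the mechanism you propose for identifying the constant. You write that you would ``condition on the path up to a large but fixed time $M$, freeze the interaction $\exp\{\b^2\int_0^\infty V(W^{\ssup 1}_s-W^{\ssup 2}_s)\,\dd s\}$'' and then treat $W^{\ssup 1}_T-W^{\ssup 2}_T$ as a free Gaussian independent of the frozen interaction. If this heuristic were accurate, the contribution of the terminal factor would simply be $T^{d/2}\E[V(W^{\ssup 1}_T-W^{\ssup 2}_T)]\to (4\pi)^{-d/2}\int V=(4\pi)^{-d/2}$, and the limiting constant would be $\b^2(4\pi)^{-d/2}$ rather than $\fC_0=\gamma^2(\b)(4\pi)^{-d/2}$ with $\gamma^2(\b)=\b^2\int V(y)\,E_y[\mathrm e^{\b^2\int_0^\infty V(W_{2s})\dd s}]\dd y$. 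The extra factor $E_y[\mathrm e^{\b^2\int_0^\infty V(W_{2s})\dd s}]$ arises precisely because the insertion of $V(W^{\ssup 1}_T-W^{\ssup 2}_T)$ conditions the two replicas to be $O(1)$-close at time $T$, so that the pair $(1,2)$ acquires a non-negligible interaction over a whole boundary layer $[T-m,T]$ with $m=m(T)\to\infty$, $m=o(T)$; this is the contribution computed in Propositions~\ref{prop-claim1}--\ref{prop-claim2} by splitting $[0,T]$ into $[0,m]\cup[m,T-m]\cup[T-m,T]$ and analyzing the Brownian bridge on the terminal window via Lemma~\ref{lemma1-claim1}/Lemma~\ref{lemma2-claim1}. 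A symmetric growing window at both ends is essential; a fixed $M$ cannot see it. Correspondingly, your item (ii) mischaracterizes the paths still close at time $T$ as error terms to be bounded away: they are not rare noise, they are \emph{the} surviving $O(1)$ contribution after the $T^{d/2}$ rescaling, and inside that event the two copies interact non-trivially over $[T-m,T]$. (A smaller point: the display in which you state the one-replica-pair limit has a random left-hand side $T^{d/2}E_0^{\otimes 2}[\Phi_T\Phi_T V]$ equated to a deterministic right-hand side $(4\pi)^{-d/2}\b^{-2}\gamma^2(\b)\E[\sZ_\infty^2]$; presumably you mean to take an outer $\E$.) Once the boundary-layer contribution is explicitly incorporated and the window $m(T)$ is made to grow, the rest of your outline (the four-replica analogue, the $L^2$/$L^4$ threshold for $\b$, and the H\"older/UI control as in~\eqref{eq-ui}) aligns with the paper's proof.
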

For the second step, we define a sequence $\{G^{\ssup T}_\tau\}_{\tau\geq 1}$ of stochastic processes on time interval $[1, \8)$, with 
\begin{equation}\label{def:GT}
G^{\ssup T}_\tau=  T^{\frac{d-2}{4}}  \left( \frac{\sZ_{\tau T}}{\sZ_T} -1 \right) \;,\qquad \tau \geq 1.
\end{equation}
Then, for all $T$, $G^{\ssup T}$ is a continuous martingale for the filtration ${\mathcal B}^{\ssup T}=({\mathcal B}^{\ssup T}_\tau)_{\tau \geq 1}$, where ${\mathcal B}^{(T)}_\tau$ denotes
the $\sigma$-field generated by the white noise $\dB$ up to time $\tau T$. Then we need the following result, which 
provides convergence at the process level:
\begin{theorem} \label{th:cvGP}
For $\b < \beta_0$, as $T \to \8$, we have convergence
\begin{equation} 
G^{\ssup T} \stackrel{\rm law}{\longrightarrow} G
 \end{equation}
on the space of continuous functions on $[1, \8)$, equipped with the topology of uniform 
convergence on compact intervals, where $G$ is a mean zero Gaussian process with independent increments  and variance
\begin{equation} \label{eq:VarianceLimitMartingale}
g(\tau) = \frac{2}{d-2}\fC_0(\beta) \,\, [1-\tau^{-\frac{d-2}{2}}].
\end{equation}
\end{theorem}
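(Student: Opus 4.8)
The plan is to prove Theorem~\ref{th:cvGP} by verifying the hypotheses of the martingale functional central limit theorem in the form quoted in the footnote: for a sequence of continuous $L^2$-martingales $\{G^{\ssup T}\}_T$ with $G^{\ssup T}_1=0$, if the brackets $\langle G^{\ssup T}\rangle_\tau$ converge in probability (for each fixed $\tau$) to a deterministic, continuous, nondecreasing function $g(\tau)$, then $G^{\ssup T}\to G$ in law on $\mathscr C([1,\infty);\R)$ with the topology of uniform convergence on compacts, where $G$ is the centered Gaussian process with independent increments and variance $g$. Here $g(\tau)=\fC_0(\beta)\int_1^\tau\sigma^{-d/2}\dd\sigma=\frac{2}{d-2}\fC_0(\beta)[1-\tau^{-(d-2)/2}]$, which is indeed continuous, nondecreasing and bounded. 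So the whole theorem reduces to the single statement
\begin{equation}\label{plan:brackconv}
\langle G^{\ssup T}\rangle_\tau \;\stackrel{\IP}{\longrightarrow}\; g(\tau)=\fC_0(\beta)\int_1^\tau \sigma^{-d/2}\dd\sigma \qquad\text{for each fixed }\tau\geq 1.
\end{equation}

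To establish \eqref{plan:brackconv} I would start from the exact identity \eqref{bracket}, namely
$\langle G^{\ssup T}\rangle_\tau = \frac{1}{\sZ_T^2}\int_1^\tau (\sigma T)^{d/2}\big(\tfrac{\dd}{\dd t}\langle \sZ\rangle\big)_{\sigma T}\,\sigma^{-d/2}\dd\sigma$, and feed in Proposition~\ref{prop:claim1}, which says $(\sigma T)^{d/2}(\tfrac{\dd}{\dd t}\langle\sZ\rangle)_{\sigma T}=\fC_0(\beta)\sZ_{\sigma T}^2 + r_{\sigma T}$ with $\|r_{S}\|_{L^2(\IP)}\to 0$ as $S\to\infty$. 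Substituting gives a main term $\frac{\fC_0(\beta)}{\sZ_T^2}\int_1^\tau \sZ_{\sigma T}^2\,\sigma^{-d/2}\dd\sigma$ and a remainder. For the remainder, dominated convergence on the bounded interval $[1,\tau]$ together with the $L^2$-decay of $r_{\sigma T}$ and the almost sure positivity $\sZ_T\to\sZ_\infty>0$ (so $\sZ_T^{-2}$ is tight) shows it tends to $0$ in probability; a clean way is to bound its $L^1$ norm after a Cauchy--Schwarz split using the negative-moment bounds $\sup_T\E[|\log\sZ_T|^\theta]<\infty$ from \eqref{eq:negative} (equivalently $\sup_T\E[\sZ_T^{-p}]<\infty$), exactly as in the proof of Proposition~\ref{prop:LpBoundLogZ}. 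For the main term, I would use that for each fixed $\sigma\in[1,\tau]$, $\sZ_{\sigma T}/\sZ_T\to 1$ almost surely as $T\to\infty$ (both converge to $\sZ_\infty>0$), so $\sZ_{\sigma T}^2/\sZ_T^2\to 1$ pointwise; then an interchange of limit and integral — justified by uniform integrability of $\{\sZ_{\sigma T}^2/\sZ_T^2\}$, again via H\"older and the uniform positive/negative moment bounds on $\sZ_\cdot$ — yields $\frac{\fC_0(\beta)}{\sZ_T^2}\int_1^\tau \sZ_{\sigma T}^2\,\sigma^{-d/2}\dd\sigma\to \fC_0(\beta)\int_1^\tau\sigma^{-d/2}\dd\sigma=g(\tau)$ in probability. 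Combining, \eqref{plan:brackconv} holds.

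With \eqref{plan:brackconv} in hand I would then invoke the martingale CLT. Concretely: the brackets converge in probability pointwise in $\tau$ to the deterministic continuous limit $g$; since $g$ is continuous and the $\langle G^{\ssup T}\rangle$ are nondecreasing, pointwise convergence upgrades to locally uniform convergence in probability by a Dini-type argument; this is precisely the hypothesis of the continuous-martingale invariance principle (e.g. Jacod--Shiryaev, or \cite{CN95} as cited), which gives $G^{\ssup T}\to G$ in law in $\mathscr C([1,\infty);\R)$, $G$ being the time-changed Brownian motion $B_{g(\tau)}$, i.e. the centered Gaussian process with independent increments and $\mathrm{Var}(G_\tau)=g(\tau)=\frac{2}{d-2}\fC_0(\beta)[1-\tau^{-(d-2)/2}]$. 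I would also note that the time change is automatically continuous and bounded, so no issues arise at $\tau\to\infty$.

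The main obstacle is the passage $\sZ_{\sigma T}^2/\sZ_T^2\to 1$ \emph{inside} the integral with sufficient uniformity: one needs more than pointwise a.s. convergence, since the integrand must be controlled uniformly in $\sigma$ (not just for each fixed $\sigma$) and in $T$ to interchange limit and integration, and the factor $\sZ_T^{-2}$ is only almost surely finite, not bounded. The key tools to overcome this are the uniform moment bounds from \cite{CCM19} — $\sup_T\E[|\log\sZ_T|^\theta]<\infty$ for all $\theta\in\R$, hence uniform positive and negative $L^p$ bounds on $\sZ_T$ — together with the $L^2$-error estimate \eqref{keystep}/Proposition~\ref{prop:claim1} and the $L^2$-closeness $\|\sZ_\infty-\sZ_T\|_2^2\sim \fC_1\fC_2\E[\sZ_\infty^2]T^{-(d-2)/2}$ of Proposition~\ref{prop:covar}; all the heavy analytic lifting has been quarantined into those two propositions, so the present proof is essentially the assembly of these ingredients plus an application of the martingale invariance principle. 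Everything else (the explicit value of $g$, its monotonicity, the Dini upgrade) is routine.
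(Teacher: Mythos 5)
Your proposal is correct and takes essentially the same route as the paper: reduce to the bracket convergence $\langle G^{\ssup T}\rangle_\tau\to g(\tau)$ in probability, substitute Proposition~\ref{prop:claim1} into the exact identity \eqref{bracket} to split the difference $\langle G^{\ssup T}\rangle_\tau-g(\tau)$ into a main term and a remainder, and conclude with the martingale functional CLT. The only point of divergence is how the a.s.\ finite but potentially large factor $\sZ_T^{-2}$ is controlled: the paper restricts to the high-probability event $A_\varepsilon=\{\sup_t\sZ_t\vee\sup_t\sZ_t^{-1}\leq\varepsilon^{-1}\}$ on which both $\sZ_t$ and $\sZ_t^{-1}$ are uniformly bounded, and then lets $\varepsilon\to 0$, whereas you invoke tightness (which works) and suggest moment bounds as an alternative. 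Be careful with the parenthetical asserting that $\sup_T\E[|\log\sZ_T|^\theta]<\infty$ is ``equivalently $\sup_T\E[\sZ_T^{-p}]<\infty$'': this is not an equivalence (uniformly bounded moments of the logarithm are strictly weaker than uniformly bounded negative polynomial moments of $\sZ_T$), so the Cauchy--Schwarz variant you sketch would require the stronger uniform-in-$T$ negative-moment input, which the paper deliberately sidesteps via the event $A_\varepsilon$.
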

\paragraph{Proof of Theorem  \ref{th:cvGP} (Assuming Proposition \ref{prop:claim1}):}  From the definition \eqref{def:GT} we compute the bracket 
of the square-integrable martingale $G^{\ssup T}$,
$$
\begin{aligned}
\langle G^{\ssup T}\rangle_\tau = \frac{T^\frac{d-2}{2}}{\sZ_T^{2}}\,\, \langle \sZ \rangle_{\tau T}&= \frac{T^\frac{d-2}{2}}{\sZ_T^2} \int_1^{\tau T} 
\left(\frac{\dd}{\dd t} \langle  \sZ \rangle\right)_{s} \dd s
\\  
&= \frac{T^\frac{d}{2}}{\sZ_T^2} {\displaystyle  \int_1^\tau }
\left(\frac{\dd}{\dd t} \langle  \sZ \rangle\right)_{\sigma T} \dd \sigma 
\end{aligned}
$$
by replacing the variables $s=\sigma T$. Then,
$$
\begin{aligned} 
\langle G^{\ssup T}\rangle_\tau -g(\tau)
& = {\displaystyle  \int_1^\tau }
\bigg[  \frac{(\sigma T)^\frac{d}{2}}{\sZ_T^2}
\left(\frac{\dd}{\dd t} \langle  \sZ \rangle\right)_{\sigma T} 
-  \fC_0\bigg]
\sigma^{-d/2} \dd \sigma 
\\ 
&= {\displaystyle  \int_1^\tau }
\frac{\sZ_{\sigma T}^2}{\sZ_{ T}^2}
\left[  \frac{(\sigma T)^\frac{d}{2}}{\sZ_{\sigma T}^2}
\left(\frac{\dd}{\dd t} \langle  \sZ \rangle\right)_{\sigma T} 
- \fC_0\right]
\sigma^{-d/2} \dd \sigma 
+
 \frac{ \fC_0}{\sZ_T^2} 
{\displaystyle  \int_1^\tau }
\left[ \sZ_{\sigma T}^{2}\!-\!{\sZ_T^2}\right]
\sigma^{-d/2} d \sigma 
\\
& =: I_1 + I_2
\end{aligned}
$$
As $T \to \8$ the last integral vanishes in $L^1$ and $I_2$ vanishes in probability. 
For $\varepsilon \in (0,1]$, introduce the event 
$$
A_\varepsilon= \bigg\{ \sup\big\{\sZ_t; t\in[0,\infty]\big\}\vee\sup\big\{\sZ_t^{-1}; t\in[0,\infty]\big\} \leq \varepsilon^{-1}\bigg\}
$$
and observe that $\lim_{\varepsilon \to 0} \IP( A_\varepsilon) =1$ since $ \sZ_t$ is continuous, positive with a positive limit.  So, we can estimate the expectation of $I_1$ by
$$
\E \big[ {\bf 1}_{ A_\varepsilon}  \vert I_1 \vert
 \big]
\leq \frac{\tau}{\varepsilon^6} \,\, \sup_{ t \geq T}\bigg\{ \bigg \| 
t^{\frac{d}{2}} \left(\frac{\dd}{\dd t} \langle  \sZ \rangle\right)_t \!-\!   \fC_0 \sZ_t^2 
\bigg\|_1 \bigg\}  ,
$$
which vanishes by Proposition \ref{prop:claim1}. Thus, $\langle G^{\ssup T} \rangle \to g$ in probability. 
Since for the sequence of continuous martingales $G^{\ssup T}$  the brackets converge pointwise 
to a deterministic limit $g$, we derive that the sequence  $G^{\ssup T}$ itself converges in law to a Brownian motion with time-change given by $g$, that is, the process $G$ defined in the statement of Theorem \ref{th:cvGP} (see \cite[Theorem 3.11 in  Chapter 8]{JS87}), which is proved now.
\qed

\paragraph{Concluding the proof of Theorem \ref{th:CVmarginalZ}.} 
Write
\begin{eqnarray*}
T^{\frac{d-2}{4}}  \left( \frac{\sZ_\8}{\sZ_T} -1 \right)  &=& G_\8^{\ssup T}\\&=& G_\tau^{\ssup T} +\frac{ T^{\frac{d-2}{4}} [\sZ_\8-\sZ_{\tau T}] }{\sZ_T} \;,
\end{eqnarray*}
and consider the last term. By \eqref{eq:asdifnorm}, the numerator has $L^2$-norm tending to 0 as $\tau \to \8$ uniformly in $T \geq 1$ whereas the denominator has a positive limit.
Then, the last term vanishes in the double limit $T \to \8, \tau \to \8$, and therefore 
$$
\lim_{T \to \8} T^{\frac{d-2}{4}}  \left( \frac{\sZ_\8}{\sZ_T} -1 \right) = \lim_{\tau \to \8} \lim_{T \to \8} G^{\ssup T}(\tau) ,
$$
which is the Gaussian law with variance $g(\8)\!=\frac{2}{d-2}\fC_0$ by Theorem \ref{th:cvGP}. Hence,
$$
T^{\frac{d-2}4}\bigg(\frac{\sZ_T(x)-\sZ_\infty(x)}{\sZ_T(x)}\bigg)  \cvlaw \mathcal N\left(0,\frac{2}{d-2}\fC_0\right)\,\,\, \mbox{as}\,\,T\to\infty
$$
\qed

\subsection{Proof of Theorem \ref{co:tclMT}.} \label{subsec:secondProof}
Note that the convergences \eqref{eq1} and \eqref{eq2} are equivalent by identification \eqref{eq:uZ}, we therefore only prove \eqref{eq1}. 
By It\^o's formula, equations \eqref{eq:differentialM} and \eqref{eq:bracketM} imply that 
\begin{equation}\label{eq:claim}
\log \sZ_T = N_T - \frac 12 \langle N\rangle_T,
\end{equation}
where
$$
\begin{aligned}
&N_T=\beta \int_0^T \int_{\R^d} E_{0,\b,t} [ \phi (y-W_t) ]  \xi(t,y) \dd y \dd t \\
& \langle N\rangle_T = 
\b^2
\int_0^T E_{0,\b,s}^{\otimes 2}  \left[ V(W_s^{\ssup 1}-W_s^{\ssup 2})  \right]  \dd s,
\end{aligned}
$$
with $N$ being a martingale.
Then, Proposition \ref{prop:claim1} shows that, in probability and as $T\to\infty$,
\[T^{d/2}\frac{\dd}{\dd T}\langle N\rangle_T \to \fC_0.\] 

Mimicking the proof of Theorem \ref{th:cvGP}, we further get that the bracket of the rescaled martingale $N^{\ssup T}:\tau\to T^{(d-2)/4}(N_{\tau T}-N_T)$ converges in probability as $T\to\infty$ to the deterministic function $g(\tau)$, where $g$ is given by \eqref{eq:VarianceLimitMartingale},  implying thereby the convergence $N^{\ssup T}\cvlaw G$.
Moreover, convergence of the bracket also implies that as $T\to\infty$,
\[
T^{(d-2)/4}(\langle N \rangle_{\tau T} - \langle N \rangle_{T}) {\stackrel{\IP}{\longrightarrow}}\, 0,
\]  
so that the bracket part in \eqref{eq:claim} vanishes under the scaling limit.
Putting things together, we obtain that for all $\tau \geq 1$,
\begin{equation} \label{eq:CVforLog}
T^{(d-2)/4}(\log \sZ_{\tau T} - \log \sZ_{T}) \cvlaw G(\tau).
\end{equation}
Then, we write:
\begin{equation*}
T^{(d-2)/4}\left(\log \sZ_{\infty} - \log \sZ_{T}\right) = T^{(d-2)/4}(\log \sZ_{\infty} - \log \sZ_{\tau T})  + T^{(d-2)/4}(\log \sZ_{\tau T} - \log \sZ_{T})
,\end{equation*}
where, by Proposition \ref{prop:LpBoundLogZ}, the first term vanishes in $L^p$-norm as $\tau\to\infty$, uniformly in $T\geq 1$, for all $p<2$.
Therefore, convergence \eqref{eq1} follows from \eqref{eq:CVforLog} and the last display, by letting $T \to \infty$ and $\tau\to\infty$ as in the conclusion of the proof of Theorem \ref{th:CVmarginalZ}. 
\qed

\begin{rem}\label{remark2-Psi}
Note that with the following lemma, we can see that Theorem \ref{th:CVmarginalZ} and Theorem \ref{co:tclMT} are in fact equivalent (choose for example $\Psi(x)=\log(1+x)$ to go from Theorem \ref{th:CVmarginalZ} to Theorem \ref{co:tclMT}). We can also obtain a general version of the two theorems which is Corollary \ref{co:tclMT0}.
\begin{lemma} \label{lem:switchingLemma}
If $X_\e\to \mathcal N(0,\sigma^2)$ in distribution, then for any $\Psi\in C^1(\mathbb R)$ with $\Psi(0)=0$, \ $ \Psi^\prime(0)=1$ and $a>0$, we have
$\e^{-a} \Psi(\e^a X_\e)\to \mathcal N(0,\sigma^2)$. 
\end{lemma}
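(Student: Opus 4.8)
The plan is to prove Lemma~\ref{lem:switchingLemma} by a direct Taylor expansion argument, exploiting the fact that multiplying $X_\e$ by the vanishing factor $\e^a$ makes the error term negligible while the linearization $\Psi'(0)=1$ reproduces $X_\e$ itself in the limit. Write $Y_\e := \e^{-a}\Psi(\e^a X_\e)$. Since $\Psi\in C^1$ with $\Psi(0)=0$ and $\Psi'(0)=1$, for any fixed real $u$ we have $\Psi(u) = u + u\,r(u)$ where $r(u) := \bigl(\Psi(u)-u\bigr)/u$ for $u\neq 0$ and $r(0):=0$, and $r$ is continuous at $0$ with $r(0)=0$. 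Substituting $u=\e^a X_\e$ gives the exact identity
\begin{equation}\label{eq:switchDecomp}
Y_\e = X_\e + X_\e\, r(\e^a X_\e).
\end{equation}
The first term converges in distribution to $\mathcal N(0,\sigma^2)$ by hypothesis, so by Slutsky's theorem it suffices to show the remainder $R_\e := X_\e\, r(\e^a X_\e)$ converges to $0$ in probability.

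For the remainder, first I would fix $\delta>0$ and use tightness of $(X_\e)$ (which follows from $X_\e \cvlaw \mathcal N(0,\sigma^2)$) to pick $M>0$ with $\IP(|X_\e| > M)<\delta$ for all $\e$. On the event $\{|X_\e|\le M\}$ we have $|\e^a X_\e|\le \e^a M \to 0$, so by continuity of $r$ at $0$, given $\eta>0$ there is $\e_0>0$ such that $|r(v)|\le \eta/M$ whenever $|v|\le \e^a M$ and $\e<\e_0$; hence $|R_\e|\le |X_\e|\cdot \eta/M \le \eta$ on that event. Therefore $\IP(|R_\e|>\eta)\le \IP(|X_\e|>M) < \delta$ for $\e<\e_0$. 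Since $\delta,\eta$ were arbitrary, $R_\e \to 0$ in probability.

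Combining \eqref{eq:switchDecomp} with $R_\e\to 0$ in probability and $X_\e\cvlaw \mathcal N(0,\sigma^2)$, Slutsky's lemma yields $Y_\e = X_\e + R_\e \cvlaw \mathcal N(0,\sigma^2)$, which is the claim. There is no real obstacle here; the only mild subtlety is to state carefully that the continuity modulus of $r$ is applied on a compact neighbourhood of $0$ \emph{uniformly} in $\e$ once $X_\e$ is localized to $\{|X_\e|\le M\}$, which is exactly what tightness of $(X_\e)$ provides. One could alternatively invoke the continuous mapping theorem on the pair $(\e^a, X_\e)\cvlaw (0,\mathcal N(0,\sigma^2))$ together with the map $(s,x)\mapsto s^{-1}\Psi(sx)$ extended continuously by $x$ at $s=0$, but the Slutsky route above is the cleanest and avoids fussing over the joint convergence.
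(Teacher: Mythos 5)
Your proof is correct. The paper states Lemma~\ref{lem:switchingLemma} without proof (treating it as a standard fact used only to pass to Corollary~\ref{co:tclMT0}), and the decomposition $\e^{-a}\Psi(\e^a X_\e)=X_\e+X_\e\,r(\e^a X_\e)$ with the remainder killed by tightness plus continuity of $r$ at $0$, followed by Slutsky, is exactly the natural argument one would write if the proof were included.
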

\begin{cor}\label{co:tclMT0}
Fix $d\geq 3$ and $\b < \b_0$ as in Theorem \ref{th:h} and $\Psi$ as in Lemma \ref{lem:switchingLemma}. Then for all $x\in \rd$, $t>0$, as $\e\to 0$,
\begin{equation} \label{eq1-2}
\e^{-\frac{(d-2)}2}\, \, \bigg(\Psi\bigg(\frac{u_\e(t,x)}{\mathfrak u(\xi^{\ssup{\e,t,x}})}-1 \bigg)\bigg) \cvlaw \mathcal N\left(0,\frac{2}{d-2}\fC_0t^{-\frac{d-2}{2}}\right).
\end{equation}
\end{cor}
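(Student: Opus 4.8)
The statement to prove is Corollary \ref{co:tclMT0}, which combines Theorem \ref{co:tclMT} with the elementary Lemma \ref{lem:switchingLemma}.

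\medskip

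\textbf{The plan.} The strategy is to reduce the claim to Theorem \ref{th:CVmarginalZ} (equivalently Theorem \ref{co:tclMT}) via the identification \eqref{eq:uZ} and then apply Lemma \ref{lem:switchingLemma}. First I would recall that by \eqref{eq:uZ} we have the almost sure identity $u_\e(t,x)= \mathscr Z_{t/\e^2}(\xi^{\ssup{\e,t,0}}; x/\e)$, and similarly $\mathfrak u(\xi^{\ssup{\e,t,x}})= \mathscr Z_\infty(\xi^{\ssup{\e,t,0}}; x/\e)$ by the definition of the stationary solution together with the shift-covariance of the noise (this is the same manipulation used to pass between \eqref{eq1} and \eqref{eq2} in the proof of Theorem \ref{co:tclMT}). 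Consequently, setting $T= t/\e^2$,
$$
\frac{u_\e(t,x)}{\mathfrak u(\xi^{\ssup{\e,t,x}})}-1 = \frac{\mathscr Z_T(x/\e)- \mathscr Z_\infty(x/\e)}{\mathscr Z_\infty(x/\e)},
$$
which has the same law as $(\mathscr Z_T - \mathscr Z_\infty)/\mathscr Z_\infty$ by stationarity in the spatial variable.

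\medskip

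\textbf{Key steps.} Next I would observe that Theorem \ref{th:CVmarginalZ} gives $T^{(d-2)/4}\big((\mathscr Z_T-\mathscr Z_\infty)/\mathscr Z_T\big)\cvlaw \mathcal N(0, \tfrac{2}{d-2}\fC_0)$, and since $\mathscr Z_T/\mathscr Z_\infty \to 1$ almost surely, Slutsky's lemma yields the same limit for $T^{(d-2)/4}\big((\mathscr Z_T-\mathscr Z_\infty)/\mathscr Z_\infty\big)$. Writing $\e^a = T^{-(d-2)/4}$ with $a= (d-2)/2$ and recalling $T= t/\e^2$, we get $\e^a = (t/\e^2)^{-(d-2)/4}= t^{-(d-2)/4}\,\e^{(d-2)/2}$, so up to the deterministic factor $t^{-(d-2)/4}$ the quantity $(\mathscr Z_T-\mathscr Z_\infty)/\mathscr Z_\infty$ rescaled by $\e^{-(d-2)/2}$ converges to $\mathcal N(0,\tfrac{2}{d-2}\fC_0 t^{-(d-2)/2})$. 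Call this limiting variable (for a fixed sequence $\e\to 0$) $X_\e := \e^{-(d-2)/2}\big((\mathscr Z_T-\mathscr Z_\infty)/\mathscr Z_\infty\big)$; it converges in distribution to $\mathcal N(0,\sigma^2)$ with $\sigma^2 = \tfrac{2}{d-2}\fC_0 t^{-(d-2)/2}$. Finally, apply Lemma \ref{lem:switchingLemma} with this $X_\e$, the given $\Psi$, and $a= (d-2)/2$: since $\Psi(0)=0$, $\Psi'(0)=1$, the lemma gives $\e^{-(d-2)/2}\Psi\big(\e^{(d-2)/2} X_\e\big)\cvlaw \mathcal N(0,\sigma^2)$. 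But $\e^{(d-2)/2} X_\e = (\mathscr Z_T-\mathscr Z_\infty)/\mathscr Z_\infty = u_\e(t,x)/\mathfrak u(\xi^{\ssup{\e,t,x}})-1$ in law, so $\e^{-(d-2)/2}\Psi\big(u_\e(t,x)/\mathfrak u(\xi^{\ssup{\e,t,x}})-1\big)$ converges in law to $\mathcal N\big(0,\tfrac{2}{d-2}\fC_0 t^{-(d-2)/2}\big)$, which is exactly \eqref{eq1-2}.

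\medskip

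\textbf{Main obstacle.} There is essentially no deep obstacle here — the corollary is a soft consequence of the already-established Theorem \ref{th:CVmarginalZ}. The only point requiring a modicum of care is the interplay between convergence in distribution and the almost-sure prefactor $\mathscr Z_T/\mathscr Z_\infty \to 1$ when passing from the ratio with denominator $\mathscr Z_T$ to the ratio with denominator $\mathscr Z_\infty$; this is handled by Slutsky's theorem. One should also double-check that Lemma \ref{lem:switchingLemma} is robust to the extra fixed deterministic scalar $t^{-(d-2)/4}$ inside the limiting law, which it trivially is since one may absorb it by rescaling $X_\e$. Thus the proof is a two-line application: identify the ratio via \eqref{eq:uZ}, invoke Theorem \ref{th:CVmarginalZ}, and conclude with Lemma \ref{lem:switchingLemma}.
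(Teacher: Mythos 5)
Your proposal is correct and follows exactly the route the paper intends: identify $u_\e/\mathfrak u - 1$ with $(\sZ_T-\sZ_\infty)/\sZ_\infty$ via \eqref{eq:uZ}, pass from the $\sZ_T$-denominator of Theorem \ref{th:CVmarginalZ} to the $\sZ_\infty$-denominator by Slutsky, rescale $T=t/\e^2$, and finish with Lemma \ref{lem:switchingLemma}. The paper's own proof is the terse one-liner ``Theorem \ref{th:CVmarginalZ}, identification \eqref{eq:uZ} and the above lemma'', and your argument supplies precisely the missing details of that implication.
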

\begin{proof}
The proof follows from Theorem \ref{th:CVmarginalZ}, identification \eqref{eq:uZ} and the above lemma. 
\end{proof}
\end{rem}


\section{Proof of Proposition \ref{prop:claim1}.} \label{sec:prclaim1}

This section is entirely devoted to the proof of Proposition \ref{prop:claim1}. Denote for short by $\mathscr L_T$ the quantity of interest,
\begin{eqnarray}
\mathscr L_T &:=& T^{\frac{d}{2}} \left(\frac{\dd}{\dd t} \langle  \sZ \rangle\right)_T -  \fC_0 \sZ_T^2\nn \\ \nn
&=&  E_{0}^{\otimes 2}  \bigg[\Phi_T(W^{\ssup 1}) \Phi_T(W^{\ssup 2}) \bigg(T^{\frac{d}{2}}  V\big(W_T^{\ssup 1}\!-\! W_T^{\ssup 2}\big) - \fC_0 \bigg)\bigg],
\end{eqnarray}
and proceed in two steps. The first step is devoted to determining the value of the appropriate constant $\fC_0$ below. 

\subsection{The first moment.}\label{sec:warmup}
We first want to show that:
\begin{proposition}\label{cor-LT}
There exists $\b_1\in(0,\infty)$ such that for all $\b< \b_1$, if we choose
\begin{equation} \label{eq:fC_4}
\mathfrak C_0(\b) = \frac{\b^2}{(2\pi)^{d/2}} \int_{\rd} \dd y \,\, V(\sqrt{2} y)\,\,E_y\bigg[\mathrm e^{\beta^2\int_0^\infty V(\sqrt{2} W_{s})\,\dd s}\bigg],
\end{equation}
then $ \E(\mathscr L_T)\to 0$ as $T\to\infty$.
\end{proposition}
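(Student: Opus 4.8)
The plan is to compute $\E(\mathscr L_T)$ explicitly using the Gaussian structure of the noise, identify the leading-order term as $T\to\infty$, and see that the prefactor of the non-vanishing piece is exactly $\fC_0(\b)$ as defined in \eqref{eq:fC_4}. By \eqref{eq:expgauss} with $n=2$ applied to the two independent Brownian paths $W^{\ssup 1}, W^{\ssup 2}$,
$$
\E(\mathscr L_T) = E_0^{\otimes 2}\bigg[ \mathrm e^{\b^2\int_0^T V(W_s^{\ssup 1}-W_s^{\ssup 2})\,\dd s}\Big(T^{d/2}\,V(W_T^{\ssup 1}-W_T^{\ssup 2}) - \fC_0\Big)\bigg].
$$
First I would pass to the difference process: since $W^{\ssup 1}-W^{\ssup 2}\eqlaw \sqrt 2\, W$ for a single Brownian motion $W$, this equals
$$
E_0\bigg[ \mathrm e^{\b^2\int_0^T V(\sqrt 2\, W_s)\,\dd s}\Big(T^{d/2}\,V(\sqrt 2\, W_T) - \fC_0\Big)\bigg].
$$

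Next I would analyse the term $E_0\big[\mathrm e^{\b^2\int_0^T V(\sqrt2 W_s)\dd s}\,T^{d/2}V(\sqrt 2 W_T)\big]$. The factor $V$ is supported in a ball, so $V(\sqrt 2 W_T)$ forces $W_T$ to stay of order $1$; the natural move is to condition on the endpoint $W_T=z$ and use the Brownian bridge decomposition, or equivalently to write the expectation via the transition density. Conditioning on $W_T = z$ with $|z| = O(1)$, the density of $W_T$ at such $z$ is $\rho(T,z)\sim (2\pi T)^{-d/2}$, which produces the prefactor $T^{-d/2}$ cancelling the explicit $T^{d/2}$. Conditionally on the endpoint being near the origin, as $T\to\infty$ the bridge on $[0,T]$ looks (in the relevant time window, i.e.\ near time $0$) like a Brownian motion started at $0$ — more precisely, by time-reversal $W$ on $[0,T]$ with $W_T=z$ has the same law as $z + $ (a bridge from $z$ back to $0$), and one checks that $\int_0^T V(\sqrt 2 W_s)\,\dd s$ localizes near the two endpoints; combining this with the Markov property and letting $T\to\infty$, the integral $\int_0^T V(\sqrt 2 W_s)\dd s$ converges (in law, jointly with the endpoint constraint) to $\int_0^\infty V(\sqrt 2 W_s)\,\dd s$ under $P_z$ started from the appropriate point. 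Carrying out the bookkeeping, the limit becomes
$$
\frac{1}{(2\pi)^{d/2}}\int_{\rd}\dd z\; V(\sqrt 2 z)\; E_z\Big[\mathrm e^{\b^2\int_0^\infty V(\sqrt 2 W_s)\,\dd s}\Big],
$$
so that choosing $\fC_0(\b)$ equal to $\b^2$ times this quantity — which is precisely \eqref{eq:fC_4} — forces $\E(\mathscr L_T)\to 0$. (One should also double-check consistency with the definition \eqref{def-sigma-beta}–\eqref{C0} of $\fC_0$ via the substitution $y\mapsto\sqrt 2 y$ and $W_{2s}$ vs.\ $\sqrt 2 W_s$ scaling.)

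To make the above rigorous I would: (i) split the time integral $\int_0^T = \int_0^{T/2} + \int_{T/2}^T$ and use the Markov property at time $T/2$ together with time-reversal so that the two halves each see one endpoint; (ii) establish the requisite uniform integrability so that the convergence in law of the relevant functionals upgrades to convergence of expectations — here the small-$\b$ assumption enters, since one needs $E_0[\exp\{q\b^2\int_0^\infty V(\sqrt 2 W_s)\dd s\}]<\infty$ for some $q>1$, which holds for $\b<\b_1$ by a Khasminskii-type argument; (iii) control the error from replacing the bridge density and from the localization of $\int_0^T V(\sqrt 2 W_s)\dd s$ near the endpoints, using that $V$ is bounded and compactly supported together with standard Gaussian tail estimates on $\sup_{s\le T}|W_s|$ restricted to $|W_T|=O(1)$. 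The main obstacle I anticipate is step (ii)–(iii): one must show not just pointwise convergence of the integrand but enough uniformity in $T$ to interchange limit and expectation, and simultaneously that the contribution of paths whose endpoint is of size $o(\sqrt T)$ but not $O(1)$ — i.e.\ the ``intermediate'' regime — is negligible. This is exactly the kind of delicate estimate that, as the authors themselves warn, requires ``substantial technical work''; the $L^2$ (rather than merely first-moment) statement of Proposition \ref{prop:claim1} will need the second-moment computation of the next subsection, but for Proposition \ref{cor-LT} the first moment with careful uniform integrability suffices.
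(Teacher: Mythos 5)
Your plan tracks the paper's own proof almost step for step: rewrite $\E(\mathscr L_T)$ via the Brownian bridge decomposition over the endpoint $y$, condition at time $T/2$ and use time-reversal of the bridge to factorize the bridge expectation into two pieces associated with the two endpoints (the paper's Proposition \ref{prop-claim2}), and justify passage to the limit via a Khasminskii-type bound transported to the Brownian bridge by a Girsanov argument (the paper's Lemma \ref{lemma2-claim1}). The one misstatement is in your anticipated obstacle at (iii): there is no delicate ``intermediate endpoint regime,'' since the explicit factor $V(\sqrt 2\, W_T)$ forces the endpoint into a compact set; the genuinely delicate intermediate scale is in the \emph{time} variable, and this is exactly what the paper handles by introducing an auxiliary sequence $m=m(T)$ with $m\to\infty$, $m=o(T)$, and proving (Proposition \ref{prop-claim1}) that $\int_m^{T-m}V(\sqrt 2\, W_s)\,\dd s$ is uniformly negligible under the bridge measure.
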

\begin{rem}
By a simple change of variables, one can check that the definition of $\fC_0$ in \eqref{eq:fC_4} corresponds indeed to the one of Proposition \ref{prop:claim1}.
\end{rem}

The rest of Section \ref{sec:warmup} is devoted to the proof of Proposition \ref{cor-LT}. For any $t>s\geq 0$ and $x,y\in\rd$, we will denote by $P_{s,x}^{t,y}$ the law (and by 
$E_{s,x}^{t,y}$ the corresponding expectation) of the Brownian bridge starting at $x$ at time $s$ and conditioned to reach $y$ at time $t>s$. 
Recall that
$$
\rho(t,x)= (2\pi t)^{-d/2} \mathrm e^{-|x|^2/2t},
$$
denotes the standard Gaussian kernel. 

We note that 
\begin{eqnarray}\nn
\E (\mathscr L_T) &=& E_{0}^{\otimes 2}  \bigg[ 
\mathrm e^{\b^2\int_0^T V(W^{\ssup 1}_t\!-\!W^{\ssup 2}_t)\dd t} \bigg( T^{\frac{d}{2}}  V\big(W_T^{\ssup 1}\!-\! W_T^{\ssup 2}\big) - \fC_0 \bigg)\bigg]
\\ \nn
&=&  E_{0}  \bigg[
\mathrm e^{\b^2\int_0^{T} V(\sqrt 2  W_t)\dd t} \bigg( T^{\frac{d}{2}}  V(\sqrt 2  W_T) - \fC_0 \bigg)\bigg],
\end{eqnarray}
and
\begin{equation}\nn
E_{0}  \bigg[  \mathrm e^{\b^2\int_0^{T} V(\sqrt 2  W_t)\dd t}T^{\frac{d}{2}}  V(\sqrt 2  W_T) \bigg] =
\int_{\rd}  V(\sqrt 2 y) E_{0,0}^{T,y}  \bigg[  \mathrm e^{\b^2\int_0^{T} V(\sqrt 2  W_t)\dd t} \bigg] T^{\frac{d}{2}}  \rho(T,y) \dd y.
\end{equation}
Now, we fix a sequence 
$$
m=m(T) \qquad\mbox{such that}\qquad m\to \infty \quad\mbox{and} \quad m=o(T)\quad\mbox{as}\quad T\to\infty,
$$
which allows us to prove Proposition \ref{cor-LT} in two steps:
\begin{proposition}\label{prop-claim1}
For small enough $\b$, as $T\to\infty$,
\begin{equation*}
\lim_{T\to\infty} \sup_{y\in\mathbb{R}^d} \left|E_{0,0}^{T,y}   \bigg[ 
\mathrm e^{\b^2\int_0^{T} V(\sqrt 2  W_t)\dd t} \bigg] - \mathcal T_1(y)\right|=0,
\end{equation*}
where
$$\mathcal T_1(y)=
E_{0,0}^{T,y}   \left[ 
\mathrm e^{\b^2\int_{[0,m] \cup [T\!-\!m,T]} V(\sqrt 2  W_t)\dd t} \right].
$$
\end{proposition}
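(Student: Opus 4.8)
The plan is to exploit that $V=\phi\star\phi$ is nonnegative, bounded, and supported in $B(0,1)$, so that the contribution coming from the middle window $[m,T-m]$ is negligible. Write $A=\b^2\int_{[0,m]\cup[T-m,T]}V(\sqrt2 W_t)\,\dd t\geq0$ and $B=\b^2\int_m^{T-m}V(\sqrt2 W_t)\,\dd t\geq0$, so that the quantity to control is
\begin{equation*}
0\;\leq\;E_{0,0}^{T,y}\big[\mathrm e^{A+B}\big]-\mathcal T_1(y)\;=\;E_{0,0}^{T,y}\big[\mathrm e^{A}(\mathrm e^{B}-1)\big]\;\leq\;E_{0,0}^{T,y}\big[B\,\mathrm e^{A+B}\big],
\end{equation*}
using $\mathrm e^{x}-1\leq x\,\mathrm e^{x}$ for $x\geq0$. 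Since $A+B=\b^2\int_0^T V(\sqrt2 W_t)\,\dd t$, Cauchy--Schwarz bounds the right-hand side by $E_{0,0}^{T,y}[B^2]^{1/2}\,E_{0,0}^{T,y}\big[\mathrm e^{2\b^2\int_0^T V(\sqrt2 W_t)\,\dd t}\big]^{1/2}$. Thus it suffices to show, for $\b$ small: (i) a uniform exponential moment bound $\sup_{T>0,\,y\in\rd}E_{0,0}^{T,y}\big[\mathrm e^{2\b^2\int_0^T V(\sqrt2 W_t)\,\dd t}\big]<\infty$, and (ii) $\sup_{y\in\rd}E_{0,0}^{T,y}[B^2]\to0$ as $T\to\infty$; the second limit is where the choice $m=m(T)\to\infty$ enters.

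The common input for (i) and (ii) is an elementary pointwise estimate on the joint marginals of the bridge. Under $P_{0,0}^{T,y}$ the law of $W_t$ is Gaussian with variance $v(t):=t(T-t)/T$, and, conditionally on $W_s=z$ with $s<t$, the law of $W_t$ is Gaussian with variance at most $\min(t-s,T-t)$; in particular each of these densities is bounded by the reciprocal of $(2\pi\cdot\mathrm{variance})^{d/2}$, a bound that does not depend on $y$ or on the conditioning point. Since $V(\sqrt2\,\cdot)\leq\|V\|_\8\,\mathbf 1_{\{|\cdot|\leq1/\sqrt2\}}$, integrating these densities over the ball of radius $1/\sqrt2$, and retaining also the trivial bound $1$ coming from total mass, yields by peeling off the spatial integrals successively from $z_n$ down to $z_1$ via the Markov property the estimate
\begin{equation*}
E_{0,0}^{T,y}\Big[\prod_{i=1}^{n}V(\sqrt2 W_{t_i})\Big]\;\leq\;C^{n}\,\min\!\big(1,v(t_1)^{-d/2}\big)\prod_{i=1}^{n-1}\min\!\big(1,(t_{i+1}-t_i)^{-d/2}\big),
\end{equation*}
valid for $0<t_1<\dots<t_n<T$ and uniformly in $y$. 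The crucial point is to keep the bridge densities in this Gaussian form rather than first cancelling the normalisation $\rho(T,y)^{-1}$, which is what makes the bound uniform in $y$.

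To prove (i) I would expand $E_{0,0}^{T,y}[\mathrm e^{\lambda\int_0^T V(\sqrt2 W_t)\,\dd t}]=\sum_{n\geq0}\lambda^{n}\int_{0<t_1<\dots<t_n<T}E_{0,0}^{T,y}[\prod_i V(\sqrt2 W_{t_i})]\,\dd t_1\cdots\dd t_n$, insert the bound above, and carry out the time integrals one at a time: each of the integrals over $t_n,\dots,t_2$ produces a factor $\int_0^\infty\min(1,u^{-d/2})\,\dd u=:c_d<\infty$ — finite precisely because $d\geq3$ — and the final integral over $t_1$ produces $\int_0^T\min(1,v(t)^{-d/2})\,\dd t$, which is bounded uniformly in $T$ since $v(t)\geq\tfrac12\min(t,T-t)$. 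This gives $E_{0,0}^{T,y}[\mathrm e^{\lambda\int_0^T V(\sqrt2 W_t)\,\dd t}]\leq C\sum_{n\geq0}(Cc_d\lambda)^{n}$, finite once $Cc_d\lambda<1$; taking $\lambda=2\b^2$ fixes the threshold $\b_1$. For (ii), the integrals defining $B$ involve only $t\in[m,T-m]$, on which $v(t)\geq m/2$; hence the $n=2$ case of the density bound, after integrating $t_2$ against $c_d$, gives
\begin{equation*}
E_{0,0}^{T,y}[B^2]=2\b^{4}\!\!\int_{m<t_1<t_2<T-m}\!\!\!\!E_{0,0}^{T,y}\big[V(\sqrt2 W_{t_1})V(\sqrt2 W_{t_2})\big]\,\dd t_1\,\dd t_2\;\leq\;C\b^{4}\!\int_m^{T-m}\!v(t_1)^{-d/2}\,\dd t_1\;\leq\;C\b^{4}\,m^{-\frac{d-2}{2}},
\end{equation*}
uniformly in $y$, which tends to $0$ because $m=m(T)\to\infty$. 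Combining (i) and (ii) through the Cauchy--Schwarz bound yields $\sup_{y\in\rd}\big(E_{0,0}^{T,y}[\mathrm e^{A+B}]-\mathcal T_1(y)\big)\to0$, as claimed. I expect the genuine obstacle to be exactly the uniformity in the pinning point $y$: one must never divide by $\rho(T,y)$ and instead bound the bridge densities directly by Gaussian densities whose suprema do not see $y$. Once this is set up, everything reduces to the integrability of $u\mapsto\min(1,u^{-d/2})$ in dimension $d\geq3$, which is routine.
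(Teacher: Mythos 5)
Your approach is genuinely different from the paper's, and it does work, but there is one slip in the stated pointwise estimate that you should correct.

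\textbf{Comparison with the paper's route.} The paper reduces the statement to a uniform bound on $E_{0,0}^{T,y}[\mathrm e^{\b^2\int_0^T V(\sqrt2 W_t)\dd t}\,\mathbf 1_A]$ for the event $A=\{\int_m^{T-m}V(\sqrt2 W_s)\dd s>a\}$, splits $\mathrm e^X(\mathrm e^Y-1)$ on $A$ and $A^c$, and invokes two lemmas: the uniform exponential moment bound for the bridge (Lemma~\ref{lemma2-claim1}, proved by a Girsanov change of measure followed by Khas'minskii's lemma) and the uniform tail estimate $\sup_z P_{0,0}^{T,z}(A)\to0$ (Lemma~\ref{lemma3-claim1}, a first-moment/Markov argument). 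You instead use the algebraic bound $\mathrm e^x-1\le x\mathrm e^x$ and Cauchy--Schwarz, reducing everything to (i) the same uniform exponential moment bound and (ii) the uniform decay $\sup_y E_{0,0}^{T,y}[B^2]\to0$, a second-moment estimate. You then prove both (i) and (ii) by a chaos expansion together with direct Gaussian-density bounds for the bridge marginals, rather than by Girsanov/Khas'minskii. This is self-contained and avoids the change-of-measure machinery; the price is working slightly harder (a second moment rather than a probability), but it is entirely workable. Your diagnosis of the key obstruction — uniformity in $y$, handled by bounding the bridge transition densities rather than dividing by $\rho(T,y)$ — matches the spirit of the paper's Lemma~\ref{lemma2-claim1}.

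\textbf{The slip.} Your displayed estimate
\[
E_{0,0}^{T,y}\Big[\prod_{i=1}^{n}V(\sqrt2 W_{t_i})\Big]\;\leq\;C^{n}\,\min\big(1,v(t_1)^{-d/2}\big)\prod_{i=1}^{n-1}\min\big(1,(t_{i+1}-t_i)^{-d/2}\big)
\]
is not correct as stated. The conditional variance of $W_{t_{i+1}}$ given $W_{t_i}$ under the bridge is $v_i=(t_{i+1}-t_i)(T-t_{i+1})/(T-t_i)$, which satisfies $v_i\le t_{i+1}-t_i$ but can be much \emph{smaller} than $t_{i+1}-t_i$ (take $t_{i+1}$ close to $T$). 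Since the density is bounded by $(2\pi v_i)^{-d/2}$, small $v_i$ gives a \emph{large} density bound; hence $\min(1,v_i^{-d/2})\ge\min(1,(t_{i+1}-t_i)^{-d/2})$, which is the wrong direction for your inequality. The correct factor is $\min\big(1,v_i^{-d/2}\big)$ with the actual variance $v_i$, not $(t_{i+1}-t_i)^{-d/2}$. Fortunately, this does not damage the conclusion: peeling the time integrals from the top with $s=T-t_{i-1}$ and $u=t_i-t_{i-1}$, one has $v_i=u(s-u)/s\ge u/2$ on $[0,s/2]$ and by symmetry $\int_0^s\min(1,v_i^{-d/2})\dd u\le 4c_d$ uniformly in $s$, so the chaos series still converges for $\b$ small, and the same correction applied to the $n=2$ case of (ii) still gives $E_{0,0}^{T,y}[B^2]\le C\b^4 m^{-(d-2)/2}$. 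Once you replace $(t_{i+1}-t_i)^{-d/2}$ by $v_i^{-d/2}$ throughout (and verify the time integral as above), your proof is complete and correct.
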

\noindent 

\begin{proposition}\label{prop-claim2}
For small enough $\b$, for all fixed $y\in\mathbb{R}^d$ and as $T\to\infty$, 
\begin{eqnarray} \nn
\mathcal T_1(y)
&\sim &
E_{0,0}^{T,y}   \bigg[ 
\mathrm e^{\b^2\int_{[0,m] } V(\sqrt 2  W_t)\dd t} \bigg]
E_{0,0}^{T,y}   \bigg[ 
\mathrm e^{\b^2\int_{ [T\!-\!m,T]} V(\sqrt 2  W_t)\dd t} \bigg]\\ \nn
&\to&
E_{0}  \bigg[ \mathrm e^{\b^2\int_0^{\8} V(\sqrt 2  W_t)\dd t} \bigg]  E_{y}   \bigg[ 
\mathrm e^{\b^2\int_{0}^\8 V(\sqrt 2  W_t)\dd t}\bigg].
\end{eqnarray}
\end{proposition}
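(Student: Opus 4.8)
The plan is to use the Markov property of the Brownian bridge together with the scale separation $m=o(T)$: this makes the two occupation functionals, one supported on $[0,m]$ and the other on $[T-m,T]$, asymptotically uncorrelated under $P_{0,0}^{T,y}$, while it makes the intermediate positions $W_m$, $W_{T-m}$ behave, near the respective time endpoints, like a free Brownian motion run from $0$ (resp.\ from $y$, after time reversal). Concretely, for the factorisation I would condition the bridge on $(W_m,W_{T-m})=(a,b)$: by the Markov property the path splits into three independent bridges over $[0,m]$, $[m,T-m]$, $[T-m,T]$, so that
\[
\mathcal T_1(y)=\int_{\rd}\int_{\rd}\mu_T(\dd a,\dd b)\;f_T(a)\,g_T(b),
\]
where $\mu_T$ is the joint law of $(W_m,W_{T-m})$ under $P_{0,0}^{T,y}$, $f_T(a)=E_{0,0}^{m,a}\big[\mathrm e^{\b^2\int_0^m V(\sqrt2\,W_t)\dd t}\big]$ and $g_T(b)=E_{T-m,b}^{T,y}\big[\mathrm e^{\b^2\int_{T-m}^T V(\sqrt2\,W_t)\dd t}\big]$. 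The joint density of $(W_m,W_{T-m})$ equals $\rho(m,a)\rho(T-2m,b-a)\rho(m,y-b)/\rho(T,y)$; since $m=o(T)$ one has $\rho(T-2m,b-a)/\rho(T,y)\to1$ on the effective range $|a|,|b-y|=O(\sqrt m)=o(\sqrt T)$, so $\mu_T$ is asymptotically the product measure $\rho(m,a)\,\dd a\otimes\rho(m,y-b)\,\dd b$. This is precisely the asserted decorrelation, and combined with the uniform bounds below it yields the first displayed line of the proposition.

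For the identification of the two factors, note that the law of $(W_t)_{0\le t\le k}$ under $P_{0,0}^{T,y}$ has Radon--Nikodym density $\rho(T-k,y-W_k)/\rho(T,y)$ with respect to $P_0$; for fixed $k$ this density is bounded uniformly in $T\ge 2k$ and converges to $1$, hence $E_{0,0}^{T,y}\big[\mathrm e^{\b^2\int_0^k V(\sqrt2\,W_t)\dd t}\big]\to E_0\big[\mathrm e^{\b^2\int_0^k V(\sqrt2\,W_t)\dd t}\big]$, and the latter increases as $k\to\infty$ to $E_0\big[\mathrm e^{\b^2\int_0^\infty V(\sqrt2\,W_t)\dd t}\big]$, which is finite for $\b$ small because in $d\ge3$ a Brownian path spends only a finite time near the origin, so $\int_0^\infty V(\sqrt2\,W_t)\dd t$ has exponential moments. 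The discrepancy between $\int_0^m$ and $\int_0^k$ is absorbed by the uniform tail bound below. For the right factor, the time reversal $t\mapsto W_{T-t}$ maps the bridge from $0$ to $y$ onto the bridge from $y$ to $0$ and leaves the occupation functional invariant, so the same argument produces the limit $E_y\big[\mathrm e^{\b^2\int_0^\infty V(\sqrt2\,W_t)\dd t}\big]$; multiplying the two limits gives the second displayed line.

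The genuine work is the uniform-in-$T$ bookkeeping needed to justify passing to the limit in the two steps above, i.e.\ to control the occupation time accumulated away from the time endpoints. One must show that $\sup_{T\ge1}E_{0,0}^{T,y}\big[\mathrm e^{\b^2\int_0^m V(\sqrt2\,W_t)\dd t}-\mathrm e^{\b^2\int_0^k V(\sqrt2\,W_t)\dd t}\big]\to0$ as $k\to\infty$, and, in the factorisation step, that the family $\{f_T(W_m)\,g_T(W_{T-m})\}_T$ is uniformly integrable so that the convergence $\mu_T\to$ product measure may be used. Both reduce, via H\"older's inequality, to the uniform exponential-moment estimate $\sup_{x\in\rd}E_x\big[\mathrm e^{q\b^2\int_0^\infty V(\sqrt2\,W_t)\dd t}\big]<\infty$ for $\b$ small and any fixed $q\ge1$ (the same input used in Proposition~\ref{prop:covar}), together with the observation that on $[k,m]$, since $m=o(T)$, the bridge is comparable to a free Brownian motion which for $k$ large has already left $\mathrm{supp}(V)$ with probability close to $1$ in $d\ge3$. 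I expect this uniform integrability/transience argument to be the main obstacle; once it is in place, the three steps assemble directly into Proposition~\ref{prop-claim2}, which together with Proposition~\ref{prop-claim1} completes the proof of Proposition~\ref{cor-LT}.
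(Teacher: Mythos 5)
Your proposal is correct in outline and takes a genuinely different (though closely related) route from the paper. The paper conditions the bridge on its position at the single midpoint $T/2$, then uses the reversal property of the bridge so that both occupation windows $[0,m]$ and $[T-m,T]$ become windows $[0,m]$ of two bridges both run on $[0,T/2]$ and pinned at the same point $W_{T/2}$; after the single diffusive change of variable $z\mapsto\sqrt{T}\,z$, the integral becomes a fixed Gaussian weight in $z$ integrated against two bridge expectations, each of which converges pointwise to a free Brownian-motion expectation (the step \eqref{eq:BBToBM}) and is uniformly bounded by the bridge Khas'minskii bound (Lemma \ref{lemma2-claim1}), so dominated convergence finishes. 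You instead condition on the pair $(W_m,W_{T-m})$, splitting the bridge into three pieces and arguing that the joint law of $(W_m,W_{T-m})$ factorizes (the density ratio $\rho(T-2m,b-a)/\rho(T,y)\to1$ uniformly on the effective range); with $f_T,g_T$ uniformly bounded, this gives $\mathcal T_1\sim\int\rho(m,a)f_T(a)\,\dd a\cdot\int\rho(m,y-b)g_T(b)\,\dd b=E_0[\mathrm e^{\b^2\int_0^m V(\sqrt 2 W_t)\dd t}]\,E_y[\mathrm e^{\b^2\int_0^m V(\sqrt 2 W_t)\dd t}]$, which then converges to the claimed product by monotone convergence. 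This is a perfectly valid alternative; it is slightly more "hands-on" in that it localises the conditioning to the two windows where the functionals actually sit, at the cost of juggling two intermediate times rather than one. The paper's midpoint conditioning is a bit slicker because the single change of variable at scale $\sqrt T$ handles both windows symmetrically at once.

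One small precision gap: you formulate the uniform-integrability input as $\sup_x E_x[\mathrm e^{q\b^2\int_0^\infty V(\sqrt2 W_t)\dd t}]<\infty$ (a \emph{free} Brownian-motion bound), but the boundedness you actually need for $f_T$ and $g_T$ — and for the Hölder step controlling the discrepancy between $\int_0^k$ and $\int_0^m$ — is a \emph{bridge} bound of the form $\sup_{T,z,a}E_{0,a}^{T,z}[\mathrm e^{q\b^2\int_0^T V(\sqrt2 W_t)\dd t}]<\infty$. The two are not immediately interchangeable; the bridge version is what the paper's Lemma \ref{lemma2-claim1} establishes, via a Girsanov drift change. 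You gesture at this ("the bridge is comparable to a free Brownian motion") but do not make it precise. Likewise, your tail estimate $\sup_T E_{0,0}^{T,y}[\mathrm e^{\b^2\int_0^m V}-\mathrm e^{\b^2\int_0^k V}]\to0$ requires, beyond the exponential moment bound, a smallness estimate for the occupation time $\int_k^m V$ under the bridge measure uniformly in $T$ — this is the content of Lemma \ref{lemma3-claim1} in the paper, which you invoke informally via transience. With those two lemmas in hand (both of which the paper supplies) your three-step assembly goes through, so this is an acknowledged incompleteness rather than a flaw in the approach.
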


\medskip

We will provide some auxiliary results which will be needed to prove Proposition \ref{prop-claim1} and Proposition \ref{prop-claim2}.
First, we state a simple consequence of Girsanov's theorem:
\begin{lemma}\label{lemma1-claim1}
For any $s<t$ and $y,z\in \rd$, the Brownian bridge $P_{0,y}^{t,z}$ is absolutely continuous w.r.t. $P_{0,y}$ on the $\sigma$-field $\mathcal F_{[0,s]}$ generated by the Brownian path until time $s<t$, and 
\begin{equation} \label{lemma1-claim1-densityBBBM}
\begin{aligned}
\frac{ \dd P_{0,y}^{t,z}}{\dd P_{0,y}}\bigg|_{\mathcal F_{[0,s]}} &=\ \frac{\rho(t-s,z-W_s)}{\rho(t,z-y)}
&\leq \bigg(\frac t {t-s}\bigg)^{d/2} \exp\bigg\{\frac {|z-y|^2}{2t}\bigg\}.
\end{aligned}
\end{equation}
\end{lemma}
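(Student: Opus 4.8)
The plan is to identify the Radon--Nikodym derivative directly from the Markov property together with Bayes' rule, and then to read off the pointwise bound from the explicit form of the Gaussian kernel.

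First I would fix $s<t$, $y,z\in\rd$, and an arbitrary bounded $\mathcal F_{[0,s]}$-measurable functional $F$ of the path. Conditioning the Brownian bridge $P_{0,y}^{t,z}$ on $\mathcal F_{[0,s]}$ amounts to conditioning the Brownian motion $P_{0,y}$ on $\{W_t=z\}$ and then looking only at the first $s$ units of time. Applying the Markov property of $W$ at time $s$, the joint law satisfies $E_{0,y}[F;\,W_t\in\dd z]=E_{0,y}\big[F\,\rho(t-s,z-W_s)\big]\,\dd z$, while $P_{0,y}(W_t\in\dd z)=\rho(t,z-y)\,\dd z$. Dividing the two and using a standard monotone-class argument to pass from bounded $F$ to all of $\mathcal F_{[0,s]}$, one obtains
\[
E_{0,y}^{t,z}[F]=\frac{1}{\rho(t,z-y)}\,E_{0,y}\big[F\,\rho(t-s,z-W_s)\big],
\]
which is exactly the claimed identity $\frac{\dd P_{0,y}^{t,z}}{\dd P_{0,y}}\big|_{\mathcal F_{[0,s]}}=\rho(t-s,z-W_s)/\rho(t,z-y)$. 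Alternatively, one may invoke Girsanov's theorem as the statement suggests: under $P_{0,y}^{t,z}$ the canonical process solves $\dd W_u=\dd B_u+\frac{z-W_u}{t-u}\,\dd u$ on $[0,s]$, and computing the Dol\'eans exponential of this drift and simplifying the resulting stochastic integral via It\^o's formula reproduces the same ratio of heat kernels.

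For the inequality I would simply bound the numerator by its maximal value, $\rho(t-s,z-W_s)\le(2\pi(t-s))^{-d/2}$ since $\mathrm e^{-|z-W_s|^2/(2(t-s))}\le 1$, while keeping the denominator as $\rho(t,z-y)=(2\pi t)^{-d/2}\mathrm e^{-|z-y|^2/(2t)}$; the quotient is then $\big(\tfrac{t}{t-s}\big)^{d/2}\exp\{|z-y|^2/(2t)\}$, as asserted. There is no genuine obstacle here: the only points needing care are that $F$ be taken bounded (or nonnegative) so that the conditioning manipulations are legitimate, and that the $\dd z$-densities be understood in the sense of regular conditional probabilities.
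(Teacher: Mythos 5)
Your proof is correct. The paper actually gives \emph{no} proof of this lemma: it labels it ``a simple consequence of Girsanov's theorem'' and closes it immediately with a \verb|\qed|, so there is nothing to compare against step by step. Your conditioning/Bayes route via the Markov property is clean and complete --- $E_{0,y}[F;W_t\in\dd z]=E_{0,y}[F\,\rho(t-s,z-W_s)]\dd z$ gives the Radon--Nikodym density directly, and then bounding $\rho(t-s,z-W_s)\le(2\pi(t-s))^{-d/2}$ and dividing by $\rho(t,z-y)$ produces the stated inequality. This is, if anything, more elementary than the Girsanov derivation the paper alludes to (and which you also sketch as an alternative); Girsanov would involve writing the bridge drift $(z-W_u)/(t-u)$, forming the Dol\'eans exponential, and simplifying its integral via It\^o's formula to recover the ratio of heat kernels --- more machinery for the same result. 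Both approaches are standard and valid, and your version correctly handles the measure-theoretic point about regular conditional probabilities.
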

\qed
We will need the following version of Khas'minskii's lemma \cite[p.8, Lemma 2.1]{S98} for the Brownian bridge: 
\begin{lemma}\label{lemma2-claim1}
If $E_0\bigg[2 \beta^2 \int_0^\infty V(\sqrt 2 W_s) \dd s\bigg] <1$, then 
\[
\sup_{z,x\in \mathbb R^d,t>0} E_{0,x}^{t,z}\bigg[\exp\bigg\{\beta^2 \int_0^t V(\sqrt 2 W_s) \dd s\bigg\}\bigg] <\infty.
\]
\end{lemma}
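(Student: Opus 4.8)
The plan is to combine the Girsanov bound of Lemma~\ref{lemma1-claim1} with the classical Khas'minskii lemma \cite[Lemma~2.1]{S98} for Brownian motion, together with two localizations to the (bounded) support of $V$. Write $\mathcal B=\{y\in\rd:|y|\le 1/\sqrt 2\}$; since $\mathrm{supp}(V)\subset B(0,1)$, the map $y\mapsto V(\sqrt 2\,y)$ is supported in $\mathcal B$, and $V$ is bounded. By translation invariance the hypothesis reads $E_0\big[2\b^2\int_0^\infty V(\sqrt 2\,W_s)\,\dd s\big]=\sup_{x}E_x\big[2\b^2\int_0^\infty V(\sqrt 2\,W_s)\,\dd s\big]<1$, so Khas'minskii's lemma yields
$$
K:=\sup_{x\in\rd}E_x\Big[\mathrm e^{2\b^2\int_0^\infty V(\sqrt 2\,W_s)\,\dd s}\Big]=E_0\Big[\mathrm e^{2\b^2\int_0^\infty V(\sqrt 2\,W_s)\,\dd s}\Big]<\infty .
$$

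First I would bound the ``core'' quantity $M:=\sup\{\,E_{0,x}^{t,z}[\mathrm e^{\b^2\int_0^t V(\sqrt 2\,W_s)\dd s}]:t>0,\ x,z\in\mathcal B\,\}$. If $t<1$ then $\int_0^t V(\sqrt 2\,W_s)\dd s\le\|V\|_\infty$, so the expectation is at most $\mathrm e^{\b^2\|V\|_\infty}$ (for any $x,z$). If $t\ge 1$, split $\int_0^t=\int_0^{t/2}+\int_{t/2}^{t}$ and apply Cauchy--Schwarz:
$$
E_{0,x}^{t,z}\Big[\mathrm e^{\b^2\int_0^t V(\sqrt 2 W_s)\dd s}\Big]\le E_{0,x}^{t,z}\Big[\mathrm e^{2\b^2\int_0^{t/2} V(\sqrt 2 W_s)\dd s}\Big]^{1/2}\;E_{0,x}^{t,z}\Big[\mathrm e^{2\b^2\int_{t/2}^{t} V(\sqrt 2 W_s)\dd s}\Big]^{1/2}.
$$
The first factor involves only $\mathcal F_{[0,t/2]}$, so Lemma~\ref{lemma1-claim1} with $s=t/2$ gives $E_{0,x}^{t,z}[\mathrm e^{2\b^2\int_0^{t/2}V(\sqrt 2 W_s)\dd s}]=E_{0,x}[\frac{\rho(t/2,z-W_{t/2})}{\rho(t,z-x)}\mathrm e^{2\b^2\int_0^{t/2}V(\sqrt 2 W_s)\dd s}]\le 2^{d/2}\mathrm e^{|z-x|^2/(2t)}\,E_{0,x}[\mathrm e^{2\b^2\int_0^{\infty}V(\sqrt 2 W_s)\dd s}]\le 2^{d/2}\mathrm e\,K$, using $|z-x|^2\le2$ and $t\ge1$. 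For the second factor I use that the time-reversal $(W_{t-s})_{0\le s\le t}$ of the bridge from $x$ to $z$ is a bridge from $z$ to $x$, turning $\int_{t/2}^t V(\sqrt 2 W_s)\dd s$ into $\int_0^{t/2}V(\sqrt 2 W_s)\dd s$; hence this factor equals $E_{0,z}^{t,x}[\mathrm e^{2\b^2\int_0^{t/2}V(\sqrt 2 W_s)\dd s}]\le 2^{d/2}\mathrm e\,K$ by the previous estimate with $x,z\in\mathcal B$ interchanged. Thus $M\le\max\{\mathrm e^{\b^2\|V\|_\infty},\,2^{d/2}\mathrm e\,K\}<\infty$.

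Next I would remove the constraint $x,z\in\mathcal B$ by a hitting-time argument. Let $\tau=\inf\{s\ge0:W_s\in\mathcal B\}$. If $x\notin\mathcal B$, then $V(\sqrt 2 W_s)=0$ for $s<\tau$, so on $\{\tau>t\}$ the integral vanishes and
$$
E_{0,x}^{t,z}\Big[\mathrm e^{\b^2\int_0^t V(\sqrt 2 W_s)\dd s}\Big]=1+E_{0,x}^{t,z}\Big[\mathbf 1_{\{\tau\le t\}}\big(\mathrm e^{\b^2\int_\tau^t V(\sqrt 2 W_s)\dd s}-1\big)\Big].
$$
By the strong Markov property of the Brownian bridge at $\tau$, conditionally on $\{\tau\le t\}$ the path $(W_{\tau+u})_{0\le u\le t-\tau}$ is a Brownian bridge from $W_\tau\in\partial\mathcal B$ to $z$; bounding $\mathbf 1_{\{\tau\le t\}}\le1$ and translating time, the right-hand side is at most $1+\sup_{t'>0,\,|w|=1/\sqrt2,\,z'\in\rd}E_{0,w}^{t',z'}[\mathrm e^{\b^2\int_0^{t'} V(\sqrt 2 W_s)\dd s}]$. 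If $z'\in\mathcal B$ this is $\le1+M$; if $z'\notin\mathcal B$, apply the same argument to the time-reversed bridge (which runs from $z'$ to $w$) to force its left endpoint into $\mathcal B$ as well, at the cost of one further additive $1$, after which both endpoints lie in $\mathcal B$ and the bridge expectation is $\le M$. Altogether $\sup_{x,z,t}E_{0,x}^{t,z}[\mathrm e^{\b^2\int_0^t V(\sqrt 2 W_s)\dd s}]\le 2+M<\infty$, which is the assertion.

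I expect the crude factor $\mathrm e^{|z-y|^2/(2t)}$ in Lemma~\ref{lemma1-claim1} to be the main obstacle: since it is not bounded uniformly over $(y,z,t)$, one cannot simply apply Girsanov on a fixed fraction of $[0,t]$ for arbitrary endpoints. The hitting-time reduction is exactly what resolves this --- it drives both endpoints into the bounded ball $\mathcal B$, where $|z-y|^2\le 2$, after which the Girsanov bound (on the first half) and time-reversal (on the second half) may be used safely. The remaining ingredients are standard facts about Brownian bridges (strong Markov property and invariance under time-reversal with the endpoints swapped) and the elementary observation that $\mathrm{supp}(V(\sqrt 2\,\cdot))\subseteq\mathcal B$, so that the integrand genuinely vanishes on $[0,\tau)$.
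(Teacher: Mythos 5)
Your proof is correct, but it takes a genuinely different route from the paper's. The paper does \emph{not} rely on the statement of Lemma~\ref{lemma1-claim1}; instead, inside the proof of Lemma~\ref{lemma2-claim1} it writes the Girsanov change of measure between $P_{0,x}^{t,z}$ and a \emph{drifted} Brownian motion $P^{(\alpha)}_x$ with $\alpha=(z-x)/t$. With that choice of tilt, the Radon--Nikodym factor at the half-way time $s=t/2$ is bounded by $2^{d/2}$ \emph{uniformly in $x,z,t$} --- the problematic factor $\mathrm e^{|z-x|^2/(2t)}$ never appears --- and Khas'minskii is then applied to $E^{(\alpha)}$ after noting that $\sup_{x,\alpha}E^{(\alpha)}_x[\int_0^\infty V(\sqrt2 W_s)\dd s]$ is maximal at $\alpha=0$, $x=0$. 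The splitting into $[0,t/2]\cup[t/2,t]$ and time-reversal is the same in both arguments. Your variant sticks to the drift-free Girsanov bound of Lemma~\ref{lemma1-claim1} (hence the $\mathrm e^{|z-x|^2/(2t)}$ factor) and compensates for it with a hitting-time localization: since $V(\sqrt2\,\cdot)$ is supported in $\mathcal B$, the additive functional only accrues after the bridge first enters $\mathcal B$, and the strong Markov property of the bridge together with time-reversal reduce the general bound to the case of both endpoints in $\mathcal B$ (where $|z-x|^2\le 2$) at the cost of the additive constant $2$. Both proofs are sound; the paper's drift trick is shorter and avoids needing the strong Markov property of the bridge, while your reduction is more elementary in that it uses only Lemma~\ref{lemma1-claim1} as stated plus the standard Khas'minskii lemma. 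One cosmetic remark: you do not actually need the claim $K=\sup_x E_x[\mathrm e^{2\beta^2\int_0^\infty V(\sqrt2 W_s)\dd s}]=E_0[\cdots]$ (which is not entirely obvious for the exponential); Khas'minskii already gives $K\le 1/(1-a)<\infty$ with $a=2\beta^2 E_0[\int_0^\infty V(\sqrt2 W_s)\dd s]$, which is all that is used.
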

\begin{proof}
By Girsanov's theorem, for any $s<t$, $\alpha\in \R^d$ and $A\in \mathcal F_{[0,s]}$, 
\begin{equation}\label{eq1-lemma2-claim1}
P_{0,x}^{t,z}(A)= E^{(\alpha)}_x \bigg[ \frac{\rho^{(\alpha)}(t-s; z-W_s)}{\rho^{(\alpha)}(t,z-x)} \,\mathbf 1_A\bigg] 
\end{equation}
where $E^{(\alpha)}$ (resp. \unskip \  $P^{(\alpha)}$) refers to the expectation (resp. \unskip \ the probability) with respect to Brownian motion with drift $\alpha$ and transition density 
\[
\rho^{(\alpha)}(t,z)= \frac 1 {(2\pi t)^{d/2}} \exp\bigg\{- \frac{|z- t\alpha|^2}{2t}\bigg\}.
\]
With $\alpha=(z-x)/t$ and $s=t/2$, applying \eqref{eq1-lemma2-claim1}, we get 
\[
P_{0,x}^{t,z}(A) \leq 2^{d/2}\,\, P^{(\alpha)}_x (A).
\]
Replacing $A$ by $e^{2\beta^2 \int_0^{t/2} V(\sqrt 2 W_s) \dd s }$, we have
\[
\begin{aligned}
\sup_{z,x\in \mathbb R^d,t>0} E_{0,x}^{t,z}\bigg[\exp\bigg\{2\beta^2 \int_0^{t/2} V(\sqrt 2 W_s) \dd s\bigg\}\bigg] &\leq 2^{d/2} \sup_\alpha E^{(\alpha)}\bigg[\exp\bigg\{2\beta^2 \int_0^{t/2} V(\sqrt 2 W_s) \dd s\bigg\}\bigg] \\
&\leq 2^{d/2} \frac 1 {1-a} <\infty,
\end{aligned}
\]
where the second upper bound follows from Khas'minskii's lemma provided we have 
\[
2\beta^2 \sup_{x,\alpha} E^{(\alpha)}_x \bigg[ \int_0^\infty V(\sqrt 2 W_s) \dd s \bigg] \leq a <1.
\]
But since the expectation in the above display is equal to $\int_0^\infty \dd s \int_{\R^d} \dd z V(\sqrt 2 z)\,\, \rho^{(\alpha)} (s, z-x)$ and is maximal for $x=0$ and $\alpha=0$, the requisite condition reduces to 
\[
2\beta^2 E_0 \bigg[ \int_0^\infty V(\sqrt 2 W_s) \dd s \bigg]<1,
\]
which is satisfied by our assumption. Finally,  the lemma follows from the observation
 $$
 \exp\bigg\{\beta^2\int_0^t V(\sqrt 2 W_s) \,\dd s\bigg\}\leq \frac 12 \bigg[\exp\bigg\{2\beta^2\int_0^{t/2} V(\sqrt 2 W_s) \dd s \bigg\}+ \exp\bigg\{2\beta^2\int_{t/2}^{t} V(\sqrt 2 W_s) \dd s \bigg\}\bigg]
 $$
  combined with time reversibility of Brownian motion. 
\end{proof}
Recall that $V=\phi\star\phi$ is bounded and has support in a ball of radius $1$ around the origin, and therefore, for some constant $c, c^\prime>0$, and any $a>0$, 
$$
P_0\bigg[\int_m^\infty \dd s \, V(\sqrt 2 W_s) >a\bigg] \leq \frac {c}a \int_m^\infty \frac{\dd s}{s^{3/2}} \int_{B(0,1)} \dd y V(\sqrt 2 y) \exp\bigg\{-\frac{|y|^2}{2s}\bigg\} \leq \frac{c^\prime \|V\|_\infty} {am^{1/2}} \to 0 
$$
as $m\to\infty$, implying
\begin{lemma}\label{lemma2.5-claim1}
For any $a>0$, $\lim_{T\to\infty}\,\, P_0\big[\int_m^\infty \dd s \, V(\sqrt 2 W_s) >a\big] =0$.
\end{lemma}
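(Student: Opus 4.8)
\emph{Proof plan.} The plan is to reduce this tail estimate to an elementary first-moment bound. First I would apply Markov's inequality to write
\[
P_0\bigg[\int_m^\8 V(\sqrt 2 W_s)\,\dd s > a\bigg] \;\leq\; \frac 1a\, E_0\bigg[\int_m^\8 V(\sqrt 2 W_s)\,\dd s\bigg],
\]
and then, since the integrand is nonnegative, use Tonelli's theorem together with $W_s\sim\mathcal N(0,sI_d)$ to rewrite the right-hand side as $\frac 1a\int_m^\8\dd s\int_{\rd} V(\sqrt 2 y)\,\rho(s,y)\,\dd y$.

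Next I would exploit the two structural features of $V=\phi\star\phi$: it is bounded, and it is supported in $B(0,1)$. The support property restricts the spatial integral to the unit ball, where the Gaussian factor $\exp\{-|y|^2/2s\}$ is at most $1$, so that $\rho(s,y)\leq(2\pi s)^{-d/2}$ there. Because $d\geq 3$ and $m=m(T)\to\8$ (so we may assume $m\geq 1$), on the range $s\geq m$ one has $(2\pi s)^{-d/2}\leq (2\pi)^{-3/2}\,s^{-3/2}$, and $\int_m^\8 s^{-3/2}\,\dd s = 2m^{-1/2}$. Combining this with the finite constant $\int_{B(0,1)}V(\sqrt 2 y)\,\dd y = 2^{-d/2}\int_{\rd}V = 2^{-d/2}$ (using $\int\phi=1$, hence $\int V=1$) yields
\[
P_0\bigg[\int_m^\8 V(\sqrt 2 W_s)\,\dd s > a\bigg] \;\leq\; \frac{c'}{a\,m^{1/2}}
\]
for an absolute constant $c'$; letting $T\to\8$, so that $m=m(T)\to\8$ by construction, gives the claim.

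I do not expect any genuine obstacle here — the estimate is completely elementary. The only two points worth a sentence of care are the interchange of expectation with the $\dd s$-integral (immediate by Tonelli, all terms being nonnegative) and the role of the dimension: it is precisely $d\geq 3$ that makes $\int_m^\8 s^{-d/2}\,\dd s$ converge, with polynomial decay in $m$ (we only need the cruder rate $m^{-1/2}$, which suffices for the conclusion).
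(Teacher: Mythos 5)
Your argument is correct and matches the paper's proof, which is displayed immediately before the statement of the lemma: Markov's inequality followed by Tonelli, bounding $\rho(s,y)$ by a constant times $s^{-3/2}$ on $s\geq m$ (valid since $d\geq 3$), and integrating to get the $m^{-1/2}$ decay. No differences of substance.
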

By Lemma \ref{lemma2-claim1}, we also have
\begin{lemma}\label{lemma3-claim1}
For any $a>0$, 
$$
\lim_{T\to\infty}\sup_{z\in \mathbb R^d} P_{0,0}^{T,z}\bigg[\int_m^{T-m} V(\sqrt 2 W_s) \dd s >a \bigg] =0.
$$
\end{lemma}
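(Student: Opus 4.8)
The goal is to show
\[
\lim_{T\to\infty}\ \sup_{z\in\R^d}\ P_{0,0}^{T,z}\Bigl[\int_m^{T-m} V(\sqrt2\,W_s)\,\dd s > a\Bigr]=0
\]
for every $a>0$, where $m=m(T)\to\infty$ and $m=o(T)$. By Markov's inequality it suffices to bound $\sup_z E_{0,0}^{T,z}\bigl[\int_m^{T-m} V(\sqrt2 W_s)\,\dd s\bigr]$ by something tending to $0$, but that direct first-moment bound is a bit delicate because the bridge density near the endpoints blows up. The plan is instead to split the integral into the two halves $\int_m^{T/2}$ and $\int_{T/2}^{T-m}$ (assume $T$ large so $m<T/2$), treat the first by Lemma~\ref{lemma1-claim1} and treat the second symmetrically, using the reversibility of the Brownian bridge which swaps the roles of the two endpoints $0$ and $z$.

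\textbf{Step 1: reduce to a half-interval estimate via the bridge decomposition.} Using the Markov property of the bridge at time $T/2$ together with Lemma~\ref{lemma1-claim1}, I would bound, for the first half,
\[
E_{0,0}^{T,z}\Bigl[\int_m^{T/2}V(\sqrt2 W_s)\,\dd s\Bigr]
\le \sup_{w}\Bigl(\tfrac{T}{T/2}\Bigr)^{d/2}\exp\{\cdots\}\ \cdot\ E_{0,0}\Bigl[\int_m^{T/2}V(\sqrt2 W_s)\,\dd s\Bigr],
\]
and more carefully, since the density bound in Lemma~\ref{lemma1-claim1} requires the conditioning to happen strictly before the terminal time, I would apply it on $\mathcal F_{[0,s]}$ for $s\le T/2$ to get, for each fixed $s\in[m,T/2]$,
\[
E_{0,0}^{T,z}\bigl[V(\sqrt2 W_s)\bigr]
=E_0\Bigl[\frac{\rho(T-s,z-W_s)}{\rho(T,z)}\,V(\sqrt2 W_s)\Bigr]
\le 2^{d/2}\,\mathrm e^{|z|^2/2T}\, E_0\bigl[V(\sqrt2 W_s)\bigr].
\]
The factor $\mathrm e^{|z|^2/2T}$ is the annoying one: it is \emph{not} uniformly bounded in $z$. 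So this naive route only works after first restricting $z$ to a compact region and handling large $|z|$ separately. A cleaner fix is to note that for the first half-integral, the relevant conditioning time $s\le T/2$ satisfies $T-s\ge T/2$, and $\rho(T-s,z-W_s)/\rho(T,z)$ integrates (against $P_0$) to a bridge-to-$P_0$ Radon--Nikodym derivative whose $P_0$-expectation equals $1$; combined with $V\ge0$ bounded with compact support, one gets after integrating over $s\in[m,T/2]$ a bound of the form $C\int_m^\infty \rho(s,w)$-type tail that is $o(1)$ as $m\to\infty$, \emph{uniformly in} $z$, because the endpoint $z$ only enters through the far endpoint $T-s\ge T/2$ which is harmless.

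\textbf{Step 2: make the uniformity in $z$ honest.} Concretely I would write $\rho(T-s,z-W_s)=\rho(T,z)\cdot g_s(W_s,z)$ and use $\rho(T-s,z-w)\le (T/(T-s))^{d/2}\rho(T,z-w)\cdot(\text{Gaussian correction})$; since $s\le T/2$ the prefactor is $\le 2^{d/2}$, and $\rho(T,z-w)/\rho(T,z)=\exp\{(2z\cdot w-|w|^2)/2T\}$. Now $V(\sqrt2 W_s)\ne0$ forces $|W_s|\le 1/\sqrt2$, so on that event $|w|\le 1/\sqrt2$ and $\exp\{(2z\cdot w-|w|^2)/2T\}\le \exp\{|z|/(\sqrt2 T)\}$. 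This is still not uniform for $|z|$ of order $T$ or larger; for such $z$, however, $\rho(T,z)$ itself is tiny and one should instead bound the conditional expectation directly using Lemma~\ref{lemma2-claim1}, which gives a \emph{uniform} (in $z,x,t$) exponential-moment bound $\sup E_{0,x}^{t,z}[\exp\{\beta^2\int_0^t V(\sqrt2 W_s)\,\dd s\}]<\infty$ for $\beta$ small. From that uniform exponential moment, $\sup_z E_{0,0}^{T,z}[\exp\{\beta^2\int_m^{T-m}V(\sqrt2 W_s)\,\dd s\}]<\infty$, so the family $\{\int_m^{T-m}V(\sqrt2 W_s)\,\dd s\}_{T}$ is uniformly integrable under the bridge laws; hence to conclude $L^1$-convergence (and thus the probability statement) to $0$ it is enough to show convergence in probability to $0$, uniformly in $z$.

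\textbf{Step 3: convergence in probability, uniformly in $z$, via comparison to Brownian motion with drift.} Here I would reuse the Girsanov computation from the proof of Lemma~\ref{lemma2-claim1}: for $A\in\mathcal F_{[0,T/2]}$, $P_{0,0}^{T,z}(A)\le 2^{d/2}P_0^{(\alpha)}(A)$ with $\alpha=z/T$ (the drift that turns the bridge's first half into a drifted BM), and symmetrically, by reversibility, $P_{0,0}^{T,z}(B)\le 2^{d/2}P_z^{(\alpha')}(B)$ for $B$ measurable w.r.t.\ the time-reversed path on $[0,T/2]$, with $\alpha'$ the corresponding drift. Applying this with $A=\{\int_m^{T/2}V(\sqrt2 W_s)\,\dd s>a/2\}$ and $B$ the analogue on the other half, we reduce to showing
\[
\lim_{T\to\infty}\ \sup_{\alpha\in\R^d}\ P_0^{(\alpha)}\Bigl[\int_m^\infty V(\sqrt2 W_s)\,\dd s > a/2\Bigr]=0,
\]
together with $m=m(T)\to\infty$. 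But the occupation measure estimate already recorded just before Lemma~\ref{lemma2.5-claim1} gives, uniformly over drifts $\alpha$ (the bound there is maximized at $\alpha=0$, $x=0$), $P_0^{(\alpha)}[\int_m^\infty V(\sqrt2 W_s)\,\dd s>a/2]\le \tfrac{c'\|V\|_\infty}{(a/2)\,m^{1/2}}\to0$. Combining the two halves via a union bound finishes the argument.

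\textbf{Main obstacle.} The only real subtlety is the lack of uniformity in the endpoint $z$ coming from the bridge density factor $\exp\{|z|^2/2T\}$; the point of the proof is that this is circumvented by never conditioning \emph{at} the endpoint --- splitting at $T/2$ means each half only ever ``sees'' its far endpoint through a density on a time interval of length $\ge T/2$, where the Girsanov/drift comparison of Lemma~\ref{lemma2-claim1} applies with a \emph{drift-uniform} constant. Once that is in place the tail bound on the occupation time of the fixed compact support of $V$ over $[m,\infty)$ does all the work, and the $o(T)$ hypothesis on $m$ is used only to guarantee $m<T/2$ eventually.
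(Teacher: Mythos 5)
Your Step~3 is correct and is precisely the argument the paper is gesturing at when it writes ``By Lemma~\ref{lemma2-claim1}, we also have'': split $[m,T-m]$ at $T/2$, use the Girsanov drift-comparison from the \emph{proof} of Lemma~\ref{lemma2-claim1} (together with bridge reversibility for the second half) to dominate each half by a drifted Brownian motion with a universal factor $2^{d/2}$, and then apply the occupation-time tail bound recorded before Lemma~\ref{lemma2.5-claim1}, which is uniform in the drift because $\rho^{(\alpha)}(s,y)\le (2\pi s)^{-d/2}$. So the proposal matches the paper's (largely implicit) proof. Your Steps~1 and~2 are essentially diagnostic: you correctly observe that the naive density bound of Lemma~\ref{lemma1-claim1} carries an $\exp\{|z|^2/2T\}$ factor that is not uniform in $z$, which is exactly why one must split at $T/2$ and use the drift comparison rather than the raw density bound. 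The uniform-integrability detour in Step~2 is not needed --- the statement is only about convergence in probability, and Step~3 bounds that probability directly via Markov's inequality --- but it is harmless. One small overstatement: the occupation-time bound is not literally ``maximized at $\alpha=0$'' pointwise in $(s,y)$; what is used and what is true is the drift-uniform bound $\rho^{(\alpha)}(s,y)\le (2\pi s)^{-d/2}$, which the paper itself invokes in the same slightly loose language in the proof of Lemma~\ref{lemma2-claim1}.
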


\begin{proof}[{\bf{Proof of Proposition \ref{prop-claim1}}}]
Note that it is enough to show that, for any $a>0$ and $y\in\mathbb{R}^d$,
\begin{equation} \label{eq:sufficientLimit}
\lim_{T\to\infty} \sup_{y\in\mathbb{R}^d} \, E_{0,0}^{T,y}  \bigg[ \mathrm e^{\b^2\int_0^{T} V(\sqrt 2  W_t)\dd t}\,\,\, \mathbf 1\bigg\{\int_m^{T-m} V(\sqrt 2 W_s)\,\dd s >a\bigg\}\bigg]  =0.
\end{equation}
Indeed, letting $A=\left\{\int_m^{T-m} V(\sqrt 2 W_s)\,\dd s >a\right\}$, we have
\begin{align*}
 \left|E_{0,0}^{T,y}   \bigg[ 
\mathrm e^{\b^2\int_0^{T} V(\sqrt 2  W_t)\dd t} \bigg] - \mathcal T_1\right| & = E_{0,0}^{T,y}   \left[ 
\mathrm e^{\b^2\int_{[0,m] \cup [T\!-\!m,T]} V(\sqrt 2  W_t)\dd t}\left(e^{\b^2\int_m^{T-m} V(\sqrt 2  W_t)\dd t}-1 \right)\right]\\
&\leq  2E_{0,0}^{T,y}   \bigg[ 
\mathrm e^{\b^2\int_0^{T} V(\sqrt 2  W_t)\dd t} \, \mathbf 1_A \bigg] + ae^aE_{0,0}^{T,y}   \bigg[ 
\mathrm e^{\b^2\int_0^{T} V(\sqrt 2  W_t)\dd t} \bigg],
\end{align*}
where we decomposed the RHS of the first line on the two events $A$ and $A^c$ and we used that $|e^a-1|\leq ae^a$ in the second line. From this last display, we see that Proposition \ref{prop-claim1} is obtained by choosing $a$ arbitrary small, Lemma \ref{lemma2-claim1} and \eqref{eq:sufficientLimit}.

Finally, we observe that convergence \eqref{eq:sufficientLimit} follows from H\"older's inequality,  Lemma \ref{lemma2-claim1} and Lemma \ref{lemma3-claim1}, which ends the proof.
\end{proof}

We now turn to the proof of 

\begin{proof}[{\bf{Proof of Proposition \ref{prop-claim2}}}]
Condition on the position of the Brownian bridge at time $T/2$, then use reversal property of the Brownian bridge and change of variable $z\to\sqrt{T} z$, to get:
\begin{align*}
&\mathcal T_1(y) = \int_{\mathbb{R}^d} E_{0,0}^{T/2,z}\left[e^{\b^2\int_{[0,m] } V(\sqrt 2  W_t)\dd t} \right]E_{T/2,z}^{T,y}\left[\mathrm e^{\b^2\int_{[T-m,T] } V(\sqrt 2  W_t)\dd t} \right] \frac{\rho(T/2,z)\rho(T/2,y-z)}{\rho(T,y)}\dd z\\
& = \int_{\mathbb{R}^d} E_{0,0}^{T/2,z\sqrt{T}}\left[e^{\b^2\int_0^m V(\sqrt 2  W_t)\dd t} \right]E_{0,y}^{T/2,z\sqrt{T}}\left[\mathrm e^{\b^2\int_0^m V(\sqrt 2  W_t)\dd t} \right] \frac{\rho(1/2,z)\rho(1/2,z-y/\sqrt{T})}{\rho(1,y/\sqrt{T})}\dd z.
\end{align*}
We now claim that, for fixed $z$,
\begin{equation} \label{eq:BBToBM}
E_{0,y}^{T/2,z\sqrt{T}}\left[\mathrm e^{\b^2\int_0^m V(\sqrt 2  W_t)\dd t} \right] \sim E_{y}\left[\mathrm e^{\b^2\int_0^\infty V(\sqrt 2  W_t)\dd t} \right].
\end{equation}
Then, by dominated convergence theorem applied to the above integral, where the expectations in the integrand are bounded thanks to Lemma \ref{lemma2-claim1}, we obtain that:
\begin{align*}
\mathcal T_1(y) \sim & \int_{\mathbb{R}^d} E_0\left[\mathrm e^{\b^2\int_0^\infty V(\sqrt 2  W_t)\dd t} \right]E_y\left[\mathrm e^{\b^2\int_0^\infty V(\sqrt 2  W_t)\dd t} \right] \frac{\rho(1/2,z)\rho(1/2,z)}{\rho(1,0)}\dd z\\
& = E_0\left[\mathrm e^{\b^2\int_0^\infty V(\sqrt 2  W_t)\dd t} \right]E_y\left[\mathrm e^{\b^2\int_0^\infty V(\sqrt 2  W_t)\dd t} \right].
\end{align*}
To prove \eqref{eq:BBToBM}, we use Lemma \ref{lemma1-claim1}:
\begin{align*}
&E_{0,y}^{T/2,z\sqrt{T}}\left[\mathrm e^{\b^2\int_0^m V(\sqrt 2  W_t)\dd t} \right] \\
& = \frac{1}{\rho(T/2,z\sqrt{T}-y)} E_{y} \left[\mathrm e^{\b^2\int_0^m V(\sqrt 2  W_t)\dd t} \rho(T/2-m,z\sqrt{T}-\sqrt{2}W_m) \right]\\
& = \frac{1}{\rho(1/2,z-y/\sqrt{T})\left(\pi(1-\frac{2m}{{T}})\right)^{d/2}} E_{y} \left[\mathrm e^{\b^2\int_0^m V(\sqrt 2  W_t)\dd t} \mathrm e^{-\frac{|z-\sqrt{2/T}\,W_m|^2}{1-2m/T}}\right].
\end{align*}
By monotone convergence and the fact that $m=o(T)$, we obtain:
\[P\text{-a.s.}\quad 
\mathrm e^{\b^2\int_0^m V(\sqrt 2  W_t)\dd t} \to \mathrm e^{\b^2\int_0^\infty V(\sqrt 2  W_t)\dd t}\quad \text{and} \quad
\mathrm e^{-\frac{|z-\sqrt{2/T}\,W_m|^2}{1-2m/T}} \to \mathrm e^{-2z^2}.
\]
Then, we have the following uniform integrability property for small $\delta >0$ and small $\b$:
\[
E_y\left[\left(\mathrm e^{\b^2\int_0^m V(\sqrt 2  W_t)\dd t} \mathrm e^{-\frac{|z-\sqrt{2/T}\,W_m|^2}{1-2m/T}}\right)^{1+\delta}\right] \leq E_y\left[\mathrm e^{(1+\delta)\b^2\int_0^\infty V(\sqrt 2  W_t)\dd t}\right]<\infty.
\]
 Hence,
\begin{equation*}
E_{0,y}^{T/2,z\sqrt{T}}\left[\mathrm e^{\b^2\int_0^m V(\sqrt 2  W_t)\dd t} \right] \to \frac{\mathrm e^{-2z^2}}{\rho(1/2,z)\pi^{d/2}} E_{y} \left[\mathrm e^{\b^2\int_0^\infty V(\sqrt 2  W_t)\dd t} \right] = E_{y} \left[\mathrm e^{\b^2\int_0^\infty V(\sqrt 2  W_t)\dd t} \right].
\end{equation*}

\end{proof}

\subsection{Second moment.}\label{sec:second}

The goal of this section is to show 

\begin{proposition}\label{prop-second-moment}
There exists $\b_0\in (0,\infty)$, such that for all $\b< \b_0$, $\E(\mathscr L_T^2) \to 0$. 
\end{proposition}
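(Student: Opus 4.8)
The plan is to deduce $\E(\mathscr L_T^2)\to 0$ from Proposition~\ref{cor-LT} by reducing it to a vanishing--variance statement and then decoupling the four--body interaction. Since $\E(\mathscr L_T)\to 0$, it suffices to show $\mathrm{Var}(\mathscr L_T)\to 0$. Writing $\mathscr L_T=E_0^{\otimes 2}\big[\Phi_T(W^{\ssup 1})\Phi_T(W^{\ssup 2})\,\Delta_T^{\ssup{12}}\big]$, where $\Delta_T^{\ssup{kl}}$ is the centered kernel of $\mathscr L_T$ (a combination of $T^{d/2}V(W_T^{\ssup k}-W_T^{\ssup l})$ and $\fC_0$), squaring, introducing four independent Brownian motions $W^{\ssup 1},\dots,W^{\ssup 4}$ from the origin and averaging over the noise via \eqref{eq:expgauss}, one gets
$$
\mathrm{Var}(\mathscr L_T)=E_0^{\otimes 4}\Big[\big(e^{\b^2 X_T}-1\big)\,e^{\b^2 Y_T}\,\Delta_T^{\ssup{12}}\,\Delta_T^{\ssup{34}}\Big],
$$
where $Y_T=\int_0^T\big(V(W_s^{\ssup 1}-W_s^{\ssup 2})+V(W_s^{\ssup 3}-W_s^{\ssup 4})\big)\dd s$ gathers the within--pair interactions and $X_T=\int_0^T\sum_{i\in\{1,2\},\,j\in\{3,4\}}V(W_s^{\ssup i}-W_s^{\ssup j})\dd s$ the cross interactions; indeed $(\E\mathscr L_T)^2$ has exactly the same form but with $X_T$ deleted.

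The guiding idea is that, in $d\geq 3$, the pairwise differences of the $W^{\ssup i}$ are transient, so $\int_0^\infty\sum_{i<j}V(W_s^{\ssup i}-W_s^{\ssup j})\dd s<\infty$ a.s.; hence $e^{\b^2 X_T}-1$ is controlled by the behavior of the paths near time $0$, whereas $\Delta_T^{\ssup{12}}$, $\Delta_T^{\ssup{34}}$ and the effective part of $e^{\b^2 Y_T}$ live near time $T$. Quantitatively, the Green function gives $E_0^{\otimes 4}\big[\int_m^\infty\sum_{i\in\{1,2\},j\in\{3,4\}}V(W_s^{\ssup i}-W_s^{\ssup j})\dd s\,\big|\,\mathcal F_m\big]\lesssim\sum_{i,j}\big(|W_m^{\ssup i}-W_m^{\ssup j}|\vee 1\big)^{-(d-2)}$, which is $O(m^{-(d-2)/2})$ in $L^1$. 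Fixing a cutoff $m=m(T)\to\infty$ with $m=o(T)$ and conditioning on $\mathcal F_m$, one uses that, given $\mathcal F_m$, the pairs $\{1,2\}$ and $\{3,4\}$ evolve independently on $[m,T]$, each carrying only its own within--pair interaction, so the conditional expectation of $e^{\b^2(Y_T-Y_m)}\Delta_T^{\ssup{12}}\Delta_T^{\ssup{34}}$ factorizes as $F_T(W_m^{\ssup 1}-W_m^{\ssup 2})\,F_T(W_m^{\ssup 3}-W_m^{\ssup 4})$, with $F_T(w)=E\big[e^{\b^2\int_0^{T-m}V(w+\sqrt 2\,\widetilde W_u)\dd u}\,\big(T^{d/2}V(w+\sqrt 2\,\widetilde W_{T-m})-\fC_0\big)\big]$ (up to the $\b^2$ in $\Delta_T^{\ssup{kl}}$). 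One then decomposes $\mathrm{Var}(\mathscr L_T)$ as $E_0^{\otimes 4}\big[(e^{\b^2 X_m}-1)e^{\b^2 Y_m}\,F_T(W_m^{\ssup 1}-W_m^{\ssup 2})\,F_T(W_m^{\ssup 3}-W_m^{\ssup 4})\big]$ plus the error coming from replacing $e^{\b^2 X_T}$ by $e^{\b^2 X_m}$.

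Two ingredients then close the argument. First, an adaptation of Section~\ref{sec:warmup} — conditioning the difference process $w+\sqrt 2\widetilde W$ at its midpoint, the Brownian--bridge--to--Brownian--motion limit of Proposition~\ref{prop-claim2}, Lemmas~\ref{lemma2-claim1}--\ref{lemma3-claim1}, and $T^{d/2}\rho(T,\cdot)\to(2\pi)^{-d/2}$ — yields $F_T(w)\to 0$ as $T\to\infty$, uniformly over $|w|=o(\sqrt T)$ (the only changes relative to $\E(\mathscr L_T)\to 0$ being a shifted starting point, which affects neither the transient behavior nor the limiting constant, and the irrelevant shortening of the horizon from $T$ to $T-m$), together with $\sup_{T\geq 1,\,w\in\rd}|F_T(w)|<\infty$; note that although $\Delta_T^{\ssup{kl}}$ is not bounded in $L^p$ for any $p>1$ (its $p$--th moment grows like $T^{(p-1)d/2}$), the prefactor $T^{d/2}$ in $F_T$ is exactly compensated by the $\asymp T^{-d/2}$ probability of localizing the endpoint, so $F_T$ stays bounded. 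Second, Khas'minskii's Lemma~\ref{lemma2-claim1}, applied to the six--fold interaction, gives $\sup_{m\geq 0}\big\|(e^{\b^2 X_m}-1)e^{\b^2 Y_m}\big\|_{1+\delta}<\infty$ for $\b$ small and some $\delta>0$. Combining these with the Green--function bound and dominated convergence (separating the rare event $\{\,|W_m^{\ssup i}-W_m^{\ssup j}|\gg\sqrt m\,\}$) forces both the main term and the error term to vanish, so $\mathrm{Var}(\mathscr L_T)\to 0$ and $\E(\mathscr L_T^2)\to 0$. Here $\b_0$ is the minimum of $\b_1$ from Proposition~\ref{cor-LT} and the Khas'minskii threshold making $\exp\{c\b^2\int_0^\infty\sum_{i<j}V(W_s^{\ssup i}-W_s^{\ssup j})\dd s\}$ integrable to the required power.

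The main obstacle is the error term $E_0^{\otimes 4}\big[(e^{\b^2 X_T}-e^{\b^2 X_m})\,e^{\b^2 Y_T}\,\Delta_T^{\ssup{12}}\,\Delta_T^{\ssup{34}}\big]$: the exponential weight genuinely couples all four paths, and — precisely because $\Delta_T^{\ssup{kl}}$ admits no $L^p$ bound for $p>1$ — one cannot detach the endpoint factors from the weights by H\"older's or the Cauchy--Schwarz inequality (any such bound blows up with $T$). The cross interactions on $[m,T]$ must instead be removed by a nested conditioning (revealing one pair at a time, or decomposing according to the last cross--collision time) that keeps each $\Delta_T^{\ssup{kl}}$ attached to its within--pair weight, so that the first--moment cancellation can still be invoked, and this is then combined with the transience and Green--function estimates above.
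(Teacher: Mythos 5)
Your decomposition is the right one and, for the main term, it works: writing the variance as
$E_0^{\otimes 4}\big[(e^{\b^2 X_T}-1)e^{\b^2 Y_T}\Delta_T^{\ssup{12}}\Delta_T^{\ssup{34}}\big]$, conditioning on $\mathcal F_m$ to factorize the within-pair parts into $F_T(W_m^{\ssup 1}-W_m^{\ssup 2})F_T(W_m^{\ssup 3}-W_m^{\ssup 4})$, and then using $F_T\to 0$ with uniform boundedness (both justified exactly by the first-moment machinery of Proposition~\ref{cor-LT} and Lemma~\ref{lemma2-claim1}) together with $L^{1+\delta}$ control of $(e^{\b^2 X_m}-1)e^{\b^2 Y_m}$ does kill the main term. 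This parallels the paper's Proposition~\ref{prop-claim4}, which also factorizes the second moment asymptotically into $E_0^{\otimes 4}[H_\infty]$ times a squared first-moment-type quantity over $[T-m,T]$ that vanishes by Section~\ref{sec:warmup}.

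However, you have correctly located, but not closed, the genuine gap, namely the error term
$E_0^{\otimes 4}\big[(e^{\b^2 X_T}-e^{\b^2 X_m})e^{\b^2 Y_T}\Delta_T^{\ssup{12}}\Delta_T^{\ssup{34}}\big]$.
Your diagnosis — that a direct H\"older bound fails because $\|\Delta_T^{\ssup{kl}}\|_p$ blows up for $p>1$ — is accurate, but the proposed remedy (a ``nested conditioning revealing one pair at a time'' or conditioning on a last cross-collision time) is not worked out and is not what defuses the problem. The paper's resolution, in Proposition~\ref{prop-claim3}, is to condition not at time $m$ but on all four endpoints $W_T^{\ssup i}=y_i$ at time $T$, i.e.\ to pass to Brownian bridges \emph{before} invoking H\"older. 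Under this conditioning the dangerous factor $T^{d/2}V(y_i-y_{i+1})$ is no longer a random variable at all; it appears deterministically multiplied by $\rho(T,y_i)\rho(T,y_{i+1})\le C T^{-d/2}\,\rho(T,y_i)$, so its $T^{d/2}$ growth is exactly absorbed by the density, and the integral over $\mathbf y$ is uniformly bounded since $V$ is compactly supported. What remains is a bridge expectation $E_{0,0}^{T,\mathbf y}\big[e^{\b^2\sum\int_0^T V}\,\mathbf 1_A\big]$ with the rare event $A$ (cross interactions after time $m$, or any interaction on $[m,T-m]$), and there H\"older now does work: Lemma~\ref{lemma2-claim1} (Khas'minskii for the bridge) bounds the exponential uniformly in $T$ and endpoints, and Lemma~\ref{lemma3-claim1} forces $\sup_{\mathbf y}P_{0,0}^{T,\mathbf y}(A)\to 0$. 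So the obstacle you flag is handled by ordering the operations differently — pin, then H\"older — rather than by a new conditioning scheme; as written, your error-term step is not a proof.

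Two smaller remarks. First, your uniform $L^\infty$ bound on $F_T$ and your claim that $F_T(w)\to 0$ ``uniformly over $|w|=o(\sqrt T)$'' are asserted but are themselves non-trivial; they require exactly the bridge/Khas'minskii estimates above applied to the difference process $w+\sqrt 2\,\widetilde W$, not merely ``a shifted starting point that changes nothing.'' Second, you keep the within-pair interaction on the whole window $[m,T]$ inside $F_T$, where the paper also strips the middle interval $[m,T-m]$; that difference is immaterial since Proposition~\ref{prop-claim1} already discards the middle, but it means you cannot appeal to the paper's computations verbatim and would need to repeat the two-sided cutoff argument inside $F_T$.
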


For this result we will proceed as in the proof of Proposition \ref{cor-LT}. It is enough to show that $\limsup_{T\to\infty} \mathbb E(\mathscr L_T^2)\leq 0$. Computing second moment, we get an integral over four independent Brownian paths: 
\begin{eqnarray}
\E(\mathscr L_T^2) &= E_{0}^{\otimes 4}   \bigg[ \prod_{i\in\{1,3\}} \bigg( T^{\frac{d}{2}}  V(W^{\ssup i}_T\!-\!W^{\ssup{i+1}}_T) - \fC_0 \bigg)  
\mathrm e^{\b^2 \sum_{1\leq i < j \leq 4} \int_0^T V(W^{\ssup i}_t\!-\!W^{\ssup j}_t)\dd t} \bigg] 
\nn \\
&= E_{0}^{\otimes 4} \bigg[  \; \prod_{i\in\{1,3\}} \bigg\{\mathrm e^{\b^2\int_0^T V(W^{\ssup i}_t\!-\!W^{\ssup{i+1}}_t)\dd t}  \bigg(T^{\frac{d}{2}}  V(W^{\ssup i}_T\!-\!W^{\ssup{i+1}}_T) - \fC_0\bigg) \bigg\}
\label{eq1-second} \\
&\qquad\times \mathrm e^{\b^2\sum^{*} \int_0^T V(W^{\ssup i}_t\!-\!W^{\ssup j}_t)\dd t}\; \bigg]
\nn
\end{eqnarray}
where the sum $\sum^{*} $ is considered for $4$ pairs $(i,j), {1\leq i < j \leq 4}$ different from $(1,2)$ and $(3,4)$.
\medskip

Throughout the rest of the article, for notational convenience, we will write 
\begin{equation}\label{eq-H-m}
\begin{aligned}
&H_m= \mathrm e^{\b^2 \sum_{1\leq i < j \leq 4}  \int_0^m V(W^{\ssup i}_t\!-\!W^{\ssup j}_t)\dd t} \, , \\
&H_{T-m,T}=\prod_{i\in\{1,3\}}\bigg\{\mathrm e^{\b^2  \int_{T-m}^T V(W^{(i)}_t\!-\!W^{(i+1)}_t)\dd t}\,\,\bigg(T^{d/2} V\left(W^{(i)}_T\!-\!W^{(i+1)}_T\right) - \fC_0 \bigg)\bigg\}.
\end{aligned}
\end{equation}
We will now estimate each term in the expectation in \eqref{eq1-second}. 
Proposition \ref{prop-claim3} stated below enables us to neglect the contributions
of $\int_m^{T-m} V(W^{\ssup i}_t\!-\!W^{\ssup j}_t)\dd t $ for all $i,j$ 
and of
$\int_{T-m}^{T} V(W^{\ssup i}_t\!-\!W^{\ssup j}_t)\dd t $ for all $(i,j) \neq (1,2), (3,4)$. 
More precisely, we want to show that 
\begin{proposition}\label{prop-claim3}
For $m={m(T)}$ as above, there exists a constant $C>0$ such that, for small enough $\b$, as $T\to\infty$,
\[
 \E \mathscr L_T^2 = \mathcal T_2 + o(1),  
\]
where
\begin{equation}\label{eq1-T2}
\begin{aligned}
 \mathcal T_2&=  E_{0}^{\otimes 4}   \big[  H_m \,\, H_{T-m,T}\big].
 \end{aligned}
\end{equation}
\end{proposition}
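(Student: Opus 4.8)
The plan is to prove Proposition \ref{prop-claim3} by showing that, in the second-moment expansion \eqref{eq1-second}, the contributions of the interaction potentials on the ``middle'' time interval $[m,T-m]$ (for all six pairs) and on the ``final'' interval $[T-m,T]$ (for the four pairs different from $(1,2)$ and $(3,4)$) are negligible, so that what survives is exactly $\mathcal T_2 = E_0^{\otimes 4}[H_m\, H_{T-m,T}]$. The strategy closely mirrors the first-moment argument (Propositions \ref{prop-claim1}--\ref{prop-claim2}): decompose the expectation on the event where the offending integrals are small versus large, bound the large-deviation event using a Khas'minskii-type estimate, and use H\"older's inequality together with the uniform integrability already established in Lemma \ref{lemma2-claim1} (applied now to bridges between the four endpoints, or after conditioning on the positions $W^{\ssup i}_m$ and $W^{\ssup i}_{T-m}$).

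More concretely, I would first condition on the $2d$-vector of positions $(W^{\ssup i}_m)_{i=1}^4$ and $(W^{\ssup i}_{T-m})_{i=1}^4$, which turns each path on $[0,m]$ and $[T-m,T]$ into a Brownian bridge and each path on $[m,T-m]$ into a Brownian bridge over a long interval of length $T-2m$. The factor $T^{d/2}V(W^{\ssup i}_T - W^{\ssup{i+1}}_T)$ and the constant $\fC_0$ in $H_{T-m,T}$ are handled exactly as in Proposition \ref{cor-LT}: $T^{d/2}\rho(T,\cdot)$ converges to a constant and the bridge expectations of $\exp\{\b^2\int V\}$ over the short intervals are uniformly bounded by Lemma \ref{lemma2-claim1}, after a Girsanov change of drift to straighten each bridge. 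The point is that all of these bounded factors are uniformly integrable in the relevant sense, so it suffices to control the exponential of the ``deleted'' integrals.

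For the deletion itself, define the event $A = \{\sum_{(i,j)} \int_m^{T-m} V(W^{\ssup i}_t - W^{\ssup j}_t)\,\dd t + \sum_{(i,j)\neq(1,2),(3,4)} \int_{T-m}^T V(W^{\ssup i}_t - W^{\ssup j}_t)\,\dd t > a\}$. On $A^c$ one uses $|e^{\b^2 x} - 1| \leq \b^2 a e^{\b^2 a}$ (choosing $a$ small at the end), and on $A$ one uses H\"older to split off the bounded $\exp\{\b^2\int V\}$-factors and is left with estimating $P_0^{\otimes 4}$-probability (or the corresponding bridge probability) of $A$. By pairwise reduction $W^{\ssup i}_t - W^{\ssup j}_t \eqlaw \sqrt 2 W_t$ and the tail estimate underlying Lemma \ref{lemma2.5-claim1} / Lemma \ref{lemma3-claim1}, each such probability tends to $0$ as $m = m(T) \to \infty$; a union bound over the finitely many pairs finishes it. The remaining integrals on $[T-m,T]$ for the pairs $(1,2)$ and $(3,4)$ are precisely what is kept in $H_{T-m,T}$, and the integrals on $[0,m]$ for all pairs are what is kept in $H_m$, so no further work is needed there.

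The main obstacle I anticipate is the bookkeeping of the four-path interactions: unlike the first-moment case, here the ``glue'' term $\exp\{\b^2 \sum^* \int_0^T V(W^{\ssup i}_t - W^{\ssup j}_t)\,\dd t\}$ couples all four paths across the whole interval, so one cannot cleanly factorize before conditioning. The resolution is that for the purpose of Proposition \ref{prop-claim3} one only needs an \emph{upper} bound on this glue term that is uniformly integrable (via Khas'minskii for the product of four Brownian motions/bridges, valid for $\b$ small), not its asymptotics --- the precise asymptotic factorization of $\mathcal T_2$ is deferred to the subsequent part of Section \ref{sec:second}. So the technical heart is verifying the required uniform integrability, i.e. an estimate of the form $\sup_{T}E_0^{\otimes 4}[\exp\{(1+\delta)\b^2 \sum_{i<j}\int_0^\infty V(W^{\ssup i}_t - W^{\ssup j}_t)\,\dd t\}] < \infty$ for $\b$ small, which is again a Khas'minskii-lemma computation but now with the multi-path occupation functional in place of the single one.
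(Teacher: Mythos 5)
Your proposal follows essentially the same route as the paper: split on the event $A$ (defined identically) where one of the ``unwanted'' integrals exceeds $a$, use $|e^{\b^2 x}-1|\leq C a$ on $A^c$, bound $P(A)$ via the tail estimate behind Lemmas \ref{lemma2.5-claim1}--\ref{lemma3-claim1}, and rely on H\"older together with the Khas'minskii-type uniform bound of Lemma \ref{lemma2-claim1} (after switching to bridges) to control both the glue term $\exp\{\b^2\sum^*\int_0^T V\}$ and the $T^{d/2}V(W^{\ssup i}_T-W^{\ssup{i+1}}_T)$ factors by means of $T^{d/2}\rho(T,\cdot)\le C$. The only cosmetic difference is that the paper conditions on the endpoints $W_T^{\ssup i}$ (deferring the $m$, $T-m$ decomposition to Proposition \ref{prop-claim4}), whereas you condition on the positions at $m$ and $T-m$; both variants work and require the same inputs.
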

Then, Proposition \ref{prop-second-moment} will be a consequence of 
\begin{proposition}\label{prop-claim4}
For small enough $\b$, we have as $T\to\infty$:
\begin{equation} \label{eq:prop-claim4}
\begin{aligned}
 \mathcal T_2 &=  E_{0}^{\otimes 4}   \bigg[ \mathrm e^{\b^2 \sum_{1\leq i < j \leq 4}  \int_0^\infty V(W^{(i)}_t\!-\!W^{(j)}_t)\dd t} \bigg] \\
 &\qquad\qquad\times \left[E_{0}^{\otimes 2}   \left(  \mathrm e^{\b^2\int_{T-m}^T V(W^{(1)}_t\!-\!W^{(2)}_t)\dd t} \left[ T^{\frac{d}{2}}  V(W^{(1)}_T\!-\!W^{(2)}_T) - \fC_0 \right] \right) \right]^2 + o(1).
 \end{aligned}
\end{equation}
As a result, $\mathcal T_2 \to 0$.
 \end{proposition}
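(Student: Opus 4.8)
The plan is to split Proposition \ref{prop-claim4} into two parts: first the decoupling identity \eqref{eq:prop-claim4} itself, and then the conclusion $\mathcal T_2\to 0$, which follows once one knows that the four-body prefactor in \eqref{eq:prop-claim4} is bounded and that the single-pair window factor $q_T:=E_0^{\otimes 2}\big[\mathrm e^{\b^2\int_{T-m}^T V(W^{\ssup 1}_t-W^{\ssup 2}_t)\dd t}\,(T^{d/2}V(W^{\ssup 1}_T-W^{\ssup 2}_T)-\fC_0)\big]$ tends to $0$. For the decoupling I would start from $\mathcal T_2=E_0^{\otimes 4}[H_m\,H_{T-m,T}]$ and use the two structural facts visible in \eqref{eq-H-m}: $H_m$ is a functional of the four paths on $[0,m]$ only, whereas $H_{T-m,T}$ is a functional of the four paths on $[T-m,T]$ only which, moreover, \emph{factorizes exactly} into a function of $(W^{\ssup 1},W^{\ssup 2})|_{[T-m,T]}$ times a function of $(W^{\ssup 3},W^{\ssup 4})|_{[T-m,T]}$, since under $E_0^{\otimes 4}$ the four motions are independent. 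Conditioning on $\mathcal F_{T-m}$ and using the Markov property then gives $\mathcal T_2=E_0^{\otimes 4}\big[H_m\,\psi_T(W^{\ssup 1}_{T-m}-W^{\ssup 2}_{T-m})\,\psi_T(W^{\ssup 3}_{T-m}-W^{\ssup 4}_{T-m})\big]$, where $\psi_T(a)=\widetilde E^{(a)}[\mathrm e^{\b^2\int_0^m V(u_t)\dd t}(T^{d/2}V(u_m)-\fC_0)]$, $(u_t)$ being $\sqrt 2$ times a Brownian motion started at $a$.

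The next point is that $H_m$ and the two $\psi_T$-factors asymptotically decouple because $m=o(T)$: conditionally on $\mathcal F_m$, the increments $W^{\ssup i}_{T-m}-W^{\ssup{i+1}}_{T-m}$ ($i\in\{1,3\}$) are independent Gaussians with covariance $2(T-2m)I_d$ and means $W^{\ssup i}_m-W^{\ssup{i+1}}_m=O(\sqrt m)$. The delicate issue — and the main obstacle — is that $\psi_T$ is \emph{not} bounded: the weight $T^{d/2}V(u_m)$ blows up when $a$ is small, so the factorization cannot be obtained by a naive bounded-function argument. I would resolve this by splitting $\psi_T=\psi_T^{(1)}-\fC_0\psi_T^{(2)}$, controlling $\psi_T^{(2)}(a)=\widetilde E^{(a)}[\mathrm e^{\b^2\int_0^m V(u_t)\dd t}]$ uniformly by Khas'minskii's lemma (Lemma \ref{lemma2-claim1}), and bounding the singular piece, via a bridge decomposition and Lemmas \ref{lemma1-claim1}--\ref{lemma2-claim1}, by $0\le\psi_T^{(1)}(a)\le C\,T^{d/2}m^{-d/2}\mathrm e^{-c|a|^2/m}$; thus $\psi_T^{(1)}$, although of total mass of order $T^{d/2}$, is concentrated (up to negligible tails) on the mesoscopic scale $|a|=O(\sqrt m)=o(\sqrt T)$. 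On that scale, both the conditional (given $\mathcal F_m$) and the unconditional density of $W^{\ssup i}_{T-m}-W^{\ssup{i+1}}_{T-m}$ at $a$ equal $(4\pi T)^{-d/2}(1+o(1))$ uniformly — the centering and the replacement $T-2m\leftrightarrow T$ only producing $O(m/T)$ corrections — so the densities are asymptotically constant and independent of $\mathcal F_m$. Feeding this back yields $\mathcal T_2=E_0^{\otimes 4}[H_m]\,q_T^2+o(1)$; and since $V\ge 0$ and the pair differences are transient for $d\ge 3$, $H_m\uparrow H_\infty:=\exp\{\b^2\sum_{1\le i<j\le 4}\int_0^\infty V(W^{\ssup i}_t-W^{\ssup j}_t)\dd t\}$, with $E_0^{\otimes 4}[H_\infty]<\infty$ for $\b$ small by a Khas'minskii-type estimate, so $E_0^{\otimes 4}[H_m]\to E_0^{\otimes 4}[H_\infty]$. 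This is exactly \eqref{eq:prop-claim4}.

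It then remains to show $q_T\to 0$, which is the one-window analogue of the first-moment computation behind Proposition \ref{cor-LT}. Writing $q_T=R_T-\fC_0 S_T$ with $S_T=E_0[\mathrm e^{\b^2\int_{T-m}^T V(\sqrt 2 W_t)\dd t}]$ and $R_T=E_0[\mathrm e^{\b^2\int_{T-m}^T V(\sqrt 2 W_t)\dd t}T^{d/2}V(\sqrt 2 W_T)]$ (using $W^{\ssup 1}-W^{\ssup 2}\eqlaw\sqrt 2 W$), transience in $d\ge 3$ — at time $T-m$ the path is at distance of order $\sqrt T$ from the origin, hence revisits the unit ball during $[T-m,T]$ with probability $O(T^{-(d-2)/2})$ — together with the uniform bound of Lemma \ref{lemma2-claim1} and dominated convergence give $S_T\to 1$; while conditioning on $W_{T-m}$ and on the terminal constraint $\sqrt 2 W_T\in\mathrm{supp}(V)$ and rerunning the Brownian-bridge arguments from the proofs of Propositions \ref{prop-claim1}--\ref{prop-claim2} (the window $[T-m,T]$ only senses the endpoint, not the origin) gives $R_T\to\fC_0$ — which is precisely how $\fC_0$ was chosen in \eqref{eq:fC_4}, being the endpoint part of the first-moment identity. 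Hence $q_T\to\fC_0-\fC_0=0$. (Equivalently, the same decoupling applied to $\E(\mathscr L_T)$ itself shows $\E(\mathscr L_T)=E_0[\mathrm e^{\b^2\int_0^m V(\sqrt 2 W_t)\dd t}]\,q_T+o(1)$ with prefactor tending to $E_0[\mathrm e^{\b^2\int_0^\infty V(\sqrt 2 W_s)\dd s}]\ge 1$, so $q_T\to 0$ follows at once from Proposition \ref{cor-LT}.) Combining, $\mathcal T_2=E_0^{\otimes 4}[H_\infty]\,q_T^2+o(1)\to 0$, which completes the proof; the bulk of the work is, as indicated, making the error estimates in the decoupling of the second paragraph uniform in $T$ and in the path configurations at time $m$, for which the bridge/Girsanov and Khas'minskii lemmas already in place are the main tools.
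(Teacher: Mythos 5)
Your proposal is correct, but it implements the decoupling differently from the paper. The paper conditions on the four positions at time $T/2$ and writes
$\mathcal T_2 = E_0^{\otimes 4}\bigl[\,E_0^{\otimes 4}\{H_m \mid (W_{T/2}^{\ssup i})_i\}\, E_0^{\otimes 4}\{H_{T-m,T} \mid (W_{T/2}^{\ssup i})_i\}\,\bigr]$;
it then proves two uniform bounds for the conditional expectations (via H\"older and the bridge Khas'minskii estimate, Lemma~\ref{lemma2-claim1}, using $\rho(T,\cdot)\le CT^{-d/2}$ to tame the $T^{d/2}V$ factor) and shows that the first conditional expectation converges in probability to the \emph{constant} $E_0^{\otimes 4}[H_\infty]$, via the Brownian-bridge asymptotic \eqref{eq:BBToBM}, because the endpoints at time $T/2$ live on scale $\sqrt T$. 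The factor $q_T\to 0$ is then read off from the first-moment computation of Section~\ref{sec:warmup}, exactly as you also observe. Your route instead conditions at $T-m$ (then at $m$), exploits the exact factorization of $H_{T-m,T}$ over the pairs $(1,2)$ and $(3,4)$, and replaces the bridge weak-convergence step by an explicit Gaussian-density asymptotic at the mesoscopic scale where the singular piece $\psi_T^{(1)}$ concentrates. This buys you a more hands-on, elementary control of the singularity (your bound $\psi_T^{(1)}(a)\le CT^{d/2}m^{-d/2}\mathrm e^{-c|a|^2/m}$), whereas the paper's symmetric $T/2$-conditioning feeds a somewhat cleaner dominated-convergence argument once the bridge lemmas are in place.

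One step you should spell out, since it is where the argument is most likely to be misread as obvious: the ``densities are asymptotically constant'' reasoning is clean for $\psi_T^{(1)}$ because of its exponential concentration, but for the $\fC_0\psi_T^{(2)}$-piece the function $\psi_T^{(2)}(a)-1\sim |a|^{-(d-2)}$ is \emph{not} integrable in $a$ for $d\ge 3$, so uniform boundedness of $\psi_T^{(2)}$ alone does not yield the decoupling. What makes the argument close is that
\begin{equation*}
\int \rho\bigl(2(T-2m),\,a-(W^{\ssup i}_m-W^{\ssup{i+1}}_m)\bigr)\bigl(\psi_T^{(2)}(a)-1\bigr)\dd a \;=\; O\bigl(T^{-(d-2)/2}\bigr)
\end{equation*}
uniformly on the event $|W^{\ssup i}_m-W^{\ssup{i+1}}_m|=O(\sqrt m)$, by diffusive scaling of the heat kernel; hence the $\psi_T^{(2)}$-contribution to each factor is $1+o(1)$ and the resulting $-\fC_0$ cancels correctly against the bulk mass of $\psi_T^{(1)}$. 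This is precisely the cancellation that produces $q_T\to 0$, and it must be tracked in the two-window product as well, since both factors are $O(1)$ individually while the product must be $o(1)$. Once this is written out (together with the tail truncation in $W^{\ssup i}_m-W^{\ssup{i+1}}_m$), your proof is complete and equivalent in rigor to the paper's.
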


\subsection{Proof of Proposition \ref{prop-claim3}.}\label{sec:proof-claim3}
By the proof of Proposition \ref{prop-claim1}, we see that it is enough to prove that
\begin{equation} \label{eq:removingProofEst}
\sup_{T>0} E_{0}^{\otimes 4} \bigg[ \mathrm e^{\b^2 \sum_{1\leq i < j \leq 4}  \int_0^T V(W^{\ssup i}_t\!-\!W^{\ssup j}_t)\dd t} 
 \prod_{i=\{1,3\}} \bigg|T^{d/2} V(W^{\ssup i}_T\!-\!W^{\ssup{i+1}}_T) - \fC_0 \bigg| \bigg] <\infty,
\end{equation}
and that for all $a>0$,
\begin{equation} \label{eq:removingProofEst2}
\lim_{T\to\infty}  E_{0}^{\otimes 4} \bigg[ \mathrm e^{\b^2 \sum_{1\leq i < j \leq 4}  \int_0^T V(W^{\ssup i}_t\!-\!W^{\ssup j}_t)\dd t} 
 \prod_{i=\{1,3\}} \bigg|T^{d/2} V(W^{\ssup i}_T\!-\!W^{\ssup{i+1}}_T) - \fC_0 \bigg| \, \mathbf 1_A\bigg] = 0,
\end{equation}
where the event $A$ is defined as 
\begin{align*}
A= \bigg\{\int_{m}^{T-m} & V(W^{\ssup i}_t\!-\!W^{\ssup j}_t)\dd t \geq a\,\,\mbox{for some}\,\, 1\leq i < j \leq 4,\bigg\}\\
&\bigcup\bigg\{ \int_{T-m}^T V(W^{\ssup i}_t\!-\!W^{\ssup j}_t)\dd t \geq a \mbox{ for some}\,\, (i,j) \neq (1,2), (3,4) \bigg\}.
\end{align*}

To prove the above properties, we first estimate, using that $V,\fC_0\geq 0$, 
\[
\prod_{i\in\{1,3\}} \bigg|T^{d/2} V(W^{\ssup i}_T\!-\!W^{\ssup{i+1}}_T) - \fC_0\bigg| \leq  \prod_{i\in\{1,3\}} \bigg[T^{d/2} V(W^{\ssup i}_T\!-\!W^{\ssup{i+1}}_T)
+ \fC_0 \bigg] 
\]
and expand the last product. Then, we observe that for $\b$ small enough, H\"older's inequality and Lemma  \ref{lemma2.5-claim1} directly give:
\[
\begin{aligned}
 & \fC_0^2 \sup_{T>0} E_{0}^{\otimes 4} \bigg[ \mathrm e^{\b^2 \sum_{1\leq i < j \leq 4}  \int_0^T V(W^{\ssup i}_t\!-\!W^{\ssup j}_t)\dd t} \,\, \bigg]<\infty,\\
\text{and }\, & \fC_0^2 \lim_{T\to\infty}\,\, E_{0}^{\otimes 4} \bigg[ \mathrm e^{\b^2 \sum_{1\leq i < j \leq 4}  \int_0^T V(W^{\ssup i}_t\!-\!W^{\ssup j}_t)\dd t} \,\, \mathbf 1_A\bigg]=0.
\end{aligned}
\]
Moreover, switching from free Brownian motion to the Brownian bridge,
\begin{align*}
& E_{0}^{\otimes 4} \bigg[ e^{\b^2 \sum_{1\leq i < j \leq 4}  \int_0^T V(W^{(i)}_t\!-\!W^{(j)}_t)\dd t} \,\,
 \prod_{i=\{1,3\}} \bigg(T^{d/2} V(W^{(i)}_T\!-\!W^{(i+1)}_T)  \bigg) \,\,\mathbf 1_A\bigg]\\
& = \int_{(\mathbb R^d)^4} \dd \mathbf y\,\, \prod_{i\in\{1,3\}}\left(T^{d/2} V(y_i-y_{i+1}) \rho(T,y_i) \rho(T,y_{i+1})\right) \\
& \qquad\bigotimes_{i=1}^4 E_{0,0}^{T,y_i} \bigg[ \mathrm e^{\b^2 \sum_{1\leq i < j \leq 4}  \int_0^T V(W^{(i)}_t\!-\!W^{(j)}_t)\dd t} \,\,\mathbf 1_A\bigg],
\end{align*}
where the same equality also holds without the indicator $\mathbf{1}_A$. A similar, even simpler, decomposition holds for 
$$
\fC_0 \; E_{0}^{\otimes 4} \bigg[ e^{\b^2 \sum_{1\leq i < j \leq 4}  \int_0^T V(W^{(i)}_t\!-\!W^{(j)}_t)\dd t} \,\,
\bigg(T^{d/2} V(W^{(1)}_T\!-\!W^{(2)}_T)  \bigg) \,\,\mathbf 1_A\bigg]\;.
$$
Since $\rho(T,y_{i+1}-r)\leq C T^{-d/2}$ and $V$ is compactly supported, we  finally obtain \eqref{eq:removingProofEst} and \eqref{eq:removingProofEst2}
by H\"older's inequality, Lemma \ref{lemma2-claim1} and Lemma \ref{lemma3-claim1}.

\qed

\medskip

\subsection{Proof of Proposition \ref{prop-claim4}.}\label{sec:proof-claim4}

If we denote by $\mathcal F_{[0,T/2]}$ the $\sigma$-algebra generated by all four Brownian paths until time $T/2$, then, using Markov's property, 
\[
\begin{aligned}
\mathcal T_2= E_0^{\otimes 4}[H_m\,\, H_{T-m,T}] 
&=E_0^{\otimes 4}\left[E_0^{\otimes 4}\left(H_m\,\, H_{T-m,T}\middle| \big(W_{T/2}^{\ssup i}\big)_{i=1}^4\right)\right] \\
&=E_0^{\otimes 4}\left[E_0^{\otimes 4}\left\{E_0^{\otimes 4}\left(H_m\,\, H_{T-m,T}\middle| \mathcal F_{[0,T/2]}\right) \middle | \big(W_{T/2}^{(\ssup i}\big)_{i=1}^4\right\} \right] \\
&=E_0^{\otimes 4}\left[E_0^{\otimes 4}\left\{H_m\middle| \big(W_{T/2}^{\ssup i}\big)_{i=1}^4\right\} E_0^{\otimes 4}\left\{H_{T-m,T}\middle| \big(W_{T/2}^{\ssup i}\big)_{i=1}^4\right\}\right].
\end{aligned}
\]
We will prove that there exists a constant $C<\infty$, such that: 
\begin{equation*}
\begin{gathered}
\text{(i)} \sup_{T>0} E_0^{\otimes 4}\left\{H_m\middle| \big(W_{T/2}^{\ssup i}\big)_{i=1}^4\right\} \leq C \qquad \text{(ii)} \sup_{T>0} E_0^{\otimes 4}\left\{H_{T-m,T}\middle| \big(W_{T/2}^{\ssup i}\big)_{i=1}^4\right\} \leq C,\\
\text{(iii)} \  E_0^{\otimes 4}\left\{H_m\middle| \big(W_{T/2}^{\ssup i}\big)_{i=1}^4\right\} \cvlaw E_0^{\otimes 4}\left[H_\infty\right], \text{ as } T\to\infty.
\end{gathered}
\end{equation*}
where $H_\infty$ is defined as $H_m$ with the time interval $[0,m]$ replaced by $[0,\infty)$, recall \eqref{eq-H-m}. 

Let us first conclude the proof of Proposition \ref{prop-claim4} assuming the above three assertions. The difference of the two first terms in \eqref{eq:prop-claim4} writes:
\begin{align*}
&\mathcal T_2 - E_0^{\otimes 4}\left[ H_\infty \right] E_0^{\otimes 4}\left[H_{T-m,T}\right]
 \\
&= E_0^{\otimes 4}\left[\left(E_0^{\otimes 4}\left\{H_m\middle| \big(W_{T/2}^{\ssup i}\big)_{i=1}^4\right\} - E_0^{\otimes 4}\left[ H_\infty \right]\right)
 E_0^{\otimes 4}\left\{H_{T-m,T}\middle| \big(W_{T/2}^{\ssup i}\big)_{i=1}^4\right\}\right],
\end{align*}
which goes to $0$ as $T\to\infty$ by (i)-(iii), proving \eqref{eq:prop-claim4}. Finally, computations of Section 3.1 ensure that:
\[
\bigg[E_0^{\otimes 2}\bigg\{\mathrm e^{\beta^2 \int_{T-m}^T V(W^{\ssup 1}_s- W^{\ssup 2}_s) \dd s}\,\,\bigg(T^{d/2}V\left(W^{\ssup 1}_T- W^{\ssup 2}_T\right)- \fC_0\bigg)\bigg\}\bigg] \underset{T\to\infty}{\longrightarrow}0.
\]

We now owe the reader the proofs of (i)-(iii). To prove (i), we use H\"older's inequality to get 
\[
\begin{aligned}
E_0^{\otimes 4}\left\{H_m\middle| \big(W_{T/2}^{(i)}\big)_{i=1}^4\right\} &\leq \prod_{1\leq i < j \leq 4} E_0^{\otimes 4}\left[\mathrm e^{6\beta^2 \int_0^m V(W^{\ssup i}_t\!-\!W^{\ssup j}_t)\dd t}\middle| \big(W_{T/2}^{\ssup i}\big)_{i=1}^4\right]^{1/6} \\
&= \prod_{1\leq i < j \leq 4} E_{0,0}^{T/2,W_{T/2}^{\ssup i}-W_{T/2}^{\ssup j}} \bigg[\mathrm e^{6\beta^2 \int_0^m V(\sqrt 2 W_t)\dd t}\bigg]^{1/6} \\
&\leq \sup_{T,z} E_{0,0}^{T/2,z} \bigg[e^{6\beta^2 \int_0^{T/2} V(\sqrt 2 W_t)\dd t}\bigg] <\infty,
\end{aligned}
\]
by Lemma \ref{lemma2-claim1}. For (ii), we note that by Markov's property, 
\[
\begin{aligned}
E_0^{\otimes 4}\left\{H_{T-m,T}\middle| \big(W_{T/2}^{\ssup i}\big)_{i=1}^4\right\} &=\prod_{i\in\{1,3\}} E_{W_{T/2}^{\ssup i}-W_{T/2}^{\ssup{i+1}}} \left[ \mathrm e^{\beta^2 \int_{T/2-m}^{T/2} V(\sqrt 2 W_t) \dd t}\,\left(T^{d/2} V\left(\sqrt 2 W_{T/2}\right)- \fC_0\right)\right].
\end{aligned}
\]
We have:
\[
\fC_0\, E_{W_{T/2}^{\ssup i}-W_{T/2}^{\ssup{i+1}}} \bigg[ \mathrm e^{\beta^2 \int_{T/2-m}^{T/2} V(\sqrt 2 W_t) \dd t}\bigg]\leq \fC_0 \sup_{z} E_z \bigg[\mathrm e^{\beta^2 \int_0^\infty V(\sqrt 2 W_t) \dd t}\bigg]<\infty,
\]
while, for some constant $C'>0$, 
\[
\begin{aligned}
&E_{W_{T/2}^{\ssup i}-W_{T/2}^{\ssup{i+1}}} \bigg[ e^{\beta^2 \int_{T/2-m}^{T/2} V(\sqrt 2 W_t) \dd t}\,\, T^{d/2}V\left(\sqrt 2 W_{T/2}\right)\bigg] \\
&\leq C'\int_{\mathbb R^d} \dd z \,\,\, E_{0,W_{T/2}^{\ssup i}-W_{T/2}^{\ssup{i+1}}}^{T/2,z}\,\,\, \bigg[ \mathrm e^{\beta^2 \int_{T/2-m}^{T/2} V(\sqrt 2 W_t) \dd t}\bigg]\,\, V\left(\sqrt 2 z\right) \\
&\leq C'  \sup_{T,y,z}\,\,  E_{0,y}^{T/2,z} \bigg[ \mathrm e^{\beta^2 \int_0^{T/2} V(\sqrt 2 W_t)\,\,\dd t}\bigg] \,\, \int_{B(0,1)} \dd z \,\, V\left(\sqrt 2 z\right)\\
&<\infty,
\end{aligned}
\]
again by Lemma \ref{lemma2-claim1}. 

Finally, to prove (iii), we fix any bounded continuous  test function $f:\mathbb R\to\mathbb R$, so that 
\begin{align}
E_0^{\otimes 4} \bigg[ f\left(E_0^{\otimes 4}\left\{H_m\middle| \big(W_{T/2}^{\ssup i}\big)_{i=1}^4\right\}\right)\bigg]&= \int_{(\mathbb R^d)^4} \dd \mathbf y \,\, f\bigg(E_{0,0}^{T/2,\mathbf y}\left[H_m\right]\bigg) \prod_{i=1}^4 \rho(T/2,y_i) \nn\\
&= \int_{(\mathbb R^d)^4} \dd \mathbf z\,\,  f\bigg(E_{0,0}^{T/2,\sqrt T \mathbf z}\left[H_m\right]\bigg)  \prod_{i=1}^4 \rho(1/2,z_i). \label{eq-claim3-prop-claim4}
\end{align}
Now, letting $T\to\infty$, we get similarly to \eqref{eq:BBToBM} that $E_{0,0}^{T/2,\sqrt T \mathbf z}\left[H_m\right]\to E_0^{\otimes 4}\left[H_\infty\right]$. By dominated convergence, the RHS of \eqref{eq-claim3-prop-claim4} converges to $f\left(E_0^{\otimes 4}\left[H_\infty\right]\right)$, implying (iii).
\qed

\section{Proof of Theorem \ref{th:h} and Theorem \ref{th:CVagainstTestFun}.}\label{sec:proof:th:h}

We start with the proof of Theorem \ref{th:h} for which we need to show that $\{\mathscr H_\e(t,x)\}_{t>0,x\in \rd} \cvfidi \{\mathscr H(t,x)\}_{t>0,x\in \rd}$ as $\e\to 0$. 
For the reader's convenience, we split this proof into two tasks which will be split into the next two sections.


\subsection{Convergence of finite dimensional distributions of the spatially indexed process $\{\mathscr H_\e(t,x)\}_{x\in \rd}$.}\label{sec-f.d.m-space}
We first show that, 
\begin{proposition}\label{prop0:th:h}
For $\beta\in (0,\beta_0)$, any fixed $t>0$, $k\in \N$ and $x_1,\dots,x_k\in \rd$, the joint distributions of $\big(\mathscr H_\e(x_1,t), \dots, \mathscr H_\e(x_k,t)\big)$ converges to that 
of $\big(\mathscr H(x_1,t), \dots, \mathscr H(x_k,t)\big)$. 
\end{proposition}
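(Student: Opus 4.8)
The plan is to reduce the statement, via the a.s.\ identity \eqref{hZ} together with the scaling \eqref{scaling}, to a joint statement about the free energy $\log\sZ_T$. Fix $t>0$ and $x_1,\dots,x_k\in\rd$. Using \eqref{hZ} we have $\mathscr H_\e(t,x_j)=\e^{1-d/2}\big[\log\sZ_{t/\e^2}(\xi^{\ssup{\e,t,0}};x_j/\e)-\hh(t,x_j)\big]$, and since $\hh(t,x_j)=\log\mathfrak u(\xi^{\ssup{\e,t,x_j}})$, after the deterministic time change $T=t/\e^2$ the vector $(\mathscr H_\e(t,x_j))_{j\le k}$ has the same law as $t^{-(d-2)/4}T^{(d-2)/4}\big(\log\sZ_{tT}(y_j\sqrt T)-\log\sZ_\infty(y_j\sqrt T)\big)_{j\le k}$ with $y_j=x_j/(\e\sqrt T)=x_j/\sqrt t$ (using that $\sZ_\infty$ evaluated on the shifted noise is precisely $\hh$). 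So Proposition \ref{prop0:th:h} is exactly the fixed-$t$, multi-$x$ case of \eqref{eq1:th:Z} in Theorem \ref{th:Z}, and it suffices to prove the joint convergence of $\big(T^{(d-2)/4}(\log\sZ_{tT}(y_j\sqrt T)-\log\sZ_\infty(y_j\sqrt T))\big)_{j\le k}$ to a centered Gaussian vector with the covariance obtained from \eqref{cov:H}.

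The next step is to run the four-step martingale scheme of Section \ref{sec:proofsketch} in a multidimensional form. For each spatial point $y_j$ introduce, in analogy with \eqref{dec}, the martingale $N_T(y_j)=\beta\int_0^T\int_{\rd}E_{y_j,\b,s}[\phi(y-W_s)]\,\xi(s,y)\,\dd y\,\dd s$ and the rescaled martingales $N^{\ssup T}_\tau(y_j)=T^{(d-2)/4}(N_{\tau T}(y_j)-N_T(y_j))$, together with the compensator correction which, exactly as in the proof of Theorem \ref{co:tclMT}, contributes nothing in the limit because $T^{(d-2)/4}(\langle N(y_j)\rangle_{\tau T}-\langle N(y_j)\rangle_T)\to 0$ in probability (this uses Proposition \ref{prop:claim1} via the rescaling $s=\sigma T$, $\sigma^{-d/2}$ integrable). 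The brackets $\langle N^{\ssup T}(y_j)\rangle_\tau$ converge in probability to the deterministic $g(\tau)$ from \eqref{eq:VarianceLimitMartingale}, again by Proposition \ref{prop:claim1} localized on the events $A_\e$ as in the proof of Theorem \ref{th:cvGP}. The genuinely new ingredient is the \emph{cross}-brackets $\langle N^{\ssup T}(y_i),N^{\ssup T}(y_j)\rangle_\tau$ for $i\ne j$: one computes, mimicking \eqref{eq:differentialM}–\eqref{eq:bracketM}, that $\tfrac{\dd}{\dd t}\langle N(y_i),N(y_j)\rangle_T=\b^2\sZ_T(y_i)^{-1}\sZ_T(y_j)^{-1}E^{\otimes 2}_{y_i,y_j}[\Phi_T(W^{\ssup 1})\Phi_T(W^{\ssup 2})V(W^{\ssup 1}_T-W^{\ssup 2}_T)]$, and one must show that the $T^{d/2}$-rescaled version of this converges in $L^1$ to a limit proportional to $\rho(2\cdot,y_i-y_j)$-type kernels — concretely, after the diffusive rescaling $y_i\sqrt T$, $y_j\sqrt T$ the two bridges are forced to be at macroscopic distance $|y_i-y_j|\sqrt T\to\infty$, so the local interaction $V(W^{\ssup 1}_T-W^{\ssup 2}_T)$ at the terminal time is asymptotically negligible and the cross-bracket derivative decays \emph{faster} than $T^{-d/2}$; hence $\langle N^{\ssup T}(y_i),N^{\ssup T}(y_j)\rangle_\tau\to 0$ for $i\ne j$ — wait, this is not what the covariance \eqref{cov:H} predicts, so in fact the correct computation keeps track of the \emph{starting} points and the cross-bracket limit is $\gamma^2(\beta)\int_1^\tau (\sigma t)^{-d/2}\rho$-weighted terms; I would carry this out carefully by conditioning the bridges at the midpoint and using Lemma \ref{lemma2-claim1}, Lemma \ref{lemma3-claim1} and the local CLT for the Brownian bridge endpoints, exactly as in the proof of Proposition \ref{prop-claim4}. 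With the full limiting covariance matrix $(\Sigma_{ij}(\tau))$ in hand, the multidimensional martingale central limit theorem (the vector version of \cite[Theorem 3.11, Chapter 8]{JS87}) gives $\big(N^{\ssup T}(y_j)\big)_{j\le k}\cvlaw$ a Gaussian process with that covariance.

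Finally, to pass from $\log\sZ_{\tau T}-\log\sZ_T$ back to $\log\sZ_\infty-\log\sZ_T$ I would use, for each fixed $j$, the decomposition
$$
T^{(d-2)/4}(\log\sZ_\infty(y_j\sqrt T)-\log\sZ_T(y_j\sqrt T))=T^{(d-2)/4}(\log\sZ_\infty-\log\sZ_{\tau T})+T^{(d-2)/4}(\log\sZ_{\tau T}-\log\sZ_T)
$$
and control the first term uniformly in $T$ as $\tau\to\infty$ by Proposition \ref{prop:LpBoundLogZ} (applied at the shifted noise, for which the same bound holds by stationarity in law of $\sZ_\infty$), exactly as in the conclusion of the proof of Theorem \ref{th:CVmarginalZ}; the double limit $T\to\infty$ then $\tau\to\infty$ commutes by the standard Billingsley argument since the limiting Gaussian has variance $g(\infty)<\infty$. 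This yields the joint Gaussian limit with covariance matrix $\lim_{\tau\to\infty}\Sigma(\tau)$, which one identifies with $t^{-(d-2)/2}\big(\gamma^2(\beta)\int_0^\infty\rho(2\sigma,(x_i-x_j)/\sqrt t)\dd\sigma\big)_{ij}$ and hence, after undoing the $t$-rescaling, with $\mathrm{Cov}(\mathscr H(t,x_i),\mathscr H(t,x_j))$ from \eqref{cov:H}. I expect the main obstacle to be precisely the identification and $L^1$-convergence of the off-diagonal bracket derivatives: unlike the diagonal case handled in Proposition \ref{prop:claim1}, one now has four Brownian paths starting from \emph{two} distinct macroscopically separated points, and one needs a uniform-in-$T$ integrability estimate (in the spirit of \eqref{eq-ui}) together with a careful Brownian-bridge decomposition to see that the interaction localizes near the two endpoints and produces the Green-function spatial structure of the Gaussian free field.
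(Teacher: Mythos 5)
Your plan follows essentially the same route as the paper's proof: reduce to the joint convergence of $T^{(d-2)/4}\big(\log\sZ_\infty(\sqrt T x_j)-\log\sZ_{tT}(\sqrt T x_j)\big)$, apply the It\^o decomposition $\log\sZ_T(x)=N_T(x)-\tfrac12\langle N(x)\rangle_T$, show the compensator is negligible, and feed the $L^1$-convergence of the (cross-)brackets into the multidimensional martingale functional CLT. Your self-correction about the off-diagonal bracket is exactly the right insight, and it is the content of the paper's key Proposition \ref{prop1:th:h}: with $r=x\sqrt T$ one has
\[
T^{d/2}\Big(\tfrac{\dd}{\dd t}\big\langle \sZ(0),\sZ(r)\big\rangle\Big)_{\sigma T}-\gamma^2(\beta)\,\rho(2\sigma,x)\,\sZ_{\sigma T}(0)\sZ_{\sigma T}(r)\ \longrightarrow\ 0\quad\text{in }L^2,
\]
i.e.\ the cross-bracket derivative still decays at the rate $T^{-d/2}$ (not faster), but the constant $\gamma^2\rho(2\sigma,0)$ from the diagonal case is replaced by the attenuated $\gamma^2\rho(2\sigma,x)$; integrating this in $\sigma$ is precisely what produces the heat-kernel/Green-function covariance of $\mathscr H$. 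The technical work you anticipate (Brownian-bridge decomposition at the midpoint, Lemmas \ref{lemma2-claim1} and \ref{lemma3-claim1}, uniform integrability in the spirit of \eqref{eq-ui}) is carried out in Propositions \ref{prop2:th:h} and Lemma \ref{lemma3:prop2:th:h}, which mirror Propositions \ref{prop-claim1}--\ref{prop-claim4}.
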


For the rest of this section, we will write, for any $\sigma>0$, 
\begin{equation}\label{eq:L}
\begin{aligned}
&\mathscr L^{\ssup\sigma}_T(x)\stackrel{\mathrm{(def)}}{=} T^{d/2} \bigg(\frac {\dd}{\dd t}\big\langle \mathscr Z(0), \mathscr Z(r)\big\rangle\bigg)_{\sigma T} - \gamma^2 \rho(2\sigma,x) \mathscr Z_{\sigma T}(0) \mathscr Z_{\sigma T}(r)\\
&r\stackrel{\mathrm{(def)}}{=} \sqrt T x,
\end{aligned}
\end{equation}
where $\gamma^2=\gamma^2(\b)$ is defined in \eqref{def-sigma-beta}.

The key step for the proof of Proposition \ref{prop0:th:h} is  
\begin{proposition}\label{prop1:th:h}
Under the assumptions of Theorem \ref{th:h}, for $\beta\in (0,\beta_0)$, for any $x\in \R^d$ and $\sigma>0$, 
$$
\L\stackrel{L^2(\mathbb P)}{\longrightarrow}0 \qquad\mbox{as}\quad T\to\infty. 
$$
\end{proposition}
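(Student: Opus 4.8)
The plan is to follow the same two-step strategy (first moment, then second moment) that was used to prove Proposition \ref{prop:claim1}, now in the ``off-diagonal'' setting where the two copies of $\mathscr Z$ start at $0$ and at $r=\sqrt T x$ rather than both at $0$. First I would record the analogue of the differential of the mixed bracket: writing $\Phi_T$ as in \eqref{eq-Phi} and using \eqref{eq:expgauss} with $n=2$,
\begin{equation*}
\Big(\frac{\rmd}{\rmd t}\big\langle\mathscr Z(0),\mathscr Z(r)\big\rangle\Big)_{\sigma T}
= \b^2\, E_{0,r}^{\otimes 2}\Big[\Phi_{\sigma T}(W^{\ssup 1})\Phi_{\sigma T}(W^{\ssup 2})\, V\big(W^{\ssup 1}_{\sigma T}-W^{\ssup 2}_{\sigma T}\big)\Big],
\end{equation*}
so that, after taking the product-Gaussian expectation over the noise, $\mathscr L^{\ssup\sigma}_T(x)$ becomes an expectation over two independent Brownian motions started at $0$ and $r$, with weight $\exp\{\b^2\int_0^{\sigma T}V(W^{\ssup 1}_t-W^{\ssup 2}_t)\rmd t\}$ times $(T^{d/2}V(W^{\ssup 1}_{\sigma T}-W^{\ssup 2}_{\sigma T})-\gamma^2\rho(2\sigma,x)\cdot(\text{the two partition functions}))$. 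The difference process $W^{\ssup 1}-W^{\ssup 2}$ started at $-r=-\sqrt T x$ is $\sqrt 2$ times a Brownian motion started at $-x\sqrt{T/2}$; here $V$ only matters near the origin, which the difference walk reaches at the diffusive scale, and at that time the individual endpoints have Gaussian density $\sim\rho(\sigma T, \cdot)$, producing the extra factor $\rho(2\sigma,x)$ after the change of variables $z\mapsto z\sqrt T$. This pins down why the correct centering constant is $\gamma^2\rho(2\sigma,x)$ rather than $\mathfrak C_0$.

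For the first moment I would choose a scale $m=m(T)$ with $m\to\infty$, $m=o(T)$, and split the time interval $[0,\sigma T]$ into $[0,m]\cup[m,\sigma T-m]\cup[\sigma T-m,\sigma T]$, exactly as in Propositions \ref{prop-claim1}--\ref{prop-claim2}. The middle-interval contribution of $\int V$ is negligible by the Brownian-bridge version of Khas'minskii (Lemma \ref{lemma2-claim1}) together with Lemma \ref{lemma3-claim1}; the two end intervals decouple asymptotically. The factor $T^{d/2}V(\sqrt2\,\cdot)\rho(\sigma T,\cdot)$ localizes the terminal displacement, and after rescaling the spatial variable by $\sqrt T$ one is left with $\rho(2\sigma,x)$ times a product of two one-sided exponential functionals of Brownian motion started from the two (rescaled) bridge endpoints, which converge by monotone/dominated convergence to $\gamma^2\rho(2\sigma,x)$ times the product of the limiting partition functions — i.e. $\E[\mathscr L^{\ssup\sigma}_T(x)]\to 0$. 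The second-moment bound is the same computation as Proposition \ref{prop-second-moment}, now over four Brownian paths with two of them started at $r$: condition on the positions at time $\sigma T/2$, use the Markov property to factorize into an ``initial'' block $H_m$ and a ``terminal'' block $H_{\sigma T-m,\sigma T}$, prove the uniform $L^1$-type bounds (i)--(ii) and the convergence in law (iii) via Lemma \ref{lemma2-claim1}, and conclude that the limiting second moment is the square of the (vanishing) limiting first moment of the terminal block, hence $\E[(\mathscr L^{\ssup\sigma}_T(x))^2]\to 0$.

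The main obstacle I anticipate is controlling the spatial inhomogeneity introduced by the shift $r=\sqrt T x$: unlike in Proposition \ref{prop:claim1}, the Brownian-bridge densities and the exponential functionals now depend on endpoints that move to infinity with $T$, so the uniform-in-$z$ estimates must be replaced by estimates that hold after the rescaling $z\mapsto z\sqrt T$, and one must check that the $T^{d/2}\rho(\sigma T,\cdot)$ prefactor exactly compensates to produce the finite limit $\rho(2\sigma,x)$ rather than $0$ or $\infty$. Concretely, the delicate point is justifying the dominated-convergence passage in the rescaled integral $\int_{\rd}\rmd z\,(\cdots)\,\rho(1/2,z)$ uniformly in the auxiliary bridge endpoints, which requires the Hölder-plus-Brownian-scaling uniform integrability argument already used for \eqref{eq:BBToBM} and \eqref{eq-claim3-prop-claim4}, now carried out with the shifted initial conditions; this is where the restriction to small $\b$ (ensuring the relevant exponential functionals are in $L^{1+\delta}$ uniformly) is essential.
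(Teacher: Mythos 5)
Your proposal follows essentially the same two-step (first moment, then second moment) strategy the paper uses, with the same time-splitting $[0,m]\cup[m,\sigma T-m]\cup[\sigma T-m,\sigma T]$, the same bridge Khas'minskii lemma, and the same half-time Markov factorization $H_m\cdot H_{\sigma T-m,\sigma T}$ for the second moment; your identification of the centering $\gamma^2\rho(2\sigma,x)$ via $T^{d/2}\rho(2\sigma T,\cdot-\sqrt Tx)\to\rho(2\sigma,x)$ is also the correct computation. The one simplification the paper exploits that you describe only implicitly is that for $x\neq 0$ the starting points are at distance $\sqrt T|x|\to\infty$, so the initial-interval functional $\int_{[0,m]}V$ is asymptotically $0$ and the ``initial'' partition function in your product degenerates to $1$, leaving a single terminal expectation rather than a genuine product of two.
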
 
Recall the square integrable martingale
\begin{align}
\dd \sZ_T(r)&= \b E_r  \bigg[ \Phi_T(W)\,\ \phi \star \xi (T,W_T)\bigg]\ dT \;,\nn\\ 
\dd \langle  \sZ(0), \sZ(r) \rangle_T &= \b^2 E_{0,r}^{\otimes 2} \bigg[ \Phi_T(W^{\ssup 1})\Phi_T(W^{\ssup 2}) V\big(W_T^{\ssup 1}-W_T^{\ssup 2}\big)  \bigg]  \dd T \nn \\ 
&=  \b^2 \sZ_T(0) \sZ_T(r)  \times E_{0,r,\b,T}^{\otimes 2}  \left[ V(W_T^{\ssup 1}-W_T^{\ssup 2})  \right]  \dd T \;, \label{eq:exprWithPolymerMeas}
\end{align}
so that 
\begin{equation}
\bigg(\frac {\dd}{\dd t}\big\langle \mathscr Z(0), \mathscr Z(r)\big\rangle\bigg)_{\sigma T}
= \beta^2 E_{0;r}^{\otimes 2}\bigg[\Phi_{\sigma T}(W^{\ssup 1}) \, \Phi_{\sigma T}(W^{\ssup 2}) \, V\big(W^{\ssup 1}_{\sigma T}- W^{\ssup 2}_{\sigma T}\big)\bigg], 
\end{equation}
where we remind the reader that $E^{\otimes 2}_{x;y}$ denotes expectation with respect to two independent Brownian motions $W^{\ssup 1}$ and $W^{\ssup 2}$ starting at $x\in \rd$ and $y\in \rd$ respectively.
The last expression, combined with \eqref{eq:L} then implies that 
 \begin{equation}\label{eq1:prop1:th:h}
 \begin{aligned}
 \E \big[\L\big] &= E_{0,r}^{\otimes 2}  \bigg[ 
\mathrm e^{\b^2\int_0^{\sigma T} V(W^{\ssup 1}_t\!-\!W^{\ssup 2}_t)\dd t} \bigg( T^{\frac{d}{2}}  V\big(W_{\sigma T}^{\ssup 1}\!-\! W_{\sigma T}^{\ssup 2}\big) - \gamma^2 \,\rho(2\sigma,x) \bigg)\bigg]
\\ 
&=  E_{r}  \bigg[
\mathrm e^{\b^2\int_0^{\sigma T} V(  W_{2t})\dd t} \bigg( T^{\frac{d}{2}}  V( W_{2\sigma T}) - \gamma^2 \, \rho(2\sigma,x) \bigg)\bigg]\\
 &=
\int_{\rd}  V( y) E_{0,r}^{2\sigma T,y}  \bigg[  \mathrm e^{\b^2\int_0^{\sigma T} V(W_{2t})\dd t} \bigg] T^{\frac{d}{2}}  \rho(2\sigma T,y-r) \dd y
 - \gamma^2 \rho(2\sigma,x) \E[ \sZ_{\sigma T}(0) \sZ_{\sigma T}(r)] 
\end{aligned}
\end{equation}
where  we again remind the reader that $E^{t,y}_{0,x}$ denotes expectation with respect to a  Brownian bridge conditioned to  start at $x\in \rd$ and 
pinned at $y\in \rd$ at time $t$.  As before, we will first show that

\begin{proposition}\label{prop2:th:h}
Under the assumptions of Theorem \ref{th:h} (i.e., for $\beta\in (0,\beta_0)$), for any $x\in \R^d$ and $\sigma>0$, 
$$
\E\left[\L\right]{\longrightarrow}0 \qquad\mbox{as}\quad T\to\infty. 
$$
\end{proposition}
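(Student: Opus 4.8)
The plan is to treat the degenerate case $x=0$ separately from the case $x\neq 0$. When $x=0$ one has $r=0$, $\langle\sZ(0),\sZ(0)\rangle=\langle\sZ\rangle$, and, since $\rho(2\sigma,0)=(4\pi\sigma)^{-d/2}$ and $\fC_0=\gm^2(4\pi)^{-d/2}$ by \eqref{C0}, $\gm^2\rho(2\sigma,0)=\sigma^{-d/2}\fC_0$; unwinding the definition \eqref{eq:L} therefore gives the exact identity $\L=\sigma^{-d/2}\,\mathscr L_{\sigma T}$, with $\mathscr L_{\sigma T}$ the object studied in Section \ref{sec:prclaim1}, so that $\E[\L]=\sigma^{-d/2}\,\E[\mathscr L_{\sigma T}]\to 0$ by Proposition \ref{cor-LT}. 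From now on assume $x\neq 0$, and recall from \eqref{eq1:prop1:th:h}--\eqref{eq:exprWithPolymerMeas} the decomposition $\E[\L]=A_T-B_T$, where, with $r=\sqrt T x$,
\[
A_T=\b^2\!\int_{\rd}\! V(y)\,E_{0,r}^{2\sigma T,y}\Big[\mathrm e^{\b^2\int_0^{\sigma T}V(W_{2t})\dd t}\Big]\,T^{d/2}\rho(2\sigma T,y-r)\,\dd y,\qquad B_T=\gm^2\rho(2\sigma,x)\,\E\big[\sZ_{\sigma T}(0)\,\sZ_{\sigma T}(r)\big].
\]
The strategy is to show that $A_T$ and $B_T$ both converge, as $T\to\8$, to the same constant $\gm^2\rho(2\sigma,x)$, so that their difference tends to $0$.

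The limit of $B_T$ is the easier half. By \eqref{eq:expgauss} and the fact that $W^{\ssup 1}-W^{\ssup 2}$ issued from $(0,r)$ has the law of $(W_{2s})_s$ issued from $r$, one has $\E[\sZ_{\sigma T}(0)\,\sZ_{\sigma T}(r)]=E_r[\mathrm e^{\b^2\int_0^{\sigma T}V(W_{2s})\dd s}]$. Since $d\geq 3$ and $|r|=\sqrt T\,|x|\to\8$, the probability that $(W_{2s})_{s\leq\sigma T}$ started at $r$ ever meets $\mathrm{supp}(V)$ is $O\big(|r|^{-(d-2)}\big)\to 0$; on the complementary event the exponential equals $1$, while on that event it is dominated, uniformly in $T$, in every $L^p$ by the Khas'minskii estimate of Lemma \ref{lemma2-claim1}. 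Hence $E_r[\mathrm e^{\b^2\int_0^{\sigma T}V(W_{2s})\dd s}]\to 1$ and $B_T\to\gm^2\rho(2\sigma,x)$.

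For $A_T$ two ingredients are required. First, because $V$ is supported in a fixed ball, $|y-r|^2/(4\sigma T)\to|x|^2/(4\sigma)$ uniformly for $y\in\mathrm{supp}(V)$, so that $T^{d/2}\rho(2\sigma T,y-r)\to(4\pi\sigma)^{-d/2}\mathrm e^{-|x|^2/4\sigma}=\rho(2\sigma,x)$ uniformly on $\mathrm{supp}(V)$. Second, I claim that for every fixed $y$ the bridge expectation satisfies $E_{0,r}^{2\sigma T,y}[\mathrm e^{\b^2\int_0^{\sigma T}V(W_{2t})\dd t}]\to E_y[\mathrm e^{\b^2\int_0^{\8}V(W_{2t})\dd t}]$ as $T\to\8$, and is bounded uniformly in $(T,y)$. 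This is proved in the spirit of Propositions \ref{prop-claim1}--\ref{prop-claim2}, but it is in fact \emph{simpler} here: the functional $\int_0^{\sigma T}V(W_{2t})\dd t=\tfrac12\int_0^{2\sigma T}V(W_u)\dd u$ is invariant under the time-reversal $u\mapsto 2\sigma T-u$, which turns the bridge from $r$ to $y$ into a bridge from $y$ to $r$; on a window $[0,m]$ with $m=m(T)\to\8$, $m=o(T)$, Lemma \ref{lemma1-claim1} shows that this reversed bridge is absolutely continuous with respect to Brownian motion started at $y$ with a density tending to $1$ --- here it is crucial that the far endpoint $r=\sqrt T x$ escapes, so that $\rho(2\sigma T-s,r-\cdot)/\rho(2\sigma T,r-y)\to 1$ --- which, by transience in $d\geq 3$, produces in the limit the factor $E_y[\mathrm e^{\b^2\int_0^{\8}V(W_{2t})\dd t}]$. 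The remaining interval $[m,2\sigma T]$ contributes negligibly in probability: near its right endpoint the bridge is at distance $\geq|r|-O(\sqrt m)\to\8$ from $\mathrm{supp}(V)$, and on $[m,2\sigma T-m]$ one invokes a bridge occupation-time estimate in the manner of Lemma \ref{lemma3-claim1}; the uniform bound again follows from Lemma \ref{lemma2-claim1}. Dominated convergence in $y$ against $V(y)\,\dd y$ (with $V$ bounded and compactly supported) then yields $A_T\to\b^2\rho(2\sigma,x)\int_{\rd}V(y)\,E_y[\mathrm e^{\b^2\int_0^{\8}V(W_{2t})\dd t}]\,\dd y=\gm^2\rho(2\sigma,x)$ by \eqref{def-sigma-beta}. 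Combining the two halves gives $\E[\L]=A_T-B_T\to 0$.

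The step I expect to be the main obstacle is the bridge occupation-time estimate on $[m,2\sigma T-m]$ for a Brownian bridge one of whose endpoints, $r=\sqrt T x$, is driven to infinity with $T$: one needs a version of Lemma \ref{lemma3-claim1} --- itself a consequence of the Khas'minskii Lemma \ref{lemma2-claim1} --- that is uniform both in such a receding endpoint and in the other endpoint $y\in\mathrm{supp}(V)$. Once this is in hand, the rest is a rerun of the estimates already developed in Section \ref{sec:prclaim1}.
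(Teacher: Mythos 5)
Your proof is correct and follows essentially the same route as the paper's: starting from the identity \eqref{eq1:prop1:th:h}, you truncate the bridge's time integral to windows of length $m=o(T)$ near both endpoints (the paper's Lemma \ref{lemma1:prop2:th:h}), then exploit the fact that the escaping endpoint $r=\sqrt T x$ contributes trivially so that only the $y$-end survives (the paper's Lemma \ref{lemma2:prop2:th:h}), with the uniform $L^p$ control supplied by the Khas'minskii bound of Lemma \ref{lemma2-claim1}. Your explicit $A_T$/$B_T$ split and the separate reduction of the $x=0$ case to Proposition \ref{cor-LT} via $\mathscr L^{(\sigma)}_T(0)=\sigma^{-d/2}\mathscr L_{\sigma T}$ are useful clarifications (the paper's sketch only treats $x\neq 0$ explicitly), but they are organizational rather than a genuinely different argument.
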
 

The proof of Proposition \ref{prop2:th:h} splits into two main steps. Again, we fix $m=m(T)$ such that $m\to \infty$ and $m=o(T)$ as $T\to\infty$. 

\begin{lemma}\label{lemma1:prop2:th:h}
Under the assumptions imposed in Proposition \ref{prop2:th:h}, we have, for any $x\in \rd$, (recall $r=\sqrt T x$)
\[
\sup_{y\in \rd} \bigg| E_{0,r}^{2\sigma T,y}  \big[  \mathrm e^{\b^2\int_0^{\sigma T} V(W_{2t})\dd t} \big] - E_{0,r}^{2\sigma T,y}  \big[  \mathrm e^{\b^2\int_{[0,m]\cup[\sigma T -m,\sigma T]} V(W_{2t})\dd t} \big] \bigg| \stackrel{T\to\infty}{\longrightarrow} 0\;.
\]
\end{lemma}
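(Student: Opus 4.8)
The plan is to follow the proof of Proposition~\ref{prop-claim1}, the only genuinely new feature being that the Brownian bridge now starts at the ($T$-dependent) point $r=\sqrt T\,x$ rather than at the origin.

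First I would record a scaling identity putting the integrand into the form covered by Lemmas~\ref{lemma1-claim1}, \ref{lemma2-claim1} and \ref{lemma3-claim1}. If $W$ denotes a Brownian bridge from $r$ to $y$ over $[0,2\sigma T]$, then $\mathbb B_t:=\tfrac1{\sqrt2}\,W_{2t}$, $t\in[0,\sigma T]$, is a \emph{standard} Brownian bridge from $r/\sqrt2$ to $y/\sqrt2$ over $[0,\sigma T]$, and under this coupling $\int_0^{\sigma T}V(W_{2t})\,\dd t=\int_0^{\sigma T}V(\sqrt2\,\mathbb B_t)\,\dd t$, and likewise with $\int_0^{\sigma T}$ replaced by $\int_{[0,m]\cup[\sigma T-m,\sigma T]}$. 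In particular, applying Lemma~\ref{lemma2-claim1} with starting point $r/\sqrt2$, pinning point $y/\sqrt2$ and horizon $\sigma T$ shows that, for $\b$ small enough, $\sup_{T>0,\,y\in\rd}E_{0,r}^{2\sigma T,y}\big[\mathrm e^{\b^2\int_0^{\sigma T}V(W_{2t})\dd t}\big]<\infty$, and the same holds with $\b^2$ replaced by $p\b^2$ for some $p>1$ close to $1$.

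Next I would reduce the statement exactly as in the proof of Proposition~\ref{prop-claim1}: writing the difference in question as $E_{0,r}^{2\sigma T,y}\big[\mathrm e^{\b^2\int_{[0,m]\cup[\sigma T-m,\sigma T]}V(W_{2t})\dd t}\big(\mathrm e^{\b^2\int_m^{\sigma T-m}V(W_{2t})\dd t}-1\big)\big]$ and splitting over the event $A_a:=\{\int_m^{\sigma T-m}V(W_{2t})\,\dd t>a\}$ and its complement, the contribution of $A_a^c$ is at most $\b^2a\,\mathrm e^{\b^2a}\sup_{T,y}E_{0,r}^{2\sigma T,y}[\mathrm e^{\b^2\int_0^{\sigma T}V(W_{2t})\dd t}]$ (using $\mathrm e^{\b^2u}-1\le\b^2a\,\mathrm e^{\b^2a}$ for $0\le u\le a$), which is small for $a$ small. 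Hence it suffices to prove, for each fixed $a>0$,
$$
\lim_{T\to\infty}\ \sup_{y\in\rd}\ E_{0,r}^{2\sigma T,y}\Big[\mathrm e^{\b^2\int_0^{\sigma T}V(W_{2t})\dd t}\,\mathbf 1_{A_a}\Big]=0 .
$$
By H\"older's inequality with $1/p+1/q=1$ and $p$ close to $1$, this expectation is bounded by $E_{0,r}^{2\sigma T,y}\big[\mathrm e^{p\b^2\int_0^{\sigma T}V(W_{2t})\dd t}\big]^{1/p}\,P_{0,r}^{2\sigma T,y}(A_a)^{1/q}$; the first factor is bounded uniformly in $y,T$ by the previous paragraph, so the matter reduces to $\sup_{y\in\rd}P_{0,r}^{2\sigma T,y}(A_a)\to0$, i.e., via the scaling identity, to $\sup_{y\in\rd}P_{0,r/\sqrt2}^{\sigma T,y/\sqrt2}\big[\int_m^{\sigma T-m}V(\sqrt2\,\mathbb B_t)\,\dd t>a\big]\to0$, which is the statement of Lemma~\ref{lemma3-claim1} except that the bridge starts at $r/\sqrt2$ rather than at $0$. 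Since $m=m(T)\to\infty$ and $m=o(\sigma T)$, the proof of Lemma~\ref{lemma3-claim1} goes through verbatim for an arbitrary, possibly $T$-dependent, starting point.

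The step I expect to be the main obstacle is precisely this uniformity over the pinning point $y\in\rd$. The naive absolute-continuity bound of Lemma~\ref{lemma1-claim1}, comparing the bridge on $[0,\sigma T/2]$ with a free Brownian motion started at $r/\sqrt2$, produces a factor $2^{d/2}\mathrm e^{|y-r|^2/(4\sigma T)}$ that is not bounded uniformly in $y$; the remedy, as in the proof of Lemma~\ref{lemma2-claim1}, is to compare instead with a Brownian motion carrying the constant drift $(y-r)/(2\sigma T)$, which absorbs the linear displacement and whose transition density is still dominated by $(2\pi t)^{-d/2}$ uniformly in the drift. Splitting $[m,\sigma T-m]=[m,\sigma T/2]\cup[\sigma T/2,\sigma T-m]$ and treating the second interval by time-reversal of the bridge, Markov's inequality then gives for $P_{0,r}^{2\sigma T,y}(A_a)$ a bound of order $a^{-1}\int_m^\infty(2\pi t)^{-d/2}\,\dd t=O\big(a^{-1}m^{-(d-2)/2}\big)$, uniform in $r$ and $y$, which tends to $0$ as $T\to\infty$ because $d\ge3$. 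Everything else is a routine repetition of the arguments of Section~\ref{sec:warmup}.
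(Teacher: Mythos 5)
Your argument is correct and is exactly what the paper has in mind: the paper's proof consists of the single sentence that it ``follows exactly the same line of arguments as in Proposition \ref{prop-claim1},'' and your scaling of the bridge, the splitting over $A_a$ and its complement, and the observation that both Lemma \ref{lemma2-claim1} and the Markov-inequality estimate behind Lemma \ref{lemma3-claim1} are uniform in the (here $T$-dependent) starting point are precisely the details being left implicit. Nothing to change.
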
 
\begin{proof}The proof of Lemma \ref{lemma1:prop2:th:h} follows exactly the same line of arguments as in Proposition \ref{prop-claim1}. 
\end{proof}
The next result will then conclude the proof of Proposition \ref{prop2:th:h}. 
\begin{lemma}\label{lemma2:prop2:th:h}
Under the assumptions imposed in Proposition \ref{prop2:th:h}, we have, for any $x\in \rd$ and for $r=x\sqrt T$,
$$
\sup_{y\in \rd} \bigg| E_{0,r}^{2\sigma T,y}  \big[  \mathrm e^{\b^2\int_{[0,m]\cup[\sigma T -m,\sigma T]} V(W_{2t})\dd t} \big] - E_{r}  \big[  \mathrm e^{\b^2\int_0^\infty V(W_{2t})\dd t} \big] E_y \big[\mathrm e^{\b^2\int_0^\infty V(W_{2t} )\dd t}\big]\bigg| \stackrel{T\to\infty}{\longrightarrow} 0\;.
$$
\end{lemma}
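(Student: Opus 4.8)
The plan is to follow the proof of Proposition~\ref{prop-claim2} almost verbatim, the one new ingredient being control that is uniform in the pinned endpoint $y\in\rd$. Writing $s=2t$, so that the bridge $P_{0,r}^{2\sigma T,y}$ runs on $[0,2\sigma T]$ and the two windows $\{2t:t\in[0,m]\}$, $\{2t:t\in[\sigma T-m,\sigma T]\}$ become $[0,2m]$ and $[2\sigma T-2m,2\sigma T]$, I would first condition the bridge on its value at the mid-time $\sigma T$. By the Markov property the two halves are then independent bridges, $P_{0,r}^{\sigma T,\,\cdot}$ and $P_{\sigma T,\,\cdot}^{2\sigma T,y}$; since $m=o(T)$ the two windows lie in different halves, and since $V$ is even, time reversal turns the second half into a bridge from $y$ with its window at the left end. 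After the substitution $z=\sqrt T\,w$ --- which, exactly as in Proposition~\ref{prop-claim2}, turns the Gaussian density of the mid-point into $\rho(\sigma,w-x)\,\rho(\sigma,w-y/\sqrt T)\big/\rho(2\sigma,x-y/\sqrt T)\,\dd w$ --- this yields the identity
\[
E_{0,r}^{2\sigma T,y}\Big[\mathrm e^{\b^2\int_{[0,m]\cup[\sigma T-m,\sigma T]}V(W_{2t})\dd t}\Big]
=\int_{\rd}A_T\big(r;\sqrt T w\big)\,A_T\big(y;\sqrt T w\big)\,\frac{\rho(\sigma,w-x)\,\rho(\sigma,w-y/\sqrt T)}{\rho\big(2\sigma,x-y/\sqrt T\big)}\,\dd w,
\]
where $A_T(a;z):=E_{0,a}^{\sigma T,z}\big[\mathrm e^{\b^2\int_0^m V(W_{2t})\dd t}\big]$.

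Next I would pass to the limit in each factor $A_T$. By Girsanov (Lemma~\ref{lemma1-claim1}), restricting the bridge to $[0,2m]$,
\[
A_T(a;z)=E_a\Big[\mathrm e^{\b^2\int_0^m V(W_{2t})\dd t}\;\frac{\rho(\sigma T-2m,\,z-W_{2m})}{\rho(\sigma T,\,z-a)}\Big];
\]
since $m=o(T)$ the density ratio tends a.s.\ to $1$ and is dominated, by Lemma~\ref{lemma1-claim1}, by $(\sigma T/(\sigma T-2m))^{d/2}\mathrm e^{|z-a|^2/2\sigma T}$, which is bounded whenever $|z-a|=O(\sqrt T)$. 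Combined with monotone convergence $\mathrm e^{\b^2\int_0^m V(W_{2t})\dd t}\uparrow\mathrm e^{\b^2\int_0^\infty V(W_{2t})\dd t}$ and the uniform bound of Lemma~\ref{lemma2-claim1} (which also gives $0\le A_T\le C$ for all parameters), this yields $A_T(a;\sqrt T w)\to E_a\big[\mathrm e^{\b^2\int_0^\infty V(W_{2t})\dd t}\big]$ for every fixed $w$. Plugging this into the displayed integral, for \emph{fixed}~$y$ dominated convergence --- the integrand being bounded by $C^2$ times a probability density in $w$ --- gives the conclusion of the lemma without the supremum.

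The genuine obstacle is the uniformity over $y\in\rd$, and I would obtain it by splitting $\rd=\{|y|\le M\sqrt T\}\cup\{|y|>M\sqrt T\}$ with $M$ fixed and large. On the first set $y/\sqrt T$ stays in the compact ball $\bar B(0,M)$, the rescaled density is a probability density depending continuously on $y/\sqrt T$ with parameters in a compact set, and the convergences above are locally uniform, so (e.g.\ by a subsequence/compactness argument) the convergence holds uniformly over $\{|y|\le M\sqrt T\}$. On the second set I would show that both sides are uniformly close to the trivial value: $E_y[\mathrm e^{\b^2\int_0^\infty V(W_{2t})\dd t}]=1+\fC_1|y|^{-(d-2)}$ for $|y|\ge1$ by Proposition~\ref{prop:covar}, hence $\to1$ uniformly in $|y|>M\sqrt T$ as $M\to\infty$; and, reversing time on the second half, the near-$y$ bridge factor is likewise $1+o(1)$ uniformly in such $y$ (and in $z$ on the bulk of the rescaled density), because $\mathrm{supp}(V)\subset B(0,1)$, the Girsanov weight of Lemma~\ref{lemma1-claim1} is $O(1)$ there, and a Brownian motion started at $y$ ever enters $B(0,1)$ with probability $O(|y|^{-(d-2)})$ by transience in $d\ge3$ (equivalently, by Proposition~\ref{prop:covar}). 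Hence on $\{|y|>M\sqrt T\}$ both $E_{0,r}^{2\sigma T,y}[\cdots]$ and $E_r[\cdots]\,E_y[\cdots]$ equal $E_r[\mathrm e^{\b^2\int_0^\infty V(W_{2t})\dd t}]$ up to an error that is uniformly small once $M$ is large; letting $M\to\infty$ finishes the proof. For the application to Proposition~\ref{prop2:th:h} only $y$ with $V(y)\ne0$, hence $|y|\le1$, is used, so there the easy locally-uniform case on a fixed compact set already suffices.
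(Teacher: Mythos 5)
Your overall plan is a sound one, but it is a genuinely different (and longer) route than the paper takes, and it has one weak spot. The paper's own proof of this lemma is a shortcut: it observes that for $x\neq 0$ one has $r=x\sqrt T\to\infty$, so the bridge (and the free Brownian motion) started at $r$ leaves $\mathrm{supp}(V)\subset B(0,1)$ immediately, hence the $[0,m]$ part of the exponent contributes $1+o(1)$ uniformly and $E_r[\mathrm e^{\beta^2\int_0^\infty V(W_{2t})\dd t}]\to 1$ as well; the claim then collapses to the one-sided statement
\[
\sup_y\Big|E_{0,r}^{2\sigma T,y}\big[\mathrm e^{\beta^2\int_{\sigma T-m}^{\sigma T}V(W_{2t})\dd t}\big]-E_y\big[\mathrm e^{\beta^2\int_0^\infty V(W_{2t})\dd t}\big]\Big|\to 0,
\]
which requires only a single Girsanov comparison near the terminal point. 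You instead re-run the mid-point decomposition of Proposition~\ref{prop-claim2} in full, which is more work but has the advantage of also covering $x=0$ (which the paper's proof passage does not treat explicitly, deferring to Section~\ref{sec:warmup}).

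The weak spot is the handling of uniformity over $\{|y|\le M\sqrt T\}$. The "compactness/subsequence" argument does not go through as written: the set $\{|y|\le M\sqrt T\}$ is unbounded, and while the \emph{weight} $\rho(\sigma,w-x)\rho(\sigma,w-y/\sqrt T)/\rho(2\sigma,x-y/\sqrt T)$ depends only on the compact parameter $y/\sqrt T$, the factors $A_T(y;\sqrt T w)$ and $E_y[\mathrm e^{\beta^2\int_0^\infty V(W_{2t})\dd t}]$ depend on $y$ itself, not on $y/\sqrt T$; pointwise convergence in $y$ on an unbounded set does not upgrade to uniform convergence by compactness of $y/\sqrt T$. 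The correct repair is to split at a \emph{fixed} radius $K$ rather than at $M\sqrt T$: on $\{|y|\le K\}$ you have genuine compactness in $y$, and the Girsanov ratio from Lemma~\ref{lemma1-claim1} plus the uniform tail bound from Lemma~\ref{lemma2.5-claim1} make the convergence uniform there; on $\{|y|>K\}$ both sides are within $O(K^{-(d-2)})$ of $1$ by transience (Proposition~\ref{prop:covar}), and one lets $K\to\infty$ at the end. As you rightly point out, for the use in Proposition~\ref{prop2:th:h} only $y$ with $V(y)\neq 0$, i.e.\ $|y|\le 1$, enters, so the compact case already suffices in the application; but as a proof of the lemma as stated, the $M\sqrt T$ split needs the fix above.
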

\begin{proof}
Note that when $x\ne 0$, the integrals over $[0,m]$ vanish in the limit and the claim reduces to showing 
$$
\sup_y \bigg| E_{0,r}^{2\sigma T,y}\bigg[\mathrm e^{\beta^2\int_{\sigma T-\sigma m}^{\sigma T} V(W_{2t}) \dd t}\bigg]-  E_y\big[\mathrm e^{\beta^2\int_0^\infty V(W_{2t} )\dd t}\bigg]\bigg| \to 0,
$$
which is straightforward to check. 
\end{proof}

We will now show 

\begin{lemma}\label{lemma3:prop2:th:h}
Under the assumptions imposed in Proposition \ref{prop2:th:h}, we have, for any $x\in \rd$,
\begin{itemize}
\item As $T\to\infty$, \begin{equation}\label{item1}
 \E\big[\L^2\big] = \mathscr T_2^{\ssup\sigma}(x) + o(1),
\end{equation}
where 
\[
\mathscr T_2^{\ssup\sigma}(x)= \left(E_{0;r}^{\otimes 2} \otimes E_{0;r}^{\otimes 2}\right)\big[H_m H_{\sigma T-m,\sigma T}\big] ,
\]
with $H_m$ and $H_{\sigma T-m,\sigma T}$ defined in \eqref{eq-H-m}. 
\item As $T\to\infty$, 
\begin{equation}\label{item2}
\begin{aligned}
&\bigg| \mathscr T_2^{\ssup\sigma}(x)- \bigg[E_{0;0}^{\otimes 2}\bigg(\mathrm e^{\beta^2 \int_0^\infty \dd t \,\, V(W^{\ssup 1}_t- W^{\ssup 2}_t) }\bigg)\bigg]^2 \\
&\qquad \times \bigg[E_{0;r}^{\otimes 2}\bigg(\mathrm e^{\beta^2 \int_{\sigma T-m}^{\sigma T} V(W^{\ssup 1}_t- W^{\ssup 2}_t) \, \dd t} \, \big[T^{d/2} V\big(W^{\ssup 1}_{\sigma T}- W^{\ssup 2}_{\sigma T}\big) - \gamma^2 \rho(2\sigma,x)\big]\bigg)\bigg]^2\bigg| \to 0.
\end{aligned}
\end{equation} 
\end{itemize}
In particular, $\mathscr T_2^{\ssup\sigma}(x)\to 0$ and $\mathscr L_T^{(\sigma)}(x) \to 0$ in $L^2$.
\end{lemma}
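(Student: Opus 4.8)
The plan is to follow, with cosmetic changes, the two-step treatment of the single-point second moment in Propositions~\ref{prop-claim3} and~\ref{prop-claim4}; the only genuinely new feature here is that the four Brownian paths start at \emph{two} distinct points, $0$ and $r=\sqrt T x$, instead of all at the origin. The degenerate case $x=0$ should be disposed of first and separately: there $r=0$, a direct computation gives $\mathscr L_T^{\ssup\sigma}(0)=\sigma^{-d/2}\mathscr L_{\sigma T}$, so that $\mathscr L_T^{\ssup\sigma}(0)\to0$ in $L^2$ is precisely Proposition~\ref{prop:claim1}. We henceforth assume $x\neq0$.

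For~\eqref{item1} I would expand $\E[(\mathscr L_T^{\ssup\sigma}(x))^2]$ via~\eqref{eq:expgauss} with $n=4$ into an integral over four independent Brownian motions $W^{\ssup 1},\dots,W^{\ssup 4}$, with $W^{\ssup 1},W^{\ssup 3}$ started at $0$ and $W^{\ssup 2},W^{\ssup 4}$ started at $r$ (the two copies of $E_{0;r}^{\otimes 2}$), carrying the weight $\exp\{\b^2\sum_{1\le i<j\le 4}\int_0^{\sigma T}V(W^{\ssup i}_t-W^{\ssup j}_t)\dd t\}$ together with the two factors $T^{d/2}V(W^{\ssup i}_{\sigma T}-W^{\ssup{i+1}}_{\sigma T})-\gamma^2\rho(2\sigma,x)$, $i\in\{1,3\}$. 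One then discards the contributions of $\int_m^{\sigma T-m}V(W^{\ssup i}_t-W^{\ssup j}_t)\dd t$ for every pair, and of $\int_{\sigma T-m}^{\sigma T}V(W^{\ssup i}_t-W^{\ssup j}_t)\dd t$ for every pair $(i,j)\neq(1,2),(3,4)$, exactly as in the proof of Proposition~\ref{prop-claim3}: bound $|T^{d/2}V-\gamma^2\rho|\le T^{d/2}V+\gamma^2\rho$, expand, pass to Brownian bridges (so that each $T^{d/2}\rho(\sigma T,\cdot)$ is bounded by $(2\pi\sigma)^{-d/2}$ and $V$ is compactly supported), and estimate the remaining weighted bridge expectations by H\"older's inequality, the uniform Khas'minskii bound of Lemma~\ref{lemma2-claim1}, and the decay estimates of Lemmas~\ref{lemma2.5-claim1} and~\ref{lemma3-claim1}; the latter two carry over to bridges started at an arbitrary point (here $0$ and $\pm r$) because their proofs only use the endpoint-uniform Girsanov estimate behind Lemma~\ref{lemma2-claim1}. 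A dominated-convergence passage under the $\dd\mathbf y$-integral then gives~\eqref{item1}, with $\mathscr T_2^{\ssup\sigma}(x)=(E_{0;r}^{\otimes 2}\otimes E_{0;r}^{\otimes 2})[H_m H_{\sigma T-m,\sigma T}]$.

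For~\eqref{item2} I would condition on the four midpoint positions $(W^{\ssup i}_{\sigma T/2})_{i=1}^4$; since $m=o(T)$, $H_m$ then depends only on $[0,m]$ and $H_{\sigma T-m,\sigma T}$ only on $[\sigma T-m,\sigma T]$, so the two are conditionally independent and $\mathscr T_2^{\ssup\sigma}(x)$ equals the expectation of the product of the two conditional expectations $\E(H_m\mid\cdot)$ and $\E(H_{\sigma T-m,\sigma T}\mid\cdot)$ given the midpoints. As in the proof of Proposition~\ref{prop-claim4}, H\"older's inequality and Lemma~\ref{lemma2-claim1} bound both conditional expectations uniformly in $T$. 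The essential new point is the limit of $\E(H_m\mid\cdot)$: replacing the bridges on $[0,m]$ by free Brownian motions started at $0$ (for $i=1,3$) and at $r$ (for $i=2,4$) and letting $m\to\infty$ as in~\eqref{eq:BBToBM}, the interactions $V(W^{\ssup i}_t-W^{\ssup j}_t)$ between a path started at $0$ and a path started at $r=\sqrt T x$ vanish in the limit, since $|r|\to\infty$ and $V$ is compactly supported; hence only the two like-start pairs $(1,3)$ and $(2,4)$ survive and $\E(H_m\mid\cdot)\to\big(E_{0;0}^{\otimes 2}[\mathrm e^{\b^2\int_0^\infty V(W^{\ssup 1}_t-W^{\ssup 2}_t)\dd t}]\big)^2$, a deterministic constant (the $(2,4)$ factor equalling the $(1,3)$ one by translation invariance). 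Because this limit is constant and $\E(H_{\sigma T-m,\sigma T}\mid\cdot)$ is bounded, dominated convergence gives $\mathscr T_2^{\ssup\sigma}(x)-\big(E_{0;0}^{\otimes 2}[\mathrm e^{\b^2\int_0^\infty V}]\big)^2\,(E_{0;r}^{\otimes 2}\otimes E_{0;r}^{\otimes 2})[H_{\sigma T-m,\sigma T}]\to0$, and the product structure of $H_{\sigma T-m,\sigma T}$ over $i\in\{1,3\}$ rewrites the last expectation as the square of $E_{0;r}^{\otimes 2}[\mathrm e^{\b^2\int_{\sigma T-m}^{\sigma T}V(W^{\ssup 1}_t-W^{\ssup 2}_t)\dd t}(T^{d/2}V(W^{\ssup 1}_{\sigma T}-W^{\ssup 2}_{\sigma T})-\gamma^2\rho(2\sigma,x))]$; this is~\eqref{item2}. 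Finally, for $x\neq0$ the $[0,m]$-part of the exponential in that inner expectation is negligible, exactly as in the proof of Lemma~\ref{lemma2:prop2:th:h}, so it is asymptotically equal to $\E[\mathscr L_T^{\ssup\sigma}(x)]$, which tends to $0$ by Proposition~\ref{prop2:th:h}; hence $\mathscr T_2^{\ssup\sigma}(x)\to0$, and combined with~\eqref{item1} this gives $\mathscr L_T^{\ssup\sigma}(x)\to0$ in $L^2$.

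I expect the only real obstacle to be the bookkeeping of the last paragraph — pinning down the conditional limit of $\E(H_m\mid\cdot)$ and justifying, uniformly enough to pass to the limit under the midpoint integral, the decoupling of the paths started at $0$ from those started at $r$ over the $o(T)$ window $[0,m]$. The reductions in~\eqref{item1} and the uniform bounds in~\eqref{item2} are routine transcriptions of the single-point arguments already carried out in Sections~\ref{sec:proof-claim3} and~\ref{sec:proof-claim4}.
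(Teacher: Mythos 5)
Your proposal is correct and follows essentially the same route as the paper: expand $\E[\L^2]$ over four independent paths started at $0,r,0,r$, prune the cross-pair and middle-interval interactions exactly as in Proposition~\ref{prop-claim3}, condition on the midpoints $(W^{\ssup i}_{\sigma T/2})_{i=1}^4$ to factorize $\mathscr T_2^{\ssup\sigma}(x)$ into a product of conditional expectations of $H_m$ and $H_{\sigma T-m,\sigma T}$ as in Proposition~\ref{prop-claim4}, and then identify the conditional limit of $\E(H_m\mid\cdot)$ and feed the result back through $\E[\L]\to0$. The one place you go beyond the paper's terse write-up is your explicit argument that, over the window $[0,m]$ with $m=o(T)$, the four unlike-start interactions die because the starting separation $|r|=\sqrt T\,|x|\to\infty$ outruns both the diffusive displacement and the bridge drift, so only the two like-start pairs $(1,3)$ and $(2,4)$ survive and the constant is $\bigl(E_{0;0}^{\otimes 2}[\mathrm e^{\beta^2\int_0^\infty V}]\bigr)^2$ rather than the full $E_0^{\otimes 4}[H_\infty]$ of the $x=0$ case; the paper leaves this decoupling implicit behind a reference to the single-start argument (and its pointer to Proposition~\ref{prop-claim3} rather than~\ref{prop-claim4} looks like a typo), and your separate reduction of the case $x=0$ to Proposition~\ref{prop:claim1} via $\L[0]=\sigma^{-d/2}\mathscr L_{\sigma T}$ is also the intended reading of the paper's ``it suffices to consider $x\neq0$.''
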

\begin{proof}
Again, it suffices to consider the case $x\neq 0$. Note that 
\[
\begin{aligned}
&\E\big[\L^2\big]\\
&=\big(E_{0;r}^{\otimes 2} \otimes E_{0;r}^{\otimes 2}\big)\left[\prod_{j\in\{1,3\}} \left\{\mathrm e^{\beta^2 \int_0^T V(W^{\ssup i}_t-W^{\ssup{i+1}}_t)\dd t }\big[T^{d/2} V(W^{\ssup i}_T-W^{\ssup{i+1}}_T)-\gamma^2 \rho(2\sigma,x)\big]\right\}\right.\\
&\qquad\qquad\qquad\qquad\qquad\times \mathrm e^{\beta^2 \sum^\star \int_0^T \dd t \, V(W^{\ssup i}_t- W^{\ssup j}_t)}\bigg].
\end{aligned}
\]
Then a repetition of the exactly same arguments as in Proposition \ref{prop-claim3} prove \eqref{item1}. To deduce \eqref{item2},  shortening notations 
$ E_{0;r}^{\otimes 2} \otimes E_{0;r}^{\otimes 2}$ into $E_{0;r;0;r}^{\otimes 4}$, observe that 
$$
\mathscr T_2^{\ssup\sigma}(x)= E_{0;r;0;r}^{\otimes 4} \bigg[ E_{0;r;0;r}^{\otimes 4}\bigg\{ H_m \bigg|  \big(W^{\ssup i}_{\sigma T/2}\big)_{i=1}^4\bigg\}
E_{0;r;0;r}^{\otimes 4}
\bigg\{ H_{\sigma T-m} \bigg| \big(W^{\ssup i}_{\sigma T/2}\big)_{i=1}^4\bigg)\bigg\}\bigg]
$$
Hence, the left hand side in \eqref{item2} can be rewritten as 
$$
\begin{aligned}
&\mathscr T_2^{\ssup\sigma}(x)- \big(E_{0;0}\big[\mathrm e^{\beta^2\int_0^\infty V(W_{2t}) \dd t}\big]\big)^2 E_{0;r0;r}^{\otimes 4}\big[H_{\sigma T-m,\sigma T}\big] \\
&= E_{0;r;0;r}^{\otimes 4}\bigg[ \bigg\{E_{0;r;0;r}^{\otimes 4}  \bigg[H_m\bigg| \big(W^{\ssup i}_{\sigma T/2}\big)_{i=1}^4\bigg] - \bigg(E_{0;0}\bigg[\mathrm e^{\beta^2\int_0^\infty V(W_{2t}) \dd t}\bigg]\bigg)^2\bigg\}
\times E_{0;r;0;r}^{\otimes 4}\bigg\{ H_{\sigma T-m} \bigg| \big(W^{\ssup i}_{\sigma T/2}\big)_{i=1}^4\bigg\}\bigg] 
\end{aligned}
$$
which tends to 0, as can be seen from the proof of Proposition \ref{prop-claim3}. 
\end{proof}

\begin{proof}[Proof of Proposition \ref{prop0:th:h}.]
The proof follows a similar line of arguments as in Section \ref{subsec:secondProof}. In particular, it is enough to show that for all $t>0$,
\begin{equation*} \label{eq:CV_forSpaceVar}
 T^{(d-2)/4} \left(\log \sZ_\infty(\sqrt T x) - \log \sZ_{tT}(\sqrt T x)\right) \cvlaw \mathscr  H(t,x),
\end{equation*}
where the convergence holds jointly for finitely many $x$'s.
 Note that again by It\^o's formula, $\log \sZ_T(x)= N_T(x)- \frac 12 \langle N(x)\rangle_T$, where 
\[
N_T(x)=\beta \int_0^T \int_{\R^d} E_{x,\b,t} [ \phi (y-W_t) ] \xi(t,y) \dd y \dd t,
\]
is a martingale with cross-bracket
\begin{align*}
&\quad \langle N(x),N(y)\rangle_T = 
\b^2
\int_0^T E_{x,y,\b,s}^{\otimes 2}  \left[ V(W_s^{\ssup 1}-W_s^{\ssup 2})  \right]  \dd s,
\end{align*}
where we remind the reader that $E_{x,\beta,t}$ denotes the expectation w.r.t. the polymer measure, while $E_{x,y,\beta,t}^{\otimes 2}$ denotes the same w.r.t. the product 
polymer measure (on the same environment) defined on two independent Brownian paths starting at $x$ and $y$. 

We now use the multidimensional version of  the functional central limit for martingales (\cite[Theorem 3.11]{JS87}) combined with Proposition \ref{prop1:th:h} to conclude that the
 sequence of rescaled martingales
\begin{equation} \label{eq:def_RescaledMartingale}
\left(N^{\ssup T}(x)\right):\tau\to T^{(d-2)/4}(N_{\tau T}(\sqrt{T}x)-N_{tT}(\sqrt{T}x)), \quad \tau\geq t,
\end{equation}
converge in the sense of finite dimensional distributions to a Gaussian field. For all $r=xT^{1/2}$ and $r'=yT^{1/2}$,
\begin{equation}
\begin{aligned}
\langle N^{\ssup T}(x), N^{\ssup T}(y)\rangle_\tau & = T^{(d-2)/2} \b^2 \int_{tT}^{\tau T} E_{r,r',\b,s}^{\otimes 2}  \left[ V(W_s^{\ssup 1}-W_s^{\ssup 2})  \right]  \dd s \nonumber \\
& = \int_{t}^{\tau}  T^{d/2} \b^2 E_{r,r',\b,\sigma T}^{\otimes 2}  \left[ V(W_s^{\ssup 1}-W_s^{\ssup 2})  \right]  \dd \sigma \nonumber 
\end{aligned}
\end{equation}
and we claim that
\begin{equation}\label{eq4.8}
\lim_{T \to \8} \sup_{x,y} \bigg\| \langle N^{\ssup T}(x), N^{\ssup T}(y)\rangle_\tau -  \gamma^2 \int_{t}^{\tau}   \rho(2\sigma,y-x) \dd\sigma \bigg\|_1 =0\;.
\end{equation}
%
%
%
Note that for $\tau=\infty$, the second term in the norm in \eqref{eq4.8} equals $\mathrm{Cov}(\mathscr H(t,x)\mathscr H(t,y))$, so that, assuming \eqref{eq4.8}, this concludes the proof of Proposition \ref{prop0:th:h} by the same arguments as exposed in Section \ref{subsec:secondProof}.


We now prove the uniform limit \eqref{eq4.8}.
By \eqref{eq:exprWithPolymerMeas} and Cauchy-Schwarz inequality,
\begin{align*}&\E\left|\langle N^{\ssup T}(x), N^{\ssup T}(y)\rangle_\tau - \gamma^2 \int_{t}^{\tau}   \rho(2\sigma,y-x) \dd\sigma  \right|\\
& =  \E\left|\int_t^\tau \frac{1}{\sZ_{\sigma T}(r)\sZ_{\sigma T}(r')} \left( T^{d/2} \left(\frac {\dd}{\dd t}\big\langle \mathscr Z(r), \mathscr Z(r')\big\rangle\right)_{\sigma T}- \gamma^2 \rho(2\sigma,y-x)\right)\dd \sigma \right| \\
&\leq \int_t^\tau \left\Vert\frac{1}{\sZ_{\sigma T}(r)\sZ_{\sigma T}(r')}\right\Vert_2 \left\Vert T^{d/2} \left(\frac {\dd}{\dd t}\big\langle \mathscr Z(r), \mathscr Z(r')\big\rangle\right)_{\sigma T}- \gamma^2 \rho(2\sigma,y-x)\right\Vert_2 \dd \sigma.
\end{align*}
Recall that $\sup_T \E[|\log \sZ_T|^p]<\infty$ for all $p<0$ (see \eqref{eq:negative}). Hence, convergence \eqref{eq4.8}
is obtained from Proposition \ref{prop1:th:h} and  dominated convergence applied to the last line of the above display. Note that domination can be justified by the following bound for all $\sigma \geq t$:
\begin{equation} \label{eq:L2bound2pointsDerivative}
\left\Vert T^{d/2} \left(\frac {\dd}{\dd t}\big\langle \mathscr Z(r), \mathscr Z(r')\big\rangle\right)_{\sigma T}\right\Vert_2 \leq C \sigma^{-d/2},
\end{equation}
where $C$ is some finite constant independent of $x$, $y$, $T$ and $\sigma$ (independence with respect to $x,y$ will be useful for the proof of Theorem \ref{th:CVagainstTestFun}). It can be directly obtained 
by observing that
\begin{equation} \label{eq:UniformL2Bound}
\sup_{T>0} \sup_{x,y\in\mathbb{R}^d} \left\Vert T^{d/2} \left(\frac {\dd}{\dd t}\big\langle \mathscr Z(r), \mathscr Z(r')\big\rangle\right)_{T}\right\Vert_2 < \infty,
\end{equation}
which we get by computing the second moment:
\begin{align*}
&\E \left[T^{d} \left(\frac {\dd}{\dd t}\big\langle \mathscr Z(0), \mathscr Z(r)\big\rangle\right)_{T}^2\right] \\
& = \left(E_{0;r}^{\otimes 2} \otimes E_{0;r}^{\otimes 2}\right)\left[e^{\beta^2 \sum_{i<j\leq 4} \int_0^T \dd t \, V\big(W^{\ssup i}_t- W^{\ssup j}_t\big)}\prod_{j\in\{1,3\}} \mathrm T^{d/2} V\big(W^{\ssup i}_T-W^{\ssup{i+1}}_T\big)\right]\\
& = \int_{(\mathbb R^d)^4} \dd \mathbf y\,\, \prod_{i\in\{1,3\}}\left(T^{d/2} V(y_i-y_{i+1}) \rho(T,y_i) \rho(T,y_{i+1}-r)\right) \\
& \qquad\bigg[\bigotimes_{i\in\{1,3\}} \big(E_{0,0}^{T,y_i} \otimes E_{0,r}^{T,y_{i+1}}\big)\bigg] \bigg[ \mathrm e^{\b^2 \sum_{1\leq i < j \leq 4}  \int_0^T V\big(W^{(i)}_t\!-\!W^{(j)}_t\big)\dd t}\bigg]. 
\end{align*}
By Lemma \ref{lemma2-claim1} and H\"older's inequality, the expectation over the Brownian bridges in the last display is uniformly bounded for small enough $\b$. Since $\rho(T,y_{i+1}-r)\leq C T^{-d/2}$ and $V$ is compactly supported, we obtain \eqref{eq:UniformL2Bound}.
%
\end{proof}

\subsection{Proof of Theorem \ref{th:h}: Convergence of space-time finite dimensional distributions of $\{\mathscr H_\e(t,x)\}_{t>0,x\in \rd}$. }\label{4.2}
Note that to derive Theorem \ref{th:h} it suffices to show that for $\beta\in (0,\beta_0)$ the following convergence of the joint distribution of a finite vector holds:
$$
(\mathscr H_\e(t,x), \mathscr H_\e(s,y),\dots,) \to (\mathscr H(t,x),\mathscr H(s,y), \dots).
$$
 The proof builds on the lines of arguments as that of Proposition \ref{prop0:th:h}. 
Recall that with $\xi^{\ssup{\e,t}}(s,y)=\e^{(d+2)/2}\xi(t-\e^2 s, \e y)$ we have $u_\e(t,x)= \sZ_{t/\e^2}(\xi^{\ssup{\e,t}}; x/\e)$ and therefore, for all $s\leq t$,
\begin{equation} \label{eq:couplingProporty}
\big(u_\e(s,y), u_\e(t,x)\big)= \Bigg( \sZ_{\frac{t-s}{\e^2}, \frac t {\e^2}}\bigg(\frac y\e,\xi^{\ssup{\e,t}}\bigg), \sZ_{0,\frac t{\e^2}}\bigg(\frac x \e, \xi^{\ssup{\e,t}}\bigg)\Bigg)
\end{equation}
where for any $0\leq A \leq B$ and $X\in \rd$, we wrote
\begin{equation}\label{PhiAB}
\begin{aligned}
&\sZ_{A,B}(X;\xi)=  
E\big[\Phi_{A,B}(W) \big \vert {W_A=X}\big],\qquad\mbox{with}\\
&\Phi_{A,B}(W)=\exp\bigg\{\beta\int_A^B\int_{\rd} \phi(W_s-y) \xi(s,y) \dd s \dd y- \frac{\beta^2}2 (B-A) V(0)\bigg\}.
\end{aligned}
\end{equation}
In this section we will also write 
\[
T=\e^{-2}, u=t-s\in[0,t) \qquad\mbox{and}\,\,\, \sigma>u.
\]
Starting from \eqref{eq:couplingProporty}, we can restrict ourselves to $y=0$ by shift invariance and compute as before: 
\begin{equation*}
\begin{aligned}
&\Bigg(\frac{\dd}{\dd t} \bigg\langle \sZ_{Tu,\cdot}(0;\cdot), \sZ_{0;\cdot}(\sqrt T x,\cdot)\bigg\rangle\Bigg)_{T\sigma}\\
&= \beta^2\big(E_{Tu,0}\otimes E_{0,\sqrt T x}\big)\big[\Phi_{Tu,T\sigma}(W^{\ssup 1})\Phi_{0,T\sigma}(W^{\ssup 2}) \, V\big(W^{\ssup 1}_{T\sigma}- W^{\ssup 2}_{T\sigma}\big)\big].
\end{aligned}
\end{equation*}
As in \eqref{eq:L} if we now write 
\begin{equation}\label{eq:L1}
\begin{aligned}
\mathscr L^{\sigma,u}_T(x)&= T^{d/2} \bigg[\beta^2\big(E_{Tu,0}\otimes E_{0,\sqrt T x}\big)\big[\Phi_{Tu,T\sigma}(W^{\ssup 1})\Phi_{0,T\sigma}(W^{\ssup 2}) \, V\big(W^{\ssup 1}_{T\sigma}- W^{\ssup 2}_{T\sigma}\big)\big]\bigg] \\
&\qquad\qquad\qquad- \gamma^2 \rho(2\sigma-u,x) \sZ_{Tu,T\sigma}(0) \sZ_{0,T\sigma}(\sqrt Tx),
\end{aligned}
\end{equation} 
then we again need to show 
\begin{proposition}\label{prop4}
Under the same assumption as in Theorem \ref{th:h}, for any $\sigma>u\geq 0$ and $x\in\rd$
\begin{itemize}
\item As $T\to\infty$,
\begin{equation}\label{prop4eq1}
\E\left[\mathscr L^{\sigma,u}_T(x)\right]\to 0.
\end{equation}
\item As $T\to\infty$,
\begin{equation}\label{prop4eq2}
\E\left[\mathscr L^{\sigma,u}_T(x)^2\right]\to 0.
\end{equation}
\end{itemize}
Therefore, $\mathscr L^{\sigma,u}_T(x)\stackrel{L^2(\mathbb P)}{\longrightarrow} 0$.
\end{proposition}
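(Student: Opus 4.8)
The plan is to mimic exactly the two-step strategy already carried out for $\mathscr L^{\ssup\sigma}_T(x)$ in Proposition \ref{prop1:th:h} (Lemma \ref{lemma3:prop2:th:h}), the only new feature being that one of the two Brownian paths now starts running only from time $Tu$ rather than from time $0$. First I would establish the first-moment convergence \eqref{prop4eq1}. Writing out $\E[\mathscr L^{\sigma,u}_T(x)]$ via \eqref{eq:expgauss} (with $n=2$), the expectation becomes an integral over two independent Brownian motions, one started at $0$ at time $Tu$ and one started at $\sqrt T x$ at time $0$; after the usual conditioning on the endpoint at time $T\sigma$ one gets a Brownian-bridge expectation of $\exp\{\beta^2\int_{[Tu,T\sigma]} V(W^{\ssup 1}_t - W^{\ssup 2}_t)\,\dd t\}$ weighted against $T^{d/2}\rho(\cdot)$ factors. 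The key point is that the weight $T^{d/2}\rho(T\sigma, \cdot)$ forces the endpoints $W^{\ssup 1}_{T\sigma}$ and $W^{\ssup 2}_{T\sigma}$ to be within $O(1)$ of each other (since $V$ is compactly supported), so the relevant geometry at the two ``ends'' of the time interval $[Tu,T\sigma]$ decouples. Exactly as in Proposition \ref{prop-claim1} and Proposition \ref{prop-claim2}, one fixes $m=m(T)$ with $m\to\infty$, $m=o(T)$, shows via the Brownian-bridge Khas'minskii estimate (Lemma \ref{lemma2-claim1}) together with Lemma \ref{lemma3-claim1} that the contribution of $\int_{[Tu+m,\,T\sigma-m]}V$ is negligible, and then that the surviving factors on $[Tu,Tu+m]$ and $[T\sigma-m,T\sigma]$ converge (by Lemma \ref{lemma1-claim1} and dominated convergence, as in \eqref{eq:BBToBM}) to the infinite-horizon expectations. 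Collecting terms, the $T^{d/2}\rho(T\sigma,\cdot)$ part produces $\gamma^2\rho(2\sigma-u,x)$ times $\E[\sZ_{Tu,T\sigma}(0)\sZ_{0,T\sigma}(\sqrt Tx)]$ up to $o(1)$; indeed the factor $\rho(2\sigma-u,x)$ arises precisely because the two paths are at relative distance governed by a Gaussian of variance $2\sigma-u$ at the common time $T\sigma$ (one path has been running time $T\sigma$, the other time $T(\sigma-u)$, and the difference $W^{\ssup 1}-W^{\ssup 2}$ evaluated near the endpoint has, after rescaling by $\sqrt T$, variance $2\sigma-u$ about the displacement $x$). The precise identification of the constant $\gamma^2$ is inherited verbatim from Proposition \ref{cor-LT} via the change of variables noted in the remark following it.

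Next I would prove the second-moment bound \eqref{prop4eq2}. Squaring, $\E[\mathscr L^{\sigma,u}_T(x)^2]$ becomes an integral over four Brownian paths — two copies of the pair $(W^{\ssup 1}$ started at $(Tu,0)$, $W^{\ssup 2}$ started at $(0,\sqrt T x))$ — with the integrand containing the product over $i\in\{1,3\}$ of the ``end factors'' $\mathrm e^{\beta^2\int_{Tu}^{T\sigma}V(W^{\ssup i}-W^{\ssup{i+1}})}[T^{d/2}V(W^{\ssup i}_{T\sigma}-W^{\ssup{i+1}}_{T\sigma})-\gamma^2\rho(2\sigma-u,x)]$ and a cross-interaction $\mathrm e^{\beta^2\sum^\star\int V}$ over the remaining pairs. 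Following Proposition \ref{prop-claim3} and Proposition \ref{prop-claim4} line by line: one first uses $|T^{d/2}V-\gamma^2\rho|\leq T^{d/2}V+\gamma^2\rho$ and the uniform moment bound \eqref{eq:UniformL2Bound} (whose analogue with one delayed path is proven the same way, using $\rho(T\sigma,\cdot)\leq CT^{-d/2}$, Lemma \ref{lemma2-claim1} and H\"older) to truncate all the $[Tu+m,T\sigma-m]$ interactions and all off-diagonal end interactions, reducing $\E[\mathscr L^{\sigma,u}_T(x)^2]$ to $\mathscr T_2 + o(1)$ where $\mathscr T_2=(E_{Tu,0}\otimes E_{0,\sqrt Tx})^{\otimes 2}[H_m H_{T\sigma-m,T\sigma}]$ with $H_m,H_{T\sigma-m,T\sigma}$ the obvious analogues of \eqref{eq-H-m}. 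Then, conditioning all four paths on their positions at the mid-time and applying the Markov property, $\mathscr T_2$ factorizes asymptotically as in Proposition \ref{prop-claim4} into $\big(E_{0;0}^{\otimes 2}[\mathrm e^{\beta^2\int_0^\infty V(W^{\ssup 1}-W^{\ssup 2})}]\big)^2$ times the square of $E^{\otimes 2}_{0;r}\big[\mathrm e^{\beta^2\int_{T\sigma-m}^{T\sigma}V}(T^{d/2}V-\gamma^2\rho(2\sigma-u,x))\big]$, and the latter bracket tends to $0$ by the first-moment analysis of \eqref{prop4eq1}. Hence $\mathscr T_2\to0$, giving \eqref{prop4eq2}. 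Combining \eqref{prop4eq1} and \eqref{prop4eq2} yields $\mathscr L^{\sigma,u}_T(x)\to 0$ in $L^2(\mathbb P)$.

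The main obstacle, as in the earlier sections, is the delicate truncation/decoupling at the two ends of the long time interval: one must control the Brownian-bridge expectations $E_{0,r}^{T\sigma,y}[\mathrm e^{\beta^2\int V(W_{2t})}]$ \emph{uniformly in the pinning point $y$} and uniformly in $T$, which is exactly what Lemma \ref{lemma2-claim1} (Brownian-bridge Khas'minskii) and Lemma \ref{lemma3-claim1} are designed for, but now the bridge runs over $[Tu,T\sigma]$ with one endpoint at the rescaled point $\sqrt T x\neq 0$; the fact that $x\neq0$ actually \emph{simplifies} matters, since then the $[Tu,Tu+m]$ interaction between the two (now macroscopically separated) paths vanishes in the limit, exactly as noted in the proof of Lemma \ref{lemma2:prop2:th:h}, so the only surviving end contribution is near time $T\sigma$. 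I expect that essentially no new idea beyond the machinery of Sections \ref{sec:prclaim1} and \ref{sec-f.d.m-space} is required, only bookkeeping to carry the shifted starting time $Tu$ through the change of variables $z\mapsto\sqrt T z$ and the dominated-convergence arguments.
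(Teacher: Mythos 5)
Your proposal is correct in substance and would work, but for the first moment \eqref{prop4eq1} the paper takes a cleaner route than the direct re-derivation you sketch. Rather than redoing the whole truncation/decoupling argument of Propositions~\ref{prop-claim1}--\ref{prop-claim2} with one path started at time $Tu$ and the other at time $0$, the paper applies the Markov property to path~$W^{\ssup 2}$ on $[0,Tu]$ (using the noise covariance structure and diffusive scaling of the heat kernel) to obtain the exact identity
\[
\E\big[\mathscr L^{\sigma,u}_T(x)\big] = \int \dd z\, \rho(u,z-x)\, \E\big[\mathscr L_T^{\ssup{\sigma-u}}(z)\big] + R_T ,
\]
with $R_T = -\gamma^2\int\dd z\,\rho(u,z-x)\big(\rho(2\sigma-u,x)-\rho(2\sigma-2u,z)\big)\E\big[\sZ_{0,T(\sigma-u)}(0)\sZ_{0,T(\sigma-u)}(\sqrt T z)\big]$. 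The first term then vanishes by dominated convergence and the already-proved Proposition~\ref{prop2:th:h}, once one has the uniform bound $\sup_{z,T}\E|\mathscr L^{\ssup{\sigma-u}}_T(z)|<\infty$ (which the paper checks directly from \eqref{eq1:prop1:th:h} and Lemma~\ref{lemma2-claim1}); and $R_T\to 0$ because $\E[\sZ\,\sZ]\to1$ for $z\neq0$ by \eqref{eq:covM0} and the two Gaussian kernels cancel by the Chapman--Kolmogorov identity $\int\rho(u,z-x)\rho(2\sigma-2u,z)\dd z=\rho(2\sigma-u,x)$. This reduction is what lets the paper avoid re-proving any estimates for paths with asynchronous starting times; your direct argument is viable but would require restating Propositions~\ref{prop-claim1}--\ref{prop-claim2} and several of the bridge lemmas with the shifted origin $Tu$, more work than the one-sentence bookkeeping you suggest. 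For the second moment \eqref{prop4eq2} your plan and the paper's coincide: write out the four-path formula (with the single $(2,4)$ interaction running over the longer interval $[0,T\sigma]$ since both of those paths begin at time $0$) and repeat the Proposition~\ref{prop-claim3}/\ref{prop-claim4} truncation and mid-time factorization. One small imprecision in your write-up: be careful that, when $x\neq 0$, the beginning-of-interval interactions that survive are precisely $(1,3)$ (near $Tu$) and $(2,4)$ (near $0$), each on their own short time window, so the limiting ``$H_\infty$'' factor is a product of two two-path infinite-horizon expectations as you ultimately state, not a single four-path one.
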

\begin{proof}We will first carry out 

\noindent{\bf{Proof of \eqref{prop4eq1}:}} With $\mathscr L^{\ssup{\sigma-u}}_T(\cdot)$ defined in \eqref{eq:L}, we will show that 
\begin{equation}\label{prop4eq11}
\E[\mathscr L^{\sigma,u}_T(x)] = \int \dd z \rho(u, z-x) \E[\mathscr L_T^{\ssup{\sigma-u}}(z)] + R_T ,
\end{equation}
where
\[R_T = - \gamma^2 \int \dd z \rho(u,z\!-\!x)\left(\rho(2\sigma\!-\!u,x) -\rho(2\sigma\!-\!2u,z)\right) \E\left[\sZ_{0,T(\sigma\!-\!u)}(0)\sZ_{0,T(\sigma\!-\!u)}(\sqrt T z)\right],
\]
 and that
\begin{equation}\label{prop4eq12}
\sup_{z\in \rd, T>0} \E[|\mathscr L^{(\sigma- u)}_T(z)|]< \infty.
\end{equation}
By dominated convergence, Proposition \ref{prop2:th:h} and \eqref{prop4eq12} justify that the first term of the RHS of \eqref{prop4eq11} vanishes as $T\to\infty$. Then, the covariance computation in \eqref{eq:covM0} implies that the expectation in the summand of $R_T$ goes to $1$ for all $z\neq 0$. By dominated convergence and the Chapman-Kolmogorov property, we therefore obtain that $R_T$ also vanishes as $T\to \infty$, which in turn implies \eqref{prop4eq1}. 

To derive \eqref{prop4eq12}, we appeal to \eqref{eq:L} and \eqref{eq1:prop1:th:h}, so that 
$$
\begin{aligned}
\E[|\mathscr L^{\sigma,u}_T(z)|] &\leq \int_{\rd} \dd y \, V(y) E_{0,z\sqrt T}^{2(\sigma-u)T,y}\bigg[\mathrm e^{\beta^2 \int_0^{(\sigma-u)T} V(W_{2t}) \, \dd t}\bigg] \, T^{d/2} \rho(2(\sigma-u)T,y-z\sqrt T) \\
 &\qquad\qquad +\gamma^2\rho(2\sigma,z) E[\mathscr Z_{(\sigma-u)T}^2] \\
 &\leq C \int_{\rd} \dd y V(y) + \gamma^2 (4\pi\sigma)^{-d/2} \, \sup_T \E[\sZ_{(\sigma-u)T}^2] <\infty.
 \end{aligned}
$$
For the second upper bound in the above display, we used that in the first summand  the first expectation is uniformly bounded thanks to Lemma \ref{lemma2-claim1}, while 
$T^{d/2} \rho((2(\sigma-u)T, y-z\sqrt T)\leq C^\prime$ and for the second summand we used $\sup_T \E[\sZ_{(\sigma-u)T}^2] <\infty$ for $\beta\in (0,\beta_0)$ and moreover $\rho(2\sigma,z) \leq (4\pi\sigma)^{-d/2}$. Hence, \eqref{prop4eq12} is justified.

To prove \eqref{prop4eq11}, we note that (recall the definition of $\Phi_{A,B}(\cdot)$ in \eqref{PhiAB}),
 $$
 \begin{aligned}
 \E[\mathscr L^{\sigma,u}_T(x)]&= T^{d/2}\beta^2\big(E_{Tu,0}\otimes E_{0,\sqrt T x}\big)\bigg[\E\left[\Phi_{Tu,T\sigma}(W^{\ssup 1})\Phi_{0,T\sigma}(W^{\ssup 2})\right] \, V\big(W^{\ssup 1}_{T\sigma}- W^{\ssup 2}_{T\sigma}\big)\bigg] \\
&\qquad\qquad\qquad- \gamma^2 \rho(2\sigma-u,x) \E\left[\sZ_{Tu,T\sigma}(0) \sZ_{0,T\sigma}(\sqrt Tx)\right] \\
&=T^{d/2}\beta^2\big(E_{Tu,0}\otimes E_{0,\sqrt T x}\big)\bigg[\mathrm e^{\beta^2 \int_{Tu}^{T\sigma} V\big(W^{\ssup 1}_s-W^{\ssup 2}_s\big) \dd s}\,\, V\big(W^{\ssup 1}_{T\sigma}- W^{\ssup 2}_{T\sigma}\big)\bigg] \\
&\qquad\qquad\qquad- \gamma^2 \rho(2\sigma-u,x) \int \dd z \rho(u,z-x) \E\left[\sZ_{0,T(\sigma-u)}(0) \sZ_{0,T(\sigma-u)}(\sqrt Tz)\right] \end{aligned}
$$
where the second identity follows from covariance structure of the white noise, the Markov property \eqref{eq:markov} and the diffusive scaling of the heat kernel.
Thus, \eqref{prop4eq11} follows from the above display and the following computation:
\begin{align*}
&\big(E_{Tu,0}\otimes E_{0,\sqrt T x}\big)\bigg[\mathrm e^{\beta^2 \int_{Tu}^{T\sigma} V\big(W^{\ssup 1}_s-W^{\ssup 2}_s\big) \dd s}\,\, V\big(W^{\ssup 1}_{T\sigma}- W^{\ssup 2}_{T\sigma}\big)\bigg]\\
& = \int \rho(u,z-x) \left(E_{0,0}\otimes E_{0,\sqrt T z}\right) \left[\mathrm e^{\beta^2 \int_{0}^{T(\sigma-u)} V\big(W^{\ssup 1}_s-W^{\ssup 2}_s\big) \dd s}\, V\left(W^{\ssup 1}_{T(\sigma-u)}- W^{\ssup 2}_{T(\sigma-u)}\right)  \right].
\end{align*}

We will now provide the proof of 

\noindent{\bf{Proof of \eqref{prop4eq2}:}} 
Note that 
\begin{equation}\label{prop4eq3}
\begin{aligned}
&\E[\mathscr L^{\sigma,u}_T(x)^2]
= \left(E^{\otimes 2}_{(Tu,0);(0,\sqrt T x)} \otimes E^{\otimes 2}_{(Tu,0);(0,\sqrt T x)}\right)\bigg[\prod_{i\in\{1,3\}}\bigg\{\mathrm e^{\beta^2\int_{Tu}^{T\sigma} V(W^{\ssup i}_s-W^{\ssup{i+1}}_s)\dd s}\\
&\qquad\qquad\qquad\qquad\qquad\qquad\qquad\qquad\times \big[T^{d/2}V\big(W^{\ssup i}_{T\sigma}-W^{\ssup{i+1}}_{T\sigma}\big) 
- \rho(2\sigma-u,x)\gamma^2\big]\bigg\} \\
&\qquad \qquad\qquad\qquad\qquad\times \mathrm{exp}\bigg\{\beta^2 \sum_{\heap{i<j}{(i,j)\ne (2,4), (1,3)}} \int_{Tu}^{T\sigma} V(W^{\ssup i}_s- W^{\ssup{i+1}}_s) \dd s\bigg\} \,\, \mathrm e^{\beta^2\int_0^{T\sigma} V(W^{\ssup 2}_s- W^{\ssup 4}_s)\, \dd s}\bigg]
\end{aligned}
\end{equation}
Applying the same arguments for the proof of Proposition \ref{prop-claim3} once more, 
we now derive \eqref{prop4eq2}. 
\end{proof}


\noindent{\bf{Proof of Theorem \ref{th:h}:}} 
 To derive Theorem \ref{th:h}, 
 by property \eqref{eq:couplingProporty} for general vector length, it is enough to show that for all $t>0$ and all $u_1,\dots,u_n\in [0,t)$, $x_1,\dots,x_n\in\mathbb{R}^d$,
\begin{align}
&\left(\log \sZ_{T u_1,\infty}(\sqrt T x_1)-\log \sZ_{ T u_1, T t}(\sqrt T x_1)\,,\dots,\log \sZ_{T u_n,\infty}(\sqrt T x_n)-\log \sZ_{ T u_n, T t}(\sqrt T x_n)\right)\nn\\
&\qquad  \cvlaw \left(\mathscr H(t-u_1,x_1),\dots, \mathscr H(t-u_n,x_n)\right),\quad \text{as } T\to\infty.\label{eq:JointCVlogZ}
\end{align}
Again, we define 
\begin{align*}
N_{U,T}(X) = \b \int_U^{T} \int_{\mathbb{R}^d} E_{X,U,\sigma}\left[ \phi(y-W_\sigma)\right] \xi(\sigma,y)\dd \sigma \dd y,
\end{align*}
where $E_{X,A,B}$ denotes the expectation with respect to the polymer measure 
$$
P_{X,A,B}(\cdot)= P_{X,\beta,A,B}(\cdot)= \frac 1 {\sZ_{A,B}(X)} \, E_{W_U=X}\big[\Phi_{A,B}(W)\,\mathbf 1_\cdot\big]
$$
conditional on the Brownian path to start at $X\in \rd$ at time $U$, and $\Phi_{A,B}(\cdot)$ is defined in \eqref{PhiAB}. Then,
\begin{equation} \label{eq:decompIto}
\log \sZ_{U,T}(x) = N_{U,T}(x) - \frac{1}{2}\langle N_{U,\cdot}(x) \rangle_T,
\end{equation}
where the bracket satisfies
\begin{equation} \label{eq:exprBracketN2times}
\frac{\dd}{\dd t}\langle N_{U,\cdot}(Y),N_{0,\cdot}(X)\rangle_{T}= E_{U,Y,T}\otimes E_{0,X,T} \left[ V\left(W^{(1)}_{T}- W^{(2)}_{T} \right)\right],
\end{equation}
so that the following result holds:
\begin{lemma} \label{prop:twopointscorr}
Let $t\geq 0$ and define, for $u\leq t \leq \tau$, the martingales
\[(N^{\ssup{T,t}}(u,x)):\tau\to T^{(d-2)/4}\left(N_{Tu,{T\tau}}(\sqrt{T} x)-N_{Tu,Tt}(\sqrt{T}x)\right).\] Then, for all $u_1,\dots,u_n\in[0,t)$, $x_1,\dots,x_n\in \rd$,
\begin{itemize}
\item  As $T\to\infty$,
\begin{equation}\label{it1}
\langle N^{\ssup{T,t}}(u_1,x_1), N^{\ssup{T,t}}(u_2,x_2) \rangle_\tau \overset{L^1}{\longrightarrow} \gamma^2 \int_t^\tau \rho(2\sigma - (u_1+u_2),x_1-x_2) \dd \sigma.
\end{equation}
\item Moreover,
\begin{equation}\label{it2}
 \mathrm{Cov}\big(\mathscr H(t-u_1,x_1), \mathscr H(t-u_2,x_2)\big)= \gamma^2\int_t^\infty \rho(2\sigma -(u_1+u_2),x_1-x_2) \dd \sigma.
\end{equation}
\end{itemize}
\end{lemma}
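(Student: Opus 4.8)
\emph{Plan.} The two assertions are of very different nature: \eqref{it2} is a short Gaussian computation, while \eqref{it1} will be derived from Proposition~\ref{prop4} by space--time translation invariance of the white noise together with a dominated--convergence argument. I would prove \eqref{it2} first, then \eqref{it1}.

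\emph{Proof of \eqref{it2}.} Starting from the representation $\mathscr H(t,x)=\gamma(\beta)\int_0^\infty\int_{\rd}\rho(\sigma+t,x-z)\,\xi(\sigma,z)\,\dd\sigma\,\dd z$ and the covariance structure of $\xi$,
\[
\mathrm{Cov}\big(\mathscr H(t-u_1,x_1),\mathscr H(t-u_2,x_2)\big)=\gamma^2(\beta)\int_0^\infty\Big(\int_{\rd}\rho(\sigma+t-u_1,x_1-z)\,\rho(\sigma+t-u_2,x_2-z)\,\dd z\Big)\dd\sigma .
\]
The inner integral collapses, by the semigroup (Chapman--Kolmogorov) property of $\rho$, to $\rho\big(2\sigma+2t-(u_1+u_2),x_1-x_2\big)$, and the substitution $\sigma\mapsto\sigma-t$ turns the right--hand side into $\gamma^2(\beta)\int_t^\infty\rho\big(2\sigma-(u_1+u_2),x_1-x_2\big)\,\dd\sigma$, which is \eqref{it2}. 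This integral is finite since $d\ge3$ and, because $u_1,u_2<t$, the time argument $2\sigma-(u_1+u_2)$ stays bounded below by a positive constant on $[t,\infty)$. (Consistently, letting $\tau\to\infty$ in \eqref{it1} reproduces exactly this quantity.)

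\emph{Proof of \eqref{it1}.} Since $N_{Tu_1,\cdot}(\sqrt T x_1)$ and $N_{Tu_2,\cdot}(\sqrt T x_2)$ are stochastic integrals against the same white noise, formula \eqref{eq:exprBracketN2times} and the change of variables $s=\sigma T$ give
\[
\big\langle N^{\ssup{T,t}}(u_1,x_1),N^{\ssup{T,t}}(u_2,x_2)\big\rangle_\tau=\int_t^\tau T^{d/2}\Big(\frac{\dd}{\dd t}\big\langle N_{Tu_1,\cdot}(\sqrt T x_1),\,N_{Tu_2,\cdot}(\sqrt T x_2)\big\rangle\Big)_{\sigma T}\,\dd\sigma .
\]
We may assume $u_1\ge u_2$. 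Shifting the noise by $\theta_{Tu_2,\sqrt T x_2}$ (recall \eqref{eq:markov}) and using Brownian scaling, the $\sigma$--integrand can be rewritten, with $\mathscr L^{\sigma,u}_T(\cdot)$ as in \eqref{eq:L1} (the extra spatial shift of the base point does not affect any estimate in its analysis), as
\[
T^{d/2}\Big(\frac{\dd}{\dd t}\big\langle N_{Tu_1,\cdot}(\sqrt T x_1),\,N_{Tu_2,\cdot}(\sqrt T x_2)\big\rangle\Big)_{\sigma T}=\gamma^2(\beta)\,\rho\big(2\sigma-(u_1+u_2),x_1-x_2\big)+\frac{\mathscr L^{\sigma-u_2,\,u_1-u_2}_T(x_1-x_2)}{\sZ_{Tu_1,T\sigma}(\sqrt T x_1)\,\sZ_{Tu_2,T\sigma}(\sqrt T x_2)} .
\]
Consequently the $L^1$ distance between $\big\langle N^{\ssup{T,t}}(u_1,x_1),N^{\ssup{T,t}}(u_2,x_2)\big\rangle_\tau$ and $\gamma^2(\beta)\int_t^\tau\rho\big(2\sigma-(u_1+u_2),x_1-x_2\big)\,\dd\sigma$ is bounded by $\int_t^\tau\big\|\mathscr L^{\sigma-u_2,\,u_1-u_2}_T(x_1-x_2)\big\|_2\,\big\|\sZ_{Tu_1,T\sigma}^{-1}\sZ_{Tu_2,T\sigma}^{-1}\big\|_2\,\dd\sigma$. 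Here the negative--moment factor is bounded uniformly in $T\ge1$ and $\sigma\ge t$ by Cauchy--Schwarz and \eqref{eq:negative}; for each fixed $\sigma$ one has $\big\|\mathscr L^{\sigma-u_2,\,u_1-u_2}_T(x_1-x_2)\big\|_2\to0$ by Proposition~\ref{prop4}; and the uniform bound $\|\mathscr L^{\sigma,u}_T(\cdot)\|_2\le C\sigma^{-d/2}$, obtained by the same four--Brownian--bridge second--moment computation that yields \eqref{eq:UniformL2Bound} and \eqref{eq:L2bound2pointsDerivative}, provides an integrable domination. Dominated convergence then gives \eqref{it1}.

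\emph{Main obstacle.} The genuinely hard estimates have already been done: Proposition~\ref{prop4} and the bridge estimates of Proposition~\ref{prop-claim3}. For the present lemma the only point requiring care is that those estimates, as well as the uniform $L^2$ bound \eqref{eq:UniformL2Bound}, hold uniformly in the space--time base points $(u_i,x_i)$, so that, together with the uniform negative moments of $\sZ$, they assemble into a single $\sigma$--integrable dominating function valid for all large $T$. Assertion \eqref{it2} is an elementary self--contained computation.
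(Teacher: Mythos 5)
Your proposal is correct and follows essentially the same route as the paper: the paper reduces to the case $u_2=0$ via the law equality $\bigl(N^{\ssup{T,t}}_\tau(u_1,x_1), N^{\ssup{T,t}}_\tau(u_2,x_2)\bigr)\eqlaw\bigl(N^{\ssup{T,t-u_2}}_{\tau-u_2}(u_1-u_2,x_1), N^{\ssup{T,t-u_2}}_{\tau-u_2}(0,x_2)\bigr)$ (your noise shift $\theta_{Tu_2,\sqrt T x_2}$ is the same step), then invokes \eqref{eq:exprBracketN2times}, \eqref{eq:L1} and Proposition~\ref{prop4} ``in the same way as \eqref{eq4.8} was proved'' — exactly the Cauchy--Schwarz / negative-moments / dominated-convergence chain you spell out, with the $C\sigma^{-d/2}$ domination coming from the four-bridge second-moment bound. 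Your derivation of \eqref{it2} via Chapman--Kolmogorov and the substitution $\sigma\mapsto\sigma-t$ is the paper's ``simple covariance computation.''
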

\begin{proof}
First observe that for $u_2\leq u_1$,
\[\left(N^{\ssup{T,t}}_\tau(u_1,x_1), N^{\ssup{T,t}}_\tau(u_2,x_2)\right) \eqlaw \left(N^{\ssup{T,t-u_2}}_{\tau-u_2}(u_1-u_2,x_1), N^{\ssup{T,t-u_2}}_{\tau-u_2}(0,x_2)\right),\]
so that it suffices to prove convergence \eqref{it1} when $u_2=0$, and this is obtained from \eqref{eq:exprBracketN2times}, \eqref{eq:L1} and \eqref{prop4eq2} in the same way as \eqref{eq4.8} was proved.  \eqref{it2} is obtained by a simple covariance computation.
\end{proof}
Now, convergence \eqref{eq:JointCVlogZ} follows as in the proof from Section \ref{subsec:secondProof}, that is by appealing to the multi-dimensional CLT for martingales which will give that for all $t>0$, the martingales $N^{(T,t)}(u,x)$ will converge jointly in $u$ and $x$ to a Gaussian processes with covariance structure given by \eqref{it1}, then neglecting the bracket part in \eqref{eq:decompIto} in the scaling limit and finally letting $\tau\to\infty$ (observe that the RHS of \eqref{it2} equals the RHS of \eqref{it1} when $\tau=\infty$).


\subsection{Proof of Theorem \ref{th:CVagainstTestFun}.}\label{subsec:CVagainstTest}
Recall that we need to show that for all $t>0$,
\begin{equation}
\int_{\mathbb{R}^d} \mathscr H_\e(t,x) \varphi(x) \rmd x \cvlaw \int_{\mathbb{R}^d} \mathscr H(t,x) \varphi(x) \rmd x\;,
\end{equation}
as $T\to \infty$, where the convergence holds jointly for finitely many test functions $\varphi\in\mathcal{C}_c^\infty$.
Once again, it suffices to prove that
\begin{equation} \label{eq:CV_againstTest}
\int_{\mathbb{R}^d}  T^{(d-2)/4} \left(\log \sZ_\infty(\sqrt T x) - \log \sZ_{tT}(\sqrt T x)\right) \varphi(x) \rmd x \cvlaw \int_{\mathbb{R}^d} \mathscr H(t,x) \varphi(x) \rmd x,
\end{equation}
holds jointly for finitely many $\varphi$'s.
As in the proof of Proposition \ref{prop0:th:h}, we decompose $\log \sZ_T(x)$ in $N_T(x)- \frac 12 \langle N(x)\rangle_T$, so that convergence \eqref{eq:CV_againstTest} reduces to studying the joint convergence as $T\to\infty$ of the following family of martingales:
\[
(N^{\ssup T}(\varphi)):\tau\to \int_{\mathbb{R}^d} N^{\ssup T}_\tau(x) \varphi(x) \dd x,
\]
where $N^{\ssup T}(x)$ is defined by \eqref{eq:def_RescaledMartingale}.
For all test functions $\varphi_1$ and $\varphi_2$ in $\mathcal{C}_c^\infty$, we compute the cross-bracket and find that
\begin{align*}
\langle N^{\ssup T}(\varphi_1),N^{\ssup T}(\varphi_2)\rangle_\tau & =
\textcolor{black}{\int_{\mathbb{R}^d}}\int_{\mathbb{R}^d} \langle N^{\ssup T}(x) , N^{\ssup T}(y) \rangle_\tau \, \varphi_1(x)\varphi_2(y) \dd x \dd y\\
& \to \textcolor{black}{\int_{\mathbb{R}^d}\int_{\mathbb{R}^d}} \left(\gamma^2 \int_{t}^{\tau}   \rho(2\sigma,y-x)  \dd\sigma \right) \varphi_1(x)\varphi_2(y)\textcolor{black}{\dd x \dd y}.
\end{align*}
where the convergence is in $L^1$-norm as $T\to\infty$ and comes from uniform convergence \eqref{eq4.8}.
The proof is again concluded by the multidimensional functional central limit for martingales (\cite[Theorem 3.11]{JS87}) and the observation that the last integral in the above display, for $\tau=\infty$, is the covariance function of the joint Gaussian variables $\int_{\mathbb{R}^d} \mathscr H(t,x) \varphi_1(x) \rmd x$ and $\int_{\mathbb{R}^d} \mathscr H(t,x) \varphi_2(x) \rmd x$.

\section{Proof of Theorem \ref{thm:stationary}.}\label{proof:stationary}
Recall that we need to show that, for any $\varphi\in \mathcal C^\infty_c(\rd)$, 
$\int_{\rd} \varphi(x) \e^{1-d/2} [  \hh(t,x)\! -\! \E[  \hh(t,x)] \;\dd x \cvlaw \int_{\rd} \dd x \varphi(x) \HH(t,x)$
as $\e\to 0$, with $\HH$ being the stationary solution of the additive noise equation \eqref{StatEW} with GFF marginal distribution $\mathscr H(0,x)$ defined in Theorem \ref{th:h}. 
With $\mathscr H_\e(t,x)=\e^{1-d/2}[h_\e(t,x)- \hh(t,x)]$ as in \eqref{eq:Heps} and $\overline{\mathscr H}_\e(t,x): = \e^{1-d/2}[h_\e(t,x)- \E[h_\e(t,x)] $
we can rewrite 
\begin{equation}\label{eq2:stationary}
\begin{aligned}
\int_{\rd} \varphi(x) \e^{1-d/2} [  \hh(t,x)\! -\! \E[  \hh (t,x)] \;\dd x & =
\int_{\rd} \varphi(x)   \mathscr H_\e(t,x) \;\dd x + \int_{\rd} \varphi(x)   \overline{\mathscr H}_\e(t,x) \;\dd x \\
&\qquad- \int_{\rd} \varphi(x) \E[ \mathscr H_\e(t,x)] \;\dd x. 
\end{aligned}
\end{equation}
By Theorem \ref{th:h} and Theorem \ref{th:CVagainstTestFun}, combined with uniform integrability stemming from Proposition \ref{prop:LpBoundLogZ}, we have $\lim_{\e \to 0}\E[  \mathscr H_\e] = 0$ and the third term on the (r.h.s) above vanishes. Then Theorem \ref{th:CVagainstTestFun} and \eqref{EW} imply that the (l.h.s) above converges in law to $\int_{\rd} \varphi(x)  [ \mathscr H(t,x) + \overline  {\mathscr H}(t,x) ] \; \dd x$ with $ \mathscr H$ solving the deterministic heat equation 
\eqref{eq:HE} with the GFF initial condition $\mathscr H(0,x)$
 and $\overline  {\mathscr H}$ solving the additive-noise equation \eqref{EW} with flat initial condition, provided that we have joint convergence of the first and second integral on the right hand side in the above display 
 with independent limits $\int_{\rd} \dd x \varphi(x) \mathscr H(t,x)$ and $\int_{\rd} \dd x \varphi(x) \overline{\mathscr H}(t,x)$, respectively. 
 The independent sum $\mathscr H(t,x) + \overline  {\mathscr H}(t,x)$ is further easily shown to be a stationary solution of the additive-noise equation in \eqref{StatEW} with GFF initial condition $\mathscr H(0,x)$.
 
 Thus, it remains to show the joint convergence of $\int_{\rd} \varphi(x)   \mathscr H_\e(t,x) \;\dd x$ and $\int_{\rd} \varphi(x)   \overline{\mathscr H}_\e(t,x) \;\dd x$ 
 with independent limits in \eqref{eq2:stationary}, for which we proceed as follows. 
We set $T=\e^{-2}t$ and assume w.l.o.g. that $t=1$. By \eqref{hZ} and \eqref{def:stationary} and any $u, v\in \mathbb R$, we have the equality (recall the definition 
of the space-time shift $\theta_{t,x}$ from \eqref{eq:markov}) 
\begin{align}
&\mathbb E \bigg[\exp\bigg\{\mathbf iu \int \varphi (x)   {\mathscr H}_\e(t,x) dx + \mathbf iv \int \varphi (x) \overline  {\mathscr H}_\e(t,x) \dd x \bigg\} \bigg] \label{eq3:stationary}\\
&= \mathbb E \bigg[\exp\bigg\{\mathbf i u \int \varphi (x)  T^{\frac{d-2}4}  \log \frac{\mathscr Z_T(x \sqrt T)}{ \mathscr Z_\8(x \sqrt T)} +
\mathbf  i v \int \varphi (x) T^{\frac{d-2}4}  \big[\log \mathscr Z_T(x \sqrt T)- \mathbb E \log \mathscr Z_T(x \sqrt T)\big]
dx \bigg\} \bigg] \nonumber\\
&= \mathbb E \bigg[ \exp\bigg\{-\mathbf iu \int \varphi (x)  T^{\frac{d-2}4} \bigg[\bigg( \log \frac{\mathscr Z_\8(x \sqrt T)}{ \mathscr Z_T(x \sqrt T)} -\frac{\mathscr Z_\8(x \sqrt T)}{ \mathscr Z_T(x \sqrt T)}  +1\bigg) +  \bigg( \frac{\mathscr Z_\8(x \sqrt T)}{ \mathscr Z_T(x \sqrt T)}  - E_{x\sqrt T} \big( \mathscr Z_\8 \circ \theta_{T,W(T)}  \big) \bigg)\bigg] \dd x \bigg\}  \nonumber
\\
&\qquad\qquad \times 
\exp\bigg\{-\mathbf iu \int \varphi(x) T^{\frac{d-2}4} E_{x\sqrt T} \big( \mathscr Z_\8 \circ \theta_{T,W(T)}  - 1 \big) \dd x\bigg\} \nonumber
\\
&\qquad\qquad  \times \exp\bigg\{
 \mathbf i v \int \varphi (x) T^{\frac{d-2}4}  \big[\log \mathscr Z_T(x \sqrt T)- \mathbb E \log \mathscr Z_T(x \sqrt T)\big] \dd x \bigg\} \bigg] \nonumber
 \\   
&= \mathbb E \bigg[  \exp\bigg\{-\mathbf i u \int \varphi(x) T^{\frac{d-2}4} E_{x \sqrt T} \big( \mathscr Z_\8 \circ \theta_{T,W(T)} -1  \big) \dd x 
\bigg\} \bigg] \nonumber \\
&\qquad\qquad  \times \exp\bigg\{ \mathbf i v \int T^{\frac{d-2}4}  \varphi(x) \big[\log \mathscr Z_T(x \sqrt T)- \mathbb E \log \mathscr Z_T(x \sqrt T)\big]  \dd x \bigg\} \bigg ] 
+ o(1) \nonumber 
 \\   
&= \mathbb E \bigg[\exp\bigg\{-\mathbf i u \int \varphi(x) T^{\frac{d-2}4} E_{x \sqrt T} \big( \mathscr Z_\8 \circ \theta_{T,W(T)} -1  \big) \dd x \bigg\}    \bigg]  \nonumber  \\
&\qquad\qquad \times  \mathbb E \bigg[  \exp\bigg\{ \mathbf i v \int \varphi (x) T^{\frac{d-2}4}  
\big[\log \mathscr Z_T(x \sqrt T)- \mathbb E \log \mathscr Z_T(x \sqrt T)\big] \dd x \bigg\} \bigg] + o(1) \label{eq4:stationary}
\end{align}
with
 $o(1) \to 0$ as $T \to \8$, once we prove that 
\begin{eqnarray} \label{eq:f17f1}
\int \varphi (x)  T^{\frac{d-2}4} \bigg( \log \frac{\mathscr Z_\8(x \sqrt T)}{ \mathscr Z_T(x \sqrt T)} -\frac{\mathscr Z_\8(x \sqrt T)}{ \mathscr Z_T(x \sqrt T)}  +1\bigg)  \dd x\to 0,\quad\mbox{and}
\\ \label{eq:f17f2}
 \int \varphi (x)  T^{\frac{d-2}4} \bigg[ \frac{\mathscr Z_\8(x \sqrt T)}{ \mathscr Z_T(x \sqrt T)}  - E_{x \sqrt T} \big( \mathscr Z_\8 \circ \theta_{T,W(T)}  \big) \bigg] \dd x  \to 0
 \end{eqnarray}
 in probability. Then in the equality of \eqref{eq3:stationary} and \eqref{eq4:stationary} above, we can choose $v=0, u\in \R$ and $u=0,v\in \R$ which  
 imply the desired asymptotic factorization as $\e\downarrow 0$ of the two expectations in \eqref{eq3:stationary}. 
 It thus remains to prove \eqref{eq:f17f1}-\eqref{eq:f17f2}, whose validity we now show in the $L^1(\mathbb P)$-norm.
To prove \eqref{eq:f17f1}  with $\varphi$ integrable, we can use that
\begin{eqnarray} \label{eq:f17f4}
\mathbb E \bigg |\int \varphi (x)  T^{\frac{d-2}4} \bigg( \log \frac{\mathscr Z_\8(x \sqrt T)}{ \mathscr Z_T(x \sqrt T)} -\frac{\mathscr Z_\8(x \sqrt T)}{ \mathscr Z_T(x \sqrt T)}  +1\bigg)  \dd x \bigg|\leq
a_T(0) \int_{\rd} |\varphi(x) | \dd x 
\end{eqnarray}
where
\begin{equation} \label{eq:f17f3} 
a_T(x):=
\mathbb E \bigg |T^{\frac{d-2}4}  \bigg( \log \frac{\mathscr Z_\8(x)}{ \mathscr Z_T(x)} -\frac{\mathscr Z_\8(x)}{ \mathscr Z_T(x)}  +1\bigg ) \bigg |=a_T(0)
\end{equation}
Using  Theorem \ref{th:CVmarginalZ} together with \eqref{eq:lpbound} and finiteness of all negative moments of $\mathscr Z_\8$ and 
since $\mathscr Z_T$ has smaller negative moments, we see that $a_T(0) \to 0$ as $T \to \8$ (the convergence holds in $L^p(\mathbb P)$ for all $p <2$) which proves \eqref{eq:f17f1}.  We follow a similar argument for proving \eqref{eq:f17f2} by estimating 
\begin{equation}\nn
\mathbb E \bigg |  \int \varphi (x)  T^{\frac{d-2}4} \bigg[ \frac{\mathscr Z_\8(x \sqrt T)}{ \mathscr Z_T(x \sqrt T)}  - E_{x \sqrt T} \big( \mathscr Z_\8 \circ \theta_{T,W(T)}  \big) \bigg] \dd x \bigg |
\leq b_T(0) \times \int |\varphi| \dd x
\end{equation}
where
\begin{equation} \nn
b_T(x):= \mathbb E \bigg | T^{\frac{d-2}4} \bigg[\frac{\mathscr Z_\8(x \sqrt T)}{ \mathscr Z_T(x \sqrt T)}  - E_{x \sqrt T} \big( \mathscr Z_\8 \circ \theta_{T,W(T)}  \big) \bigg] \bigg | \;.
\end{equation}
As above,  we show that $b_T(0) \to 0$ as $T \to \8$, by bounding the negative moments of $\mathscr Z_T$, and proving that the following norm vanishes :
(recall \eqref{eq:markov})
\begin{eqnarray} \label{eq:f17f3} 
c_T&=& T^{\frac{d-2}2} \mathbb E \big[ {\mathscr Z_\8(0)} - { \mathscr Z_T(0)} E_{0} \big( \mathscr Z_\8 \circ \theta_{T,W(T)}  \big) \big]^2  \\ \nn
&=& T^{\frac{d-2}2} \mathbb E \left[ E_0 \big( (\Phi_T(W)-\mathscr Z_T(0)) 
\mathscr Z_\8 \circ \theta_{T,W_T} \right]^2  \\ \nn
&=& T^{\frac{d-2}2} E_0^{\otimes 2} \mathbb E \left[  \big(\Phi_T(W^{\ssup 1})-\mathscr Z_T(0)\big)  \big(\Phi_T(W^{\ssup 2})-\mathscr Z_T(0)\big) 
\mathscr Z_\8 \circ \theta_{T,W_T^{\ssup 1}} \mathscr Z_\8 \circ \theta_{T,W_T^{\ssup 2}} \right] \\ \nn
&=& T^{\frac{d-2}2} E_0^{\otimes 2} \mathbb E \left[  \big(\Phi_T(W^{\ssup 1})-\mathscr Z_T(0)\big)  \big(\Phi_T(W^{\ssup 2})-\mathscr Z_T(0)\big) 
\big( \mathscr Z_\8 \circ \theta_{T,W_T^{\ssup 1}} -1\big) \big(\mathscr Z_\8 \circ \theta_{T,W_T^{\ssup 2}} -1\big) \right]\\ \nn
&\stackrel{}{=}& 
T^{\frac{d-2}2} E_0^{\otimes 4}  \left[  \left( \mathrm e^{\frac{\beta^2}{2} \int_0^T V(W_s^{\ssup 1}-W_s^{\ssup 2})\dd s} -
\mathrm e^{\frac{\beta^2}{2} \int_0^T V(W_s^{\ssup 1}-W_s^{\ssup 4})\dd s} - \mathrm e^{\frac{\beta^2}{2} \int_0^T V(W_s^2-W_s^3)ds} +
\mathrm e^{\frac{\beta^2}{2} \int_0^T V(W_s^{\ssup 3}-W_s^{\ssup 4})\dd s} \right)  \right.  \\ \nn && \qquad \times  f(W_T^{\ssup 1}-W_T^{\ssup 2}) \Big]\quad
{\rm with\; }  f(w)= {\rm Cov}(\mathscr Z_\8(w), \mathscr Z_\8(0))= \frac{\mathfrak C_1}{|w|^{d-2} } \quad {\rm for\ } |w| >1, {\rm \; recall \ } \eqref{eq:covM0}.
\end{eqnarray}
Now, we note that under $P_0^{\otimes 4}$, as $T \to \8$, the couple
$$
\big (W^{\ssup 1}_s-W^{\ssup 2}_s, W^{\ssup 1}_s-W^{\ssup 4}_s, W^{\ssup 2}_s-W^{\ssup 3}_s, W^{\ssup 3}_s-W^{\ssup 4}_s\big )_{s \in [0,T]} \quad {\rm and} \quad \frac{W^{\ssup 1}_T-W^{\ssup 2}_T}{\sqrt T}  
$$
converge in law to the product measure of the law of 
$
\big (W^{\ssup 1}_s-W^{\ssup 2}_s, W^{\ssup 1}_s-W^{\ssup 4}_s, W^{\ssup 2}_s-W^{\ssup 3}_s, W^{\ssup 3}_s-W^{\ssup 4}_s\big )_{s \geq 0}
$
and the Gaussian ${\mathcal N}(0, \sqrt 2 I_d)$, which implies that
$$
\lim_{T \to \8} c_T = 2 \mathfrak C_2 E_0^{\otimes 3} \big( \mathrm e^{\frac{\beta^2}{2} \int_0^\8 V(W_s^{\ssup 1}-W_s^{\ssup 2})\dd s} -
\mathrm e^{\frac{\beta^2}{2} \int_0^\8 V(W_s^{\ssup 1}-W_s^{\ssup 2})\dd s} \big) =0. 
$$
with $\mathfrak C_2$ defined underneath \eqref{eq:asdifnorm}. 
This proves the desired joint convergence of $\int_{\rd} \varphi(x)   \mathscr H_\e(t,x) \;\dd x$ and $\int_{\rd} \varphi(x)   \overline{\mathscr H}_\e(t,x) \;\dd x$ 
 with independent limits in \eqref{eq2:stationary}. \qed

\section{Proof of Proposition \ref{prop:tightness}}\label{sec-tightness}
It is enough to show that  $\{\mathscr H_\e(t,x)\}_{\e>0, x\in \rd}$ forms a tight family, since the convergence part of the proposition will follow from tightness and uniqueness of the limit established in Theorem \ref{th:CVagainstTestFun}. To prove tightness, we appeal to the following tightness criterion which was recently established in \cite{FM17} and was shown to hold in a function space $\mathcal C^\alpha_{\mathrm{loc}}(\rd)$ of distributions with ``local $\alpha$-H\"older regularity" for $\alpha\in \R$. Loosely speaking, this means 
that a  distribution $f\in \mathcal C^\alpha_{\mathrm{loc}}(\rd)$ if and only if 
for any smooth test function $\varphi\in \mathcal C^\infty_c(\rd)$ with compact support and $x\in \rd$, $\lambda^{-d}\langle f,\varphi(\lambda^{-1}(\cdot- x))\rangle \leq C \lambda^{\alpha}$ for $\lambda\sim 0$. In \cite[Theorem 1.1]{FM17})
it was shown there that there is a finite family of smooth and compactly supported functions $\phi$ and $(\psi^{\ssup i})_{1\leq i \leq 2^d}$ so that the following condition holds: 
\begin{theorem}
Let $(f_\e)_\e$ be a family of random linear forms on $C^r_c(\rd)$, let $p\in [1,\infty)$ and $\beta\in \R$ such that $|\beta| < r$. Suppose there is an absolute constant $C<\infty$ such that the following two conditions hold:
\begin{equation}\label{cond1}
\sup_{\heap{x\in \rd}{\e>0}} \E\big[\big|\big\langle f_\e, \phi(\cdot- x)\big\rangle\big|^p\big]^{1/p} \leq C,
\end{equation}
and for all $i=1,\dots,2^d-1$ and $n\in \N$, 
\begin{equation}\label{cond2}
\sup_{\heap{x\in \rd}{\e>0}} \E\bigg[\bigg|\bigg\langle f_\e, \psi^{\ssup i}\bigg(\frac{\cdot- x}{2^{-n}}\bigg)\bigg\rangle\bigg|^p\bigg]^{1/p} \leq C 2^{-nd} \, 2^{-n\beta}.
\end{equation}
Then the family $(f_\e)_\e$ is tight in $\mathcal C^\alpha_{\mathrm{loc}}(\rd)$ for every $\alpha < \beta- \frac d p$.
\end{theorem}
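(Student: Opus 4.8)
This is exactly the tightness criterion of Furlan and Mourrat, and the plan is to deduce it from the wavelet characterization of the local Besov--H\"older space $\mathcal C^\alpha_{\mathrm{loc}}(\rd)$ together with a Rellich-type compact embedding and an elementary moment estimate. I would fix once and for all a compactly supported Daubechies wavelet system on $\rd$: a scaling function $\phi\in C^r_c(\rd)$ and wavelets $\psi^{\ssup 1},\dots,\psi^{\ssup{2^d-1}}\in C^r_c(\rd)$, all supported in one common ball, with $r$ as in the statement, such that $\{\phi(\cdot-k):k\in\zd\}\cup\{2^{nd/2}\psi^{\ssup i}(2^n\cdot-k):n\ge 0,\ k\in\zd,\ 1\le i\le 2^d-1\}$ is an orthonormal basis of $L^2(\rd)$; these are precisely the functions entering \eqref{cond1}--\eqref{cond2}. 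Since $\psi^{\ssup i}((\cdot-x)/2^{-n})=\psi^{\ssup i}(2^n(\cdot-x))=2^{-nd/2}\big(2^{nd/2}\psi^{\ssup i}(2^n(\cdot-x))\big)$, hypothesis \eqref{cond2} says exactly that each $L^2$-normalized wavelet coefficient of $f_\e$ has $p$-th moment at most $C\,2^{-n(\beta+d/2)}$, uniformly over the continuum of base points $x\in\rd$ (in particular over all dyadic translates $x=k2^{-n}$), while \eqref{cond1} bounds the coarse-scale coefficients.

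The first and main step is the localized wavelet characterization of $\mathcal C^{\beta'}_{\mathrm{loc}}(\rd)$ for any $|\beta'|<r$: a distribution $f$ belongs to this space if and only if, for every ball $B=B(0,m)$,
\[
\|f\|_{\beta',B}:=\sup_{k}\big|\langle f,\phi(\cdot-k)\rangle\big|+\sup_{n\ge 0}\ \sup_{1\le i\le 2^d-1}\ 2^{n(\beta'+\frac d2)}\,\sup_{k}\big|\langle f,2^{nd/2}\psi^{\ssup i}(2^n\cdot-k)\rangle\big|<\infty,
\]
the suprema over $k$ being restricted to translates whose support meets $B$, and this quantity is comparable to the intrinsic seminorm of $\mathcal C^{\beta'}$ over a slightly enlarged ball. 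Establishing this --- via the reconstruction formula $f=\sum\langle f,\phi_k\rangle\phi_k+\sum\langle f,\psi^{\ssup i}_{n,k}\rangle\psi^{\ssup i}_{n,k}$ and the classical estimates relating rescaled-test-function pairings to Daubechies coefficients --- is where the hypothesis $|\beta|<r$ is used, since it guarantees that these pairings are well defined for the a priori merely distributional $f_\e$ and that the series converges in the right sense.

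Granting this, fix $\alpha<\beta-\frac dp$ and choose $\beta'\in(\alpha,\beta-\frac dp)$. For a fixed ball $B$ there are finitely many translates $k$ with $\mathrm{supp}\,\phi(\cdot-k)\cap B\ne\emptyset$, and only $O(2^{nd})$ translates $k$ with $\mathrm{supp}\,\psi^{\ssup i}(2^n\cdot-k)\cap B\ne\emptyset$; hence, using $\E[\sup_j a_j^p]\le\sum_j\E[a_j^p]$ together with \eqref{cond1} and the $L^2$-normalized form of \eqref{cond2}, one gets for each $n\ge 0$
\[
\sup_\e\E\Big[2^{np(\beta'+\frac d2)}\sup_{i,\,k}\big|\langle f_\e,2^{nd/2}\psi^{\ssup i}(2^n\cdot-k)\rangle\big|^p\Big]\le C'\,2^{np(\beta'+\frac d2)}\,2^{nd}\,2^{-np(\beta+\frac d2)}=C'\,2^{np(\beta'-\beta+\frac dp)},
\]
and since $\beta'-\beta+\frac dp<0$ this is summable over $n\ge 0$. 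Adding the finite contribution of the coarse-scale coefficients yields $\sup_\e\E\big[\|f_\e\|_{\beta',B}^p\big]<\infty$ for every ball $B$.

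Finally, the embedding $\mathcal C^{\beta'}_{\mathrm{loc}}(\rd)\hookrightarrow\mathcal C^\alpha_{\mathrm{loc}}(\rd)$ is compact when $\alpha<\beta'$: bounded subsets of $B^{\beta'}_{\infty,\infty}$ on a fixed ball are relatively compact in $B^{\alpha}_{\infty,\infty}$ on that ball, and this upgrades to the local space through its countable family of ball-seminorms and a diagonal extraction. Given $\delta>0$, Markov's inequality applied to the bound above produces radii $M_m$ with $\sup_\e\IP\big(\|f_\e\|_{\beta',B_m}>M_m\big)\le\delta2^{-m}$; the set $K=\{g:\|g\|_{\beta',B_m}\le M_m\ \text{for all}\ m\}$ is bounded in $\mathcal C^{\beta'}_{\mathrm{loc}}$, hence relatively compact in $\mathcal C^\alpha_{\mathrm{loc}}$, and $\sup_\e\IP(f_\e\notin K)\le\delta$, which proves tightness in $\mathcal C^\alpha_{\mathrm{loc}}(\rd)$ for every $\alpha<\beta-\frac dp$. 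The hard part is the first step --- pinning down and proving the wavelet characterization uniformly over all (not merely dyadic) base points and for distributional $f_\e$ of finite regularity, which is exactly where the constraint $|\beta|<r$ enters --- after which the probabilistic content is just a union bound, Markov's inequality, and the compact embedding.
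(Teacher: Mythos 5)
The paper under review does not prove this statement: it is quoted verbatim from \cite[Theorem 1.1]{FM17} and invoked as a black box in Section~6 to deduce Proposition~\ref{prop:tightness}, so there is no in-paper proof to compare against. That said, your reconstruction traces the route of \cite{FM17} itself: identify $\phi$ and $(\psi^{\ssup i})_{i}$ as a compactly supported Daubechies wavelet system of regularity $r$, characterize $\mathcal C^{\beta'}_{\mathrm{loc}}$ through wavelet coefficients, pass from the pointwise $L^p(\IP)$ bound on coefficients to a bound on the supremum over the $O(2^{nd})$ relevant dyadic translates meeting a fixed ball via $\E[\sup_j a_j^p]\le\sum_j\E[a_j^p]$ (which needs $p\ge 1$), sum in $n$ using $\beta'<\beta-d/p$, and conclude with Markov's inequality and the compact embedding of $\mathcal C^{\beta'}_{\mathrm{loc}}$ into $\mathcal C^{\alpha}_{\mathrm{loc}}$ for $\alpha<\beta'$. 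Your bookkeeping is correct, in particular $2^{np(\beta'+d/2)}\cdot2^{nd}\cdot2^{-np(\beta+d/2)}=2^{np(\beta'-\beta+d/p)}$, and the extraction of a compact set $K$ via a sequence of ball seminorms is the standard diagonal argument. The genuine gap is the one you flag yourself: the localized wavelet characterization of $\mathcal C^{\beta'}_{\mathrm{loc}}$ — including the reconstruction identity for a random linear form on $C^r_c(\rd)$ that is not a priori in any Besov space, the convergence of the series in the relevant topology, and the passage from coefficient bounds at arbitrary base points $x\in\rd$ to the dyadic lattice — is asserted rather than proved. This is precisely where the restriction $|\beta|<r$ enters and where most of the technical content of \cite{FM17} sits; as written, your argument delegates that essential lemma while correctly carrying out the probabilistic superstructure around it.
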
 

\noindent{\bf{Concluding the proof of Proposition \ref{prop:tightness}:}} 
We will check the requisite conditions \eqref{cond1} and \eqref{cond2} for $f_\e(y)= \mathscr H_\e(t,y)= \e^{1-\frac d 2} [h_\e(t,y)- \mathfrak h(\xi^{\ssup{\e,t,y}})]$ and $\beta=0$ and $p\in (1,2)$. 
 First remark that in this context $f_\e(y)$  is stationary in the $y$-variable. 
 Next to check the above two requirements, it suffices to show that for any smooth function with compact support (say, in a ball of radius $1$), 
\begin{equation}\label{cond3}
\sup_{\heap{x\in \rd}{\e>0}} \bigg(\E\bigg[\bigg|\int_{\rd} f_\e(y) \psi\big(\frac{y-x}{2^{-n}}\big) \dd y \bigg|^p\bigg]\bigg)^{1/p} \leq C 2^{-nd}.
\end{equation}
Suppose that $f_\e$ is stationary for all $\e>0$. Let $q>1$ verify $p^{-1} + q^{-1}=1$. By H\"older's inequality, the LHS in the above display is bounded above by 
\[
\begin{aligned}
&\sup_{\heap{x\in \rd}{\e>0}} \E\bigg[\int_{B(x,2^{-n})} |f_\e(y)|^p \dd y \bigg]^{1/p} \left(\int_{\rd} \left|\psi\big(\frac{y-x}{2^{-n}}\big)\right|^q \dd y \right)^{1/q} \\
& \leq \sup_{x\in \rd}  |B(x,2^{-n})|^{1/p}\, \sup_{\e>0}\E\left[|f_\e(0)|^p\right]^{1/p} 2^{-nd/q} \|\psi\|_q
 \leq \|\psi\|_q\, \sup_{\e>0} \E\left[|f_\e(0)|^p\right]^{1/p} 2^{-nd},
\end{aligned}
\]
where we have used stationarity of $f_\e$ in second line. Hence, we observe that tightness of $(f_\e)_\e$ would follow from $L^p(\mathbb P)$-boundedness of $(f_\e)_\e$ for $p\in (1,2)$. 
To this end, we appeal to the $L^p(\mathbb P)$-boundedness of $T^{(d-2)/4}[\log\sZ_T-\log\sZ_\infty]$ for all $p\in (1,2)$ (recall Proposition \ref{prop:LpBoundLogZ}), which, together with 
the identity \eqref{eq:uZ} in turn implies that $f_\e$ enjoys the same property, implying tightness of $(\mathscr H_\e(t,x)_{x\in \mathbb{R}^d})_{\e>0}$ in the space $\mathcal C^\alpha_{\mathrm{loc}}(\rd)$ for all 
$\alpha < -d /2$. Thus, Proposition \ref{prop:tightness} is proved. \qed

\noindent{\bf Acknowledgements:}  The authors would like to thank Ofer Zeitouni (Rehovot/ New York) for very useful feedback and discussions. Research of the third author is funded by the Deutsche Forschungsgemeinschaft (DFG) under Germany's Excellence Strategy {\it EXC 2044-390685587, Mathematics M\"unster: Dynamics-Geometry-Structure}. The authors were partly supported by the French Agence Nationale de la Recherche under grant ANR-17-CE40-0032. The authors acknowledge  the hospitality of ICTS-TIFR Bengaluru during the program Large deviation theory in statistical physics (ICTS/Prog-ldt/2017/8), where the present work was initiated. The second and the third author would like to thank the hospitality of NYU Shanghai where part of the present work was completed during the first author's long term stay during the academic year 2018-19.


{\small

}

\end{document}